\numberwithin{equation}{section}
\numberwithin{figure}{section}
\theoremstyle{plain}
\newtheorem{thm}{\protect\theoremname}[section]
\theoremstyle{definition}
\newtheorem{defn}[thm]{\protect\definitionname}
\theoremstyle{remark}
\newtheorem{rem}[thm]{\protect\remarkname}
\theoremstyle{plain}
\newtheorem{lem}[thm]{\protect\lemmaname}
\theoremstyle{plain}
\newtheorem{cor}[thm]{\protect\corollaryname}
\theoremstyle{remark}
\newtheorem{notation}[thm]{\protect\notationname}
\theoremstyle{plain}
\newtheorem{prop}[thm]{\protect\propositionname}
\DeclareMathOperator{\Add}{\textup{Add}}
\DeclareMathOperator{\Aff}{\mathbf{Aff}}
\DeclareMathOperator{\Alg}{\textup{Alg}}
\DeclareMathOperator{\Ann}{\textup{Ann}}
\DeclareMathOperator{\Arr}{\textup{Arr}}
\DeclareMathOperator{\Art}{\textup{Art}}
\DeclareMathOperator{\Ass}{\textup{Ass}}
\DeclareMathOperator{\Aut}{\textup{Aut}}
\DeclareMathOperator{\Autsh}{\underline{\textup{Aut}}}
\DeclareMathOperator{\Bi}{\textup{B}}
\DeclareMathOperator{\CAdd}{\textup{CAdd}}
\DeclareMathOperator{\CAlg}{\textup{CAlg}}
\DeclareMathOperator{\CMon}{\textup{CMon}}
\DeclareMathOperator{\CPMon}{\textup{CPMon}}
\DeclareMathOperator{\CRings}{\textup{CRings}}
\DeclareMathOperator{\CSMon}{\textup{CSMon}}
\DeclareMathOperator{\CaCl}{\textup{CaCl}}
\DeclareMathOperator{\Cart}{\textup{Cart}}
\DeclareMathOperator{\Cl}{\textup{Cl}}
\DeclareMathOperator{\Coh}{\textup{Coh}}
\DeclareMathOperator{\Coker}{\textup{Coker}}
\DeclareMathOperator{\Cov}{\textup{Cov}}
\DeclareMathOperator{\Der}{\textup{Der}}
\DeclareMathOperator{\Div}{\textup{Div}}
\DeclareMathOperator{\End}{\textup{End}}
\DeclareMathOperator{\Endsh}{\underline{\textup{End}}}
\DeclareMathOperator{\Ext}{\textup{Ext}}
\DeclareMathOperator{\Extsh}{\underline{\textup{Ext}}}
\DeclareMathOperator{\FAdd}{\textup{FAdd}}
\DeclareMathOperator{\FCoh}{\textup{FCoh}}
\DeclareMathOperator{\FGrad}{\textup{FGrad}}
\DeclareMathOperator{\FLoc}{\textup{FLoc}}
\DeclareMathOperator{\FMod}{\textup{FMod}}
\DeclareMathOperator{\FPMon}{\textup{FPMon}}
\DeclareMathOperator{\FRep}{\textup{FRep}}
\DeclareMathOperator{\FSMon}{\textup{FSMon}}
\DeclareMathOperator{\FVect}{\textup{FVect}}
\DeclareMathOperator{\Fibr}{\textup{Fibr}}
\DeclareMathOperator{\Fix}{\textup{Fix}}
\DeclareMathOperator{\Fl}{\textup{Fl}}
\DeclareMathOperator{\Fr}{\textup{Fr}}
\DeclareMathOperator{\Funct}{\textup{Funct}}
\DeclareMathOperator{\GAlg}{\textup{GAlg}}
\DeclareMathOperator{\GExt}{\textup{GExt}}
\DeclareMathOperator{\GHom}{\textup{GHom}}
\DeclareMathOperator{\GL}{\textup{GL}}
\DeclareMathOperator{\GMod}{\textup{GMod}}
\DeclareMathOperator{\GRis}{\textup{GRis}}
\DeclareMathOperator{\GRiv}{\textup{GRiv}}
\DeclareMathOperator{\Gal}{\textup{Gal}}
\DeclareMathOperator{\Gl}{\textup{Gl}}
\DeclareMathOperator{\Grad}{\textup{Grad}}
\DeclareMathOperator{\Hilb}{\textup{Hilb}}
\DeclareMathOperator{\Hl}{\textup{H}}
\DeclareMathOperator{\Hom}{\textup{Hom}}
\DeclareMathOperator{\Homsh}{\underline{\textup{Hom}}}
\DeclareMathOperator{\ISym}{\textup{Sym}^*}
\DeclareMathOperator{\Imm}{\textup{Im}}
\DeclareMathOperator{\Irr}{\textup{Irr}}
\DeclareMathOperator{\Iso}{\textup{Iso}}
\DeclareMathOperator{\Isosh}{\underline{\textup{Iso}}}
\DeclareMathOperator{\Ker}{\textup{Ker}}
\DeclareMathOperator{\LAdd}{\textup{LAdd}}
\DeclareMathOperator{\LAlg}{\textup{LAlg}}
\DeclareMathOperator{\LMon}{\textup{LMon}}
\DeclareMathOperator{\LPMon}{\textup{LPMon}}
\DeclareMathOperator{\LRings}{\textup{LRings}}
\DeclareMathOperator{\LSMon}{\textup{LSMon}}
\DeclareMathOperator{\Left}{\textup{L}}
\DeclareMathOperator{\Lex}{\textup{Lex}}
\DeclareMathOperator{\Loc}{\textup{Loc}}
\DeclareMathOperator{\M}{\textup{M}}
\DeclareMathOperator{\ML}{\textup{ML}}
\DeclareMathOperator{\MLex}{\textup{MLex}}
\DeclareMathOperator{\Map}{\textup{Map}}
\DeclareMathOperator{\Mod}{\textup{Mod}}
\DeclareMathOperator{\Mon}{\textup{Mon}}
\DeclareMathOperator{\Ob}{\textup{Ob}}
\DeclareMathOperator{\Obj}{\textup{Obj}}
\DeclareMathOperator{\PDiv}{\textup{PDiv}}
\DeclareMathOperator{\PGL}{\textup{PGL}}
\DeclareMathOperator{\PML}{\textup{PML}}
\DeclareMathOperator{\PMLex}{\textup{PMLex}}
\DeclareMathOperator{\PMon}{\textup{PMon}}
\DeclareMathOperator{\Pic}{\textup{Pic}}
\DeclareMathOperator{\Picsh}{\underline{\textup{Pic}}}
\DeclareMathOperator{\Pro}{\textup{Pro}}
\DeclareMathOperator{\Proj}{\textup{Proj}}
\DeclareMathOperator{\QAdd}{\textup{QAdd}}
\DeclareMathOperator{\QAlg}{\textup{QAlg}}
\DeclareMathOperator{\QCoh}{\textup{QCoh}}
\DeclareMathOperator{\QMon}{\textup{QMon}}
\DeclareMathOperator{\QPMon}{\textup{QPMon}}
\DeclareMathOperator{\QRings}{\textup{QRings}}
\DeclareMathOperator{\QSMon}{\textup{QSMon}}
\DeclareMathOperator{\R}{\textup{R}}
\DeclareMathOperator{\Rep}{\textup{Rep}}
\DeclareMathOperator{\Rings}{\textup{Rings}}
\DeclareMathOperator{\Riv}{\textup{Riv}}
\DeclareMathOperator{\SFibr}{\textup{SFibr}}
\DeclareMathOperator{\SMLex}{\textup{SMLex}}
\DeclareMathOperator{\SMex}{\textup{SMex}}
\DeclareMathOperator{\SMon}{\textup{SMon}}
\DeclareMathOperator{\SchI}{\textup{SchI}}
\DeclareMathOperator{\Sh}{\textup{Sh}}
\DeclareMathOperator{\Soc}{\textup{Soc}}
\DeclareMathOperator{\Spec}{\textup{Spec}}
\DeclareMathOperator{\Specsh}{\underline{\textup{Spec}}}
\DeclareMathOperator{\Stab}{\textup{Stab}}
\DeclareMathOperator{\Supp}{\textup{Supp}}
\DeclareMathOperator{\Sym}{\textup{Sym}}
\DeclareMathOperator{\TMod}{\textup{TMod}}
\DeclareMathOperator{\Top}{\textup{Top}}
\DeclareMathOperator{\Tor}{\textup{Tor}}
\DeclareMathOperator{\Vect}{\textup{Vect}}
\DeclareMathOperator{\alt}{\textup{ht}}
\DeclareMathOperator{\car}{\textup{char}}
\DeclareMathOperator{\codim}{\textup{codim}}
\DeclareMathOperator{\degtr}{\textup{degtr}}
\DeclareMathOperator{\depth}{\textup{depth}}
\DeclareMathOperator{\divis}{\textup{div}}
\DeclareMathOperator{\et}{\textup{et}}
\DeclareMathOperator{\ffpSch}{\textup{ffpSch}}
\DeclareMathOperator{\h}{\textup{h}}
\DeclareMathOperator{\ilim}{\displaystyle{\lim_{\longrightarrow}}}
\DeclareMathOperator{\ind}{\textup{ind}}
\DeclareMathOperator{\indim}{\textup{inj dim}}
\DeclareMathOperator{\lf}{\textup{LF}}
\DeclareMathOperator{\op}{\textup{op}}
\DeclareMathOperator{\ord}{\textup{ord}}
\DeclareMathOperator{\pd}{\textup{pd}}
\DeclareMathOperator{\plim}{\displaystyle{\lim_{\longleftarrow}}}
\DeclareMathOperator{\pr}{\textup{pr}}
\DeclareMathOperator{\pt}{\textup{pt}}
\DeclareMathOperator{\rk}{\textup{rk}}
\DeclareMathOperator{\tr}{\textup{tr}}
\DeclareMathOperator{\type}{\textup{r}}
\DeclareMathOperator*{\colim}{\textup{colim}}
\renewcommand{\to}{\arr}
\renewcommand{\bigsqcup}{\coprod}
\renewcommand{\simeq}{\cong}
\newcommand\restr[2]{{
  \left.\kern-\nulldelimiterspace 
  #1 
  \vphantom{\big|} 
  \right|_{#2} 
  }}
\providecommand{\corollaryname}{Corollary}
\providecommand{\definitionname}{Definition}
\providecommand{\lemmaname}{Lemma}
\providecommand{\notationname}{Notation}
\providecommand{\propositionname}{Proposition}
\providecommand{\remarkname}{Remark}
\providecommand{\theoremname}{Theorem}
\begin{document}
\title{Moduli of formal torsors II}
\author{Fabio Tonini and Takehiko Yasuda}
\address{Universitá degli Studi di Firenze, Dipartimento di Matematica e Informatica
’Ulisse Dini’, Viale Giovanni Battista Morgagni, 67/A, 50134 Firenze,
Italy}
\email{fabio.tonini@unifi.it}
\address{Department of Mathematics, Graduate School of Science, Osaka University,
Toyonaka, Osaka 560-0043, JAPAN}
\email{takehikoyasuda@math.sci.osaka-u.ac.jp}
\thanks{The first author was supported by GNSAGA of INdAM. The second author
was supported by JSPS KAKENHI Grant Numbers JP15K17510, JP18H01112
and JP18K18710.}
\begin{abstract}
Applying the authors' preceding work, we construct a version of the
moduli space of $G$-torsors over the formal punctured disk for a
finite group $G$. To do so, we introduce two Grothendieck topologies,
the sur (surjective) and luin (locally universally injective) topologies,
and define P-schemes using them as variants of schemes. Our moduli
space is defined as a P-scheme approximating the relevant moduli functor.
We then prove that Fröhlich's module resolvent gives a locally constructible
function on this moduli space, which implies that motivic integrals
appearing in the wild McKay correspondence are well-defined.
\end{abstract}

\maketitle
\global\long\def\AA{\mathbb{A}}%
\global\long\def\PP{\mathbb{P}}%
\global\long\def\NN{\mathbb{N}}%
\global\long\def\GG{\mathbb{G}}%
\global\long\def\ZZ{\mathbb{Z}}%
\global\long\def\QQ{\mathbb{Q}}%
\global\long\def\CC{\mathbb{C}}%
\global\long\def\FF{\mathbb{F}}%
\global\long\def\LL{\mathbb{L}}%
\global\long\def\RR{\mathbb{R}}%
\global\long\def\MM{\mathbb{M}}%
\global\long\def\SS{\mathbb{S}}%

\global\long\def\bx{\boldsymbol{x}}%
\global\long\def\by{\boldsymbol{y}}%
\global\long\def\bf{\mathbf{f}}%
\global\long\def\ba{\mathbf{a}}%
\global\long\def\bs{\mathbf{s}}%
\global\long\def\bt{\mathbf{t}}%
\global\long\def\bw{\mathbf{w}}%
\global\long\def\bb{\mathbf{b}}%
\global\long\def\bv{\mathbf{v}}%
\global\long\def\bp{\mathbf{p}}%
\global\long\def\bq{\mathbf{q}}%
\global\long\def\bj{\mathbf{j}}%
\global\long\def\bM{\mathbf{M}}%
\global\long\def\bd{\mathbf{d}}%
\global\long\def\bA{\mathbf{A}}%
\global\long\def\bB{\mathbf{B}}%
\global\long\def\bC{\mathbf{C}}%
\global\long\def\bP{\mathbf{P}}%
\global\long\def\bX{\mathbf{X}}%
\global\long\def\bY{\mathbf{Y}}%
\global\long\def\bZ{\mathbf{Z}}%
\global\long\def\bW{\mathbf{W}}%
\global\long\def\bV{\mathbf{V}}%
\global\long\def\bU{\mathbf{U}}%
\global\long\def\bN{\mathbf{N}}%
\global\long\def\bQ{\mathbf{Q}}%

\global\long\def\cN{\mathcal{N}}%
\global\long\def\cW{\mathcal{W}}%
\global\long\def\cY{\mathcal{Y}}%
\global\long\def\cM{\mathcal{M}}%
\global\long\def\cF{\mathcal{F}}%
\global\long\def\cX{\mathcal{X}}%
\global\long\def\cE{\mathcal{E}}%
\global\long\def\cJ{\mathcal{J}}%
\global\long\def\cO{\mathcal{O}}%
\global\long\def\cD{\mathcal{D}}%
\global\long\def\cZ{\mathcal{Z}}%
\global\long\def\cR{\mathcal{R}}%
\global\long\def\cC{\mathcal{C}}%
\global\long\def\cL{\mathcal{L}}%
\global\long\def\cV{\mathcal{V}}%
\global\long\def\cU{\mathcal{U}}%
\global\long\def\cS{\mathcal{S}}%
\global\long\def\cT{\mathcal{T}}%
\global\long\def\cA{\mathcal{A}}%
\global\long\def\cB{\mathcal{B}}%
\global\long\def\cG{\mathcal{G}}%
\global\long\def\cP{\mathcal{P}}%
\global\long\def\cQ{\mathcal{Q}}%

\global\long\def\fs{\mathfrak{s}}%
\global\long\def\fp{\mathfrak{p}}%
\global\long\def\fm{\mathfrak{m}}%
\global\long\def\fX{\mathfrak{X}}%
\global\long\def\fV{\mathfrak{V}}%
\global\long\def\fx{\mathfrak{x}}%
\global\long\def\fv{\mathfrak{v}}%
\global\long\def\fY{\mathfrak{Y}}%
\global\long\def\fa{\mathfrak{a}}%
\global\long\def\fb{\mathfrak{b}}%
\global\long\def\fc{\mathfrak{c}}%
\global\long\def\fO{\mathcal{O}}%
\global\long\def\fd{\mathfrak{d}}%
\global\long\def\fP{\mathfrak{P}}%

\global\long\def\rv{\mathbf{\mathrm{v}}}%
\global\long\def\rx{\mathrm{x}}%
\global\long\def\rw{\mathrm{w}}%
\global\long\def\ry{\mathrm{y}}%
\global\long\def\rz{\mathrm{z}}%
\global\long\def\bv{\mathbf{v}}%
\global\long\def\bw{\mathbf{w}}%
\global\long\def\sv{\mathsf{v}}%
\global\long\def\sx{\mathsf{x}}%
\global\long\def\sw{\mathsf{w}}%

\global\long\def\Spec{\mathrm{Spec}\,}%
\global\long\def\Hom{\mathrm{Hom}}%

\global\long\def\Var{\mathbf{Var}}%
\global\long\def\Gal{\mathrm{Gal}}%
\global\long\def\Jac{\mathrm{Jac}}%
\global\long\def\Ker{\mathrm{Ker}}%
\global\long\def\Image{\mathrm{Im}}%
\global\long\def\Aut{\mathrm{Aut}}%
\global\long\def\st{\mathrm{st}}%
\global\long\def\diag{\mathrm{diag}}%
\global\long\def\characteristic{\mathrm{char}}%
\global\long\def\tors{\mathrm{tors}}%
\global\long\def\sing{\mathrm{sing}}%
\global\long\def\red{\mathrm{red}}%
\global\long\def\ord{\mathrm{ord}}%
\global\long\def\pt{\mathrm{pt}}%
\global\long\def\op{\mathrm{op}}%
\global\long\def\Val{\mathrm{Val}}%
\global\long\def\Res{\mathrm{Res}}%
\global\long\def\Pic{\mathrm{Pic}}%
\global\long\def\disc{\mathrm{disc}}%
\global\long\def\Coker{\mathrm{Coker}}%
 
\global\long\def\length{\mathrm{length}}%
\global\long\def\sm{\mathrm{sm}}%
\global\long\def\rank{\mathrm{rank}}%
\global\long\def\age{\mathrm{age}}%
\global\long\def\et{\mathrm{et}}%
\global\long\def\hom{\mathrm{hom}}%
\global\long\def\tor{\mathrm{tor}}%
\global\long\def\reg{\mathrm{reg}}%
\global\long\def\cont{\mathrm{cont}}%
\global\long\def\crep{\mathrm{crep}}%
\global\long\def\Stab{\mathrm{Stab}}%
\global\long\def\discrep{\mathrm{discrep}}%
\global\long\def\mld{\mathrm{mld}}%
\global\long\def\GCov{G\textrm{-}\mathrm{Cov}}%
\global\long\def\P{\mathrm{P}}%

\global\long\def\GL{\mathrm{GL}}%
\global\long\def\codim{\mathrm{codim}}%
\global\long\def\prim{\mathrm{prim}}%
\global\long\def\cHom{\mathcal{H}om}%
\global\long\def\cSpec{\mathcal{S}pec}%
\global\long\def\Proj{\mathrm{Proj}\,}%
\global\long\def\modified{\mathrm{mod}}%
\global\long\def\ind{\mathrm{ind}}%
\global\long\def\rad{\mathrm{rad}}%
\global\long\def\Conj{\mathrm{Conj}}%
\global\long\def\fie{\textrm{-}\mathrm{fie}}%
\global\long\def\NS{\mathrm{NS}}%
\global\long\def\Disc{\mathrm{Disc}}%
\global\long\def\hotimes{\hat{\otimes}}%
\global\long\def\Fil{\mathrm{Fil}}%
\global\long\def\Inn{\mathrm{Inn}}%
\global\long\def\rfil{\mathrm{rfil}}%
\global\long\def\per{\mathrm{per}}%
\global\long\def\id{\mathrm{id}}%
\global\long\def\AffVar{\mathbf{AffVar}}%
\global\long\def\Alg{\mathbf{Alg}}%
\global\long\def\PSch{\textit{P-}\mathbf{Sch}}%
\global\long\def\PQVar{\textit{P-}\mathbf{QVar}}%
\global\long\def\QVar{\mathbf{QVar}}%
\global\long\def\Set{\mathbf{Set}}%
\global\long\def\Eis{\mathrm{Eis}}%
\global\long\def\ACF{\mathbf{ACF}}%
\global\long\def\SDelta{\mathcal{S}\Delta}%
\global\long\def\adm{\mathrm{adm}}%
\global\long\def\Aff{\mathbf{Aff}}%
\global\long\def\Affnat{\mathbf{Aff}^{\natural}}%
\global\long\def\hatAffnat{(\mathbf{Aff}^{\natural})^{\land}}%
\global\long\def\Algnat{\mathbf{Alg}^{\natural}}%
\global\long\def\Sch{\mathbf{Sch}}%
\global\long\def\Shsur{\mathrm{Sh_{sur}}}%
\global\long\def\Shubi{\mathrm{Sh_{ubi}}}%
\global\long\def\Schnat{\mathbf{Sch}^{\natural}}%
\global\long\def\unif{\mathrm{unif}}%
\global\long\def\sets{\mathbf{Set}}%
\global\long\def\bDelta{\pmb{\bm{\Delta}}}%
\global\long\def\uni{\mathrm{uni}}%
\global\long\def\Uni{\mathrm{Unif}}%
\global\long\def\sep{\mathrm{sep}}%
\global\long\def\Unit{\mathrm{Unit}}%

\tableofcontents{}

\global\long\def\A{\mathbb{A}}%

\global\long\def\Ab{(\textup{Ab})}%

\global\long\def\C{\mathbb{C}}%

\global\long\def\Cat{(\textup{cat})}%

\global\long\def\Di#1{\textup{D}(#1)}%

\global\long\def\E{\mathcal{E}}%

\global\long\def\F{\mathbb{F}}%

\global\long\def\GCov{G\textup{-Cov}}%

\global\long\def\Gcat{(\textup{Galois cat})}%

\global\long\def\Gfsets#1{#1\textup{-fsets}}%

\global\long\def\Gm{\mathbb{G}_{m}}%

\global\long\def\GrCov#1{\textup{D}(#1)\textup{-Cov}}%

\global\long\def\Grp{(\textup{Grps})}%

\global\long\def\Gsets#1{(#1\textup{-sets})}%

\global\long\def\HCov{H\textup{-Cov}}%

\global\long\def\MCov{\textup{D}(M)\textup{-Cov}}%

\global\long\def\MHilb{M\textup{-Hilb}}%

\global\long\def\N{\mathbb{N}}%

\global\long\def\PGor{\textup{PGor}}%

\global\long\def\PGrp{(\textup{Profinite Grp})}%

\global\long\def\PP{\mathbb{P}}%

\global\long\def\Pj{\mathbb{P}}%

\global\long\def\Q{\mathbb{Q}}%

\global\long\def\RCov#1{#1\textup{-Cov}}%

\global\long\def\RR{\mathbb{R}}%

\global\long\def\WW{\textup{W}}%

\global\long\def\Z{\mathbb{Z}}%

\global\long\def\acts{\curvearrowright}%

\global\long\def\alA{\mathscr{A}}%

\global\long\def\alB{\mathscr{B}}%

\global\long\def\arr{\longrightarrow}%

\global\long\def\arrdi#1{\xlongrightarrow{#1}}%

\global\long\def\catC{\mathscr{C}}%

\global\long\def\catD{\mathscr{D}}%

\global\long\def\catF{\mathscr{F}}%

\global\long\def\catG{\mathscr{G}}%

\global\long\def\comma{,\ }%

\global\long\def\covU{\mathcal{U}}%

\global\long\def\covV{\mathcal{V}}%

\global\long\def\covW{\mathcal{W}}%

\global\long\def\duale#1{{#1}^{\vee}}%

\global\long\def\fasc#1{\widetilde{#1}}%

\global\long\def\fsets{(\textup{f-sets})}%

\global\long\def\iL{r\mathscr{L}}%

\global\long\def\id{\textup{id}}%

\global\long\def\la{\langle}%

\global\long\def\odi#1{\mathcal{O}_{#1}}%

\global\long\def\ra{\rangle}%

\global\long\def\rig{\mathbin{\!\!\pmb{\fatslash}}}%

\global\long\def\set{(\textup{Sets})}%

\global\long\def\shA{\mathcal{A}}%

\global\long\def\shB{\mathcal{B}}%

\global\long\def\shC{\mathcal{C}}%

\global\long\def\shD{\mathcal{D}}%

\global\long\def\shE{\mathcal{E}}%

\global\long\def\shF{\mathcal{F}}%

\global\long\def\shG{\mathcal{G}}%

\global\long\def\shH{\mathcal{H}}%

\global\long\def\shI{\mathcal{I}}%

\global\long\def\shJ{\mathcal{J}}%

\global\long\def\shK{\mathcal{K}}%

\global\long\def\shL{\mathcal{L}}%

\global\long\def\shM{\mathcal{M}}%

\global\long\def\shN{\mathcal{N}}%

\global\long\def\shO{\mathcal{O}}%

\global\long\def\shP{\mathcal{P}}%

\global\long\def\shQ{\mathcal{Q}}%

\global\long\def\shR{\mathcal{R}}%

\global\long\def\shS{\mathcal{S}}%

\global\long\def\shT{\mathcal{T}}%

\global\long\def\shU{\mathcal{U}}%

\global\long\def\shV{\mathcal{V}}%

\global\long\def\shW{\mathcal{W}}%

\global\long\def\shX{\mathcal{X}}%

\global\long\def\shY{\mathcal{Y}}%

\global\long\def\shZ{\mathcal{Z}}%

\global\long\def\st{\ | \ }%

\global\long\def\stA{\mathcal{A}}%

\global\long\def\stB{\mathcal{B}}%

\global\long\def\stC{\mathcal{C}}%

\global\long\def\stD{\mathcal{D}}%

\global\long\def\stE{\mathcal{E}}%

\global\long\def\stF{\mathcal{F}}%

\global\long\def\stG{\mathcal{G}}%

\global\long\def\stH{\mathcal{H}}%

\global\long\def\stI{\mathcal{I}}%

\global\long\def\stJ{\mathcal{J}}%

\global\long\def\stK{\mathcal{K}}%

\global\long\def\stL{\mathcal{L}}%

\global\long\def\stM{\mathcal{M}}%

\global\long\def\stN{\mathcal{N}}%

\global\long\def\stO{\mathcal{O}}%

\global\long\def\stP{\mathcal{P}}%

\global\long\def\stQ{\mathcal{Q}}%

\global\long\def\stR{\mathcal{R}}%

\global\long\def\stS{\mathcal{S}}%

\global\long\def\stT{\mathcal{T}}%

\global\long\def\stU{\mathcal{U}}%

\global\long\def\stV{\mathcal{V}}%

\global\long\def\stW{\mathcal{W}}%

\global\long\def\stX{\mathcal{X}}%

\global\long\def\stY{\mathcal{Y}}%

\global\long\def\stZ{\mathcal{Z}}%

\global\long\def\then{\ \Longrightarrow\ }%

\global\long\def\L{\textup{L}}%

\global\long\def\l{\textup{l}}%

\makeatletter 
\providecommand\@dotsep{5} 
\makeatother 

\section{Introduction}

In the preceding paper \cite{Tonini:2017qr}, the authors constructed
the moduli stack of $G$-torsors over $\Spec k((t))$, where $k$
is a field of characteristic $p>0$ and $G$ is a group of the form
$H\rtimes C$ for a $p$-group $H$ and a tame cyclic group $C$,
which generalizes and refines Harbater's work for $p$-groups \cite{MR579791}.
The motivation of the authors came from the wild McKay correspondence.
In this theory, motivic integrals of the forms $\int_{\Delta_{G}}\LL^{d-v}$
and $\int_{\Delta_{G}}\LL^{w}$ appear, where $\Delta_{G}$ is the
moduli space of $G$-torsors over $\Spec k((t))$, $v,w$ are functions
$\Delta_{G}\longrightarrow\frac{1}{|G|}\ZZ$ associated to a representation
$G\longrightarrow\GL_{d}(k[[t]])$ and $d$ is its rank. The first
aim of the present paper is to construct a version of the moduli space
$\Delta_{G}$ for an arbitrary finite group by using the mentioned
result from the previous paper and prove that motivic integrals as
above are well-defined in a version of the complete Grothendieck ring
of varieties. After the first draft of this paper had been written,
the main assertion of the wild McKay correspondence, the equality
of $\int_{\Delta_{G}}\LL^{d-v}$ and the stringy motive of the quotient
$k[[t]]$-scheme $\AA_{k[[t]]}^{d}/G$ associated to the representation,
was proved by the second author in \cite{yasuda2019motivic}, building
on the well-definedness of the integral obtained in this paper.

We do not construct the moduli stack, since it appears difficult.
Instead we construct what we call the P-moduli space. This is a version
of the moduli space, which is even coarser than the coarse moduli
space. Actually this is the coarsest one for which motivic integrals
as above still make sense. We construct the category of P-schemes
by modifying morphisms of the category of schemes. The P-moduli space
is the P-scheme approximating the relevant moduli functor the most.
We call it the strong P-moduli space if it satisfies an additional
condition. A precise statement of our first main result is as follows:
\begin{thm}[Theorem \ref{thm:strong P moduli of DeltaG}]
\label{thm:main1}Let $G$ be a finite group and let $k$ be a field.
Consider the functor from the category of affine $k$-schemes to the
category of sets which sends $\Spec R$ to the set of isomorphism
classes of $G$-torsors over $\Spec R((t))$. This functor has a strong
P-moduli space, which is the disjoint union of countably many affine
schemes of finite type over $k$.
\end{thm}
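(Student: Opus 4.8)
The plan is to reduce the statement, for an arbitrary finite $G$, to the case of a group of the form $P \rtimes C$ with $P$ a $p$-group and $C$ tame cyclic — exactly the setting of \cite{Tonini:2017qr} — and to descend from the moduli stacks of that paper to P-moduli spaces, exploiting that the category $\PSch$ is well behaved under disjoint unions, under stratifications in the sur topology, and under quotients by finite groups.

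The first step is the \emph{type stratification}. Over an algebraically closed extension $L$ of $k$, a $G$-torsor over $\Spec L((t))$ is a finite disjoint union of connected ones and thus corresponds to a continuous homomorphism $\pi_{1}(\Spec L((t))) \to G$ up to $G$-conjugation; sorting by the image gives a bijection of $\Delta_{G}(\Spec L)$ with $\coprod_{[H]} \left\{ \text{surjections } \pi_{1}(\Spec L((t))) \twoheadrightarrow H \right\}/N_{G}(H)$, the coproduct running over the $G$-conjugacy classes of subgroups $H \le G$. Since the absolute Galois group of $L((t))$ is an extension of a procyclic prime-to-$p$ group by a pro-$p$ group, Schur--Zassenhaus identifies its finite quotients with the groups of the form $P \rtimes C$; hence only the finitely many classes $[H]$ of \emph{admissible} subgroups (those admitting a normal Sylow $p$-subgroup with cyclic quotient) contribute. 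I would then globalise this to a decomposition of the whole functor $\Delta_{G}$ into the subfunctors $\Delta_{G}^{[H]}$ of torsors of fixed type $[H]$; the type is a constructible invariant, so on any affine base the source decomposes into locally closed type-strata, and because such a stratification is a covering for the sur topology this yields, at the level of P-moduli spaces, an honest disjoint union $\coprod_{[H]}$ over the finitely many admissible classes.

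For a single admissible $H$, the subfunctor $\Delta_{G}^{[H]}$ is obtained from the functor of connected $H$-torsors over $\Spec R((t))$ by inducing up along $H \hookrightarrow G$ and passing to the quotient by the finite group $N_{G}(H)$ acting through automorphisms of $H$; the connected $H$-torsors form a subfunctor of $\Delta_{H}$. Here I would combine \cite{Tonini:2017qr} with the P-scheme formalism of the present paper: the moduli stack of $H$-torsors over $\Spec k((t))$ constructed there admits a strong P-moduli space that is a disjoint union of countably many affine $k$-schemes of finite type, the countable index being the discrete ramification data (tuples of Swan-type conductors) and each piece arising as a quotient, formed in $\PSch$, of a finite-dimensional affine space by the finite symmetry group coming from the Artin--Schreier-type parametrisation of torsors. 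Taking the further quotient by $N_{G}(H)$ and then the finite coproduct over the admissible $[H]$ stays inside disjoint unions of countably many finite-type affine $k$-schemes, since a quotient of a finite-type affine $k$-scheme by a finite group is again one and such quotients exist in $\PSch$; one must also check that the condition defining \emph{strongness} is preserved by these two operations, reducing it to the statement for connected $H$-torsors in \cite{Tonini:2017qr}. (Over a non-algebraically-closed $k$ one argues by base change to $\bar k$, or simply notes that \cite{Tonini:2017qr} already works over a general field, and descends.)

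The main obstacle I anticipate is the interplay, in the globalisation step, between the type stratification and non-reduced or disconnected bases: making precise that the jumping of the type is invisible to the P-moduli space — the very phenomenon the sur topology is meant to absorb — and that the resulting gluing data satisfy the relevant cocycle condition. A close second is the passage from the moduli stack of \cite{Tonini:2017qr} to a P-moduli space with the asserted explicit shape, i.e.\ simultaneously establishing finite-typeness of each ramification stratum, affineness, and that the quotients in play genuinely become P-schemes; by comparison, the bookkeeping with $N_{G}(H)$ and the stability of strongness under finite-group P-quotients should be routine.
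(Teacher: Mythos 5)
Your skeleton matches the paper's proof of Theorem \ref{thm:strong P moduli of DeltaG}: decompose over conjugacy classes of subgroups $H \le G$ that are semidirect products of a $p$-group and a tame cyclic group, take connected $H$-torsors, induce up along $H \hookrightarrow G$, quotient by the finite group of conjugation-automorphisms (the paper uses $\Aut_G(H) = N_G(H)/Z_G(H)$ where you use $N_G(H)$; these give the same quotient since the centralizer acts trivially on $\Delta_H^\circ$), and feed in the strong P-moduli space of the semidirect-product case from the preceding paper via Lemma \ref{lem:strong moduli of ind coarse}. The paper constructs the map $\bigsqcup_{H \in \Lambda} \Delta_H^\circ / \Aut_G(H) \to \Delta_G$ and shows it is geometrically injective and a sur epimorphism, after which Lemma \ref{lem:when phiP is an isomorphism} transports the strong P-moduli space across.

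The genuine gap is the one you flag as your ``main obstacle,'' but you understate its nature: it is not a matter of the sur topology formally absorbing jumps in the type. The assertion that the type is a constructible invariant --- equivalently, that the map above is an epimorphism in the sur topology --- is the content of the paper's uniformization theorem (Theorem \ref{cor:stratified-uniformization}): every finite \'etale $R((t))$-algebra becomes, after a surjective finitely presented base change $\Spec S \to \Spec R$, a finite product of power series rings $S_i((s))$ in which $t$ maps to $s^k g$ for a unit $g$. Its proof passes to the henselization of $R[t]$ at $(t)$, spreads out to a Noetherian subring, takes a partial compactification of the resulting cover of curves, and runs a Noetherian induction; it occupies a dedicated section of the paper. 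Combined with Lemma \ref{lem:torsors and disjoint union} on decomposing a torsor with disconnected fibers into pieces induced from subgroups, this is exactly what yields the sur covering your globalisation step needs; without it, ``the type is constructible'' is unproven. You also need the separate check that $\Delta_H^* \to \Delta_H$ is a sur covering (to pass from the restricted functor of the preceding paper to the full $\Delta_H$), which the paper handles via Lemma \ref{lem:Zariski refined by uin}. Everything else in your outline --- the $N_G(H)$ bookkeeping, stability of strongness under finite P-quotients, reduction to a general base field --- is indeed routine given the P-scheme formalism, as you anticipate.
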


The theorem can be generalized to the case where $G$ is a finite
étale group scheme (Corollary \ref{cor:P-module-etale-gp-scheme})
and it holds in any characteristic. Here we outline the proof. From
the previous work, we have the P-moduli space if $G$ is the semidirect
product of a $p$-group and a tame cyclic group. We construct the
P-moduli space for an arbitrary $G$ by ``gluing'' the P-moduli
spaces of semidirect products as above. To do so, we show that every
$G$-torsor over $\Spec R((t))$ is induced from an $H$-torsor with
$H\subset G$ a subgroup which is a semidirect product as above, locally
in $\Spec R$ for some Grothendieck topology. What we use as such
a topology is the sur (surjective) topology; a scheme morphism $Y\longrightarrow X$
is a sur covering if it is surjective and locally of finite presentation.
This topology is also incorporated into the very definition of P-schemes.
We also introduce the luin (locally universally injective) topology.
It is interesting that such a crude topology as the sur topology is
still useful. The sur and luin topologies and P-schemes would be of
independent interest and we study their basic properties. We note
that Kelly \cite[Def. 3.5.1]{kelly-thesis} introduced a Grothendieck
topology similar to the sur topology; he does not assume that a covering
$Y\to X$ is locally of finite presentation, instead assume that every
point $x\in X$ admits a lift $y\in Y$ having the same residue field
as $x$.

Advantages of P-moduli spaces are that it is much easier to show their
existence than in the case of usual moduli stacks or schemes and that
they are invariant by some transformations preserving geometric points.
For instance if $\shF$ is a moduli functor or stack and $f\colon\shF\to\frac{1}{l}\Z$
is a locally constructible function, then a P-moduli space for $\shF$
is a disjoint union of $P$-moduli spaces for $\{f=r\}\subseteq\shF$,
the submoduli of $\shF$ of objects where $f$ has constant value
$r$ (see \ref{prop:functor of constant maps}). For instance, we
may restrict ourselves to those $G$-torsors over $\Spec R((t))$
which have constant ramification as a family over $\Spec R$ in a
suitable sense.

We also prove that the functions $v,w$ mentioned above are locally
constructible. This together with Theorem \ref{thm:main1} shows that
integrals $\int_{\Delta_{G}}\LL^{d-v}$ and $\int_{\Delta_{G}}\LL^{w}$
are well-defined. The function $v$ is essentially the same as the
module resolvent introduced by Fröhlich \cite{MR0414520} and $w$
is a variant of $v$. When the given representation $G\longrightarrow\GL_{d}(k[[t]])$
is a permutation representation, then $v$ and $w$ are closely related
to the Artin and Swan conductors \cite{MR0414520,MR3431631}.

\subsection*{Acknowledgements}

We would like to thank Ofer Gabber for helpful comments, which allowed
us to remove the Noetherianity assumption in Theorem \ref{cor:stratified-uniformization}.

\section{Notation, terminology and convention}

For a scheme $X$, we denote by $|X|$ the underlying topological
space. 

For a category $\bC$, the expression $A\in\bC$ means that $A$ is
an object of $\bC$. 

We denote the category of schemes by $\Sch$ and the one of affine
schemes by $\Aff$. For a scheme $S$, we denote the category of $S$-schemes
by $\Sch/S$. When $S$ is separated, we denote by $\Aff/S$ its subcategory
of $S$-schemes affine over $S$. 

We often identify a ring $R$ with its spectrum $\Spec R$ and apply
the terminology for schemes also to rings. For instance, for a ring
map $A\longrightarrow B$ and a finite group $G$, we say that $B$
is a $G$-torsor over $A$ or that $B/A$ is a $G$-torsor if $\Spec B\longrightarrow\Spec A$
is a $G$-torsor.

\section{Luin and sur topologies\label{sec:Luin-and-sur-topologies}}

In this section, we introduce two Grothendieck topologies, the luin
topology and the sur topology, and study their basic properties. We
need these topologies to develop the theory of P-schemes and P-moduli
spaces in Section \ref{sec:P-schemes-and-moduli}.
\begin{defn}
\label{def:universally stuff} A morphism of schemes $f\colon Y\longrightarrow X$
is said to be \emph{universally bijective }(resp. \emph{universally
injective}) if for all maps of schemes $X'\arr X$ the map $X'\times_{X}Y\arr X'$
is bijective (resp. injective) as map of sets.

Let $S$ be a base scheme. A morphism of $S$-schemes $f\colon Y\longrightarrow X$
is said to be \emph{geometrically bijective }(resp. \emph{geometrically
injective}, \emph{geometrically surjective}) if for all algebraically
closed field $K$ and maps $\Spec K\to S$ the map $\Hom_{S}(\Spec(K),Y)\to\Hom_{S}(\Spec(K),X)$
is bijective (resp. injective, surjective).
\end{defn}

\begin{rem}
A morphism of $S$-schemes $f\colon Y\longrightarrow X$ is geometrically
bijective (resp. injective, surjective) if and only if it is so as
a map of ($\Spec\Z$-)schemes. In other words this notion is an absolute
property, not a relative one. We have chosen to include a base scheme
to make the definition compatible with \ref{def:geometrically stuff for functors}.
\end{rem}

\begin{lem}
\label{lem:univ-bi}Let $S$ be a base scheme and $f\colon Y\longrightarrow X$
be a morphism of $S$-schemes. We have:
\begin{enumerate}
\item the map $f$ is universally injective if and only it is geometrically
injective;
\item if the map $f$ is geometrically bijective (resp. geometrically surjective)
then it is universally bijective (resp. surjective); the converse
holds if $f$ is locally of finite type.
\end{enumerate}
\end{lem}

\begin{proof}
(1) This is \cite[Tag 01S4]{stacks-project}, taking into account
that, if $K\to L$ is a map of fields and $Z$ is a scheme, then $Z(K)\to Z(L)$
is injective.

(2) Since the base change of a surjective map is surjective, the ``bijective
case'' follows from (1) and the ``surjective'' one. If a map if
geometrically surjective then it is clearly surjective. For the converse
assume that $f$ is locally of finite type and let $x\colon\Spec K\longrightarrow X\in X(K)$
be a geometric point. The fiber $Y\times_{X}\Spec K$ is locally of
finite type, non empty since $f$ is surjective, and has a $K$-rational
point since $K$ is algebraically closed. This defines an element
of $Y(K)$ over $x\in X(K)$.
\end{proof}
\begin{rem}
\label{rem:geom bettern than univ}Even if the notion of universally
injective or bijective is more common in the literature, we are going
to use the notion of geometrically injective or bijective instead.
Firstly because, for morphisms locally of finite type, the two notions
coincide. But the main reason is that the second notion easily extends
in the case of natural transformation of functors (see \ref{def:geometrically stuff for functors})
and it plays a crucial role in the next chapters.
\end{rem}

\begin{defn}
A morphism of schemes $g\colon Y\arr X$ is called a \emph{sur (surjective)
covering} if it is locally of finite presentation and surjective.

A morphism of schemes $g\colon Y\arr X$ is called a \emph{luin (locally
universally injective) covering} if it is a sur covering and there
is a covering $\{Y_{i}\}_{i}$ of open subsets of $Y$ such that $Y_{i}\arr X$
is geometrically injective. 

A morphism of schemes $g\colon Y\arr X$ is called a \emph{ubi (universally
bijective) covering} if it is a geometrically injective sur covering.
In particular it is a geometrically bijective luin covering.

We call a collection of morphisms $(U_{i}\longrightarrow X)_{i\in I}$
in $\Sch$ a sur (resp. luin, ubi) covering if the induced morphism
$g\colon Y=\coprod_{i\in I}U_{i}\longrightarrow X$ is a sur (resp.
luin, ubi) covering. 
\end{defn}

It is easy to check that luin and sur coverings satisfy the axioms
of a Grothendieck topology. We define the \emph{luin topology} and
the \emph{sur topology} by these collections of coverings. By construction
fppf coverings are sur coverings, while Zariski coverings are luin
coverings.

If $(U_{i}\longrightarrow X)_{i\in I}$ is a sur (resp. luin) covering,
then $\coprod_{i\in I}U_{i}\arr X$ is again a sur (resp. luin) covering.
Hence, when discuss the luin or sur topology, we often consider coverings
$U\longrightarrow X$ consisting of a single morphism.

We will soon prove (see \ref{cor:finiteness for sur coverings}) that
sur coverings satisfy the equivalent conditions of \cite[Proposition 2.33]{MR2223406},
that is for a sur covering any open affine below is dominated by a
quasi-compact open above. This is the classical quasi-compactness
condition required for fpqc coverings. 
\begin{defn}[{\cite[Tag 005G]{stacks-project}}]
 A subset $E\subseteq X$ of a topological space is called \emph{constructible
}if it is a finite union of sets of the form $U\cap(X-V)$ where $U,V\arr X$
are quasi-compact open immersions. It is said \emph{locally constructible
}if there exists an open covering $\{U_{i}\}_{i}$ of $X$ such that
$E\cap U_{i}$ is constructible in $U_{i}$.
\end{defn}

Every constructible subset of a quasi-compact space is quasi-compact
(see \cite[Tag 09YH]{stacks-project}). For a quasi-compact and quasi-separated
scheme, locally constructible subsets are constructible (see \cite[Tag 054E]{stacks-project}).
We will often use this form of Chevalley's theorem (see \cite[Tag 054K]{stacks-project}):
\begin{thm}
\label{thm:Chevalley} A quasi-compact and locally of finite presentation
map of schemes preserves locally constructible subsets. 
\end{thm}

\begin{lem}
\label{lem:scheme structure finitely presented}Let $X=\Spec A$ be
an affine scheme and $U,V\subseteq X$ two quasi-compact open subsets.
Then there is a scheme structure on $E=U\cap(X-V)$ such that $E\arr X$
is a finitely presented immersion. If $U$ is affine we can furthermore
assume that $E$ is affine.
\end{lem}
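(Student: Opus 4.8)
The plan is to realize $E$ as an open subscheme of a finitely presented closed subscheme of $X$, and then invoke the stability of ``immersion'' and of ``finite presentation'' under composition. First, since $V$ is a quasi-compact open subset of the affine scheme $X=\Spec A$, it is a finite union $V=\bigcup_{j=1}^{m}D(g_{j})$ of principal opens, so that the closed subset $X-V$ equals $V(I)$ for the finitely generated ideal $I=(g_{1},\dots,g_{m})\subseteq A$. Endow $X-V$ with the closed subscheme structure $Z:=\Spec(A/I)$. Since $I$ is finitely generated, $A/I$ is a finitely presented $A$-algebra, and closed immersions are always quasi-compact and separated; hence $Z\hookrightarrow X$ is an immersion of finite presentation.

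Next, write $U=\bigcup_{i=1}^{n}D(f_{i})$, which is again possible because $U$ is quasi-compact open in the affine scheme $X$. As a subset of $Z$, the preimage of $U$ under $Z\hookrightarrow X$ is $\bigcup_{i=1}^{n}D(\bar f_{i})$, where $\bar f_{i}$ denotes the image of $f_{i}$ in $A/I$; this is a finite union of principal opens of the affine scheme $Z$, hence a quasi-compact open subset, whose underlying set is exactly $E=U\cap(X-V)$. Give $E$ the corresponding open subscheme structure of $Z$. A quasi-compact open immersion is of finite presentation (open immersions are locally of finite presentation, and any monomorphism is quasi-separated), so $E\hookrightarrow Z$ is of finite presentation.

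Finally, the composite $E\hookrightarrow Z\hookrightarrow X$ is an immersion, being a composition of immersions, and it is of finite presentation, being a composition of morphisms of finite presentation; this is the asserted scheme structure on $E$. There is no genuine obstacle in the argument: the only points worth a word of care are that the hypotheses ``$U$, $V$ quasi-compact'' are used precisely to guarantee that $I$ (hence $Z\to X$) is finitely generated and that the open $E\subseteq Z$ is quasi-compact, and that one must fix this specific locally closed subscheme structure on $E$ rather than an arbitrary one with underlying set $E$.
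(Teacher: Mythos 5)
Your proof is correct and follows essentially the same route as the paper: take a finitely generated ideal $I$ with $V(I)=X-V$, give $X-V$ the closed subscheme structure $\Spec(A/I)$, and give $E$ the quasi-compact open subscheme structure $(\Spec A/I)\cap U$, so that $E\to\Spec(A/I)\to X$ is a finitely presented immersion. You have merely spelled out the verifications (finite generation of $I$, quasi-compactness of $E$ in $Z$, stability of finitely presented immersions under composition) that the paper leaves implicit.
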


\begin{proof}
The quasi-compactness of $V$ implies that there exists a finitely
generated ideal $I$ of $A$ such that $\Spec(A/I)=X-V$. The composition
$(\Spec A/I)\cap U\arr\Spec A/I\arr\Spec A$ is a finitely presented
immersion whose image is $U\cap(X-V)$ and it is affine if $U$ is
affine.
\end{proof}
\begin{lem}
\label{lem:quasi-compact constructible topology} Let $Y$ be a quasi-compact
and quasi-separated scheme and $E\subseteq Y$ a constructible subset.
If $E=\bigcup_{j\in J}E_{j}$ is union of constructible subsets of
$Y$ then there exists $J'\subseteq J$ finite such that $E=\bigcup_{j\in J'}E_{j}$.

\end{lem}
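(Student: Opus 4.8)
The plan is to pass from the Zariski topology on $Y$ to the \emph{constructible} (or \emph{patch}) topology $\tau_{\mathrm{cons}}$ and then conclude by a bare quasi-compactness argument. Recall that a quasi-compact and quasi-separated scheme is a spectral space \cite[Tag 08YF]{stacks-project}; that since $Y$ is quasi-compact and quasi-separated its constructible subsets form a Boolean algebra (quasi-separatedness being what makes finite intersections of quasi-compact opens quasi-compact); and that $\tau_{\mathrm{cons}}$ — the coarsest topology on $Y$ in which every constructible subset is open — is quasi-compact, with the constructible subsets being exactly its clopen subsets.

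Granting this, the argument is immediate. Since $E$ is constructible in $Y$ it is clopen, in particular closed, in $(Y,\tau_{\mathrm{cons}})$, so $E$ with the induced topology is quasi-compact. For each $j\in J$ the subset $E_j$ is constructible in $Y$, hence $\tau_{\mathrm{cons}}$-open, so $E_j=E_j\cap E$ is open in $E$. Thus $(E_j)_{j\in J}$ is an open covering of the quasi-compact space $E$, and extracting a finite subcover indexed by some finite $J'\subseteq J$ yields $E=\bigcup_{j\in J'}E_j$, which is the assertion.

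The only input that is not purely formal is the quasi-compactness of $\tau_{\mathrm{cons}}$ on the spectral space $Y$ (and hence on its constructible subset $E$); this is the step I expect to carry the real content, and it is where quasi-compactness and quasi-separatedness of $Y$ genuinely enter — quasi-separatedness being needed precisely so that the constructible subsets form a Boolean algebra, i.e.\ are stable under complement. This fact is classical: it is part of the theory of spectral/patch spaces, and can for instance be seen by realizing $(Y,\tau_{\mathrm{cons}})$ as a closed subspace of the product of two-point discrete spaces indexed by the quasi-compact opens of $Y$ and applying Tychonoff's theorem; one may also simply cite \cite[Tag 08YF]{stacks-project} (and, for the fact that a constructible subset with the induced topology is again spectral, \cite[Tag 0902]{stacks-project}). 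So I do not anticipate any real difficulty beyond correctly invoking this; the lemma is essentially a repackaging of the quasi-compactness of the constructible topology.
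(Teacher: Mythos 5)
Your argument is essentially identical to the paper's: both pass to the constructible (patch) topology on the spectral space $Y$, observe that $E$ is closed (indeed clopen) there and hence quasi-compact, note that each $E_j$ is constructible hence open in this topology, and extract a finite subcover. No gap; the extra remarks about Tychonoff and \cite[Tag 0902]{stacks-project} are optional justifications of the same standard facts the paper cites.
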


\begin{proof}
We use the theory of spectral spaces (see \cite[Tag 08YF]{stacks-project}).
A quasi-compact and quasi-separated scheme is a spectral space. Any
spectral space endowed with the coarsest topology in which its constructible
subsets are both open and closed is quasi-compact. With respect to
this topology, $E$ is a closed subset of $Y$, hence quasi-compact
and $E=\cup_{j\in J}E_{j}$ is an open covering. This implies the
assertion.
\end{proof}
\begin{cor}
\label{cor:reducing to finitely many}Let $\{f_{j}\colon Z_{j}\arr Y\}_{j\in J}$
be a collection of locally finitely presented and quasi-compact maps
such that the image of $\coprod_{j\in J}Z_{j}\arr Y$ is locally constructible.
If $V\subseteq Y$ is a quasi-compact and quasi-separated open subset
of $Y$ (e.g. affine) then there exists a finite subset $J'\subseteq J$
such that
\[
\Imm(\coprod_{j\in J'}f_{j}^{-1}(V)\arr V)=\Imm(\coprod_{j\in J}f_{j}^{-1}(V)\arr V).
\]
\end{cor}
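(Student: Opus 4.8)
The plan is to reduce the statement to an application of Lemma \ref{lem:quasi-compact constructible topology}. First I would observe that, by Chevalley's theorem in the form quoted above (a quasi-compact and locally of finite presentation map preserves locally constructible subsets), each image $\Imm(f_j\colon Z_j\arr Y)$ is a locally constructible subset of $Y$; hence so is their union (this is part of the hypothesis, but it also follows from the hypothesis that $\coprod Z_j\arr Y$ has locally constructible image). Restricting to the quasi-compact quasi-separated open $V$, the set $E_j := \Imm(f_j^{-1}(V)\arr V) = \Imm(f_j\arr Y)\cap V$ is locally constructible in $V$, and since $V$ is quasi-compact and quasi-separated, $E_j$ is in fact constructible in $V$ (by the cited form of Chevalley, \cite[Tag 054E]{stacks-project}). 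Likewise $E := \Imm(\coprod_{j\in J}f_j^{-1}(V)\arr V) = \bigcup_{j\in J} E_j$ is a constructible subset of $V$.

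Now $V$ is a quasi-compact and quasi-separated scheme and $E = \bigcup_{j\in J} E_j$ is a union of constructible subsets of $V$ with $E$ itself constructible, so Lemma \ref{lem:quasi-compact constructible topology} (applied with $Y$ replaced by $V$) yields a finite subset $J'\subseteq J$ with $E = \bigcup_{j\in J'} E_j$. Unwinding the definitions, this says precisely
\[
\Imm\Bigl(\coprod_{j\in J'}f_j^{-1}(V)\arr V\Bigr)=\Imm\Bigl(\coprod_{j\in J}f_j^{-1}(V)\arr V\Bigr),
\]
which is the assertion.

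The only points requiring care are the bookkeeping ones: that $f_j^{-1}(V)\arr V$ is again quasi-compact and locally of finite presentation (base change preserves both properties), so that Chevalley applies to it; that a locally constructible subset of a quasi-compact quasi-separated scheme is genuinely constructible, so that Lemma \ref{lem:quasi-compact constructible topology} is applicable; and that $\Imm(f_j^{-1}(V)\arr V)$ really equals $\Imm(f_j)\cap V$, which is immediate from the cartesian description $f_j^{-1}(V)=Z_j\times_Y V$. I do not expect a genuine obstacle here — the corollary is essentially a packaging of the preceding lemma together with Chevalley's theorem; the main thing is simply to verify that all the finiteness hypotheses are preserved under the base change to $V$.
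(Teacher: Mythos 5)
Your proof is correct and follows essentially the same route as the paper's: identify $E = \Imm(\coprod_{j}Z_{j}\arr Y)\cap V$ as a constructible subset of the qcqs scheme $V$, use Chevalley's theorem to see that each $E_j = \Imm(f_j^{-1}(V)\arr V)$ is constructible, and then invoke Lemma \ref{lem:quasi-compact constructible topology}. The only slightly odd remark is the parenthetical claim that local constructibility of the union ``also follows'' from Chevalley on each piece --- an infinite union of constructible sets need not be constructible, so this is really just the hypothesis restated, not an independent consequence --- but since you also invoke the hypothesis directly, this does not affect the argument.
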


\begin{proof}
If $E=\Imm(\coprod_{j\in J}Z_{j}\arr Y)$ then the right hand side
of the above equation is $E\cap V$, which is constructible in $V$.
By Chevalley's theorem \ref{thm:Chevalley} the image $E_{i}$ of
$f_{j}^{-1}(V)\arr V$ is constructible. The conclusion follows from
\ref{lem:quasi-compact constructible topology}.
\end{proof}
\begin{cor}
\label{cor:ubi is quasi-compact} Let $f\colon X\arr Y$ be a geometrically
injective map which is locally of finite presentation. Then $f$ is
quasi-compact if and only if $f(X)$ is locally constructible (e.g.
if $f$ is geometrically bijective).
\end{cor}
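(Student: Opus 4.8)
The plan is to prove both implications, with the non-trivial direction being ``$f(X)$ locally constructible $\Rightarrow$ $f$ quasi-compact''. For the easy direction, suppose $f$ is quasi-compact; then $f$ is quasi-compact and locally of finite presentation, hence of finite presentation, so by Chevalley's theorem (in the form quoted above, \cite[Tag 054K]{stacks-project}) it preserves locally constructible subsets. In particular $f(X)=\Imm(f)$ is locally constructible. The parenthetical remark is immediate: if $f$ is universally bijective then $f(X)=Y$, which is certainly locally constructible. This takes care of one direction.

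For the converse, assume $f(X)$ is locally constructible; I must show $f$ is quasi-compact, i.e. that $f^{-1}(V)$ is quasi-compact for every affine open $V\subseteq Y$. Fix such a $V$ and cover $f^{-1}(V)$ by affine opens $\{W_i\}_{i\in I}$. Each restriction $W_i\to V$ is a map of affine schemes, hence quasi-compact, and it is locally of finite presentation; so $\{W_i\to V\}_{i\in I}$ is a family of locally finitely presented quasi-compact maps whose total image is $f(f^{-1}(V))\cap V = f(X)\cap V$, which is constructible in $V$ by local constructibility of $f(X)$ and quasi-compactness of $V$. The key point is now that $f$ is \emph{universally injective}: the fibers of $f$ over (geometric) points are singletons, so the subsets $f(W_i)\subseteq V$ are pairwise disjoint unless they coincide, and more precisely $W_i\to V$ is injective on points. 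I want to apply Corollary \ref{cor:reducing to finitely many} to the family $\{W_i\to V\}_{i\in I}$ over $Y'=V$ (with $V'=V$): it yields a finite subset $I'\subseteq I$ with $\Imm(\coprod_{i\in I'}W_i\to V)=\Imm(\coprod_{i\in I}W_i\to V)=f(X)\cap V$. Thus $\bigcup_{i\in I'}f(W_i)=\bigcup_{i\in I}f(W_i)$, and because $f$ is injective on the points of $f^{-1}(V)=\bigcup_i W_i$, the equality of images forces $\bigcup_{i\in I'}W_i=\bigcup_{i\in I}W_i=f^{-1}(V)$ as subsets of $X$. Hence $f^{-1}(V)$ is covered by finitely many affine opens, so it is quasi-compact, and $f$ is quasi-compact.

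I expect the main obstacle to be the final deduction ``equality of images implies equality of the opens'' — one must use universal injectivity carefully at the level of topological points (not just geometric points): if $x\in W_i$ then $f(x)\in f(W_j)$ for some $j\in I'$, so there is $x'\in W_j$ with $f(x')=f(x)$; since $f$ restricted to $f^{-1}(V)$ is injective as a map of sets (a universally injective morphism is in particular injective on underlying sets), $x=x'\in W_j$. This gives $\bigcup_{i\in I}W_i\subseteq\bigcup_{j\in I'}W_j$, hence equality. The rest is bookkeeping: checking that $W_i\to V$ is locally of finite presentation (it is, as $f$ is and open immersions are) and quasi-compact (a morphism of affines), so that Corollary \ref{cor:reducing to finitely many} applies. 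Note no Noetherian or quasi-separatedness hypothesis on $Y$ is needed beyond what is built into ``$V$ affine''.
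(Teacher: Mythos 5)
Your proof is correct and follows essentially the same route as the paper's: the paper also derives the nontrivial direction from Corollary \ref{cor:reducing to finitely many} applied to an affine open covering, with universal injectivity then forcing the finitely many affines to cover the whole preimage. Your write-up merely fills in the universal-injectivity step that the paper leaves implicit and works over $V$ from the start (so that each $W_i\to V$ is a morphism of affines, visibly quasi-compact), which is a cleaner way to meet the hypotheses of \ref{cor:reducing to finitely many}; one harmless slip is the claim ``hence of finite presentation'' in the easy direction — quasi-compact plus locally of finite presentation does not give finite presentation without quasi-separatedness, but this is immaterial since the version of Chevalley quoted in the paper applies directly to quasi-compact, locally finitely presented morphisms.
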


\begin{proof}
The ``only if'' part is Chevalley's theorem \ref{thm:Chevalley},
while the ``if'' part follows reducing first to the case where $Y$
is affine and then applying \ref{cor:reducing to finitely many} with
$\{Z_{j}\}_{j}$ an open affine covering of $X$.
\end{proof}
\begin{cor}
\label{cor:finiteness for sur coverings} Let $f\colon X\arr Y$ be
a sur covering. Then for any quasi-compact open $V$ of $Y$ there
exists a quasi-compact open $W$ of $X$ such that $f(W)=V$. In other
words sur coverings satisfy the equivalent conditions stated in \cite[Proposition 2.33]{MR2223406}.
\end{cor}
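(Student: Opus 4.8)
The plan is to reduce the statement to Corollary \ref{cor:reducing to finitely many}. Fix a sur covering $f\colon X\arr Y$ and a quasi-compact open $V\subseteq Y$; since $V$ is an open subscheme of a scheme, it is automatically quasi-separated as soon as it is, say, covered by finitely many affines with quasi-compact pairwise intersections, but more to the point $V$ is quasi-compact by hypothesis, and by shrinking we may even assume $V$ is affine (a quasi-compact open is a finite union of affines, and it suffices to find a quasi-compact $W$ above each affine piece and take the finite union). So assume $V=\Spec B$ is affine.

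Choose an open affine covering $\{Z_j\}_{j\in J}$ of $X$. Each inclusion $Z_j\hookrightarrow X$ followed by $f$ gives a map $f_j\colon Z_j\arr Y$ which is locally of finite presentation (being the composition of an open immersion with the locally finitely presented map $f$) and quasi-compact (the source is affine, hence quasi-compact, and quasi-compactness of a morphism can be checked on an affine — in fact a single-point — target? no: quasi-compactness means preimages of quasi-compact opens are quasi-compact; here $Z_j$ itself is quasi-compact, so $f_j$ restricted over any quasi-compact open has quasi-compact source, hence is quasi-compact). Moreover $\coprod_{j\in J} Z_j\arr Y$ factors the surjection $X\arr Y$, so its image is all of $Y$, which is certainly locally constructible. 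Thus the hypotheses of Corollary \ref{cor:reducing to finitely many} are met, and we obtain a finite subset $J'\subseteq J$ with
\[
\Imm\Bigl(\coprod_{j\in J'} f_j^{-1}(V)\arr V\Bigr)=\Imm\Bigl(\coprod_{j\in J} f_j^{-1}(V)\arr V\Bigr)=f^{-1}(V)\arr V,
\]
the last equality because $f$ is surjective, so the right-hand image is all of $V$.

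Now set $W=\bigcup_{j\in J'} f_j^{-1}(V)=f^{-1}(V)\cap\bigcup_{j\in J'} Z_j$. This is open in $X$, it is a finite union of the opens $f^{-1}(V)\cap Z_j$, and each such piece is a quasi-compact open of the affine $Z_j$ — note $f^{-1}(V)\cap Z_j = f_j^{-1}(V)$ is the preimage of the quasi-compact open $V$ under the quasi-compact map $f_j$, hence quasi-compact — so $W$ is quasi-compact. By the displayed equality, $f(W)=V$. This proves the first assertion; the final sentence is then just the statement of \cite[Proposition 2.33]{MR2223406}, whose equivalent conditions are exactly of this shape (every affine open below is dominated by a quasi-compact open above).

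The one point that needs a little care, and is really the only obstacle, is the quasi-compactness of the maps $f_j$ (equivalently, that $f_j^{-1}(V)$ is quasi-compact): since $Z_j$ is affine it is a Noetherian-free quasi-compact space, and $f_j^{-1}(V)$ is an open subscheme of $Z_j$, but an arbitrary open of an affine need not be quasi-compact. However $V$ is quasi-compact and $Y$ is a scheme, so $f^{-1}(V)$ is an open subscheme of $X$ whose restriction to the affine $Z_j$ is $f_j^{-1}(V)$; writing $V=\bigcup_{m=1}^{r}\Spec B_{g_m}$ inside $V=\Spec B$ and pulling back, $f_j^{-1}(V)$ is covered by the finitely many quasi-compact opens $f_j^{-1}(\Spec B_{g_m})$ — each of which is a principal open of the affine $Z_j$ after composing with $f_j|_{Z_j}\colon Z_j\arr V\hookrightarrow Y$ localized appropriately, hence affine — so $f_j^{-1}(V)$ is a finite union of affines and therefore quasi-compact, as needed.
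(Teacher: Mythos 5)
There is a genuine gap in your argument, and it is exactly at the point you flag as "the one point that needs a little care": the claim that the maps $f_{j}\colon Z_{j}\arr Y$ are quasi-compact (equivalently, that $f_{j}^{-1}(V)$ is quasi-compact). This can actually fail. A morphism from an affine scheme to a non-quasi-separated scheme need not be quasi-compact; for instance, glue two copies of $\AA^{\infty}_{k}$ along $\AA^{\infty}_{k}\setminus\{0\}$ to get a quasi-compact $Y$ that is not quasi-separated, take $X=Y$, $f=\id$ (a sur covering), $V$ one copy and $Z_{j}$ the other copy. Then $f_{j}^{-1}(V)=\AA^{\infty}_{k}\setminus\{0\}$ is not quasi-compact. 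Your attempted fix in the last paragraph is circular: the elements $g_{m}$ live in $B=\Gamma(V,\odi V)$, and there is no map $Z_{j}\arr V$ (only $f_{j}\colon Z_{j}\arr Y$), so $g_{m}$ does not pull back to a function on $Z_{j}$; it pulls back only to a function on $f_{j}^{-1}(V)$, and hence $f_{j}^{-1}(\Spec B_{g_{m}})$ is at best a principal open of $f_{j}^{-1}(V)$, which is precisely the set whose quasi-compactness you are trying to establish.

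The paper's proof avoids this by first replacing $Y$ by the affine $V$ and $X$ by $f^{-1}(V)$, and only then choosing the affine open cover — an affine open cover of $f^{-1}(V)$, not of $X$. With that replacement, every $Z_{j}\arr V$ is a morphism between affine schemes, hence automatically quasi-compact (an affine morphism is quasi-compact), and Corollary \ref{cor:reducing to finitely many} applies with no trouble. The resulting $W=\bigcup_{j\in J'}Z_{j}$ is then a finite union of affine opens, hence quasi-compact. Your proof follows the same skeleton (reduce to $V$ affine, invoke \ref{cor:reducing to finitely many}, take the finite union), but by keeping $Y$ fixed and covering all of $X$ you lose control of $f_{j}^{-1}(V)$. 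Note that intersecting your $Z_{j}$ with $f^{-1}(V)$ after the fact does not help, since such an intersection need not be quasi-compact; the affines must be chosen inside $f^{-1}(V)$ from the start.
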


\begin{proof}
We can assume $V=Y$ affine. In this case it is enough to apply \ref{cor:reducing to finitely many}
to the collection $\{U\arr Y\}_{U}$ with $U$ open affine of $X$.
\end{proof}
\begin{defn}
An open covering $\{U_{i}\}_{i\in I}$ of a topological space $Y$
is called \emph{locally finite} if for all $y\in Y$ there exists
an open neighborhood $U_{y}$ of $y$ such that there are at most
finitely many indices $i\in I$ with $U_{y}\cap U_{i}\neq\emptyset$.
If all the $U_{i}$ are quasi-compact (e.g. affine) this is the same
of asking that for each $i\in I$ there are at most finitely many
$j\in I$ with $U_{i}\cap U_{j}\neq\emptyset$.
\end{defn}

\begin{notation}
\label{nota:splitting a covering by constructible sets} Let $Y$
be a set and let $\{Z_{i}\}_{i\in I}$ be a collection of subsets
$Z_{i}\subset Y$. For a subset $J\subset I$, we define $Z_{J}^{\circ}:=\bigcap_{i\in J}Z_{i}\setminus\bigcup_{i\in J^{c}}Z_{i}$. 
\end{notation}

It is easy to see that $Y=\bigsqcup_{J\subset I}Z_{J}^{\circ}$. Moreover,
for subsets $J_{1},J_{2}\subset I$, the set $Z_{J_{1},J_{2}}:=\bigcap_{i\in J_{1}}Z_{i}\setminus\bigcup_{i\in J_{2}}Z_{i}$
is written as 
\begin{equation}
Z_{J_{1},J_{2}}=\bigsqcup_{J_{1}\subset J,\,J_{2}\subset J^{c}}Z_{J}^{\circ}.\label{eq:disjoint union}
\end{equation}

\begin{lem}
\label{lem:key lemma for scheme structure on constructible sets}
Let $Y$ be a scheme and $E\subseteq Y$ be a locally constructible
subset. Then there exist affine schemes $Z_{j}$ and locally finitely
presented immersions $Z_{j}\arr Y$ such that the map $\coprod_{j}Z_{j}\arr Y$
has image $E$. 

If $Y$ is quasi-separated and has a locally finite and affine open
covering we can furthermore assume that the maps $Z_{j}\arr Y$ are
quasi-compact and the map $\bigsqcup_{j}Z_{j}\arr Y$ is a finitely
presented monomorphism. In particular, $Y$ has a ubi covering $\{Z_{j}\longrightarrow Y\}$
with $Z_{j}$ affine.
\end{lem}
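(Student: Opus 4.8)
The plan is to handle the two assertions of Lemma \ref{lem:key lemma for scheme structure on constructible sets} in turn, building the second from the first. For the first assertion, I would use that locally constructibility means there is an open affine covering $\{U_\alpha\}$ of $Y$ such that $E\cap U_\alpha$ is constructible in $U_\alpha$, hence a finite union of sets of the form $U\cap(U_\alpha-V)$ with $U,V$ quasi-compact opens of $U_\alpha$. By Lemma \ref{lem:scheme structure finitely presented} each such piece carries a finitely presented immersion into $U_\alpha$, and composing with the open immersion $U_\alpha\hookrightarrow Y$ (which is locally of finite presentation) gives locally finitely presented immersions $Z_j\arr Y$ from affine schemes, whose images cover $E$. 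This settles the first paragraph of the statement.

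For the refined assertion, assume $Y$ is quasi-separated with a locally finite affine open covering $\{U_i\}_{i\in I}$. Quasi-separatedness makes each $U_i\cap U_{i'}$ quasi-compact, so the maps $U_i\hookrightarrow Y$ are quasi-compact, and since $\{U_i\}$ is locally finite, $\coprod_i U_i\arr Y$ is quasi-compact. The difficulty is to turn the covering by overlapping affine pieces into a covering by pieces that are pairwise disjoint on the nose (so that the total map is a monomorphism) while keeping everything affine, finitely presented, and quasi-compact. The plan is to apply Notation \ref{nota:splitting a covering by constructible sets} to $Y=\bigsqcup_{J\subset I}U_J^\circ$ where $U_J^\circ=\bigcap_{i\in J}U_i\setminus\bigcup_{i\in J^c}U_i$; local finiteness guarantees that each point lies in $U_J^\circ$ for a finite $J$, so only finitely many overlapping $U_i$ are relevant locally and each $U_J^\circ$ is a locally constructible (in fact constructible inside any $U_i$ with $i\in J$) subset of $Y$. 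Intersecting with $E$, each $E\cap U_J^\circ$ is locally constructible and sits inside an affine open; by the first part (really by Lemma \ref{lem:scheme structure finitely presented}) it receives a finite family of finitely presented quasi-compact immersions from affine schemes, with image $E\cap U_J^\circ$, and one can even take a single finitely presented immersion with that image (a constructible subscheme of an affine scheme). Taking the disjoint union over all $J\subset I$ of these immersions $Z_j\arr Y$, the images are the pairwise disjoint sets $E\cap U_J^\circ$, so the total map $\bigsqcup_j Z_j\arr Y$ is injective on points and, being moreover unramified-like in the sense that each $Z_j\arr Y$ is an immersion and the images are disjoint, is a monomorphism; it is locally of finite presentation, and quasi-compactness follows because each $Z_j$ is quasi-compact over $Y$ and, by local finiteness, only finitely many of the $U_J^\circ$ meet a given quasi-compact open of $Y$. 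This gives a finitely presented monomorphism $\bigsqcup_j Z_j\arr Y$ with image $E$.

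The main obstacle I anticipate is the bookkeeping needed to see that $\bigsqcup_j Z_j\arr Y$ is a \emph{monomorphism} and not merely universally injective: one must check that for each $J$ the chosen immersion realizing $E\cap U_J^\circ$ is an immersion into $Y$ (not just into some $U_i$) and that these have genuinely disjoint (not just disjoint on points) images, so that the fiber product of the total map with itself is the diagonal. The disjointness of the $U_J^\circ$ for distinct $J$ is clean, but one should be careful that $U_J^\circ$ need not be locally closed in $Y$ in general; what saves us is that for $i\in J$, the set $U_J^\circ\cap U_i$ is constructible in the affine scheme $U_i$, and $\{U_i\}_{i\in J}$ is a finite affine cover of the locally closed... — more precisely, $U_J^\circ$ is contained in the quasi-compact open $\bigcap_{i\in J}U_i$, inside which it is closed (being the complement of the quasi-compact open $\bigcup_{i\in J^c}U_i\cap\bigcap_{i\in J}U_i$, quasi-compact by local finiteness since only finitely many $i\in J^c$ meet $\bigcap_{i\in J}U_i$). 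Thus $U_J^\circ$ is locally closed in $Y$ with a natural finitely presented immersion, and intersecting with $E$ and applying Lemma \ref{lem:scheme structure finitely presented} inside $\bigcap_{i\in J}U_i$ finishes the construction. Finally, since each $E\cap U_J^\circ$ (for $J$ with $U_J^\circ\neq\emptyset$) is nonempty locally constructible in an affine scheme, it is covered by finitely many affine constructible pieces, so the $Z_j$ can be taken affine, and the last sentence of the lemma — that $Y$ itself (take $E=Y$) admits a ubi covering by affines — is the special case $E=Y$, the map being surjective, universally injective (a monomorphism), and locally of finite presentation, hence universally bijective by Lemma \ref{lem:univ-bi}.
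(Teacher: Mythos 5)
Your outline starts the same way the paper does (decompose $Y=\bigsqcup_{J\subset I}U_J^\circ$ using the locally finite affine covering, observe that local finiteness kills infinite $J$, and give each $U_J^\circ$ a quasi-compact scheme structure), and the identification of $U_J^\circ$ as locally closed in $\bigcap_{i\in J}U_i$ is correct. The reduction to the first assertion from the second by covering $Y$ by affines is also the paper's argument.

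However, there is a genuine gap in the treatment of $E\cap U_J^\circ$. You claim that it ``receives a finite family of finitely presented quasi-compact immersions from affine schemes, with image $E\cap U_J^\circ$, and one can even take a single finitely presented immersion with that image.'' The second half is false: a constructible subset of an affine scheme need not be locally closed, hence need not be the image of any immersion. For instance $D(x)\cup V(y)\subset\Spec k[x,y]$ is constructible but not locally closed, since its closure is the whole plane and it is not open (the generic point of $V(x)$ is not in the set). And falling back on the ``finite family'' does not rescue the argument either: if the images of the finitely many immersions overlap, then $\bigsqcup_j Z_j\arr Y$ is not even universally injective, let alone a monomorphism. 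What is missing is a further \emph{disjoint} decomposition of $E\cap U_J^\circ$ into locally closed pieces. The paper does this by writing the constructible set $E$ (after reducing to $Y$ quasi-compact separated) as a finite union $\bigcup_{i=1}^{n}U_i\setminus U_{n+i}$ and then applying the $U_J^\circ$-device \emph{again} to the quasi-compact opens $U_1,\dots,U_{2n}$: this exhibits $E=\bigsqcup_{J\in\Lambda}U_J^\circ$ as a disjoint union of locally closed subsets, each of which is closed in a quasi-compact open. Each such piece is quasi-compact and separated but still possibly non-affine, which is why the paper runs one more round — covering each piece by finitely many affine opens $\{V_t\}$ and taking the $V_S^\circ$ decomposition — using separatedness to force the $V_S^\circ$ to be affine (an intersection of affines inside a separated scheme, with a quasi-compact open removed, is a closed subscheme of an affine). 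Your proof, as written, omits both the disjointification of the constructible presentation of $E\cap U_J^\circ$ and the affinization step via a second $V_S^\circ$-decomposition; without them the asserted monomorphism property does not follow.
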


\begin{proof}
We first prove the second assertion. Let $Y$ be a quasi-separated
scheme with a locally finite affine open covering $\{U_{i}\}_{i\in I}$.
Consider the decomposition $Y=\bigsqcup_{J\subset I}U_{J}^{\circ}$
as sets. From the local finiteness of the covering, if $J$ is infinite,
then $U_{J}^{\circ}$ is empty. Since $\{U_{i}\}$ is a covering,
if $J$ is empty, then so is $U_{J}^{\circ}$. If $J$ is finite and
non empty and $j\in J$ then $U=\cap_{q\in J}U_{q}$ is a quasi-compact
open subset of $U_{j}$ because $Y$ is quasi-separated. Since the
covering $\{U_{i}\}_{i}$ is locally finite there are only finitely
many $q\in J^{c}$ such that $U_{q}\cap U_{j}\neq\emptyset$ and therefore,
using again that $Y$ is quasi-separated, the union
\[
V=\bigcup_{q\in J^{c}}(U_{q}\cap U_{j})
\]
is a quasi-compact open subset of $U_{j}$. Since $U_{J}^{\circ}=U\cap(U_{j}-V)$
by definition, Lemma \ref{lem:scheme structure finitely presented}
yields a structure of scheme on $U_{J}^{\circ}$ such that the morphism
$U_{J}^{\circ}\longrightarrow U_{j}$ is a finitely presented immersion
and, if $Y$ is also separated so that $U$ is affine, we can choose
$U_{J}^{\circ}$ affine. In general $U_{J}^{\circ}$ is quasi-compact
and separated and, since $Y$ is quasi-separated, also the map $U_{J}^{\circ}\to Y$
is a finitely presented immersion. Indeed the map $U_{j}\to Y$ is
a finitely presented immersion: it is locally of finite presentation
because an open immersion, quasi-compact because $Y$ is quasi-separated
and $U_{j}$ is quasi compact, and quasi-compact because it is a monomorphism.
Moreover the map $\coprod_{J}U_{J}^{\circ}\longrightarrow Y$ is a
surjective monomorphism.

We use the above construction several times. Firstly, starting from
any locally finite affine open covering $\{U_{i}\}_{i\in I}$, it
allows to reduce the problem to the case where $Y$ is quasi-compact
and separated: we can replace $Y$ with $U_{J}^{\circ}$ and $E$
with its preimage on $U_{J}^{\circ}$.

In this case, since $E$ is constructible, we can write $E=\bigcup_{l=1}^{n}V_{l}\setminus V_{n+l}$
with quasi-compact open subsets $V_{l}\subset Y$. We now apply the
above constructions to a finite affine open covering $\{U_{i}\}_{i\in I}$
such that all $V_{l}$ can be written as union of some opens in this
covering. Since each $V_{l}\setminus V_{n+l}$ is a (automatically
disjoint) union of subsets of the form $U_{J}^{\circ}$, $J\subset I$
as in (\ref{eq:disjoint union}), so is $E$, say $E=\bigsqcup_{J\in\Lambda}U_{J}^{\circ}$
for a set $\Lambda$ of subsets of $I$. It follows that the map $\coprod_{J\in\Lambda}U_{J}^{\circ}\longrightarrow Y$
is finitely presented, a monomorphism, has image $E$ and all $U_{J}^{\circ}\to Y$
are finitely presented immersions. Moreover, since $Y$ is separated,
the $U_{J}^{\circ}$ are affine, as required.

For a general scheme $Y$, we take an affine covering $\{Y_{i}\}_{i}$
of $Y$ and a finitely presented monomorphism $\coprod_{j}Z_{ij}\arr Y_{i}$
with image $E\cap Y_{i}$ and such that $Z_{ij}\arr Y_{i}$ is an
affine and finitely presented immersion. It is clear that $\coprod_{i,j}Z_{ij}\arr Y$
satisfies the requests.
\end{proof}
\begin{cor}
\label{cor:best ubi covering} Let $Y$ be a quasi-separated scheme
with a locally finite and affine open covering. Then if $f\colon Z\arr Y$
is a luin covering there exists an ubi covering $Z'\arr Y$ which
is finitely presented, separated and has a factorization $Z'\arr Z\arrdi fY$.
In particular a luin covering of $Y$ is refined by an ubi covering. 
\end{cor}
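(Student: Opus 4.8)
The plan is to reduce the statement to the combinatorial/topological heart of Lemma~\ref{lem:key lemma for scheme structure on constructible sets} by exploiting the openness data already contained in a luin covering. Since $f\colon Z\arr Y$ is a luin covering, by definition there is an open covering $\{Z_i\}_i$ of $Z$ with each $Z_i\arr Y$ universally injective (and locally of finite presentation); collectively the $Z_i\arr Y$ still form a sur covering because $\coprod_i Z_i\arr Z\arr Y$ is surjective and locally of finite presentation. So after replacing $Z$ by $\coprod_i Z_i$ we may assume from the start that $f$ itself is universally injective, locally of finite presentation, and surjective, and moreover that $Z=\coprod_i Z_i$ with each $Z_i$ affine (refine each member by an affine open cover; this only improves the separatedness of the source). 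The price paid is that $Z$ is now a possibly huge disjoint union indexed by some set $I$, and $f$ need not be quasi-compact.

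First I would record that, $f$ being surjective, the image $f(Z)=Y$ is (vacuously) locally constructible, but more usefully each $f(Z_i)$ is the image of a locally finitely presented map, and after further shrinking $Z_i$ to quasi-compact (affine) opens each such image becomes constructible in $Y$ by Chevalley. Now apply Corollary~\ref{cor:reducing to finitely many}: $Y$ has, by hypothesis, a locally finite affine open covering $\{V_\alpha\}_\alpha$, and for each $\alpha$ the quasi-compact open $V_\alpha$ is dominated by the images of finitely many $Z_i$'s. Concretely, for each $\alpha$ pick a finite subset $I_\alpha\subset I$ with $\bigcup_{i\in I_\alpha} f(Z_i)\supseteq V_\alpha$; then $\{Z_i\}_{i\in\bigcup_\alpha I_\alpha}$ is still a (set-indexed, locally finite over $Y$ in the appropriate sense) sur covering of $Y$, each $Z_i\arr Y$ is universally injective, and — crucially — the restriction of this family over any $V_\alpha$ involves only finitely many indices (those $i$ with $Z_i\cap f^{-1}(V_\alpha)\neq\emptyset$ can be arranged to be finite using local finiteness of $\{V_\alpha\}$ and the choice of the $I_\alpha$). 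Thus we have reduced to a \emph{locally finite} sur covering by universally injective affine-source maps.

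At this point I would run the cut-and-paste argument from Lemma~\ref{lem:key lemma for scheme structure on constructible sets} to turn this locally finite family into a single universally \emph{bijective} finitely presented separated covering: work $V_\alpha$ by $V_\alpha$, on each of which only finitely many $Z_i$ occur, stratify $V_\alpha$ by the constructible ``incidence strata'' $Z_J^\circ$ (Notation~\ref{nota:splitting a covering by constructible sets}) recording exactly which of the finitely many relevant $Z_i$'s a point lies in the image of, give each stratum the finitely presented separated scheme structure supplied by Lemma~\ref{lem:scheme structure finitely presented}, and note that over such a stratum at least one $Z_i\arr Y$ is surjective and universally injective, hence universally bijective, so the stratum lifts into $Z$. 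Taking the disjoint union of these strata over all $\alpha$ (and over all $\alpha$-local stratifications, checking they glue consistently on overlaps $V_\alpha\cap V_\beta$, again a finite-incidence affair) yields $Z'$: a disjoint union of affine schemes, finitely presented and universally bijective onto $Y$, separated, and equipped by construction with a factorization $Z'\arr Z\arrdi f Y$ (each stratum maps into the chosen $Z_i$). Since a universally bijective finitely presented surjection is in particular a luin (indeed ubi) covering, this is exactly the asserted refinement.

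The main obstacle I anticipate is bookkeeping the passage from the \emph{arbitrary} index set $I$ (a luin covering is only required to have \emph{some} open refinement with universally injective members, a priori infinite and non-locally-finite) to a \emph{locally finite} subfamily without losing surjectivity; this is where Corollary~\ref{cor:reducing to finitely many} (equivalently the spectral-space compactness of Lemma~\ref{lem:quasi-compact constructible topology}) does the real work, and one must be careful that the finite selections $I_\alpha$ over different $V_\alpha$ remain jointly coherent and that the resulting family is honestly locally finite over $Y$. Once that reduction is in place, the remaining steps are a direct replay of the proof of Lemma~\ref{lem:key lemma for scheme structure on constructible sets}, with ``universally bijective on each stratum'' replacing ``immersion'' and the lift into $Z$ coming for free from universal injectivity of the selected $Z_i$'s.
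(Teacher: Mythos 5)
Your strategy is the same one the paper uses --- shrink $Z$ to a family $\{Z_i\}$ of affines universally injective into $Y$, stratify $Y$ by incidence of the images $E_i=f(Z_i)$, give each stratum a finitely presented scheme structure and lift it into some $Z_i$ --- but you reorder the reductions and this opens a genuine gap at the gluing step. The paper first applies Lemma~\ref{lem:key lemma for scheme structure on constructible sets} with $E=Y$ to replace $Y$ by a disjoint union of affine schemes; on each affine piece the relevant index set becomes finite, so there is a single stratification and nothing to glue. You instead keep $Y$ global, thin $I$ to a locally finite subfamily, stratify \emph{each} $V_\alpha$ separately by the finitely many $E_i$ with $i\in I_\alpha$, and then say the strata ``glue on overlaps.'' As written they do not: a point $y\in V_\alpha\cap V_\beta$ is assigned, from the $\alpha$-side, to the stratum recording its incidence with $\{E_i:i\in I_\alpha\}$, and from the $\beta$-side, to the one recording incidence with $\{E_i:i\in I_\beta\}$; these are in general different strata with different scheme structures, and there is no canonical identification. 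Taking the disjoint union of all $\alpha$-local strata then produces a surjective finitely presented map which is \emph{not} universally injective over $Y$, since every point of a nonempty overlap $V_\alpha\cap V_\beta$ is hit at least twice.

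The correct version of your route stratifies once, globally. After the locally finite reduction --- and after shrinking each $Z_i$ with $i\in I_\alpha$ to $Z_i\cap f^{-1}(V_\alpha)$, re-covering by affines, so that images genuinely land in single members of $\{V_\alpha\}$; without this shrinking your claim that only finitely many indices occur over $V_\alpha$ does not follow from local finiteness of $\{V_\alpha\}$ alone --- define $E_J^\circ:=\bigcap_{i\in J}E_i\setminus\bigcup_{i\notin J}E_i$ as in Notation~\ref{nota:splitting a covering by constructible sets}, with $J$ ranging over finite subsets of the \emph{entire} remaining index set. Local finiteness makes $E_J^\circ\cap V_\alpha$ a finite Boolean combination of constructible sets in the affine $V_\alpha$, hence constructible; so the $E_J^\circ$ give a global partition of $Y$ into locally constructible pieces, each of which lifts along a chosen $Z_{i_J}$ as you intend, and one more application of Lemma~\ref{lem:key lemma for scheme structure on constructible sets} to each $E_J^\circ$ produces the desired $Z'$. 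Alternatively, and more economically, follow the paper and reduce to $Y$ affine \emph{first}; that initial pass through Lemma~\ref{lem:key lemma for scheme structure on constructible sets} is precisely the global stratification you are trying to assemble by hand.
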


\begin{proof}
By \ref{lem:key lemma for scheme structure on constructible sets},
there exists a ubi covering $\{Y_{j}\to Y\}$ with $Y_{j}$ affine.
For each $j$, we have the luin covering $Z_{j}:=Z\times_{Y}Y_{j}\to Y_{j}$.
If $Z_{j}'\to Y_{j}$ is a ubi covering as in the corollary for this
luin covering, then $\coprod_{j}Z_{j}'\to Y$ is the desired ubi covering.
Thus it suffices to show the collary in the case where $Y$ is affine.

By \ref{cor:finiteness for sur coverings} there exists a quasi-compact
open subset $\widetilde{Z}\subseteq Z$ such that $f(\widetilde{Z})=Y$.
In particular $\widetilde{Z}\to Y$ is a luin covering refining the
given one. Thus we can assume that $Z$ is quasi-compact. Now let
$\{Z_{i}\}_{i\in I}$ be a finite open covering by affine schemes
of $Z$ such that $Z_{i}\arr Y$ is geometrically injective and set
$f(Z_{i})=E_{i}$. Since $f$ is quasi-compact and locally of finite
presentation all the $E_{i}$ are constructible subsets. We have $Y=\bigsqcup_{\emptyset\ne J\subset I}E_{J}^{\circ}$
as sets. For each $J$, we choose an index $i_{J}\in J$ and let $Z_{J}\subset Z_{i_{J}}$
to be the preimage of $E_{J}^{\circ}$, which is a constructible subset
of $Z_{i_{J}}$ mapping bijectively onto $E_{J}^{\circ}$. Again from
\ref{lem:key lemma for scheme structure on constructible sets}, there
exists a finitely presented morphism $W_{J}\longrightarrow Z_{i_{J}}$
from an affine scheme $W_{J}$ whose image is $Z_{J}$. The scheme
$Z'=\bigsqcup_{J}W_{J}$ with the map $Z'\arr Y$ satisfies the requests.
\end{proof}

\section{P-schemes and moduli spaces\label{sec:P-schemes-and-moduli}}

In this section, we develop the theory of P-schemes and P-moduli spaces.
The category of P-varieties (P-schemes of finite type) can be regarded
as the categorification of the modified Grothendieck ring of varieties
(Definition \ref{def:Grothedieck rings}). Although it would be more
natural from this viewpoint to use the luin topology to define P-schemes,
we actually use the sur topology. This is a key in later applications,
since we have uniformization only locally in the sur topology (Section
\ref{sec:Uniformization}).
\begin{notation}
\label{nota:Homs} Starting from this section, we make use of Hom
sets such as $\Hom_{S}(-,-)$, $\Hom_{\Sch/S}(-,-)$, $\Hom_{\Aff/S}(-,-)$
and we want to clarify here the notation. If $\shC$ is a category
and $X,Y\colon\shC^{\op}\to\sets$ are two functor then $\Hom_{\shC}(X,Y)$
denotes the set of natural transformations $X\to Y$. The symbols
$\Hom_{\Sch/S}(-,-)$, $\Hom_{\Aff/S}(-,-)$ are used with this meaning
for $\shC=\Sch/S,\Aff/S$ respectively. On the other hand, by abuse
of notation, we will often simply write $\Hom_{S}(-,-)$ if it is
clear which category is used: $\Hom_{S}(X,Y)$ would be $\Hom_{\Sch/S}(X,Y)$
for $X,Y\colon(\Sch/S)^{\op}\to\sets$, $\Hom_{\Aff/S}(X,Y)$ for
$X,Y\colon(\Aff/S)^{\op}\to\sets.$

If $X$ and $Y$ are $S$-schemes, then $X,Y$ can be thought of as
functors $h_{X},h_{Y}\colon(\Sch/S)^{\op}\to\sets$ but also as their
restriction $h_{X}^{a},h_{Y}^{a}\colon(\Aff/S)^{\op}\to\sets$. By
Yoneda's lemma and Zariski descent we have canonical isomorphisms
\[
\Hom_{S}(X,Y)\simeq\Hom_{\Sch/S}(h_{X},h_{Y})\simeq\Hom_{\Aff/S}(h_{X}^{a},h_{Y}^{a})
\]
 where $\Hom_{S}(X,Y)$ denotes the set of morphisms as $S$-schemes.
We will simply write $\Hom_{S}(X,Y)$ and, depending on the interpretation
of $X,Y$, use one of the above sets.
\end{notation}

\subsection{P-morphisms and associated functor}

Let $S$ be a base scheme. Let $\Sch/S$ (resp. $\Aff/S$) be the
category of $S$-schemes (resp. affine schemes over $S$). By $\Sch'/S$
we denote either $\Sch/S$ or $\Aff/S$. As is well-known, associating
the functor $T\longmapsto X(T)$ to the scheme $X$, we have a fully
faithful embedding of $\Sch/S$ into the category of functors $(\Sch'/S)^{\op}\longrightarrow\Set$.
We often identify an $S$-scheme with the associated functor $(\Sch'/S)^{\op}\longrightarrow\Set$.
\begin{defn}
\label{def:ACF and the F functor}We denote by $\ACF/S$  the category
of algebraically closed fields $K$ together with a map $\Spec K\arr S$.
Given a functor $X\colon(\Sch'/S)^{\op}\arr\sets$ (e.g. an $S$-scheme)
we denote by $X_{F}$ the restriction 
\[
X_{F}\colon\ACF/S\arr(\Sch'/S)^{\op}\arr\sets.
\]
Two elements $x\colon\Spec K\to X$ and $y\colon\Spec K'\to X$ of
$X_{F}$ are equivalent if there exists a commutative diagram   \[   \begin{tikzpicture}[xscale=2.3,yscale=-1.2]     \node (A0_0) at (0, 0) {$\Spec K''$};     \node (A0_1) at (1, 0) {$\Spec K'$};     \node (A1_0) at (0, 1) {$\Spec K$};     \node (A1_1) at (1, 1) {$X$};     \path (A0_0) edge [->]node [auto] {$\scriptstyle{}$} (A0_1);     \path (A1_0) edge [->]node [auto] {$\scriptstyle{x}$} (A1_1);     \path (A0_1) edge [->]node [auto] {$\scriptstyle{y}$} (A1_1);     \path (A0_0) edge [->]node [auto] {$\scriptstyle{}$} (A1_0);   \end{tikzpicture}   \] where
$K''$ is a field. We denote by $|X|$ the set of equivalence classes
of maps as above and we call points of $X$ its elements. If $x\in X_{F}(K)$,
with an abuse of notation, we will write $x\in|X|$. If $f\colon Y_{F}\to X_{F}$
is a map of functors $(\ACF/S)^{\op}\arr\sets$ we denote by $|f|$,
or sometimes simply by $f$, the induced map $|Y|\to|X|$.
\end{defn}

\begin{rem}
\label{rem:points of functor and subsets} Given a subset $E$ of
$|X|$ we define $E_{F}\subseteq X_{F}$ by
\[
E_{F}(K)=\{s\in X(K)\st s\in E\subseteq|X|\}.
\]
Conversely given a subfunctor $G\subseteq X_{F}$ we can define the
subset $|G|\subseteq|X|$ of points $p$ for which there exists $x\in G(K)$
such that $x=p$ in $|X|$. Clearly $|E_{F}|=E$ for all $E\subseteq X$
and, in particular $|X_{F}|=|X|$. We also have $G\subseteq|G|_{F}$
with an equality if: for all $x\in X(K)$ and $K\arr K'$ if $x_{|K'}\in G(K')$
then $x\in G(K)$.
\end{rem}

Notice that $(-)_{F}$ preserves fiber products of functors $(\Sch'/S)^{\op}\arr\sets$.
If $X,Y$ are two schemes over $S$ with Yoneda functors $h_{X/S},h_{Y/S}\colon\Sch'/S\to\sets$
then $h_{X/S}\times h_{Y/S}$ is the Yoneda functor of the $S$-scheme
$X\times_{S}Y$, that is $h_{X\times_{S}Y/S}=h_{X/S}\times h_{Y/S}\colon\Sch'/S\to\sets$.
In particular we have $(X\times_{S}Y)_{F}=X_{F}\times Y_{F}$.
\begin{defn}
\label{def:P-morphism} Given an $S$-scheme $Y$ and a functor $X\colon(\Sch'/S)^{\op}\arr\sets$,
a $P$-\emph{morphism} $Y\arr X$ (over $S$) is a natural transformation
$f\colon Y_{F}\arr X_{F}$ for which there exist a sur covering $\{g_{i}\colon Z_{i}\arr Y\}$
over $S$ and morphisms $f_{i}'\colon Z_{i}\arr X$ over $S$ making
the following diagrams commutative  \begin{equation}
 \begin{tikzpicture}[xscale=1.9,yscale=-1.2]
    \node (A0_0) at (0, 0) {$(Z_i)_F$};     \node (A1_0) at (0, 1) {$Y_F$};     \node (A1_1) at (1, 1) {$X_F$};     \path (A1_0) edge [->]node [auto] {$\scriptstyle{f}$} (A1_1);     \path (A0_0) edge [->]node [auto,swap] {$\scriptstyle{(g_i)_F}$} (A1_0);     \path (A0_0) edge [->]node [auto] {$\scriptstyle{(f_i')_F}$} (A1_1);   \end{tikzpicture} 
\end{equation} We denote by $\Hom_{S}^{\textup{P}}(Y,X)\subseteq\Hom(Y_{F},X_{F})$
the set of P-morphisms from $Y$ to $X$.

Since $P$-morphisms are stable by composition we define $\PSch/S$
as the category whose objects are $S$-schemes and whose maps are
P-morphisms over $S$. An $S$ \emph{P-scheme} means an $S$ scheme
regarded as an object of $\PSch/S$.

If $X\colon(\Sch'/S)^{\op}\arr\sets$ is a functor we define $X^{\P}\colon(\Sch'/S)^{\op}\arr\sets$
as follows: $X^{\P}(Y)=\Hom_{S}^{\P}(Y,X)\subseteq\Hom(Y_{F},X_{F})$
is the set of $P$-morphisms $Y\arr X$.
\end{defn}

There exists a natural functor $\Sch/S\longrightarrow\PSch/S$ sending
an $S$-scheme to itself and a morphism to the induced P-morphism.
Notice moreover that a $P$-morphism of schemes $Y\arr X$, more generally
a functor $Y_{F}\arr X_{F}$, induces a map on the sets of points
$|Y|\arr|X|$ which in general is not continuous.
\begin{rem}
We coined the terms, P-morphism and P-scheme, to connote ``perfect''
and piecewise. Indeed relative or absolute Frobenius maps of varieties
in positive characterstic and, more generally, ubi coverings (for
example given by locally closed decompositions) become isomorphisms
as P-morphisms (see \ref{lem:when phiP is an isomorphism}). In particular
their inverses are examples of P-morphisms which are not necessarily
morphisms of schemes. In the case of decompositions those morphisms
do not even define continuous maps on the underlying topological spaces.
\end{rem}

\begin{prop}
\label{prop:remarks on P and F} Let $X\colon(\Sch'/S)^{\op}\arr\sets$
be a functor.
\begin{enumerate}
\item The functor $X^{\P}\colon(\Sch'/S)^{\op}\arr\sets$ extends naturally
to a functor $(\PSch/S)^{\op}\arr\sets$.
\item There is a canonical morphism $X\arr X^{\P}$ and $X_{F}\arr(X^{\P})_{F},|X|\to|X^{\P}|$
are isomorphisms. Moreover $X^{\P}\arr(X^{\P})^{\P}$ is an isomorphism.
\item Let $Y\in\Sch'/S$, $f\colon Y\arr X$ a P-morphism and $\overline{f}\colon Y\arr X^{\P}$
the corresponding element. Then $Y_{F}\arrdi fX_{F}\simeq(X^{\P})_{F}$
coincides with $\overline{f}_{F}\colon Y_{F}\arr(X^{\P})_{F}$. 
\item If $Y\colon(\Sch'/S)^{\op}\arr\sets$ is another functor and using
that $Y_{F}\simeq(Y^{\P})_{F}$ we obtain a map 
\[
\Hom_{S}(X,Y^{\P})\arr\Hom_{S}(X_{F},Y_{F})
\]
and this map is injective. If $X$ is an $S$-scheme its image is
$\Hom_{S}^{\P}(X,Y)$.
\item \label{enu:fully-faithful}If $Y\colon(\Sch'/S)^{\op}\arr\sets$ is
another functor then a map $X\arr Y^{\P}$ factors uniquely through
a map $X^{\P}\arr Y^{\P}$. In other words the map $X\arr X^{\P}$
induces a bijection 
\[
\Hom_{S}(X^{\P},Y^{\P})\arr\Hom_{S}(X,Y^{\P}).
\]
In particular if $X$ is an $S$-scheme then
\[
\Hom_{S}(X^{\P},Y^{\P})\simeq\Hom_{S}(X,Y^{\P})\simeq\Hom_{S}^{\P}(X,Y)\subseteq\Hom_{S}(X_{F},Y_{F}).
\]
\item If $Y\colon\Sch/S\arr\sets$ is a sheaf in the Zariski topology and
$X$ is an $S$-scheme then
\[
\Hom_{S}^{\P}(X,Y)=\Hom_{S}^{P}(X,Y_{|\Aff/S})\subseteq\Hom_{S}(X_{F},Y_{F}).
\]
In particular the restriction 
\[
\Hom_{S}(X,Y^{\P})\arr\Hom_{S}(X_{|\Aff/S},(Y_{|\Aff/S})^{\P})
\]
 is an isomorphism.
\item If $U$ is a reduced $S$-scheme and $X$ is a scheme the map $X(U)\arr X^{\P}(U)$
is injective . 
\end{enumerate}
\end{prop}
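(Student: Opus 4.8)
The map $X(U)\arr X^{\P}(U)$ in question carries a morphism of $S$-schemes $a\colon U\arr X$ to the induced natural transformation $a_F\colon U_F\arr X_F$, so what I would prove is that two morphisms $a,b\colon U\arr X$ of $S$-schemes with $a_F=b_F$ must coincide once $U$ is reduced. The plan is to reduce, by localizing on the source and on the target, to the statement that a pair of ring homomorphisms $\phi,\psi\colon B\arr A$ with $A$ reduced must be equal, and then to use that such homomorphisms are separated by the induced maps $B\arr A\arr\kappa(\mathfrak p)$ as $\mathfrak p$ runs over the primes of $A$ — which is exactly the information recorded by the equality of $a_F$ and $b_F$ on geometric points, together with the fact that a reduced ring has zero nilradical.

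First I would use that equality of morphisms is local on the source and that open subschemes of a reduced scheme are reduced, so that it suffices to check $a=b$ on a neighbourhood of each point of $U$. To make the target affine as well, I would first observe that $a$ and $b$ agree on underlying topological spaces: given $u\in|U|$, choose an algebraic closure $K$ of $\kappa(u)$ and equip $\Spec K$ with the structure morphism $\Spec K\arr\Spec\kappa(u)\arrdi{u}U\arr S$, so that it becomes an object of $\ACF/S$ and $u$ becomes an element of $U_F(K)$; evaluating $a_F=b_F$ on this element gives $a\circ u=b\circ u$ in $X_F(K)$, hence $a(u)=b(u)$ in $|X|$. Then, fixing $u$, I would choose an affine open $V=\Spec B\subseteq X$ containing $a(u)=b(u)$, note that $W:=a^{-1}(V)=b^{-1}(V)$ is an open neighbourhood of $u$ through which both $a$ and $b$ factor, and shrink $W$ to a principal affine open containing $u$; this puts us in the situation $U=\Spec A$ with $A$ reduced and $a,b\colon\Spec A\arr\Spec B$ corresponding to ring maps $\phi,\psi\colon B\arr A$.

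In this affine situation the argument would run as follows: for each prime $\mathfrak p\subset A$, choose an algebraic closure $K$ of $\kappa(\mathfrak p)$ with the induced structure map to $S$; then $\Spec K\arr\Spec A$ lies in $U_F(K)$, so $a_F=b_F$ forces the composites $B\arrdi{\phi}A\arr\kappa(\mathfrak p)\hookrightarrow K$ and $B\arrdi{\psi}A\arr\kappa(\mathfrak p)\hookrightarrow K$ to be equal, and since $\kappa(\mathfrak p)\hookrightarrow K$ is injective the two composites $B\arr\kappa(\mathfrak p)$ already agree. Hence $\phi(f)-\psi(f)$ lies in every prime of $A$ for every $f\in B$; as $A$ is reduced, $\bigcap_{\mathfrak p}\mathfrak p=(0)$, so $\phi=\psi$ and $a=b$. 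I do not expect a serious obstacle here: the only points needing care are the bookkeeping with the structure maps to $S$ (so that the geometric points used are genuinely objects of $\ACF/S$ and the morphisms used genuinely lie in $U_F$) and the fact that $X$ is not assumed affine, which is precisely why one must establish the topological agreement of $a$ and $b$ before descending to a common affine chart; the real mathematical content is simply that a reduced ring injects into the product of its residue fields.
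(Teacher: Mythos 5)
Your proof addresses only item (7) of the seven-item proposition; items (1)--(6) are not touched. If the task was to prove the whole statement, that is a large omission. I will confine my remarks to what you did write, which is a proof of (7).

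Your argument for (7) is correct, and it is a genuinely different route from the paper's. The paper's proof is more structural: it forms the base change $h\colon W\to U$ of the diagonal $X\to X\times_S X$ along $(a,b)\colon U\to X\times_S X$ (i.e., the scheme-theoretic equalizer of $a$ and $b$), observes that $a_F=b_F$ makes $h$ surjective on geometric points and hence universally bijective, notes that an immersion with closed image is a closed immersion and that a universally bijective closed immersion is a homeomorphism, and finally uses reducedness of $U$ to conclude that this closed immersion must be an isomorphism. Your proof is the element-level shadow of this: you first use $a_F=b_F$ to force topological agreement of $a$ and $b$, localize on a common affine chart, and then deduce $\phi=\psi\colon B\to A$ from the fact that $\phi-\psi$ vanishes after composing with every map $A\to\kappa(\mathfrak p)$ and that $A$ reduced means $\bigcap_{\mathfrak p}\mathfrak p=(0)$. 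Both are sound. What your version buys is elementarity (no appeal to the formal properties of immersions and the diagonal); what the paper's version buys is brevity and the absence of the two-step reduction to the affine case --- the equalizer lives globally on $U$ from the start, and the argument is uniform. One small point of hygiene in your write-up: when you assert $W=a^{-1}(V)=b^{-1}(V)$, you are already relying on the topological agreement $|a|=|b|$ established just before, so be explicit that the open sets coincide for that reason rather than leaving it implicit. Otherwise the bookkeeping with the structure maps to $S$ is handled carefully and the argument goes through.

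Whichever route you take for (7), please supply proofs (or at least sketches) for items (1)--(6) before this could stand in for the paper's proof of the proposition.
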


\begin{proof}
$1)$ Consider the functor $\overline{X}\colon\PSch/S\to\sets$ given
by $\overline{X}(Y)=\Hom_{\ACF/S}(Y_{F},X_{F})$. The extension of
$X^{\P}$ from $\Sch'/S$ to $\PSch/S$ is the subfunctor of $\overline{X}$
given by $Y\longmapsto\Hom_{S}^{\P}(Y,X)$.

$2)$ If $T\in\ACF/S$ then the composite map
\[
X(T)=X_{F}(T)\to X^{\P}(T)=(X^{\P})_{F}(T)\hookrightarrow\Hom_{\ACF/S}(T{}_{F},X_{F})
\]
is a bijection from the Yoneda lemma. This shows that the left map
is a bijection and that $X_{F}\arr(X^{\P})_{F}$ is an isomorphism.

The last statement follows easily from the definitions. 

$3)$ By definition of P-morphism and $X^{\P}$ we can replace $Y$
by a sur covering and assume that $f\colon Y_{F}\arr X_{F}$ is induced
by a map $\hat{f}\colon Y\arr X$. In this case $Y\arrdi{\hat{f}}X\arr X^{\P}$
is exactly $\overline{f}$ and taking $(-)_{F}$ the conclusion follows. 

$4)$ In the first claim one can replace $X$ by a scheme in $\Sch'/S$,
in which case the result follows easily from $3)$. Assume now that
$X$ is an $S$-scheme and consider the last claim. If $X\in\Sch'/S$
the result is again clear by $3)$. If $X$ is not necessarily in
$\Sch'/S$, then $\Hom_{S}(X,Y^{\P})$ can be identified with the
set of transformations $f\colon X_{F}\to Y_{F}$ such that, for all
$u\colon T\to X$ with $T\in\Sch'/S$, the composition $f\circ u_{F}\colon T_{F}\to Y_{F}$
is a P-morphism. Thus $\Hom_{S}^{\P}(X,Y)\subseteq\Hom_{S}(X,Y^{\P})$.
Conversely if $f\in\Hom_{S}(X,Y^{\P})$, then it is Zariski locally
a P-morphism and therefore sur locally induced by a genuine morphism.
But this exactly means that $f$ is a P-morphism, as required.

$5)$ Given a map $\phi\colon X\arr Y^{P}$ it is easy to see that
the unique extension $\phi^{\P}\colon X^{\P}\arr Y^{\P}$ is defined
as follows. Given $a\colon U\arr X^{\P}$ one defines 
\[
\phi^{\P}(a)\colon U_{F}\arrdi{a_{F}}(X^{\P})_{F}\simeq X_{F}\arrdi{\phi}(Y^{\P})_{F}\simeq Y_{F}.
\]

$6)$ It is enough to note that from the sheaf condition on $Y$ it
follows that the map 
\[
\Hom_{\Sch/S}(Z,Y)\arr\Hom_{\Aff/S}(Z_{|\Aff/S},Y_{|\Aff/S})
\]
is an isomorphism for all $S$-schemes $Z$.

$7)$ Consider $a,b\colon U\arr X$ such that $a_{F}=b_{F}$ and the
Cartesian diagram   \[   \begin{tikzpicture}[xscale=2.1,yscale=-1.2]     \node (A0_0) at (0, 0) {$W$};     \node (A0_1) at (1, 0) {$U$};     \node (A1_0) at (0, 1) {$X$};     \node (A1_1) at (1, 1) {$X\times_S X$};     \path (A0_0) edge [->]node [auto] {$\scriptstyle{h}$} (A0_1);     \path (A1_0) edge [->]node [auto] {$\scriptstyle{\Delta}$} (A1_1);     \path (A0_1) edge [->]node [auto] {$\scriptstyle{(a,b)}$} (A1_1);     \path (A0_0) edge [->]node [auto] {$\scriptstyle{}$} (A1_0);   \end{tikzpicture}   \] Since
$(-)_{F}$ preserves fiber products, it follows that $h_{F}\colon W_{F}\to U_{F}$
is an isomorphism, that is $h\colon W\to U$ is geometrically bijective.
In particular $h(W)=U$ is closed as a subset of $U$. As $h$ is
an immersion it follows that it is a closed immersion (see \cite[Tag 01IQ]{stacks-project})
and therefore also an homeomorphism. Since $U$ is reduced the map
$h\colon W\to U$ is an isomorphism, which means $a=b$.
\end{proof}
\begin{lem}
\label{lem:P-morphisms to finitely presented stuff} Let $Y$ be a
scheme over $S$ and $X$ a scheme locally of finite presentation
and quasi-separated over $S$. Given a natural transformation $f\colon Y_{F}\arr X_{F}$
the following are equivalent:
\begin{enumerate}
\item there exist a luin covering $g\colon Z\arr Y$ over $S$ and a map
$f'\colon Z\arr X$ over $S$ such that the diagram \begin{equation} \label{P-morphism-2}
 \begin{tikzpicture}[xscale=1.9,yscale=-1.2]
    \node (A0_0) at (0, 0) {$Z_F$};     \node (A1_0) at (0, 1) {$Y_F$};     \node (A1_1) at (1, 1) {$X_F$};     \path (A1_0) edge [->]node [auto] {$\scriptstyle{f}$} (A1_1);     \path (A0_0) edge [->]node [auto,swap] {$\scriptstyle{g_F}$} (A1_0);     \path (A0_0) edge [->]node [auto] {$\scriptstyle{f'_F}$} (A1_1);   \end{tikzpicture} 
\end{equation}  is commutative;
\item the transformation $f$ is a $P$-morphism;
\item if $\Gamma_{f}\colon Y_{F}\arr Y_{F}\times X_{F}=(Y\times_{S}X)_{F}$
is the graph of $f$ then $C=\Imm(|\Gamma_{f}|)\subseteq Y\times_{S}X$
is locally constructible. 
\end{enumerate}
\end{lem}

\begin{proof}
$(1)\Rightarrow(2)$ Follows by definition.

$(2)\Rightarrow(3)$ This is a local statement so that we can assume
that $Y$ is affine. Let $Z\arrdi gY$ be a sur covering and $Z\arrdi{f'}X$
such that the diagram (\ref{P-morphism-2}) is commutative. By \ref{cor:finiteness for sur coverings}
we can furthermore assume that $Z\arr Y$ is quasi-compact. The set
$C=\Imm(|\Gamma_{f}|)$ is the image of the graph $\Gamma_{f'}$ of
$f'$ along $Z\times_{S}X\arr Y\times_{S}X$. By Chevalley's theorem
\ref{thm:Chevalley} it is therefore enough to show that $\Gamma_{f'}$
is locally constructible in $Z\times_{S}X$. But $X\arr S$ and therefore
$Z\times_{S}X\arr Z$ are locally of finite presentation and quasi-separated.
Therefore a section $Z\arr Z\times_{S}X$ is quasi-compact and locally
of finite presentation. Chevalley's theorem \ref{thm:Chevalley} again
shows that $\Gamma_{f'}$ is locally constructible.

$(3)\Rightarrow(1)$ By definition of luin coverings the statement
is local in $Y$, so that we can assume $Y$ affine. By \ref{lem:key lemma for scheme structure on constructible sets}
there are locally finitely presented immersions $Z_{j}\arr Y\times_{S}X$
from affine schemes such that $g\colon Z=\bigsqcup_{j}Z_{j}\arr Y\times_{S}X$
has image $C$. The composition $u\colon Z\arr Y\times_{S}X\arr Y$
is locally of finite presentation because $X\longrightarrow S$ and
hence $Y\times_{S}X\to Y$ are locally of finite presentation. It
is surjective because $C\arr Y$ is surjective. By definition it follows
that $u\colon Z\to Y$ is a sur covering. We are going to show that
it is a luin covering, more precisely that $Z_{j}\to Y$ is geometrically
injective, and that $Z_{F}\arrdi{u_{F}}Y_{F}\arrdi fX_{f}$ is induced
by the morphism $Z\to Y\times_{S}X\to X$, which will end the proof.

We first show that $C_{F}=\Imm\Gamma_{f}\subseteq Y_{F}\times X_{F}$
(see \ref{rem:points of functor and subsets}). The inclusion $\Imm\Gamma_{f}\subseteq C_{F}$
follows by construction. For the converse, let $K\in\ACF/S$ and $(y,x)\in C_{F}(K)\subseteq Y_{F}(K)\times X_{F}(K)$.
Since $C=\Imm|\Gamma_{f}|$, there exists $K\to K'$ such that $(y,x)_{|K'}=\Gamma_{f}(\overline{y})=(\overline{y},f(\overline{y}))$
for some $\overline{y}\in Y_{F}(K')$. Thus $\overline{y}=y_{|K'}$
and $x_{|K'}=f(y_{|K'})=f(y)_{|K'}$, which implies that $x=f(y)$
because $X(K)\arr X(K')$ is injective. In other words $\Gamma_{f}(y)=(y,x)\in\Imm\Gamma_{f}$.

Since $\Gamma_{f}\colon Y_{F}\to C_{F}$ is an isomorphism and the
map $g_{F}\colon Z_{F}\to(Y\times_{S}X)_{F}=Y_{F}\times X_{F}$ has
image insides $C_{F}$, there is a factorization $g_{F}\colon Z_{F}\arrdi{\phi}Y_{F}\arrdi{\Gamma_{f}}Y_{F}\times X_{F}$.
Moreover there is a commutative diagram   \[   \begin{tikzpicture}[xscale=2.1,yscale=-1.2, on top/.style={preaction={draw=white,-,line width=#1}}, on top/.default=6pt]    
\node (A1_0) at (0, 1) {$Z_F$};     
\node (A1_1) at (1, 1) {$Y_F$};     
\node (A1_2) at (2, 1) {$Y_F\times X_F$};     
\node (A1_3) at (3, 1) {$X_F$};     
\node (A2_2) at (2, 2) {$Y_F$};     
\path (A1_0) edge [->,bend left=15]node [auto,swap] {$\scriptstyle{u_F}$} (A2_2);        
\path (A1_0) edge [->]node [auto] {$\scriptstyle{\phi}$} (A1_1);     
\path (A1_1) edge [->]node [auto] {$\scriptstyle{\Gamma_f}$} (A1_2);     
\path (A1_1) edge [->]node [auto] {$\scriptstyle{\id}$} (A2_2);     
\path (A1_1) edge [->,bend right=60]node [auto] {$\scriptstyle{f}$} (A1_3);     
\path (A1_2) edge [->]node [auto] {$\scriptstyle{}$} (A1_3);     
\path (A1_2) edge [->]node [auto] {$\scriptstyle{}$} (A2_2);   
\path (A1_0) edge [->,bend right=60, on top]node [auto] {$\scriptstyle{g_F}$} (A1_2);  
\end{tikzpicture}   \] It follows that $\phi=u_{F}\colon Z_{F}\to Y_{F}$ and that $f\circ\phi\colon Z_{F}\to X_{F}$
is induced by $Z\to Y\times_{S}X\to X$. Since $Z_{j}\to Y\times_{S}X$
is geometrically injective and $g_{F}=\Gamma_{f}\circ u_{F}\colon Z_{F}\to(Y\times_{S}X)_{F}$
we can conclude that $Z_{j}\to Z\arrdi uY$ is geometrically injective,
as required.
\end{proof}
\begin{defn}
\label{def:geometrically stuff for functors} A natural transformation
$f\colon P\to Q$ of functors $(\ACF/S)^{\op}\to\sets$ is said to
be \emph{geometrically bijective }(resp. \emph{geometrically injective},
\emph{geometrically surjective}) if it is an isomorphism (resp. injective,
surjective).

A morphism $f\colon Y\longrightarrow X$ of functors $(\Sch'/S)^{\op}\longrightarrow\Set$
is said to be \emph{geometrically bijective }(resp. \emph{geometrically
injective}, \emph{geometrically surjective}) if so is $f_{F}\colon Y_{F}\to X_{F}$.
Similarly a P-morphism $g\colon Z\to X$ from an $S$-scheme is said
to be \emph{geometrically bijective }(resp. \emph{geometrically injective},
\emph{geometrically surjective}) if $g$, thought of as a map $Z_{F}\to X_{F}$,
is so.
\end{defn}

\begin{rem}
The above definition extends the one given for morphisms of schemes
(see \ref{def:universally stuff}).

If a morphism $f\colon Y\longrightarrow X$ of functors $(\Sch'/S)^{\op}\longrightarrow\Set$
is geometrically bijective (resp. geometrically injective, geometrically
surjective) then $|f|\colon|Y|\to|X|$ is bijective (resp. injective,
surjective), but the converse is not true.
\end{rem}

\begin{lem}
\label{lem:criterion one for Piso} Let $f\colon Y_{F}\longrightarrow X_{F}$
be a P-morphism of $S$-schemes. If $f$ is an isomorphism in $\PSch/S$
then it is an isomorphism as natural transformation. The converse
holds if $Y$ and $X$ are locally finitely presented and quasi-separated
over $S$.
\end{lem}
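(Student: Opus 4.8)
The first assertion is essentially formal. Suppose $f\colon Y_F\to X_F$ has an inverse $g\colon X_F\to Y_F$ in $\PSch/S$. By Proposition \ref{prop:remarks on P and F}(2), applying $(-)_F$ to $f$ and $g$ and using that $X_F\simeq(X^{\P})_F$ and $Y_F\simeq(Y^{\P})_F$, the compositions $f\circ g$ and $g\circ f$ agree with $(\id_X)_F$ and $(\id_Y)_F$ respectively as natural transformations $\ACF/S\to\sets$; hence $f$ is already an isomorphism of natural transformations. So nothing further is needed for this direction, and the content of the lemma lies in the converse.

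For the converse, assume $Y,X$ are locally of finite presentation and quasi-separated over $S$, and that $f\colon Y_F\to X_F$ is a P-morphism which is an isomorphism of natural transformations; write $h\colon X_F\to Y_F$ for its inverse as a natural transformation. The plan is to show that $h$ is itself a P-morphism, which will immediately make $f$ an isomorphism in $\PSch/S$. I would use the graph criterion of Lemma \ref{lem:P-morphisms to finitely presented stuff}: since $X,Y$ are locally finitely presented and quasi-separated over $S$, so is $Y\times_S X$, and by (3) of that lemma it suffices to show $|\Gamma_h|$ is a locally constructible subset of $X\times_S Y$. Now the key observation is that the graph of $h$ is simply the image of the graph of $f$ under the swap isomorphism $\tau\colon Y\times_S X\xrightarrow{\sim} X\times_S Y$: concretely, for a geometric point $\Spec K\to S$ and $(x,y)\in X(K)\times Y(K)$, one has $(x,y)\in\Gamma_h(K)$ iff $x=f(y)$ iff $(y,x)\in\Gamma_f(K)$, so $\Gamma_h=\tau(\Gamma_f)$ and hence $|\Gamma_h|=\tau(|\Gamma_f|)$ as subsets. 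Since $f$ is a P-morphism, Lemma \ref{lem:P-morphisms to finitely presented stuff} gives that $|\Gamma_f|$ is locally constructible in $Y\times_S X$, and $\tau$ is an isomorphism of schemes, so $|\Gamma_h|$ is locally constructible in $X\times_S Y$. By Lemma \ref{lem:P-morphisms to finitely presented stuff} again, $h$ is a P-morphism.

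It remains to check that $f$ and $h$ are mutually inverse \emph{in $\PSch/S$}, i.e. that $h\circ f=\id_Y$ and $f\circ h=\id_X$ as P-morphisms. But composition of P-morphisms is computed by composing the underlying natural transformations on $\ACF/S$, and there $h\circ f$ and $f\circ h$ are by hypothesis the identities; by the injectivity in Proposition \ref{prop:remarks on P and F}(4)--(5) (the map $\Hom_S(X^{\P},Y^{\P})\to\Hom_S(X_F,Y_F)$ is injective), a P-morphism is determined by its restriction to $\ACF/S$, so $h\circ f=\id_Y$ and $f\circ h=\id_X$ in $\PSch/S$. Hence $f$ is an isomorphism in $\PSch/S$. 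The only slightly delicate point — and the place where the finite presentation and quasi-separatedness hypotheses are genuinely used — is the appeal to the graph criterion; without it there is no reason the set-theoretic inverse of a P-morphism should again be a P-morphism, since one loses the constructibility control provided by Chevalley's theorem.
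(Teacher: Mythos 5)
Your proof is correct and takes the same route as the paper: both directions are handled identically, with the converse reduced via Lemma \ref{lem:P-morphisms to finitely presented stuff} to the observation that the graph of $f^{-1}$ is the image of the graph of $f$ under the swap automorphism $Y\times_S X\simeq X\times_S Y$ and is therefore locally constructible. The extra verification at the end that $f$ and $h$ compose to identities in $\PSch/S$ is a harmless elaboration of what the paper leaves implicit.
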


\begin{proof}
The first statement is clear. For the second, by \ref{lem:P-morphisms to finitely presented stuff}
we have that $C=\Imm(|\Gamma_{f}|)$ is locally constructible in $Y\times_{S}X$.
Since $\Imm(|\Gamma_{f^{-1}}|)$ is the image of $C$ via the automorphism
$Y\times_{S}X\simeq X\times_{S}Y$, it follows that $\Imm(|\Gamma_{f^{-1}}|)$
is locally constructible and therefore $f^{-1}\colon X_{F}\arr Y_{F}$
is a P-morphism by \ref{lem:P-morphisms to finitely presented stuff}.
\end{proof}
\begin{lem}
\label{lem:sheaves}Let $X\colon(\Sch'/S)^{\op}\arr\sets$ be a functor.
\begin{enumerate}
\item The functor $X^{\P}\colon(\Sch'/S)^{\op}\arr\sets$ is a sheaf in
the sur topology.
\item If the diagonal of $X$ is representable and of finite presentation
then $X\arr X^{P}$ is a sheafification morphism for the sur topology. 
\item If $X$ is a scheme locally of finite presentation and quasi-separated
over $S$ then $X\arr X^{\P}$ is a sheafification morphism for the
luin topology.
\end{enumerate}
\end{lem}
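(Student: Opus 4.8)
The plan is to handle the three parts in order: part (1) is the essential input, and then (2) and (3) both follow from the standard criterion that a morphism $F\arr G$ from a presheaf $F$ to a sheaf $G$ which is locally surjective and locally injective (a ``local isomorphism'') exhibits $G$ as the sheafification of $F$.

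\textbf{Part (1).} The key point is that $(-)_F$ sends a sur covering $g\colon Z=\coprod_iU_i\arr Y$ to an effective epimorphism $g_F\colon Z_F\arr Y_F$ in the category of $\sets$-valued functors on $\ACF/S$. Indeed, by Lemma \ref{lem:univ-bi} a sur covering is surjective on $K$-points for every algebraically closed $K$; surjections of sets are effective epimorphisms, and effective epimorphisms, coproducts and coequalizers in a functor category to $\sets$ are all computed objectwise, so (using that each $\Spec K$ is connected, whence $Z_F=\coprod_i(U_i)_F$) the map $g_F$ is an effective epimorphism with kernel pair $Z_F\times_{Y_F}Z_F$. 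Since $(-)_F$ also preserves the relevant fiber products, so that $(Z\times_YZ)_F=Z_F\times_{Y_F}Z_F$, it follows that the presheaf $Y\longmapsto\Hom(Y_F,X_F)$ is a sheaf for the sur topology. It then remains to check that the subpresheaf $X^{\P}$ is stable under gluing: if $f\colon Z_F\arr X_F$ is a $P$-morphism whose two pullbacks to $(Z\times_YZ)_F$ coincide, and $\overline f\colon Y_F\arr X_F$ denotes the induced natural transformation, then choosing a sur covering $h\colon W\arr Z$ and a morphism $f'\colon W\arr X$ witnessing that $f$ is a $P$-morphism, the composite $g\circ h\colon W\arr Y$ is again a sur covering (sur coverings compose) and, together with $f'$, it witnesses that $\overline f$ is a $P$-morphism. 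Hence $X^{\P}$ is a sur sheaf.

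\textbf{Part (2).} By (1), $X^{\P}$ is a sur sheaf, so it suffices to show the canonical morphism $X\arr X^{\P}$ is a local isomorphism for the sur topology. Local surjectivity is immediate from the definition of a $P$-morphism: any $s\in X^{\P}(U)$ comes with a sur covering $Z\arr U$ over which $s$ is represented by a morphism $Z\arr X$ (if we work with $\Aff/S$, refine $Z$ by an affine open covering). For local injectivity, let $x,y\in X(U)$ have the same image in $X^{\P}(U)$, i.e. $x_F=y_F$. Form the equalizer $E:=U\times_{X\times_SX}X$ along the diagonal $\Delta_X$; by hypothesis $\Delta_X$ is representable and of finite presentation, so $E\arr U$ is a monomorphism of finite presentation, and it is surjective because $x_F=y_F$ forces every geometric point of $U$ to factor through $E$, whence Lemma \ref{lem:univ-bi} applies. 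Thus $E\arr U$ (or an affine open covering of it, if we work with $\Aff/S$) is a sur covering with $x|_E=y|_E$. Therefore $X\arr X^{\P}$ is a local isomorphism into a sheaf, hence a sheafification morphism.

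\textbf{Part (3).} Every luin covering is a sur covering, so the luin topology is coarser than the sur topology and $X^{\P}$ is a fortiori a luin sheaf. We again verify that $X\arr X^{\P}$ is a local isomorphism, now for the luin topology. Local surjectivity is exactly the implication $(2)\Rightarrow(1)$ of Lemma \ref{lem:P-morphisms to finitely presented stuff}: since $X$ is locally of finite presentation and quasi-separated over $S$, any $s\in X^{\P}(U)$ becomes representable by a morphism to $X$ after pulling back along a luin covering of $U$. Local injectivity uses the same equalizer $E\arr U$ as in (2); here $\Delta_X$ is locally of finite presentation because $X\arr S$ is, so $E\arr U$ is locally of finite presentation, and being a monomorphism it is universally injective and, by Lemma \ref{lem:univ-bi}, surjective, so it is a ubi covering, in particular a luin covering, with $x|_E=y|_E$. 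This gives (3).

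The step that requires the most care is (1): making precise that $(-)_F$ carries sur coverings to effective epimorphisms (the compatibility of $(-)_F$ with coproducts and fiber products, together with the objectwise nature of effective epimorphisms of sets), and that the subpresheaf $X^{\P}\subseteq\Hom((-)_F,X_F)$ is closed under gluing. Once (1) is available, (2) and (3) are routine applications of the sheafification criterion, the only genuine geometric input being that the equalizer of two morphisms agreeing on all geometric points is a covering, which is an immediate consequence of Lemma \ref{lem:univ-bi}.
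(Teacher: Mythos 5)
Your argument follows the paper's proof closely: part (1) establishes that $Y\longmapsto\Hom(Y_F,X_F)$ is a sur sheaf using that $(-)_F$ preserves fiber products and sends sur coverings to surjections on $K$-points, then checks that $X^{\P}$ is a subsheaf closed under gluing (since sur coverings compose); parts (2) and (3) check local surjectivity and local injectivity of $X\to X^{\P}$. This is the same overall strategy.

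The one place where you diverge from the paper, and where some care is needed, is the local-injectivity step of part (2). You form $E=U\times_{X\times_S X}X$ and assert directly that $E\to U$ is a sur covering. But in the paper's setup the diagonal of $X$ is only assumed representable by \emph{algebraic spaces}: the paper explicitly says ``$W$ is an algebraic space'' and then takes an étale atlas $V\to W$ from an affine scheme, using $V\to U$ as the sur covering. Your $E$ is a priori only an algebraic space, so you cannot immediately invoke Lemma \ref{lem:univ-bi} (stated for schemes) nor call $E\to U$ a sur covering (which is by definition a morphism of schemes). Your route can be repaired: $E\to U$ is a finitely presented monomorphism, hence separated and locally quasi-finite, and a separated locally quasi-finite algebraic space over a scheme is a scheme (Stacks Project, Tag 0418); alternatively, follow the paper and pass to an étale atlas. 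As written, though, this step silently assumes $E$ is a scheme, which is exactly what the hypothesis does not give you for free. The rest of the proof (parts (1) and (3)) is correct as stated.
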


\begin{proof}
$1)$ Consider $\overline{X}\colon(\Sch'/S)^{\op}\arr\sets$ defined
by $\overline{X}(Y)=\Hom(Y_{F},X_{F})$. Since $(-)_{F}$ preserves
fiber products and, if $V\arr U$ is a sur covering, the maps $V_{F}(K)\arr U_{F}(K)$
are surjective for all algebraically closed fields, it is easy to
see that $\overline{X}$ is a sheaf in the sur topology. Moreover
$X^{\P}\subseteq\overline{X}$ is a subfunctor. By definition of $X^{\P}$
we see that if an object of $\overline{X}$ is sur locally in $X^{\P}$
then it belongs to $X^{\P}$. This implies that $X^{\P}$ is a subsheaf
of $\overline{X}$.

$2)$ The map $X\arr X^{\P}$ is by definition an epimorphism in the
sur topology. We need to check that if $a,b\in X(U)$ become equal
in $X^{\P}(U)$ then they are sur locally equal. In particular we
can assume $U$ affine. Let $W\arr U$ be the base change of the diagonal
$X\arr X\times_{S}X$ along the map $(a,b)\colon U\arr X\times_{S}X$.
By hypothesis $W$ is an algebraic space and $W\arr U$ is of finite
presentation. Moreover $a,b$ become equal in $X(W)$. Since $a_{F}=b_{F}\colon U_{F}\arr X_{F}$
we can conclude that $W_{F}\arr U_{F}$ is bijective. Since $W$ is
an algebraic space there exists an étale atlas $V\arr W$ from a scheme.
The resulting map $V\arr U$ is locally of finite presentation, surjective
and therefore a sur covering.

$3)$ Now assume that $X$ is a scheme locally of finite presentation
and quasi-separated over $S$. By \ref{lem:P-morphisms to finitely presented stuff}
it follows that $X\arr X^{\P}$ is an epimorphism in the luin topology.
As before we need to check that if $a,b\in X(U)$ become equal in
$X^{\P}(U)$ then they are luin locally equal. By the same argument
above we see that they are equal after a map $W\arr U$ which is locally
of finite presentation, quasi-compact and geometrically bijective.
The difference now is that $W$ is a scheme and therefore $W\arr U$
is a luin covering.
\end{proof}
\begin{cor}
\label{cor:Pschemes into sheaves} Let $\Shsur(\Sch'/S)$ be the category
of sur sheaves on $\Sch'/S$. The functor $(-)^{\P}$ determines a
fully faithful functor 
\[
\PSch/S\arr\Shsur(\Sch'/S)
\]
\end{cor}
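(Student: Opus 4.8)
The plan is to read the statement off Proposition \ref{prop:remarks on P and F} and Lemma \ref{lem:sheaves}, which between them contain all the content. First I would pin down the functor: it sends an $S$ P-scheme $X$ to the presheaf $X^{\P}\colon(\Sch'/S)^{\op}\arr\sets$, and a P-morphism $f\colon X\arr Y$ (an element of $\Hom_{S}^{\P}(X,Y)$) to post-composition with $f$, i.e. to the natural transformation whose component at $T\in\Sch'/S$ is $X^{\P}(T)=\Hom_{S}^{\P}(T,X)\arr\Hom_{S}^{\P}(T,Y)=Y^{\P}(T)$, $a\mapsto f\circ a$. That this is well defined (that $f\circ a$ is again a P-morphism) and functorial follows from the fact that P-morphisms compose and that sur coverings are stable under base change and composition: one simply pulls back the sur covering witnessing $f$ along $a$ and composes. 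By Lemma \ref{lem:sheaves}(1) the presheaf $X^{\P}$ is a sheaf for the sur topology, so this functor indeed takes values in $\Shsur(\Sch'/S)$; and since $\Shsur(\Sch'/S)$ is a full subcategory of the presheaf category, a morphism of sur sheaves $X^{\P}\arr Y^{\P}$ is the same thing as a natural transformation of the underlying functors.

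It then remains to see that the functor is fully faithful, i.e. that the map $\Hom_{S}^{\P}(X,Y)\arr\Hom_{\Shsur(\Sch'/S)}(X^{\P},Y^{\P})=\Hom_{S}(X^{\P},Y^{\P})$ is a bijection for all $S$ P-schemes $X,Y$. Faithfulness is immediate: evaluating the image of $f$ at a scheme mapping to $X$ and tracking the identity recovers $f$ (in the $\Aff/S$ version one first uses Proposition \ref{prop:remarks on P and F}(6),(7) to reduce to affine test schemes, or simply tests on an affine Zariski cover of $X$). For fullness, the canonical map $X\arr X^{\P}$ of Proposition \ref{prop:remarks on P and F}(2) induces, by Proposition \ref{prop:remarks on P and F}(5), a bijection $\Hom_{S}(X^{\P},Y^{\P})\simeq\Hom_{S}(X,Y^{\P})$, and by Proposition \ref{prop:remarks on P and F}(4) the latter is in bijection with $\Hom_{S}^{\P}(X,Y)$; chasing the definitions through Proposition \ref{prop:remarks on P and F}(3) one checks that these identifications compose to the inverse of the map above, so it is bijective.

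In short, the argument is an assembly job and there is no real obstacle: Lemma \ref{lem:sheaves}(1) supplies the sheaf property and Proposition \ref{prop:remarks on P and F}(5) the hom-set bijection. The only points requiring a moment's care are the verification that $(-)^{\P}$ is a functor on $\PSch/S$ (which reduces to stability of sur coverings under base change) and the check that the bijection produced by Proposition \ref{prop:remarks on P and F}(2)--(5) really is the map induced by this functor rather than some other identification; both are routine.
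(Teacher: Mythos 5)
Your proof is correct and follows the same route as the paper: Lemma \ref{lem:sheaves}(1) for the sheaf property and Proposition \ref{prop:remarks on P and F}(\ref{enu:fully-faithful}) for the hom-set bijection. You simply spell out more of the bookkeeping (the explicit description of the functor on morphisms and the compatibility check) that the paper leaves implicit.
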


\begin{proof}
For an $S$-scheme $X$, the functor $X^{P}$ is a sur sheaf from
Lemma \ref{lem:sheaves}. From Proposition \ref{prop:remarks on P and F}
(\ref{enu:fully-faithful}), for $S$-schemes $X$ and $Y$, we have
a natural bijection 
\[
\Hom_{S}(X^{P},Y^{P})\longrightarrow\Hom_{S}^{P}(X,Y),
\]
which proves the corollary. 
\end{proof}

\subsection{P-moduli spaces}
\begin{defn}
Let $\cF\colon(\Sch'/S)^{\op}\arr\sets$ be a functor. A \emph{P-moduli
space }of $\cF$ is an $S$-scheme $X$ together with a morphism $\pi\colon\shF\arr X^{\P}$
such that\emph{}

\begin{enumerate}
\item $\pi$ is geometrically bijective, that is the induced map $\pi_{F}\colon\shF_{F}\arr(X^{\P})_{F}$
is an isomorphism;
\item $\pi$ is universal, that is for any morphism $g:\cF\longrightarrow Y^{\P}$
where $Y$ is a scheme over $S$, there exists a unique $S$-morphism
$f:X^{\P}\longrightarrow Y^{\P}$ with $f\circ\pi=g$.
\end{enumerate}
If this is the case, we also call the morphism $\pi$ a \emph{P-moduli
space}. It is clear that if exists, a $P$-moduli space is unique
up to unique P-isomorphism.
\end{defn}

\begin{lem}
\label{lem:when phiP is an isomorphism} Let $\phi\colon\shF\arr\shG$
be a map of functors $(\Sch'/S)^{\op}\arr\sets$. If $\phi$ is geometrically
bijective then $\phi^{\P}\colon\shF^{\P}\arr\shG^{\P}$ is a monomorphism.
If $\phi$ is also an epimorphism in the sur topology then $\phi^{\P}\colon\shF^{\P}\arr\shG^{\P}$
is an isomorphism.

In particular a locally finitely presented and geometrically bijective
morphism of $S$-schemes, that is an ubi covering, is a $P$-isomorphism.
\end{lem}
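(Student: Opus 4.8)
The plan is to unwind the definitions. Both $\shF^{\P}$ and $\shG^{\P}$ are subfunctors of $Y\mapsto\Hom(Y_{F},\shF_{F})$ and $Y\mapsto\Hom(Y_{F},\shG_{F})$, and $\phi^{\P}$ is, by construction, post-composition with $\phi_{F}\colon\shF_{F}\arr\shG_{F}$. Geometric bijectivity of $\phi$ says exactly that $\phi_{F}$ is an isomorphism of functors on $\ACF/S$, and this alone will force the monomorphism assertion; the sur-epimorphism hypothesis will be used only to produce lifts along sur coverings in the second part. So the lemma is essentially formal, and I expect no genuine obstacle: the only thing to watch is the bookkeeping needed to carry the ``$P$-morphism'' property through a lift.

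\textbf{Monomorphism.} First I would check that $\phi^{\P}$ really is post-composition with $\phi_{F}$ and that it lands in $\shG^{\P}$: if $f\in\shF^{\P}(Y)$ is witnessed by a sur covering $g\colon Z\arr Y$ and $f'\in\shF(Z)$ with $f\circ g_{F}=f'_{F}$, then the same $g$ together with $\phi(f')\in\shG(Z)$ witnesses that $\phi_{F}\circ f$ is a $P$-morphism $Y\arr\shG$. Since $\phi_{F}$ is an isomorphism, $f\mapsto\phi_{F}\circ f$ is injective on all of $\Hom(Y_{F},\shF_{F})$, hence a fortiori on the subset $\shF^{\P}(Y)$; as this holds for every $Y\in\Sch'/S$, the natural transformation $\phi^{\P}$ is a monomorphism.

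\textbf{Isomorphism.} Now assume moreover that $\phi$ is an epimorphism in the sur topology, and fix $Y$ and $h\in\shG^{\P}(Y)$. Choose a sur covering $g\colon Z\arr Y$ and $h'\in\shG(Z)$ with $h\circ g_{F}=h'_{F}$. By the epimorphism hypothesis there is a sur covering $Z'\arr Z$ and $k'\in\shF(Z')$ with $\phi(k')=h'|_{Z'}$; the composite $Z'\arr Y$ is again a sur covering. Since $\phi_{F}$ is an isomorphism there is a unique natural transformation $f\colon Y_{F}\arr\shF_{F}$ with $\phi_{F}\circ f=h$, and restricting along $Z'_{F}\arr Y_{F}$ one finds $\phi_{F}\circ(f|_{Z'})=h|_{Z'}=(h'|_{Z'})_{F}=\phi_{F}\circ k'_{F}$, hence $f|_{Z'}=k'_{F}$ by injectivity of $\phi_{F}$. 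Thus $Z'\arr Y$ and $k'$ witness that $f$ is a $P$-morphism, so $f\in\shF^{\P}(Y)$ and $\phi^{\P}_{Y}(f)=h$. Therefore $\phi^{\P}$ is surjective on sections as well, and being also a monomorphism it is an isomorphism. (Equivalently: the same lifting shows $\phi^{\P}$ is a local epimorphism in the sur topology, and since $\shF^{\P}$ and $\shG^{\P}$ are sur sheaves by Lemma \ref{lem:sheaves}, a mono-epi of sheaves is an isomorphism.)

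\textbf{The particular case.} A sur covering $\phi\colon Y\arr X$ of $S$-schemes, regarded as a morphism of the associated functors on $\Sch'/S$, is an epimorphism in the sur topology because sur coverings are stable under base change; so if in addition $\phi$ is geometrically bijective, the two parts above give that $\phi^{\P}\colon Y^{\P}\arr X^{\P}$ is an isomorphism, which by full faithfulness of $(-)^{\P}$ (Proposition \ref{prop:remarks on P and F}(\ref{enu:fully-faithful})) means precisely that $\phi$ is a $P$-isomorphism. For the parenthetical ``e.g.'', a geometrically bijective morphism of schemes which is locally of finite presentation is in particular surjective (Lemma \ref{lem:univ-bi}), hence a sur covering, so this case is covered. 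The only slightly delicate points are the ones flagged above: that the composite $Z'\arr Z\arr Y$ is again a sur covering, and that the chain $h|_{Z'}=(h'|_{Z'})_{F}=\phi_{F}\circ k'_{F}$ holds after applying $(-)_{F}$ — both are immediate from the Grothendieck-topology axioms and from $(-)_{F}$ commuting with composition and restriction.
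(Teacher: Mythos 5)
Your proof is correct and follows essentially the same route as the paper. The paper's own proof is terse — it cites Proposition \ref{prop:remarks on P and F} (4) and (5) for the monomorphism part (i.e., that $\phi^{\P}$ is post-composition with $\phi_{F}$ on the subsets $\shF^{\P}(Y)\subseteq\Hom(Y_{F},\shF_{F})$, which is injective when $\phi_{F}$ is an isomorphism), and for the isomorphism part it just invokes the fact that $\shF^{\P}$ and $\shG^{\P}$ are sur sheaves (Lemma \ref{lem:sheaves}), after which a mono which is a local epimorphism is an isomorphism. You unwind this explicitly, running a direct section-level lifting argument for surjectivity; the parenthetical at the end of your ``Isomorphism'' paragraph is exactly the argument the paper uses. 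Your handling of the final ``particular case'' is also what the paper has in mind (base-change stability of sur coverings gives the sur-epimorphism, and Lemma \ref{lem:univ-bi} gives surjectivity from geometric bijectivity).
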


\begin{proof}
The first claim follows from \ref{prop:remarks on P and F}, $4)$
and $5)$. For the second one it is enough to recall that $\shF^{\P}$
and $\shG^{\P}$ are sheaves in the sur topology.
\end{proof}
\begin{rem}
\label{rem:remarks of P-moduli spaces} Let $\pi\colon\shF\arr X^{\P}$
be a geometrically bijective map, so that, by \ref{lem:when phiP is an isomorphism},
$\pi^{\P}\colon\shF^{\P}\arr X^{\P}$ is a monomorphism. Then $\pi$
is a P-moduli space if and only if for all maps $\shF\arr Y^{\P}$
the map $(X^{\P})_{F}\simeq\shF_{F}\arr(Y^{\P})_{F}$ is a P-morphism
$X\arr Y$.

In particular $\shF\arr X^{\P}$ is a P-moduli space if and only if
$\shF^{\P}\arr X^{\P}$ is a $P$-moduli space and vice versa.
\end{rem}

\begin{defn}
Let $\cF\colon(\Sch'/S)^{\op}\arr\sets$ be a functor. A \emph{strong}
\emph{P-moduli space }for $\cF$ is an $S$-scheme $X$ together with
a morphism $\pi\colon\shF\arr X^{\P}$ such that $\pi^{\P}\colon\shF^{\P}\arr X^{\P}$
is an isomorphism.
\end{defn}

A strong $P$-moduli space is also unique up to unique P-isomorphism.
From Remark \ref{rem:remarks of P-moduli spaces} it follows that
a strong P-moduli space is a P-moduli space.
\begin{prop}
\label{prop:admissible-moduli} Let $\cF\colon(\Sch'/S)^{\op}\arr\sets$
be a functor, $X$ an $S$-scheme and $\pi\colon\shF\arr X^{\P}$
be a morphism. Then $\pi$ is an epimorphism in the sur topology if
and only if there exist a sur covering $\{Z_{i}\arrdi{g_{i}}X\}$
and commutative diagrams    \[   \begin{tikzpicture}[xscale=1.5,yscale=-1.2]     \node (A0_0) at (0, 0) {$Z_i$};     \node (A0_1) at (1, 0) {$X$};     \node (A1_0) at (0, 1) {$\shF$};     \node (A1_1) at (1, 1) {$X^\P$};     \path (A0_0) edge [->]node [auto] {$\scriptstyle{g_i}$} (A0_1);     \path (A1_0) edge [->]node [auto] {$\scriptstyle{\pi}$} (A1_1);     \path (A0_1) edge [->]node [auto] {$\scriptstyle{}$} (A1_1);     \path (A0_0) edge [->]node [auto] {$\scriptstyle{}$} (A1_0);   \end{tikzpicture}   \]The
map $\pi\colon\shF\arr X^{\P}$ is a strong P-moduli space if and
only if it is geometrically bijective and an epimorphism in the sur
topology.
\end{prop}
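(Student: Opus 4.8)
The statement is two equivalences, which I would treat independently; the first is a direct unwinding of the definition of a P-morphism together with the fact that sur coverings form a topology, and the second is essentially a repackaging of Lemma~\ref{lem:when phiP is an isomorphism}. For the first, ``if'' direction: suppose there is a commutative square with $g\colon Z\arr X$ a sur covering and a compatible morphism $Z\arr\shF$. Since $X^{\P}$ is a sur sheaf (Lemma~\ref{lem:sheaves}(1)) it suffices to prove that $\pi$ is locally surjective for the sur topology. So fix $T\in\Sch'/S$ and $s\in X^{\P}(T)$; by definition of $X^{\P}$ there are a sur covering $h\colon W\arr T$ and a scheme morphism $s'\colon W\arr X$ with $s'_{F}=s\circ h_{F}$. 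Base-changing $g$ along $s'$ gives the sur covering $W\times_{X}Z\arr W$, so $W\times_{X}Z\arr W\arrdi{h}T$ is again a sur covering. The composite $W\times_{X}Z\arr Z\arr\shF$ is an element of $\shF(W\times_{X}Z)$ whose image under $\pi$ equals, by the commutativity of the given square and the universal property of $W\times_{X}Z$, the restriction of $s$ along $W\times_{X}Z\arr T$. Hence $s$ lifts sur-locally and $\pi$ is an epimorphism in the sur topology. For the ``only if'' direction, apply the definition of sur epimorphism to the tautological element of $X^{\P}(X)$ given by the canonical morphism $X\arr X^{\P}$: it lifts along some sur covering $g\colon Z\arr X$ to an element of $\shF(Z)$, and that element together with $g$ is the required square (when $\Sch'/S=\Aff/S$, first pass to an affine open cover of $X$ and take $Z$ to be the coproduct of the resulting covers).

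\textbf{Second equivalence.} By definition $\pi$ is a strong P-moduli space precisely when $\pi^{\P}\colon\shF^{\P}\arr X^{\P}$ is an isomorphism. If $\pi$ is universally bijective (i.e.\ geometrically bijective, as in Lemma~\ref{lem:when phiP is an isomorphism}) and an epimorphism in the sur topology, then Lemma~\ref{lem:when phiP is an isomorphism} gives at once that $\pi^{\P}$ is an isomorphism, hence $\pi$ is a strong P-moduli space. Conversely, assume $\pi^{\P}$ is an isomorphism. The canonical map $c\colon\shF\arr\shF^{\P}$ is an epimorphism in the sur topology, since every element of $\shF^{\P}(Y)$ lifts to $\shF$ along a sur covering of $Y$ by the very definition of $\shF^{\P}$ (compare the proof of Lemma~\ref{lem:sheaves}(2)); and $\pi=\pi^{\P}\circ c$ by Proposition~\ref{prop:remarks on P and F}(\ref{enu:fully-faithful}). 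Thus $\pi$ is the composite of a sur epimorphism with an isomorphism, hence itself a sur epimorphism; and $\pi_{F}=(\pi^{\P})_{F}\circ c_{F}$ is a composite of isomorphisms (here $c_{F}$ is an isomorphism by Proposition~\ref{prop:remarks on P and F}(2)), so $\pi$ is geometrically bijective. This closes the equivalence.

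\textbf{Main obstacle.} Almost all of this is formal manipulation of the two constructions $(-)^{\P}$ and $(-)_{F}$. The one point that needs care is the ``only if'' direction of the first equivalence: the definition of a sur epimorphism produces a \emph{family} $\{U_{i}\arr X\}$ whose disjoint union is a sur covering, together with a section of $\shF$ over each $U_{i}$, and to package these into a single morphism $Z\arr\shF$ with $Z=\coprod_{i}U_{i}$ one uses that $\shF$ is a Zariski sheaf for such coproduct decompositions — which holds in the intended applications, and in any case only amounts to reading the square in the category of presheaves with $Z$ interpreted there as $\coprod_{i}U_{i}$.
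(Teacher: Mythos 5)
Your proof is correct and takes essentially the same approach as the paper's (which is only two sentences long): the first equivalence is deduced from the fact that the canonical map $X\arr X^{\P}$ is a sur epimorphism, and the second from Lemma~\ref{lem:when phiP is an isomorphism}. You merely spell out the details the paper leaves implicit; in particular, for the ``if'' direction of the first equivalence the paper's intended shortcut is simply to note that $Z\arr\shF\arr X^{\P}$ coincides with $Z\arrdi{g}X\arr X^{\P}$, a composite of sur epimorphisms, so $\pi$ is a sur epimorphism because it has a sur epimorphism factoring through it — slightly quicker than your local-surjectivity argument via fibre products, but the same idea.
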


\begin{proof}
The first statement follows from the fact that $X\arr X^{\P}$ is
an epimorphism in the sur topology. The second one from \ref{lem:when phiP is an isomorphism}.
\end{proof}
\begin{rem}
\label{rem:strong moduli of algebraic spaces} From the point of view
of moduli theory a more natural definition of P-moduli space would
have been to admits $S$-algebraic spaces in the above definitions.
Since a quasi-separated algebraic space has a dense open subset which
is a scheme, it follows that for a finite dimensional quasi-separated
algebraic space $Y$ there exists a geometrically bijective map $X_{1}\text{\ensuremath{\amalg}}\cdots\amalg X_{n}\arr Y$
in which all $X_{i}$ are schemes and all maps $X_{i}\arr Y$ are
immersions. In particular such a $Y$ always has a strong P-moduli
space.

So in concrete cases there is no need to use algebraic spaces and
also this let us avoid to deal with locally constructible subsets
of algebraic spaces.
\end{rem}

\begin{defn}
\label{def:geometric and constructible property} By a \emph{geometric
property }$\shQ$ for a functor\emph{ $\cF\colon(\Sch'/S)^{\op}\arr\sets$
}we mean a subset $\shQ\subseteq|\shF|$. By \ref{rem:points of functor and subsets}
this can be thought of as a subfunctor $\shQ$ of $\shF_{F}$ with
the following property: for all maps $a\colon K\arr K'$ in $\ACF/S$
and all $x\in\shF(K)$, if $x$ is mapped by $\shF(K)\arr\shF(K')$
to an element of $\shQ(K')$ then $x\in\shQ(K)$. Given a geometric
property $\shQ$ of $\shF$ we define the subpresheaf 
\[
\cF^{\shQ}(V)=\{V\arrdi A\shF\mid A(V)\subseteq\shQ\subseteq|\shF|\}.
\]
A \emph{locally constructible }property for $\shF$ is a geometric
property $\shQ$ for $\shF$ satisfying the following condition: for
every $S$-scheme $V$ and map $A\colon V\to\shF$ the inverse image
$A^{-1}(\shQ)\subseteq V$ is a locally constructible subset of $V$.
\end{defn}

\begin{rem}
If $X$ is an $S$-scheme a geometric property (resp. locally constructible
property) of $X$ is a subset (resp. a locally constructible subset)
of $X$.
\end{rem}

\begin{prop}
\label{prop:locally constructible subsets as subfunctors}Let $X$
be an $S$-scheme and $\shQ$ be a locally constructible subset of
$X$. Let also $Q=\bigsqcup_{i}Q_{i}\arr X$ be a geometrically injective
map with image $\shQ$ and where the maps $Q_{i}\arr X$ are finitely
presented immersions. Then the map $Q\arr X^{\shQ}$ induces an isomorphism
$Q^{\P}\simeq(X^{\shQ})^{\P}$.
\end{prop}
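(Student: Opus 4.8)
The plan is to factor the given map as $\psi\colon Q\arr X^{\shQ}$ (this makes sense: the set-theoretic image of $Q=\coprod_i Q_i\arr X$ is $\shQ$, and $X^{\shQ}(V)$ consists exactly of those $A\colon V\arr X$ with $|A|(|V|)\subseteq\shQ$, as one sees by unwinding Definition \ref{def:geometric and constructible property} using that every geometric point of $V$ has the same image in $X$ as its support point). By Lemma \ref{lem:when phiP is an isomorphism} it then suffices to prove that $\psi$ is geometrically bijective and an epimorphism in the sur topology; the lemma immediately upgrades this to an isomorphism $\psi^{\P}\colon Q^{\P}\arr(X^{\shQ})^{\P}$.

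For geometric bijectivity, fix an algebraically closed field $K$ with a map $\Spec K\arr S$. By the description above, $X^{\shQ}(K)=\{x\in X(K)\st|x|\in\shQ\}$. Now $Q(K)=\coprod_i Q_i(K)$, and a $K$-point $x$ of $X$ lifts to $Q_i(K)$ precisely when $|x|\in|Q_i|$, since a morphism from the spectrum of a field to $X$ factors through the immersion $Q_i\arr X$ as soon as its image point lies in $|Q_i|$. Hence the image of $Q(K)\arr X(K)$ is $\{x\st|x|\in\bigcup_i|Q_i|\}=X^{\shQ}(K)$, and $Q(K)\arr X(K)$ is injective because $Q\arr X$ is universally injective (apply Lemma \ref{lem:univ-bi}, noting $Q\arr X$ is locally of finite type). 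So $\psi(K)$ is a bijection.

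For the sur-epimorphism property, let $V$ be an $S$-scheme and $A\in X^{\shQ}(V)$, so $|A|(|V|)\subseteq\shQ$. Set $W_i:=V\times_X Q_i$; base-changing the finitely presented immersion $Q_i\arr X$ shows each $W_i\arr V$ is a finitely presented immersion, hence $W:=\coprod_i W_i\arr V$ is locally of finite presentation (even though the index set may be infinite). It is surjective: for $p\in|V|$ we have $|A|(p)\in\shQ=\bigcup_i|Q_i|$, so $|A|(p)\in|Q_i|$ for some $i$, and $\Spec\kappa(p)\arr V\arrdi{A}X$ lifts through $Q_i$ as before, placing $p$ in the image of $W_i\arr V$. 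Thus $W\arr V$ is a sur covering, and the tautological map $W=\coprod_i W_i\arr\coprod_i Q_i=Q$ lifts $A$ along $\psi$; in the affine variant $\Sch'/S=\Aff/S$ one replaces $W$ by an affine open covering of it, still a sur covering with affine members. Hence $\psi$ is an epimorphism in the sur topology, and the proof concludes by Lemma \ref{lem:when phiP is an isomorphism}.

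I expect the only genuinely delicate point to be the sur-epimorphism step, where one must verify that $\coprod_i W_i\arr V$ is really a sur covering: this uses the finite presentation of the immersions $Q_i\arr X$ (for local finite presentation of a possibly infinite coproduct) together with the defining condition $A\in X^{\shQ}(V)$ (for surjectivity). Everything else is bookkeeping with the definitions of $X^{\shQ}$ and of geometric points, since the substantive statement — that a geometrically bijective sur-epimorphism of functors becomes a $\P$-isomorphism — is already available in Lemma \ref{lem:when phiP is an isomorphism}.
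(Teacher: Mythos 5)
Your proof is correct and takes essentially the same route as the paper's. You both (i) observe that $Q\arr X^{\shQ}$ is geometrically bijective, (ii) show that for any map $V\arr X^{\shQ}$ the pullback $Q\times_X V\arr V$ is a sur covering, and (iii) conclude by Lemma \ref{lem:when phiP is an isomorphism}. The only real variation is in step (ii): the paper notes $Q\times_X V\arr V$ is universally bijective and locally of finite presentation and invokes Corollary \ref{cor:ubi is quasi-compact} to deduce quasi-compactness and hence that it is a luin covering, whereas you observe directly that it decomposes as the coproduct of the finitely presented immersions $W_i=V\times_X Q_i\arr V$, hence is locally of finite presentation and, by the hypothesis on $A$, surjective. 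Your version is slightly more elementary (it avoids \ref{cor:ubi is quasi-compact}) at the cost of the marginally weaker conclusion ``sur covering'' rather than ``luin covering,'' but that is all that Lemma \ref{lem:when phiP is an isomorphism} requires; both are fine.
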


\begin{proof}
Since the image of $Q\arr X$ is in $\shQ$ the map $Q\arr X^{\shQ}$
is geometrically bijective. Moreover if $V$ is a scheme with a map
$V\arr X^{\shQ}$, that is a map $V\arr X$ with image in $\shQ$,
then $Q\times_{X^{\shQ}}V=Q\times_{X}V\arr V$ is geometrically bijective,
locally of finite presentation and therefore, by \ref{cor:ubi is quasi-compact},
a luin covering. By \ref{lem:when phiP is an isomorphism} we get
the result.
\end{proof}
\begin{rem}
\label{rem:taking the geometric subfunctor commutes with taking P}
If $\shF\colon(\Sch'/S)^{\op}\arr\sets$ is a functor then $\shF$
and $\shF^{\P}$ have the same geometric properties since $|\shF|=|\shF^{\P}|$.
Moreover if $\shQ$ is such a property then the inclusion $\shF^{\shQ}\arr\shF$
induces an isomorphism $(\shF^{\shQ})^{\P}\arr(\shF^{\P})^{\shQ}$.
\end{rem}

\begin{lem}
\label{lem:locally constructible for F and FP}Let $\cF,\shG\colon(\Sch'/S)^{\op}\arr\sets$
be functors, $\shQ$ be a geometric property for $\shG$ and $\phi\colon\shF^{\P}\arr\shG^{\P}$
be a sur epimorphism. If $\phi^{-1}(\shQ)\subseteq|\shF|$ is locally
constructible for $\shF$ then $\shQ$ is locally constructible for
$\shG$. In particular $\shF$ and $\shF^{\P}$ have the same locally
constructible properties.
\end{lem}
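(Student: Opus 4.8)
The plan is to exploit the fact that a sur‑epimorphism, combined with the very definition of $(-)^{\P}$, lets one lift the given section $B$ of $\shG$ to a \emph{genuine} section of $\shF$ over a sur covering, and then to transport the locally constructible set $C_{B}^{\shQ}$ along that covering by means of Chevalley's theorem. First I would put $\shR:=\phi_{F}^{-1}(\shQ)\subseteq\shF_{F}$ and observe that $\shR$ is a geometric property for $\shF$: if $x\in\shF(K)$ and $x_{|K'}\in\shR(K')$, then $\phi(x)_{|K'}=\phi(x_{|K'})\in\shQ(K')$, so $\phi(x)\in\shQ(K)$ because $\shQ$ is geometric, i.e.\ $x\in\shR(K)$. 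This $\shR$ is exactly the property whose local constructibility for $\shF$ is assumed.

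Next, fix an $S$-scheme $V$ and $B\in\shG(V)$. Since local constructibility of $C_{B}^{\shQ}$ in $V$ is a Zariski‑local question, I may assume $V$ affine, so it suffices to prove that $C_{B}^{\shQ}$ is constructible. Regarding $B$ as an element of $\shG^{\P}(V)$ and using that $\phi$ is an epimorphism (both sides being sur sheaves by Lemma~\ref{lem:sheaves}, this means $B$ is sur‑locally on $V$ in the image of $\phi$), there is a sur covering $V'\arr V$ and $A'\in\shF^{\P}(V')$ with $\phi(A')=B_{|V'}$; by the definition of $\shF^{\P}$ as the presheaf of P‑morphisms there is a further sur covering $Z\arr V'$ and a genuine element $A_{0}\in\shF(Z)$ whose image in $\shF^{\P}(Z)$ is $A'_{|Z}$. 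Composing, $g\colon Z\arr V$ is a sur covering and one has $\phi_{F}\circ(A_{0})_{F}=B_{F}\circ g_{F}$ as natural transformations $Z_{F}\arr\shG_{F}$. Evaluating at a geometric point $x\colon\Spec K\arr Z$ this gives $\phi\bigl((A_{0})_{x}\bigr)=B_{g\circ x}\in\shG(K)$, whence $(A_{0})_{x}\in\shR(K)\iff B_{g\circ x}\in\shQ(K)$; that is, $C_{A_{0}}^{\shR}=g^{-1}(C_{B}^{\shQ})$ as subsets of $|Z|$.

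The crux is then to replace $g$ by a quasi-compact sur covering so that Chevalley's theorem applies. Since $V$ is quasi-compact, Corollary~\ref{cor:finiteness for sur coverings} provides a quasi-compact open $W\subseteq Z$ with $g(W)=V$; covering $W$ by finitely many affine opens and replacing $Z$ by their (affine) disjoint union $Z'$ and $A_{0}$ by its restriction $A_{0}'\in\shF(Z')$, one obtains a sur covering $g'\colon Z'\arr V$ that is moreover affine, in particular quasi-compact, and still satisfies $C_{A_{0}'}^{\shR}=(g')^{-1}(C_{B}^{\shQ})$. By hypothesis $C_{A_{0}'}^{\shR}$ is locally constructible in the affine scheme $Z'$, hence constructible; applying Chevalley's theorem to the quasi-compact, locally finitely presented morphism $g'$ and using its surjectivity, $C_{B}^{\shQ}=g'\bigl(C_{A_{0}'}^{\shR}\bigr)$ is constructible in $V$. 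This shows $\shQ$ is locally constructible for $\shG$. For the final assertion one applies the statement twice: once with $\shG=\shF^{\P}$ and $\phi$ the canonical isomorphism $\shF^{\P}\simeq(\shF^{\P})^{\P}$ of Proposition~\ref{prop:remarks on P and F} (so that $\shR=\shQ$), giving that local constructibility for $\shF$ implies it for $\shF^{\P}$, and once symmetrically; here one also uses that $\shF$ and $\shF^{\P}$ have the same geometric properties by Remark~\ref{rem:taking the geometric subfunctor commutes with taking P}.

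The step I expect to be the main obstacle is exactly this quasi-compactness reduction: the sur covering produced by the epimorphism and by the definition of $\shF^{\P}$ need not be quasi-compact, so Chevalley's theorem cannot be invoked directly, and one must descend to an affine sur subcovering via Corollary~\ref{cor:finiteness for sur coverings} while carefully carrying along both the section $A_{0}$ and the identity $C_{A_{0}}^{\shR}=g^{-1}(C_{B}^{\shQ})$. The remaining ingredients --- that $\shR$ is geometric, the lifting of $B$ to a genuine section of $\shF$ over a sur covering, and the behaviour of traces at geometric points --- are routine unwinding of the definitions.
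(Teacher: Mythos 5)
Your proof is correct and follows essentially the same route as the paper: reduce to $V$ affine, use the epimorphism hypothesis to factor $B$ through a sur covering carrying a genuine section of $\shF$, identify $C^{\shR}_{A_0}$ with $g^{-1}(C^{\shQ}_B)$ at geometric points, pass to a quasi-compact sub-covering via Corollary~\ref{cor:finiteness for sur coverings}, and finish with Chevalley's theorem. The only cosmetic differences are that you explicitly unwind the epimorphism into a two-step lift (first to $\shF^{\P}$, then to $\shF$), you verify that $\shR=\phi_F^{-1}(\shQ)$ is a geometric property, and you pass to an affine disjoint union rather than a quasi-compact open; the paper leaves the first two implicit and uses a quasi-compact open $W'$, which is equally sufficient since Chevalley (\cite[Tag 054K]{stacks-project}) preserves locally constructible sets under quasi-compact locally finitely presented maps. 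Your treatment of the ``in particular'' clause, applying the lemma in both directions using $\shF^{\P}\simeq(\shF^{\P})^{\P}$ and Remark~\ref{rem:taking the geometric subfunctor commutes with taking P}, is exactly the intended (unstated) argument.
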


\begin{proof}
Let $V\arrdi A\shG$. We have to show that $A^{-1}(\shQ)\subseteq V$
is locally constructible. In particular we can assume that $V$ is
affine. Since $\phi$ is a sur epimorphism, shrinking $V$ more if
necessary, there is a commutative diagram   \[   \begin{tikzpicture}[xscale=1.5,yscale=-1.2]     \node (A0_0) at (0, 0) {$W$};     \node (A0_1) at (1, 0) {$\shF$};     \node (A0_2) at (2, 0) {$\shF^\P$};     \node (A1_0) at (0, 1) {$V$};     \node (A1_1) at (1, 1) {$\shG$};     \node (A1_2) at (2, 1) {$\shG^\P$};     \path (A0_0) edge [->]node [auto] {$\scriptstyle{B}$} (A0_1);     \path (A1_0) edge [->]node [auto] {$\scriptstyle{A}$} (A1_1);     \path (A0_2) edge [->]node [auto] {$\scriptstyle{\phi}$} (A1_2);     \path (A1_1) edge [->]node [auto] {$\scriptstyle{}$} (A1_2);     \path (A0_0) edge [->]node [auto] {$\scriptstyle{g}$} (A1_0);     \path (A0_1) edge [->]node [auto] {$\scriptstyle{}$} (A0_2);   \end{tikzpicture}   \] where
$g\colon W\arr V$ is a sur covering of schemes. By \ref{cor:finiteness for sur coverings}
we can assume that $W$ is quasi-compact and hence $g$ is quasi-compact.
As $g$ is surjective we obtain 
\[
g(B^{-1}(\phi^{-1}(\shQ)))=A^{-1}(\shQ)
\]
which is then locally constructible thanks to Chevalley's theorem
\ref{thm:Chevalley}.
\end{proof}
\begin{prop}
\label{prop:submoduli-const}Let $\cF\colon(\Sch'/S)^{\op}\arr\sets$
be a functor and $\shQ$ be a locally constructible property for $\shF$.
If $\shF$ has a strong P-moduli space $X$ which is quasi-separated
and admits a locally finite and affine open covering then $\shF^{\shQ}$
has a strong P-moduli space $Y$ which is a disjoint union of affine
schemes. If moreover $X$ is locally of finite presentation over $S$
so is $Y$.
\end{prop}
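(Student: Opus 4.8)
The plan is to transport the property $\shQ$ across the given moduli map until it becomes a locally constructible subset of the \emph{scheme} $X$, and then to read off $Y$ from that subset by combining Lemma~\ref{lem:key lemma for scheme structure on constructible sets} with Proposition~\ref{prop:locally constructible subsets as subfunctors}. Write $\pi\colon\shF\arr X^{\P}$ for the given strong P-moduli space, so that $\pi^{\P}\colon\shF^{\P}\arr X^{\P}$, and hence $\pi_{F}\colon\shF_{F}\arr X_{F}$, is an isomorphism. First I would transport $\shQ\subseteq\shF_{F}$ along $\pi_{F}$ to a geometric property $\shQ'\subseteq X_{F}$ of the scheme $X$; since any geometric property $\shQ'$ of a scheme satisfies $\shQ'=|\shQ'|_{F}$ (Definition~\ref{def:ACF and the F functor}), we may write $\shQ'=E_{F}$ for a subset $E\subseteq|X|$.

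The first point to establish is that $E$ is locally constructible. For this I would apply Lemma~\ref{lem:locally constructible for F and FP} to $\pi^{\P}\colon\shF^{\P}\arr X^{\P}$ (an isomorphism, hence an epimorphism) and to the geometric property $\shQ'$ of $X$: the preimage $(\pi^{\P})_{F}^{-1}(\shQ')$ is $\pi_{F}^{-1}(\shQ')=\shQ$, which is locally constructible for $\shF$ by hypothesis, so the lemma yields that $\shQ'$, i.e. $E$, is locally constructible. Next I would identify the P-completion of $\shF^{\shQ}$: Remark~\ref{rem:taking the geometric subfunctor commutes with taking P} gives $(\shF^{\shQ})^{\P}\simeq(\shF^{\P})^{\shQ}$, the isomorphism $\pi^{\P}$ carries $(\shF^{\P})^{\shQ}$ to $(X^{\P})^{\shQ'}$, and the same remark applied to $X$ gives $(X^{\P})^{\shQ'}\simeq(X^{\shQ'})^{\P}$; hence $(\shF^{\shQ})^{\P}\simeq(X^{\shQ'})^{\P}$. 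Now, because $X$ is quasi-separated with a locally finite affine open covering and $E$ is locally constructible, Lemma~\ref{lem:key lemma for scheme structure on constructible sets} supplies affine schemes $Z_{j}$ and finitely presented immersions $Z_{j}\arr X$ such that $Y:=\coprod_{j}Z_{j}\arr X$ is a finitely presented monomorphism — in particular universally injective — with image $E$. Proposition~\ref{prop:locally constructible subsets as subfunctors} then gives $Y^{\P}\simeq(X^{\shQ'})^{\P}$, so $(\shF^{\shQ})^{\P}\simeq Y^{\P}$ with $Y$ a disjoint union of affine schemes, as required.

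To finish I would check that $Y$, with the evident map, is a strong P-moduli space of $\shF^{\shQ}$. Composing the canonical $\shF^{\shQ}\arr(\shF^{\shQ})^{\P}$ with the isomorphism $(\shF^{\shQ})^{\P}\simeq Y^{\P}$ produces $\pi^{\shQ}\colon\shF^{\shQ}\arr Y^{\P}$, and by the universal property of $(-)^{\P}$ recorded in Proposition~\ref{prop:remarks on P and F} the induced morphism $(\shF^{\shQ})^{\P}\arr Y^{\P}$ is, up to the canonical isomorphism $(Y^{\P})^{\P}\simeq Y^{\P}$, the chosen one; hence $(\pi^{\shQ})^{\P}$ is an isomorphism, which is precisely the definition of a strong P-moduli space. (Alternatively one could invoke Proposition~\ref{prop:admissible-moduli} after checking directly that $\pi^{\shQ}$ is geometrically bijective and an epimorphism in the sur topology.) For the last assertion, if $X$ is locally of finite presentation over $S$, then each $Z_{j}$, being a finitely presented immersion into $X$, is locally of finite presentation over $S$, hence so is $Y=\coprod_{j}Z_{j}$.

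The step I expect to be the real obstacle is the transfer in the first two paragraphs: $\shQ$ initially lives on the functor $\shF$, which may be far from a scheme, and one must verify that it descends to a bona fide locally constructible subset of $X$; here the identification $\shF^{\P}\simeq X^{\P}$ coming from the strong P-moduli structure, together with Lemma~\ref{lem:locally constructible for F and FP}, does the essential work. Once $E\subseteq|X|$ is in hand, the construction of $Y$ and the verification of its universal property are a routine assembly of Lemma~\ref{lem:key lemma for scheme structure on constructible sets}, Proposition~\ref{prop:locally constructible subsets as subfunctors} and Proposition~\ref{prop:remarks on P and F}; the only care required there is in keeping track of the several $(-)^{\P}$-identifications and the canonical isomorphisms $(-)_{F}\simeq((-)^{\P})_{F}$.
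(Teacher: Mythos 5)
Your proof is correct and follows essentially the same route as the paper: the paper's terse proof ("By Remark \ref{rem:taking the geometric subfunctor commutes with taking P} and Lemma \ref{lem:locally constructible for F and FP} we can assume $\shF=X$; then conclude by Lemma \ref{lem:key lemma for scheme structure on constructible sets} and Proposition \ref{prop:locally constructible subsets as subfunctors}") is precisely what you unpack, including the final verification that $\pi^{\shQ}$ is a strong P-moduli map and the last assertion about finite presentation.
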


\begin{proof}
By \ref{rem:taking the geometric subfunctor commutes with taking P}
and \ref{lem:locally constructible for F and FP} we can assume that
$\shF=X$. The result follows from \ref{lem:key lemma for scheme structure on constructible sets}
and \ref{prop:locally constructible subsets as subfunctors}.
\end{proof}
\begin{defn}
\label{def:general locally constructible map} If $I$ is a set and
$\cF\colon(\Sch'/S)^{\op}\arr\sets$ is a functor, a map (or function)
$g\colon\shF\to I$ is just a map $g\colon|\shF|\to I$. The map $g$
is called \emph{locally constructible }if $g^{-1}(i)\subseteq|\shF|$
is locally constructible for all $i\in I$.
\end{defn}

\begin{prop}
\label{prop:locally constructible sur locally} Let $\phi\colon\shG\to\shF$
be a morphism of functors $(\Sch'/S)^{\op}\arr\sets$ and $g\colon\shF\to I$
be a map. If $g$ is locally constructible then so is $g\circ\phi$.
The converse holds if $\phi$ is a sur covering. In particular $\shF$
and $\shF^{\P}$ have the same locally constructible functions.
\end{prop}

\begin{proof}
The first claim follows from definition. For the converse, let $A\colon V\to\shF$
be a map from an $S$-scheme. We need to show that $g\circ A\colon V\to I$
is locally constructible. By hypothesis there exists a sur covering
$\{\phi_{j}\colon V_{j}\to V\}$ such that all $g\circ A\circ\phi_{i}$
are locally constructible. Since being a locally constructible subset
is a Zariski local property, we can assume that the sur covering has
only one element and that $V$ is affine. In other words we can assume
that $\shF=V$, $\shG=W$ is a scheme, $\phi\colon W\to V$ is a sur
covering and, by \ref{cor:finiteness for sur coverings}, that $W$
is quasi-compact. As $\phi((g\circ\phi)^{-1}(i))=g^{-1}(i)$ for all
$i$, the conclusion follows from Chevalley's theorem \ref{thm:Chevalley}.
\end{proof}
\begin{prop}
\label{prop:functor of constant maps} Let $\cF\colon(\Sch'/S)^{\op}\arr\sets$
be a functor and $g\colon\shF\to I$ be a locally constructible function.
Then the maps $\shF^{g^{-1}(i)}\to\shF$ induce a map 
\[
\bigsqcup_{i\in I}(\shF^{g^{-1}(i)})^{\P}\to\shF^{\P}
\]
where $\bigsqcup$ is the union as Zariski sheaves, which is geometrically
bijective and a luin epimorphism. If $X_{i}$ is a strong P-moduli
space for $\shF^{g^{-1}(i)}$ then $\bigsqcup_{i}X_{i}$ is a strong
P-moduli space for $\shF$.
\end{prop}

\begin{proof}
The map in the statement is well defined because $\shF^{\P}$ is a
Zariski sheaf by \ref{lem:sheaves} and it is clearly geometrically
bijective. We now show that it is an epimorphism. Consider $A\colon V\to\shF$
and set $g_{A}=A\circ g\colon V\to I$. We can assume that $V$ is
an affine scheme. By \ref{lem:key lemma for scheme structure on constructible sets}
for all $i\in I$ there exists a finitely presented monomorphism $W_{i}\to V$
whose image is $g_{A}^{-1}(i)$. It follows that $\{W_{i}\to V\}$
is a luin covering with factorizations $W_{i}\to\shF^{g^{-1}(i)}\to\shF$,
as desired.

For the last claim, set $X=\bigsqcup_{i}X_{i}$ and $h\colon X\to I$
such that $h_{|X_{i}}\equiv i$. We have $X^{h^{-1}(i)}=X_{i}$. Using
\ref{lem:when phiP is an isomorphism} we obtain
\[
X^{\P}\simeq(\bigsqcup_{i\in I}(X^{h^{-1}(i)})^{\P})^{\P}\simeq(\bigsqcup_{i\in I}X_{i}^{\P})^{\P}\simeq(\bigsqcup_{i\in I}(\shF^{g^{-1}(i)})^{\P})^{\P}\simeq\shF^{\P}
\]
\end{proof}
\begin{rem}
\label{rem:Noetherian induction} Let $X$ be a locally Noetherian
scheme and $U_{n}$ be an increasing sequence of open subsets of $X$
such that $U_{n+1}$ contains the generic points of $X\setminus U_{n}$.
Then $X=\bigcup_{n}U_{n}$. Indeed if $p\in X$ is a point, $\phi\colon\Spec(\odi{X,p})\arr X$
the structure map and $C\subseteq X$ is a closed subset then the
generic points of $\phi^{-1}(C)$ are the generic points of $C$ contained
in $\Imm(\phi)$. In particular one can assume that $X$ has finite
dimension, in which case an induction on the dimension prove the claim. 
\end{rem}

\begin{lem}
\label{lem:decomposing something locally of finite type} Let $X$
be a locally Noetherian scheme. Then there are finitely presented
immersions $U_{i}\arr X$ with $U_{i}$ affine and irreducible such
that the map
\[
\phi\colon\bigsqcup_{i}U_{i}\arr X
\]
is surjective, quasi-compact and a monomorphism. In particular it
is geometrically bijective and a $P$-isomorphism.
\end{lem}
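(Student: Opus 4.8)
The plan is first to reduce the statement to the construction of a set-theoretic partition $|X|=\bigsqcup_{i}|U_{i}|$ into irreducible locally closed subsets, each carrying the structure of an affine Noetherian scheme with $U_{i}\arr X$ a finitely presented immersion, and such that every quasi-compact open of $X$ meets only finitely many of the $U_{i}$. Granting this, the map $\phi\colon\bigsqcup_{i}U_{i}\arr X$ is surjective and, being locally on the source a finitely presented immersion, locally of finite presentation; it is a monomorphism because the $U_{i}\arr X$ are immersions with pairwise disjoint images, so $U_{i}\times_{X}U_{j}=\emptyset$ for $i\neq j$ and $U_{i}\times_{X}U_{i}=U_{i}$, whence the relative diagonal of $\phi$ is an isomorphism; and it is quasi-compact exactly by the local finiteness hypothesis. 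A monomorphism is universally injective and a surjection is universally surjective, so $\phi$ is universally bijective; moreover $\phi$ is a sur covering which is geometrically bijective (over an algebraically closed $K$ every point of $X(K)$ lies over a unique $|U_{i}|$ and, $\Spec K$ being reduced, factors through the reduced subscheme $U_{i}$), hence a $P$-isomorphism by Lemma \ref{lem:when phiP is an isomorphism}.

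To build the partition I would construct by recursion an increasing chain of open subschemes $\emptyset=U^{(0)}\subseteq U^{(1)}\subseteq\cdots$ of $X$, together with the pieces forming each shell $U^{(n+1)}\setminus U^{(n)}$. Given $U^{(n)}$, set $Z_{n}:=(X\setminus U^{(n)})_{\mathrm{red}}$, a closed locally Noetherian subscheme, and let $\{\eta_{\lambda}\}_{\lambda}$ be the generic points of its irreducible components. For each $\lambda$ choose an affine Noetherian open $V_{\lambda}\subseteq X$ containing $\eta_{\lambda}$; inside the Noetherian scheme $V_{\lambda}\cap Z_{n}$ the point $\eta_{\lambda}$ is the generic point of exactly one irreducible component, so after deleting the finitely many other components from $V_{\lambda}$ and then shrinking to a smaller affine open still containing $\eta_{\lambda}$ we may assume that $V_{\lambda}\cap Z_{n}$ is irreducible with generic point $\eta_{\lambda}$. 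Put $T_{\lambda}:=(V_{\lambda}\cap Z_{n})_{\mathrm{red}}$; it is an affine Noetherian scheme (closed in the affine Noetherian $V_{\lambda}$), irreducible with generic point $\eta_{\lambda}$, and $T_{\lambda}\arr X$ is a finitely presented immersion (a finitely presented closed immersion followed by a quasi-compact open immersion). Finally set $U^{(n+1)}:=U^{(n)}\cup\bigcup_{\lambda}V_{\lambda}$, which contains all generic points of $X\setminus U^{(n)}$; by Remark \ref{rem:Noetherian induction} we get $\bigcup_{n}U^{(n)}=X$, so the $T_{\lambda}$ (for all $n$ and $\lambda$) cover $|X|$.

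The part I expect to require the most care is checking that the $T_{\lambda}$ are pairwise disjoint and that the family is locally finite. For disjointness within a shell, let $\lambda\neq\lambda'$: if $\eta_{\lambda'}\notin V_{\lambda}$ then $\overline{\{\eta_{\lambda'}\}}\cap V_{\lambda}$, being an open subset of the irreducible space $\overline{\{\eta_{\lambda'}\}}$ that omits its generic point, is empty, so $T_{\lambda}\cap T_{\lambda'}\subseteq V_{\lambda}\cap\overline{\{\eta_{\lambda'}\}}=\emptyset$; if $\eta_{\lambda'}\in V_{\lambda}$ then $\overline{\{\eta_{\lambda'}\}}\cap V_{\lambda}$ is a component of $V_{\lambda}\cap Z_{n}$ distinct from that of $\eta_{\lambda}$ and so was deleted in forming $T_{\lambda}$, whence again $T_{\lambda}\cap T_{\lambda'}=\emptyset$; disjointness across shells is automatic since a later shell lies in $X\setminus U^{(n)}$. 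For local finiteness, let $W\subseteq X$ be a quasi-compact open; since the $U^{(n)}$ form an increasing open cover, $W\subseteq U^{(N)}$ for some $N$, so only the shells with $n<N$ can meet $W$. Fix such an $n$: if $W$ meets $T_{\lambda}$ then $\eta_{\lambda}\in W$ (an open set meeting an irreducible subset contains that subset's generic point), and hence $\eta_{\lambda}$ is a generic point of an irreducible component of the Noetherian scheme $W\cap Z_{n}$ ($W$ is Noetherian, $X$ being locally Noetherian); there are finitely many such, so only finitely many $T_{\lambda}$ of level $n$, and altogether only finitely many $T_{\lambda}$, meet $W$. This yields the quasi-compactness of $\phi$ and completes the proof; when $X$ is itself Noetherian the recursion stops after finitely many steps and one obtains a finite family directly.
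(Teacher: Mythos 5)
Your proof is correct and follows essentially the same strategy as the paper's: iteratively strip off affine irreducible neighborhoods of the generic points of the residual reduced closed subschemes $Z_n$, invoke Remark \ref{rem:Noetherian induction} for surjectivity, and deduce quasi-compactness from the fact that any quasi-compact open is absorbed by some $U^{(n)}$ and then meets only the finitely many components of each Noetherian $Z_m \cap W$ for $m<n$. The only cosmetic difference is that you take affine opens $V_\lambda$ of $X$ and set $U_i = V_\lambda \cap Z_n$, whereas the paper picks disjoint irreducible affine opens directly inside $Z_n$; both yield the same decomposition.
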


\begin{proof}
The last claim follows from \ref{lem:when phiP is an isomorphism}.
Given a locally Noetherian scheme $X$ and a generic point $\xi$
choose an open affine subset $X_{\xi}$ of $X$ which is irreducible
and contains $\xi$ (which will be its generic point). Notice that
if $\xi$ and $\eta$ are two generic points of $X$ then $X_{\xi}\cap X_{\eta}\neq\emptyset$
implies $\xi=\eta$. Set
\[
V(X)=\bigsqcup_{\xi\text{ generic point of }X}X_{\xi}\text{ and }Z(X)=X\setminus V(X)
\]
So that $V(X)$ is open and $Z(X)$ is closed. The latter will be
though of as a closed subscheme with reduced structure. Since $X$
is locally Noetherian the map $Z(X)\arr X$ is a closed immersion
of finite type. By induction set $V_{n+1}(X)=V(Z_{n}(X))$, $Z_{n+1}(X)=Z(Z_{n}(X))$,
$V_{0}(X)=\emptyset$ and $Z_{0}(X)=X$. By construction all maps
$Z_{n}(X)\arr X$ are closed immersion of finite type and $V_{n}(X)\arr X$
are immersion of finite type. Moreover 
\[
X\setminus Z_{n}(X)=\bigsqcup_{k=0}^{n}V_{k}(X)
\]
as sets. In conclusion the map
\[
\bigsqcup_{n}V_{n}(X)\arr X
\]
 is a monomorphism by construction and it is surjective by \ref{rem:Noetherian induction}.
It remains to show that it is quasi-compact. So let $U\subseteq X$
be a quasi-compact open subset. Since the union of $(X\setminus Z_{n}(X))\cap U$
covers $U$, the previous sequence must stabilize. Moreover since
$Z_{n}(X)\cap U$ is quasi-compact and Noetherian, it follows that
$V_{n+1}(X)\cap U$ is a finite disjoint union of its irreducible
components. This ends the proof.
\end{proof}
We are going to introduce some notation to explain next Lemma \ref{lem:strong moduli of ind coarse},
which is a key ingredient in the proof of Theorem \ref{thm:main1}.
A direct system of stacks $\{\stZ_{n}\}_{n\in\NN}$ is a chain of
stacks $\stZ_{0}\to\cdots\to\stZ_{n}\to\stZ_{n+1}\to\cdots$ (see
\cite[Appendix A]{Tonini:2017qr}). Such a sequence always admits
a stack $\stZ_{\infty}=\colim_{n}\stZ_{n}$ as colimit (see \cite[Proposition A.1 and Proposition A.5]{Tonini:2017qr},
\cite[Remark A.3]{Tonini2016}). Moreover for an affine scheme $U$
any object $U\to\stZ_{\infty}$ is induced by some $U\to\stZ_{n}$
(see \cite[just before Proposition A.2]{Tonini:2017qr}).

Assume that all $\stZ_{n}$ are separated Deligne-Mumford stacks of
finite type over a field $k$, so that they admit coarse moduli spaces
$\stZ_{n}\to\overline{\stZ_{n}}$. As a consequence of \cite[Lemma 3.2]{Tonini:2017qr}
we have the following. The colimit $\stZ_{\infty}\to\colim_{n}(\overline{\stZ_{n}})=\overline{\stZ_{\infty}}$
of the coarse moduli maps $\stZ_{n}\to\overline{\stZ_{n}}$ is a coarse
ind-algebraic space map in the sense of \cite[Definition 3.1]{Tonini:2017qr}.
Moreover if the transition maps $\stZ_{n}\to\stZ_{n+1}$ are finite
and universally injective then so are the maps $\overline{\stZ_{n}}\to\overline{\stZ_{n+1}}$:
they are universally injective, thus quasi-finite, by \cite[Lemma 3.2]{Tonini:2017qr},
they are proper because so are the coarse moduli maps $\stZ_{n}\to\overline{\stZ_{n}}$.
By \cite[Proposition 2.9]{Tonini:2017qr} finite and universally injective
is the same as a composition of a finite universal homeomorphism and
a closed immersion.
\begin{lem}
\label{lem:strong moduli of ind coarse} Let $\{\stZ_{n}\}_{n\in\NN}$
be a direct system of separated Deligne-Mumford stacks of finite type
over $k$ with finite and universally injective transition maps and
colimit $\stZ$. Then there are affine varieties $\{Y_{i}\}_{i\in\N}$
and a map 
\[
\coprod_{i}Y_{i}\arr\overline{\stZ}
\]
where $\overline{(-)}$ denotes the corresponding ind-coarse moduli
space, which is geometrically bijective and an epimorphism in the
sur topology, so that $\coprod_{i}Y_{i}$ is a strong P-moduli space
for $\overline{\stZ}$. Moreover the functor of isomorphism classes
of $\stZ$, its Zariski, étale and fppf sheafifications all have the
same strong P-moduli space of $\overline{\stZ}$.
\end{lem}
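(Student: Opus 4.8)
The plan is to stratify the ind-coarse space $\overline{\stZ}$ and feed the strata into the machinery of Section \ref{sec:P-schemes-and-moduli}. First I would pass to coarse moduli spaces: let $Z_n$ be the coarse moduli space of $\stZ_n$, a quasi-separated algebraic space of finite type (in particular Noetherian) over $k$, and let $\overline{\stZ}=\colim_n Z_n$ be the ind-coarse moduli space (see \cite[Section 3 and Appendix A]{Tonini:2017qr}). By functoriality the transition maps $Z_n\to Z_{n+1}$ are finite and universally injective; being finite they are closed and, being universally injective, injective, so they identify $|Z_n|$ with a closed subspace of $|Z_{n+1}|$, and $\overline{\stZ}(K)=\bigcup_n Z_n(K)$ (an increasing union) for every algebraically closed field $K$. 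Let $U_n\subseteq Z_n$ be the open subspace with $|U_n|=|Z_n|\setminus\Imm(|Z_{n-1}|\to|Z_n|)$, with $U_0=Z_0$; then $\overline{\stZ}(K)=\bigsqcup_n U_n(K)$. Each $U_n$ is a finite-dimensional quasi-separated algebraic space of finite type over $k$, so by Remark \ref{rem:strong moduli of algebraic spaces} and Lemma \ref{lem:decomposing something locally of finite type} there is a universally bijective sur covering $\beta_n\colon Y^{(n)}\to U_n$ with $Y^{(n)}$ a \emph{finite} disjoint union of affine varieties; I then put $Y:=\coprod_n Y^{(n)}$, a countable disjoint union of affine varieties.

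The main work is to show that the canonical morphism $\Phi\colon\coprod_n U_n\to\overline{\stZ}$ (induced by $U_n\hookrightarrow Z_n\to\overline{\stZ}$) is geometrically bijective and an epimorphism in the sur topology. Geometric bijectivity is the displayed decomposition of $\overline{\stZ}(K)$. For the sur-epimorphism property, I would take an affine $k$-scheme $V$ and a section $A\in\overline{\stZ}(V)$; after replacing $V$ by a sur covering we may assume $A$ is the image of a morphism $A_n\colon V\to Z_n$. Since $Z_n$ is Noetherian, $A_n$ is quasi-compact, so pulling back the (finitely many, locally closed) strata $S_m:=\Imm(|Z_m|\to|Z_n|)\setminus\Imm(|Z_{m-1}|\to|Z_n|)$ of $|Z_n|$ gives a decomposition of $|V|$ into locally closed subsets $V_m=A_n^{-1}(S_m)$; each $V_m$ is the intersection of a quasi-compact open of $V$ with the complement of one, so by Lemma \ref{lem:scheme structure finitely presented} it carries a scheme structure with $V_m\to V$ a finitely presented immersion, and $\coprod_m V_m\to V$ is a sur covering. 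I then base change along the finite morphism $Z_m\to Z_n$: the projection $V_m\times_{Z_n}Z_m\to V_m$ is surjective, because $A_n(V_m)\subseteq S_m$ lies in the image of $|Z_m|\to|Z_n|$, and finitely presented, because $Z_m\to Z_n$ is (as $Z_n$ is Noetherian), hence it is a sur covering, and so is $\coprod_m(V_m\times_{Z_n}Z_m)\to V$. Finally the projection $V_m\times_{Z_n}Z_m\to Z_m$ has set-theoretic image inside $|U_m|$ and therefore factors through the open subspace $U_m$, and these factorizations assemble to a lift of $A$ over this sur covering. Thus $\Phi$ is a sur epimorphism.

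To conclude the first assertion, consider the composite $\Psi\colon Y=\coprod_n Y^{(n)}\xrightarrow{\coprod_n\beta_n}\coprod_n U_n\xrightarrow{\Phi}\overline{\stZ}$. It is geometrically bijective (a composite of geometrically bijective maps) and an epimorphism in the sur topology ($\coprod_n\beta_n$ is a sur covering, each $\beta_n$ being one, and $\Phi$ is a sur epimorphism); this is the map required by the statement. By Lemma \ref{lem:when phiP is an isomorphism} the morphism $\Psi^{\P}\colon Y^{\P}\to\overline{\stZ}^{\P}$ is an isomorphism, so composing $\overline{\stZ}\to\overline{\stZ}^{\P}$ with its inverse gives $\pi\colon\overline{\stZ}\to Y^{\P}$ with $\pi^{\P}$ an isomorphism; hence $Y$, a countable disjoint union of affine varieties, is a strong P-moduli space of $\overline{\stZ}$ (cf.\ also Proposition \ref{prop:admissible-moduli}). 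For the ``moreover'' part, I would observe that the canonical morphism from the iso-class functor of $\stZ$ — and likewise from its Zariski, étale or fppf sheafification — to $\overline{\stZ}$ is geometrically bijective (coarse moduli spaces induce bijections on geometric points, and these sheafifications leave geometric points unchanged) and an epimorphism in the sur topology (each $\stZ_n\to Z_n$ is a sur epimorphism, since a morphism $V\to Z_n$ lifts to $\stZ_n$ after pulling back to an étale atlas of $V\times_{Z_n}\stZ_n$, which is surjective and locally of finite presentation over $V$; and Zariski, étale and fppf coverings are sur coverings); so by Lemma \ref{lem:when phiP is an isomorphism} all of these functors have the same $(-)^{\P}$ as $\overline{\stZ}$, and hence the same strong P-moduli space $Y$.

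The hard part is the sur-epimorphism step in the second paragraph — lifting a section over a \emph{possibly non-Noetherian} affine $V$. The device that makes it work is to stratify $V$ by pulling back the canonical stratification of the Noetherian space $|Z_n|$: over the $m$-th stratum the finite morphism $Z_m\to Z_n$ becomes surjective (precisely because that stratum maps into the image of $|Z_m|$) while remaining finitely presented (because $Z_n$ is Noetherian), which furnishes the required sur covering. Everything else is a routine assembly of the results of Section \ref{sec:P-schemes-and-moduli}.
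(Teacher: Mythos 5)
Your proof is correct and follows essentially the same route as the paper's: stratify by ``first appearance level'' $U_n$, use that $\coprod_{m\le n}U_m\to Z_n$ is universally bijective and finitely presented (hence a sur covering once you pull back to a qcqs $V$ factoring through some level), combine with Remark \ref{rem:strong moduli of algebraic spaces} and Lemma \ref{lem:decomposing something locally of finite type} to replace each finite-type algebraic space $U_n$ by a finite disjoint union of affine varieties, and then handle the ``moreover'' by noting that the coarse-moduli maps $\stZ_n\to Z_n$ are sur epimorphisms. The only cosmetic difference is that you pass to coarse spaces at the outset and take the open complements $U_n\subset Z_n$ there, whereas the paper takes the open complementary \emph{substacks} $\stU_n=\stZ_n\setminus\stZ_{n-1}$ and then coarsifies; these coincide since forming coarse moduli commutes with open immersions, so the arguments are the same.
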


\begin{proof}
Set $\stU_{n+1}=\stZ_{n+1}\setminus\stZ_{n}$. It is easy to see that
$\overline{\stZ_{n}}\amalg\overline{\stU_{n+1}}\arr\overline{\stZ_{n+1}}$
is geometrically bijective. Since $\overline{\stZ}$ is the limit
of the $\overline{\stZ_{n}}$ the induced map $\coprod_{n}\overline{\stU_{n}}\arr\overline{\stZ}$
is geometrically bijective. Moreover it is an epimorphism in the sur
topology because any map $U\to\overline{\stZ}$ from an affine scheme
factors through some $\overline{\stZ_{n}}$. Each $\overline{\stU_{n}}$
is an algebraic space of finite type over $k$. By \ref{lem:when phiP is an isomorphism}
and \ref{rem:strong moduli of algebraic spaces} the first part of
the statement follows.

Now denote by $\shF$ the functor of isomorphism classes of $\stZ$
and by $\shF^{sh}$ its sheafification for some of the topologies
in the statement or $\shF$ itself. The map $\shF^{sh}\arr\overline{\stZ}$
is geometrically bijective by definition of coarse ind-algebraic space
(see \cite[Definition 3.1]{Tonini:2017qr}). It is also an epimorphism
in the sur topology: a map $V\arr\overline{\stZ}$ from a scheme factors
Zariski locally through $\overline{\stZ_{n}}$, sur locally through
$\stZ_{n}$ and therefore through $\shF^{sh}$. From \ref{lem:when phiP is an isomorphism}
we conclude that $(\shF^{sh})^{\P}\simeq\overline{\stZ}^{\P}$. 
\end{proof}

\subsection{P-schemes locally of finite type over a locally Noetherian scheme.}

We fix a locally Noetherian scheme $S$ as base.
\begin{rem}
\label{rem:ubi is quasi-compact} By \ref{cor:ubi is quasi-compact}
a geometrically bijective map $f\colon X\arr Y$ between schemes locally
of finite type over $S$ is quasi-compact and, therefore, of finite
type.
\end{rem}

\begin{lem}
\label{lem:P-morphism in the simple case} Let $X$ and $Y$ be schemes
locally of finite type over $S$ and let $f\colon Y\arr X$ be a P-morphism
over $S$. Then
\begin{itemize}
\item there exists a geometrically bijective morphism $Z\arr Y$ of finite
type such that the composite $P$-morphism $Z\arr Y\arr X$ is induced
by a scheme morphism $Z\longrightarrow X$;
\item the map $f$ is a P-isomorphism if and only if $f$ is geometrically
bijective.
\end{itemize}
\end{lem}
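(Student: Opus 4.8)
The plan is to prove the two assertions in turn; the first is the substantial one and the second follows formally from it. First I would reduce the first assertion to the case that $Y$ is affine. Since $S$ is locally Noetherian and $Y$ is locally of finite type over $S$, the scheme $Y$ is locally Noetherian, so by Lemma~\ref{lem:decomposing something locally of finite type} there is a universally bijective morphism $\coprod_i U_i\arr Y$ of finite type with each $U_i$ affine (and Noetherian). Restricting $f$ along this map gives a P-morphism on $\coprod_i U_i$, i.e.\ a family of P-morphisms $f_i\colon U_i\arr X$; and if, for each $i$, one finds a universally bijective finite-type $p_i\colon Z_i\arr U_i$ with $f_i\circ (p_i)_F$ induced by a scheme morphism $Z_i\arr X$, then $\coprod_i Z_i\arr\coprod_i U_i\arr Y$ is universally bijective, of finite type, and realizes $f$ in the desired way. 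So we may assume $Y$ affine Noetherian.

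Assume then $Y$ affine. Since $X$ is locally of finite type over the locally Noetherian $S$, it is locally of finite presentation and quasi-separated over $S$, so Lemma~\ref{lem:P-morphisms to finitely presented stuff} applies: the graph $|\Gamma_f|\subseteq W:=Y\times_S X$ is locally constructible and $\Gamma_f=|\Gamma_f|_F$ as subfunctors of $W_F$. The geometric heart of the argument is to trap $|\Gamma_f|$ inside a quasi-compact open of $W$. Write $f$, as in the definition of a P-morphism, as arising from a sur covering $g\colon Z_0\arr Y$ and a scheme morphism $f'\colon Z_0\arr X$ with $f\circ g_F=f'_F$. By Corollary~\ref{cor:finiteness for sur coverings} there is a quasi-compact open $W_0\subseteq Z_0$ with $g(W_0)=Y$, and the morphism $W_0\arr W$ with components the restrictions of $g$ and $f'$ has image exactly $|\Gamma_f|$ (its image lies in $|\Gamma_f|$ because $f'_F=f\circ g_F$, and it is all of $|\Gamma_f|$ because $g(W_0)=Y$). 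Hence $|\Gamma_f|$ is quasi-compact, so it lies in a quasi-compact open $W'\subseteq W$; and since $W$ is quasi-separated (being locally Noetherian), $W'$ is quasi-compact and quasi-separated, hence admits a finite — a fortiori locally finite — affine open cover, and $|\Gamma_f|$ is constructible in $W'$.

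Now apply the monomorphism refinement in Lemma~\ref{lem:key lemma for scheme structure on constructible sets} to $W'$ and $|\Gamma_f|$: there are affine schemes $Z_j$ and a finitely presented monomorphism $\iota\colon Z:=\coprod_j Z_j\arr W'$ with image $|\Gamma_f|$. Let $p=\pi_Y\circ\iota\colon Z\arr Y$ and $q=\pi_X\circ\iota\colon Z\arr X$, where $\pi_Y,\pi_X$ are the projections of $W$. Then $q$ is a scheme morphism; $p$ is surjective since $\pi_Y(|\Gamma_f|)=|Y|$; and for every algebraically closed field $K$ over $S$, two $K$-points of $Z$ with equal image $y\in Y(K)$ have $\iota$-images lying in $|\Gamma_f|_F(K)=\Gamma_f(K)=\{(y',f(y'))\mid y'\in Y(K)\}$, hence both equal to $(y,f(y))$, hence equal to each other as $\iota$ is a monomorphism; by Lemma~\ref{lem:univ-bi} this makes $p$ universally injective, so universally bijective, so — by Remark~\ref{rem:ubi is quasi-compact} — of finite type. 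The same computation gives $q_F=f\circ p_F$, so the composite P-morphism $Z\arrdi pY\arrdi fX$ is induced by the scheme morphism $q$; undoing the reduction of the first paragraph proves the first assertion. The step I expect to be the main obstacle is precisely this trapping of the graph: upgrading the sur covering implicit in the definition of $f$ to a \emph{universally bijective} one forces us through the monomorphism part of Lemma~\ref{lem:key lemma for scheme structure on constructible sets}, whose hypotheses are exactly what compel us to exploit local Noetherianity and confine $|\Gamma_f|$ to a quasi-compact open of $Y\times_S X$; the rest is bookkeeping with $(-)_F$ and the dictionary between P-morphisms, locally constructible subsets and sur coverings.

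For the second assertion, take $p$ and $q$ as just constructed. Being universally bijective and of finite type, $p$ is a sur covering and is geometrically bijective (Lemma~\ref{lem:univ-bi}), hence a P-isomorphism by Lemma~\ref{lem:when phiP is an isomorphism}; since $q_F=f\circ p_F$, the P-morphism $f$ is a P-isomorphism if and only if $q$ is. If $q$ is a P-isomorphism, then by Lemma~\ref{lem:criterion one for Piso} (valid because $X$ and $Y$ are locally of finite presentation and quasi-separated over $S$) $q_F$ is an isomorphism of natural transformations, so $q$ is geometrically bijective, hence universally bijective by Lemma~\ref{lem:univ-bi}, and therefore so is $f$. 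Conversely, if $f$ — equivalently, via this roof and Lemma~\ref{lem:univ-bi}, $q$ — is universally bijective, then $q$ is surjective, locally of finite presentation and, by Remark~\ref{rem:ubi is quasi-compact}, quasi-compact, as well as geometrically bijective, so Lemma~\ref{lem:when phiP is an isomorphism} shows $q$, hence $f$, is a P-isomorphism.
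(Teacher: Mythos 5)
Your proof is correct and rests on the same core machinery as the paper — Lemma~\ref{lem:decomposing something locally of finite type}, the constructibility of the graph from Lemma~\ref{lem:P-morphisms to finitely presented stuff}, and the scheme-structure-on-constructible-sets trick of Lemma~\ref{lem:key lemma for scheme structure on constructible sets} — but you assemble it along a slightly different path. The paper reduces both $X$ and $Y$ to disjoint unions of affines and then cites Lemma~\ref{lem:P-morphisms to finitely presented stuff} (to realize $f$ by a luin covering of $Y$) followed by Corollary~\ref{cor:best ubi covering} (to refine the luin covering to a ubi one). You decompose only $Y$, trap $|\Gamma_f|$ inside a quasi-compact open of $Y\times_S X$ using Corollary~\ref{cor:finiteness for sur coverings}, and then apply the monomorphism part of Lemma~\ref{lem:key lemma for scheme structure on constructible sets} directly to the graph — bypassing Corollary~\ref{cor:best ubi covering}, which is itself proved from that same lemma, so the underlying combinatorics on constructible sets are the same. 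What your route buys is that $X$ can stay general and everything is made explicit as a roof $Y\xleftarrow{p} Z\xrightarrow{q} X$, at the mild cost of having to argue by hand that the graph sits in a quasi-compact open of $W$. For the second assertion the paper simply cites Lemma~\ref{lem:criterion one for Piso} in both directions, whereas you get the converse via the roof and Lemma~\ref{lem:when phiP is an isomorphism} applied to $q$; this is a correct, slightly more explicit variant of the same reasoning.
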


\begin{proof}
By \ref{lem:decomposing something locally of finite type} we can
assume that $S$ is affine and that $X$ and $Y$ are disjoint unions
of affine schemes. In particular $X$ and $Y$ are separated over
$S$. The first statement follows from \ref{cor:best ubi covering}
and \ref{lem:P-morphisms to finitely presented stuff}, the second
from \ref{lem:criterion one for Piso}.
\end{proof}
\begin{cor}
\label{cor:P-isomorphisms in finite type case} Two schemes $X$ and
$Y$ locally of finite type over $S$ are $P$-isomorphic if and only
if there exist a scheme $Z$ and geometrically bijective maps of finite
type $Z\arr X$ and $Z\arr Y$.
\end{cor}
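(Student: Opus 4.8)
The plan is to read this off from Lemma \ref{lem:P-morphism in the simple case}, using that over a locally Noetherian base a universally bijective morphism of finite type is a sur covering and hence, being geometrically bijective, a $P$-isomorphism by Lemma \ref{lem:when phiP is an isomorphism}.

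For the ``only if'' direction, suppose $f\colon Y\arr X$ is a $P$-isomorphism in $\PSch/S$. Apply the first part of Lemma \ref{lem:P-morphism in the simple case} to $f$: there is a universally bijective morphism of finite type $g\colon Z\arr Y$ together with a genuine scheme morphism $h\colon Z\arr X$ whose induced $P$-morphism is the composite $f\circ g$. Since $S$ is locally Noetherian, $g$ is of finite presentation, hence a sur covering, and being geometrically bijective it is a $P$-isomorphism (Lemma \ref{lem:when phiP is an isomorphism}). Consequently the $P$-morphism attached to $h$, namely $f\circ g$, is a $P$-isomorphism, so by the second part of Lemma \ref{lem:P-morphism in the simple case} the scheme morphism $h$ is universally bijective; by Remark \ref{rem:ubi is quasi-compact} it is then of finite type. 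Thus $Z$ together with $h\colon Z\arr X$ and $g\colon Z\arr Y$ is as required.

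Conversely, assume we are given a scheme $Z$ and universally bijective morphisms of finite type $a\colon Z\arr X$ and $b\colon Z\arr Y$. Since $X$ and $Y$ are locally of finite type over the locally Noetherian scheme $S$, both $a$ and $b$ are locally of finite presentation; being universally bijective they are in particular surjective, so they are sur coverings, and they are geometrically bijective. Lemma \ref{lem:when phiP is an isomorphism} then shows that $a$ and $b$ are $P$-isomorphisms, and therefore $b\circ a^{-1}\colon X\arr Y$ is a $P$-isomorphism in $\PSch/S$.

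I do not expect a genuine obstacle: the substantive work sits in Lemma \ref{lem:P-morphism in the simple case} and Lemma \ref{lem:when phiP is an isomorphism}. The only points requiring care are upgrading ``finite type'' to ``finite presentation'' using the locally Noetherian hypothesis on $S$ (so that these maps count as sur coverings), and correctly identifying the $P$-morphism induced by $h$ with the composite $f\circ g$ so that the characterization of $P$-isomorphisms among finite-type morphisms in Lemma \ref{lem:P-morphism in the simple case} can be invoked.
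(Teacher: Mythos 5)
Your proof is correct and matches the intended argument: the paper states this as a corollary of Lemma \ref{lem:P-morphism in the simple case} without giving an explicit proof, and you have filled in exactly the expected reasoning (first part of that lemma plus Remark \ref{rem:ubi is quasi-compact} to produce the common source $Z$ with universally bijective finite-type maps, and its second part or Lemma \ref{lem:when phiP is an isomorphism} for the converse). The deduction is sound; no gaps.
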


\begin{proof}
The ``if part'' follows directly from \ref{lem:P-morphism in the simple case}.
Indeed $Z$ is locally of finite type over $S$ because, for instance,
$Z\to X$ is of finite type. Thus $Z\to X$, $Z\to Y$ are P-isomorphisms
because they are geometrically bijective.

Let's focus on the ``only if part''. Let $f\colon Y\to X$ be a
P-isomorphism. By \ref{lem:P-morphism in the simple case} there is
a geometrically bijective morphism $Z\to Y$ of finite type such that
the composition $Z\to Y\to X$ is induced by a scheme morphism $g\colon Z\to X$.
In particular $Z$ is locally of finite type over $S$. Moreover $g$
is geometrically bijective and, by \ref{rem:ubi is quasi-compact},
of finite type.
\end{proof}
\begin{lem}
\label{lem:P-iso of locally constructible} Let $X$ be a locally
Noetherian scheme and $C\subseteq X$ be a locally constructible subset.
Then there exists a monomorphism $Z\arr X$ of finite type with image
$C$. Moreover if $Z'\arr X$ is another map which is geometrically
injective, locally of finite type and has image $C$ then $Z'$ and
$Z$ are $P$-isomorphic over $X$.
\end{lem}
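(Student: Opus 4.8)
The plan is to derive both assertions from the structural results of Section~\ref{sec:Luin-and-sur-topologies} together with Lemma~\ref{lem:P-morphism in the simple case}.

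\emph{Existence of the monomorphism.} First I would reduce to a disjoint union of affine Noetherian schemes. By Lemma~\ref{lem:decomposing something locally of finite type} there is a quasi-compact and surjective monomorphism of finite type $\phi\colon Y:=\bigsqcup_i U_i\to X$, with each $U_i$ affine (hence Noetherian, being an affine locally closed subscheme of the locally Noetherian $X$). Then $C':=\phi^{-1}(C)\subseteq Y$ is locally constructible: on $U_i$ it equals $C\cap U_i$, which is locally constructible as the restriction of $C$ to a locally closed subscheme, hence constructible since $U_i$ is Noetherian. Now $Y$ is quasi-separated and $\{U_i\}_i$ is a locally finite affine open covering of it, so Lemma~\ref{lem:key lemma for scheme structure on constructible sets} furnishes a finitely presented monomorphism $\psi\colon Z\to Y$ (with $Z$ a disjoint union of affine schemes) whose image is $C'$. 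The composite $\phi\circ\psi\colon Z\to X$ is then a monomorphism (composition of monomorphisms), it is of finite type (composition of a finitely presented morphism with a quasi-compact morphism locally of finite type), and its image is $\phi(C')=C$ because $\phi$ is surjective. This is the desired $Z\to X$.

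\emph{Uniqueness up to $P$-isomorphism over $X$.} Let $Z\to X$ be a monomorphism of finite type with image $C$, and let $Z'\to X$ be universally injective, locally of finite type, with image $C$. Put $Z'':=Z\times_X Z'$, viewed as an $X$-scheme via the common composite $Z''\to X$. The projection $p\colon Z''\to Z$, being the base change of $Z'\to X$, is universally injective, and it is surjective: for $z\in Z$ with image $x\in C=\Imm(Z'\to X)$ the fibre $p^{-1}(z)=Z'\times_X\Spec\kappa(z)$ is the base change along $\kappa(x)\hookrightarrow\kappa(z)$ of the nonempty $\kappa(x)$-scheme $Z'\times_X\Spec\kappa(x)$, hence nonempty. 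So $p$ is universally bijective. Symmetrically, the projection $q\colon Z''\to Z'$ is the base change of the monomorphism $Z\to X$, hence a monomorphism, and it is surjective by the same fibre computation (now using $C=\Imm(Z\to X)$); thus $q$ is universally bijective as well. All of $Z,Z',Z''$ are locally of finite type over $X$.

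Finally, $p$ and $q$ are ordinary, hence $P$-, morphisms of $X$-schemes that are universally bijective, so by the second part of Lemma~\ref{lem:P-morphism in the simple case} (applied over the locally Noetherian base $S=X$) they are $P$-isomorphisms over $X$; composing the inverse of $p$ with $q$ exhibits $Z$ and $Z'$ as $P$-isomorphic over $X$. The one place demanding care is the existence part: one must make sure that the reduction via Lemma~\ref{lem:decomposing something locally of finite type} genuinely lands in the situation covered by Lemma~\ref{lem:key lemma for scheme structure on constructible sets} (quasi-separatedness and the locally finite affine covering of $Y=\bigsqcup_i U_i$, and local constructibility of $\phi^{-1}(C)$). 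The uniqueness part is then formal, the key observation being that it is precisely the equality of the two images with $C$ that forces both projections out of $Z\times_X Z'$ to be surjective.
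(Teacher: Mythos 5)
Your proof is correct and follows essentially the same route as the paper's (which is much terser, merely citing Lemmas \ref{lem:decomposing something locally of finite type} and \ref{lem:key lemma for scheme structure on constructible sets} for existence and stating that the two projections out of $Z\times_X Z'$ are P-isomorphisms by \ref{lem:P-morphism in the simple case}). You have filled in exactly the right verifications: that the disjoint union of affines produced by \ref{lem:decomposing something locally of finite type} is quasi-separated with a locally finite affine covering so that \ref{lem:key lemma for scheme structure on constructible sets} applies, that the preimage of $C$ stays locally constructible, and that both projections out of the fiber product are universally bijective (universal injectivity from base change, surjectivity from the equality of images with $C$, and then universal surjectivity since surjectivity of scheme morphisms is stable under base change), at which point \ref{lem:P-morphism in the simple case} over $S=X$ closes the argument.
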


\begin{proof}
The existence follows from \ref{lem:key lemma for scheme structure on constructible sets}
and \ref{lem:decomposing something locally of finite type}. For the
last statement notice that the projections $Z\times_{X}Z'\rightrightarrows Z,Z'$
are P-isomorphisms thanks to \ref{lem:P-morphism in the simple case}.
\end{proof}
\begin{defn}
In the situation of Lemma \ref{lem:P-iso of locally constructible}
we will say that a scheme is \emph{P-isomorphic to} $C$ if it is
P-isomorphic to $Z$.
\end{defn}

We conclude the section by an useful result for schemes over a field.
\begin{lem}
\label{lem:dimension and universally injective maps} Let $X$ and
$Y$ be schemes locally of finite type over $k$, $f\colon X\arr Y$
be a geometrically injective map and $x\in X$. Then, for every point
$x\in X$,
\[
\dim\overline{\{x\}}=\dim\overline{\{f(x)\}}=\degtr k(x)/k
\]
where $\degtr$ denotes the transcendence degree. Moreover $\dim X\leq\dim Y$
and the equality holds if $f$ is also surjective. In particular two
schemes locally of finite type over $k$ and P-isomorphic have the
same dimension.
\end{lem}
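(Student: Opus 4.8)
The plan is to reduce the statement to the case where $X$ and $Y$ are irreducible, and then to exploit the basic dimension theory of schemes of finite type over a field together with the fact that a universally injective morphism induces residue field extensions that are purely inseparable (hence do not change transcendence degree). First I would recall that if $x\in X$ maps to $y=f(x)\in Y$, then because $f$ is universally injective and locally of finite type, the field extension $k(y)\hookrightarrow k(x)$ is purely inseparable, so in particular $\degtr k(x)/k=\degtr k(y)/k$. For a scheme $X$ locally of finite type over a field $k$, the standard fact (e.g.\ from \cite{MR0163908} or \cite{stacks-project}) is that $\dim\overline{\{x\}}=\degtr k(x)/k$ for any point $x$, where the closure is taken with its reduced structure and viewed as an integral scheme of finite type over $k$. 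Applying this on both $X$ and $Y$ gives the chain of equalities
\[
\dim\overline{\{x\}}=\degtr k(x)/k=\degtr k(f(x))/k=\dim\overline{\{f(x)\}}.
\]

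Next I would deduce the inequality $\dim X\le\dim Y$. Taking the supremum over $x\in X$ of $\dim\overline{\{x\}}$ computes $\dim X$ (this is again a standard property for schemes locally of finite type over a field, where every irreducible closed subset has a generic point and dimension equals transcendence degree of that generic point); since $f$ sends $x$ to a point with $\dim\overline{\{f(x)\}}=\dim\overline{\{x\}}$, we get $\dim X=\sup_x\dim\overline{\{x\}}=\sup_x\dim\overline{\{f(x)\}}\le\sup_{y\in Y}\dim\overline{\{y\}}=\dim Y$. For the reverse inequality under the additional hypothesis that $f$ is surjective: every $y\in Y$ is $f(x)$ for some $x\in X$, so the supremum over $y$ is attained (or approximated) by points in the image, giving $\dim Y\le\dim X$, hence equality.

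Finally, for the last sentence about P-isomorphic schemes: by Corollary~\ref{cor:P-isomorphisms in finite type case}, if $X$ and $Y$ are P-isomorphic and locally of finite type over $k$, there is a scheme $Z$ with universally bijective (in particular universally injective and surjective) morphisms of finite type $Z\to X$ and $Z\to Y$; applying the equality case just proved to each of these gives $\dim X=\dim Z=\dim Y$. I do not expect a serious obstacle here — the content is entirely the standard dimension formula $\dim\overline{\{x\}}=\degtr k(x)/k$ plus pure inseparability of residue extensions for universally injective morphisms — but the one point to state carefully is that ``$\dim$'' of a possibly non-quasi-compact scheme over $k$ is the supremum of transcendence degrees of residue fields, so that the reduction to closures of points and the passage to suprema is legitimate; this is where I would be explicit rather than hand-wave.
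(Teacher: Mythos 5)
Your proof is correct and takes essentially the same route as the paper: reduce to the dimension–transcendence degree formula $\dim\overline{\{x\}}=\degtr k(x)/k$ (the paper cites \cite[Tag 02JX]{stacks-project}), observe that the residue extension $k(f(x))\hookrightarrow k(x)$ is algebraic, and take suprema. The only difference is a minor one in the middle step: you invoke the standard characterization that universally injective morphisms have purely inseparable residue field extensions (which needs nothing beyond universal injectivity), whereas the paper shows directly that the fiber $X\times_Y k(f(x))$ is the spectrum of a finite local $k(f(x))$-algebra using both universal injectivity and the locally-of-finite-type hypothesis; both establish $\degtr k(x)/k=\degtr k(f(x))/k$, and the rest is identical.
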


\begin{proof}
The last two statements are a consequence of the first one and \ref{cor:P-isomorphisms in finite type case}.
The equality $\dim\overline{\{x\}}=\degtr k(x)/k$ is \cite[Tag 02JX]{stacks-project}.
The equality $\dim\overline{\{x\}}=\dim\overline{\{f(x)\}}$ instead
follows from the fact that, if $y=f(x)$, then $k(x)/k(y)$ is finite:
the fiber map $X\times_{Y}k(y)\arr\Spec k(y)$ is non-empty, geometrically
injective and locally of finite type, which easily implies that $X\times_{Y}k(y)$
is the spectrum of a local and finite $k(y)$-algebra.
\end{proof}

\subsection{Quotients by P-actions of finite groups}

In what follows $G$ will denote a finite group.
\begin{defn}
\label{def:Pactions} Let $X$ be an $S$-scheme. A \emph{P-automorphism}
of $X$ is a P-morphism $f\colon X\longrightarrow X$ which is invertible,
that is, there exists a P-morphism $f'\colon X\longrightarrow X$
with $f\circ f'=f'\circ f=\id_{X}$. A \emph{P-action} of a finite
group $G$ on $X$ means a group homomorphism $G\longrightarrow\Aut_{S}^{P}(X)$,
where $\Aut_{S}^{P}(X)$ is the group of P-automorphisms of $X$.
A P-morphism $g\colon X\to Y$ between two $S$-schemes with a P-action
of $G$ is \emph{equivariant }it is so in the category of P-schemes
over $S$.

When we are given a P-action of a group $G$ on a scheme $X$, a \emph{geometric
P-quotient} is a P-morphism $\pi\colon X\longrightarrow W$ of $S$-schemes
such that: 
\begin{enumerate}
\item the map $\pi$ is $G$-invariant, that is, for every $g\in G$, $\pi\circ g=\pi$, 
\item the map $\pi$ is universal among $G$-invariant P-morphisms, that
is, if $\pi'\colon X\longrightarrow W'$ is another $G$-invariant
P-morphism of $S$-schemes, then there exists a unique P-morphism
$h\colon W\longrightarrow W'$ such that $h\circ\pi=\pi'$,
\item for each algebraically closed field $K$ over $S$, the map $X(K)/G\longrightarrow W(K)$
is bijective. 
\end{enumerate}
A P-morphism $\pi\colon X\longrightarrow W$ of $S$-schemes is a
\emph{strong P-quotient }if it is $G$-invariant and the induced map
$X^{P}/G\longrightarrow W^{P}$ is a strong P-moduli space, where
$X^{P}/G$ is the functor $U\longmapsto X^{P}(U)/G$. 
\end{defn}

\begin{rem}
Recall that for $S$-schemes $X$ and $Y$ one has 
\[
\Hom_{S}^{\P}(X,Y)\simeq\Hom_{S}(X^{\P},Y^{\P})
\]
by \ref{prop:remarks on P and F}, $5)$, more precisely $(-)^{\P}\colon\PSch/S\to\Sh_{\text{sur}}(\Sch'/S)$
is fully faithful by \ref{cor:Pschemes into sheaves}. In particular:
a $P$-action of $G$ on $X$ is just an action of $G$ on $X^{\P}$;
a $G$-invariant map $X\arr W$ is a $G$-invariant map $X^{\P}\arr W^{\P}$,
that is a map $X^{\P}/G\arr W^{\P}$; an equivariant P-morphism $Y\to X$
is an equivariant morphism $X^{\P}\to Y^{\P}$. Moreover it follow
easily that $X\arr W$ is a geometric $P$-quotient (resp. strong
$P$-quotient) if and only if $X^{\P}/G\arr W^{\P}$ is a $P$-moduli
space (resp. a strong $P$-moduli space).
\end{rem}

\begin{prop}
Let $X$ be an $S$-scheme with a $P$-action of $G$ and $X\arr W$
be a $G$-invariant $P$-morphism over $S$ such that $X(K)/G\arr W(K)$
are bijective  for all algebraically closed fields $K$ over $S$.
If $X\arr W$ is an epimorphism in the sur topology then $X\arr W$
is a strong $P$-quotient. This is the case, for example, if $X\arr W$
is a locally of finite presentation map of $S$-schemes.
\end{prop}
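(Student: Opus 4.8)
The plan is to translate the statement into the abstract criteria for strong P-moduli spaces that have already been proved. By the remark just before the proposition, the $G$-invariant $P$-morphism $X\arr W$ is a strong $P$-quotient precisely when the induced morphism of functors $\phi\colon X^{\P}/G\arr W^{\P}$ is a strong P-moduli space, where $X^{\P}/G$ denotes the presheaf $U\mapsto X^{\P}(U)/G$. By Proposition \ref{prop:admissible-moduli} (or directly by Lemma \ref{lem:when phiP is an isomorphism}, whose conclusion ``$\phi^{\P}$ is an isomorphism'' is exactly the definition of a strong P-moduli space here), it is enough to check that $\phi$ is geometrically bijective and an epimorphism in the sur topology. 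So the argument has just these two ingredients.

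First I would verify geometric bijectivity. Fix an algebraically closed field $K$ over $S$. By Proposition \ref{prop:remarks on P and F}, $2)$, the canonical maps $X(K)\arr X^{\P}(K)$ and $W(K)\arr W^{\P}(K)$ are bijections, so $(X^{\P}/G)(K)=X^{\P}(K)/G=X(K)/G$ and $W^{\P}(K)=W(K)$; under these identifications $\phi(K)$ is the map $X(K)/G\arr W(K)$ determined by the $G$-invariant $P$-morphism $X\arr W$, which is bijective by hypothesis.

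Next I would check the sur-epimorphism. The key remark is that, since $X\arr W$ is $G$-invariant, it factors through the quotient presheaf as $X^{\P}\arr X^{\P}/G\arrdi{\phi}W^{\P}$, and the first arrow is surjective on sections over every object of $\Sch'/S$. As $X\arr W$ is assumed to be an epimorphism in the sur topology, the composite $X^{\P}\arr W^{\P}$ is one too, hence so is its second factor $\phi$. (Alternatively one can use the first half of Proposition \ref{prop:admissible-moduli}: a sur-epimorphism $X\arr W$ gives a sur covering $Z\arr W$ together with a lift $Z\arr X$, and composing $Z\arr X\arr X^{\P}/G$ produces the commutative square demanded by that proposition.) Combining this with geometric bijectivity, Lemma \ref{lem:when phiP is an isomorphism} yields that $\phi$ is a strong P-moduli space, i.e.\ $X\arr W$ is a strong $P$-quotient. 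For the final ``for example'' clause: the hypothesis already forces $X(K)\arr W(K)$ to be surjective for all algebraically closed $K$ over $S$ (it factors through the surjection $X(K)/G\arr W(K)$), so a locally of finite presentation morphism $X\arr W$ of $S$-schemes satisfying the hypotheses is surjective by Lemma \ref{lem:univ-bi}, hence a sur covering, hence an epimorphism in the sur topology.

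I do not anticipate a genuine obstacle: this is formal bookkeeping with $(-)^{\P}$, $(-)_F$ and the sur topology once the earlier lemmas are available. The points needing a little care are confirming that $X^{\P}/G$ really is the presheaf quotient (so that its $K$-points are $X(K)/G$), that $\phi$ is well defined and that $G$-invariance is used correctly, and the one-line fact that the second factor of a sur-epimorphism is again a sur-epimorphism.
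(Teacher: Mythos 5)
Your proof is correct and follows essentially the same route as the paper's: set $F=X^{\P}/G$, observe that the hypothesis makes $F\arr W^{\P}$ geometrically bijective, invoke Lemma~\ref{lem:when phiP is an isomorphism} after checking the sur-epimorphism condition, and for the last clause deduce surjectivity of $X\arr W$ from the factorization through $X(K)/G$. You supply slightly more explicit bookkeeping (e.g.\ identifying $(X^{\P}/G)(K)$ with $X(K)/G$, and explaining why the second factor of a sur-epimorphism is one), but the argument and key lemmas are the same.
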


\begin{proof}
Set $F=X^{\P}/G$. By hypothesis the map $F\arr W^{\P}$ is geometrically
bijective, that is, by \ref{lem:when phiP is an isomorphism}, the
map $F^{\P}\arr W^{\P}$ is a monomorphism. The previous map is an
isomorphism, that is $X\arr W$ is a strong $P$-quotient, if and
only if $F\arr W^{\P}$ is an epimorphism in the sur topology. This
is true if $X\arr W$ is an epimorphism as well. Notice that $X\arr W$
is surjective because the map $X(K)\arr X(K)/G\arr W(K)$ is so for
all algebraically closed fields $K$ over $S$. Therefore if $X\arr W$
is locally of finite presentation then this map is a sur covering.
\end{proof}
\begin{cor}
\label{cor:geometric and strong quotients} Let $X$ be an $S$-scheme
with an (usual) action of $G$. If $X\arr W$ is a $G$-invariant
map of $S$-schemes, it is locally of finite presentation and $X(K)/G\arr W(K)$
is an isomorphism for all algebraically closed fields $K$, then it
is also a strong $P$-quotient. In particular geometric quotients
are strong $P$-quotients.
\end{cor}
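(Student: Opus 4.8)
The plan is to deduce the statement directly from the preceding Proposition. First I would note that the natural functor $\Sch/S\arr\PSch/S$ sends the given (usual) $G$-action on the scheme $X$ to a $P$-action of $G$ on $X$, and sends the $G$-invariant scheme morphism $\pi\colon X\arr W$ to a $G$-invariant $P$-morphism over $S$; so the part of the hypotheses of the preceding Proposition that involves the $\PSch/S$-structure is automatic.

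Next I would verify the remaining hypotheses. The assumption that $X(K)/G\arr W(K)$ is an isomorphism for every algebraically closed field $K$ over $S$ is precisely the geometric bijectivity of $X(K)/G\arr W(K)$ required in the Proposition. Moreover, for every such $K$ the composite $X(K)\arr X(K)/G\arr W(K)$ is surjective, so $\pi$ is geometrically surjective; since $\pi$ is locally of finite type, Lemma \ref{lem:univ-bi} upgrades this to genuine surjectivity of $\pi$. Being surjective and locally of finite presentation, $\pi$ is a sur covering, hence an epimorphism in the sur topology.

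With all the hypotheses of the preceding Proposition now in force, its conclusion is exactly that $\pi\colon X\arr W$ is a strong $P$-quotient, which is the assertion. I do not expect any real obstacle here: the content is just the compatibility of the functor $\Sch/S\arr\PSch/S$ with $G$-actions and with the notion of $G$-invariance, together with the observation that the single hypothesis that $X(K)/G\arr W(K)$ is an isomorphism furnishes simultaneously the geometric bijectivity and, via surjectivity of $X(K)\arr W(K)$, the sur-covering property. The one point deserving a word of justification is the use of Lemma \ref{lem:univ-bi} to pass from geometric surjectivity to surjectivity of $\pi$, which is legitimate because $\pi$ is locally of finite type over $S$.
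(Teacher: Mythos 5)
Your proof is correct and matches the paper's approach: the Corollary is stated without a separate proof because the last sentence of the preceding Proposition and its proof (observing that geometric surjectivity plus locally of finite presentation makes $X\to W$ a sur covering) already handle it. Your explicit citation of Lemma \ref{lem:univ-bi} to upgrade geometric surjectivity to surjectivity is exactly the step the paper's Proposition proof uses implicitly.
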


\begin{lem}
\label{lem:lifting P-morphisms} Consider a diagram  \[   \begin{tikzpicture}[xscale=1.5,yscale=-1.2]     \node (A0_1) at (1, 0) {$Z$};     \node (A1_0) at (0, 1) {$X$};     \node (A1_1) at (1, 1) {$Y$};     \path (A1_0) edge [->,dashed]node [auto] {$\scriptstyle{\tilde f}$} (A0_1);     \path (A1_0) edge [->]node [auto] {$\scriptstyle{f}$} (A1_1);     \path (A0_1) edge [->]node [auto] {$\scriptstyle{u}$} (A1_1);   \end{tikzpicture}   \] of
$S$-schemes where $f$ is a $P$-morphism and $u$ is a locally finitely
presented and geometrically injective map. If $f(X)\subseteq u(Z)$
as sets then there exists a unique dashed $P$-morphism $\tilde{f}$
making the above diagram commutative in $\PSch/S$.
\end{lem}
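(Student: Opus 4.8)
The plan is to reduce everything to the case where $X$ and $Y$ are replaced by suitable coverings along which the $P$-morphism $f$ becomes an honest scheme morphism, construct $\tilde f$ there, and then check that the construction descends (i.e. is independent of the choices), using the sheaf property of $Z^{\P}$ in the sur topology (Lemma \ref{lem:sheaves}, part 1). First I would unwind the definitions: $f\colon X\arr Y$ being a $P$-morphism means there is a sur covering $g\colon X'\arr X$ over $S$ and a scheme morphism $f'\colon X'\arr Y$ with $f'_F = f_F\circ g_F$. Similarly, by Lemma \ref{lem:P-morphisms to finitely presented stuff} (since $u$ is locally of finite presentation and, being universally injective, is quasi-separated), or more directly, the hypothesis $f(X)\subseteq u(Z)$ as sets translates into: the image of $f'\colon X'\arr Y$ is contained in $u(Z)\subseteq|Y|$.

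Next I would lift $f'$ across $u$. Form the fiber product $X'\times_Y Z$ with its two projections $p\colon X'\times_Y Z\arr X'$ and $q\colon X'\times_Y Z\arr Z$. Since $u$ is universally injective and locally of finite presentation, $p$ is universally injective and locally of finite presentation; and since $f'(X')\subseteq u(Z)$, the map $p$ is surjective. By Corollary \ref{cor:ubi is quasi-compact} (or directly Corollary \ref{cor:reducing to finitely many}), $p$ is quasi-compact, hence finitely presented and surjective, i.e. $p$ is a sur covering — in fact a ubi covering — of $X'$. Composing, $X'\times_Y Z\arr X'\arrdi{g} X$ is a sur covering of $X$ (sur coverings are stable under composition). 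Then $q\colon X'\times_Y Z\arr Z$ is a scheme morphism over $S$, and one checks on $F$-points that $u_F\circ q_F = f'_F\circ p_F = f_F\circ g_F\circ p_F$, so $q$ exhibits a $P$-morphism candidate: precisely, $(X'\times_Y Z\arrdi{g\circ p} X)$ together with $q$ makes the diagram (\ref{P-morphism}) commute for a would-be $P$-morphism $\tilde f\colon X\arr Z$ with $u\circ\tilde f = f$. Existence then follows once we know $q$ actually \emph{defines} a $P$-morphism out of $X$ — which it does by the very definition of $P$-morphism, since $g\circ p$ is a sur covering.

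For uniqueness and well-definedness I would argue as follows. Suppose $\tilde f_1,\tilde f_2\colon X\arr Z$ are two $P$-morphisms with $u\circ\tilde f_i = f$. These are two elements of $Z^{\P}(X)$. After pulling back along the sur covering $X'\times_Y Z\arr X$ constructed above, both $\tilde f_i$ become scheme morphisms to $Z$ (since on that covering $f$ is induced by $q$ composed with $u$), and they agree because $u$ is universally injective: two scheme morphisms into $Z$ which become equal after composing with the universally injective $u$ must already be equal on $F$-points, and equality on $F$-points of $P$-morphisms between schemes is equality in $\PSch/S$ when the target has enough sections — more carefully, one uses that $u_F$ is injective, so $q$ is forced, hence $\tilde f_{1,F} = \tilde f_{2,F}$, and then $\tilde f_1 = \tilde f_2$ as elements of $Z^{\P}(X)$ since $Z^{\P}\subseteq\overline Z$ and $Z^{\P}(X)\hookrightarrow\Hom(X_F,Z_F)$. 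The main obstacle I anticipate is the bookkeeping around the distinction between "natural transformation $X_F\arr Z_F$" and "$P$-morphism in $\PSch/S$", and making sure the candidate $\tilde f$ built on the covering genuinely glues to a $P$-morphism on $X$ rather than only on the covering; this is exactly what the sur-sheaf property of $Z^{\P}$ (Lemma \ref{lem:sheaves}, part 1) is there to handle, so I would invoke it explicitly to conclude both existence and uniqueness cleanly.
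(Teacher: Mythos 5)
Your proof is correct and follows essentially the same route as the paper's: reduce to the case where $f$ is a scheme morphism via a sur covering, form the fiber product with $Z$ over $Y$, observe it is a sur covering of $X$ (surjective by the image hypothesis, locally of finite presentation because $u$ is), and let the other projection provide the lift, with uniqueness coming from the injectivity of $u_F$. The only differences are minor overcomplications: you do not need quasi-compactness or Corollary \ref{cor:ubi is quasi-compact} to conclude the projection is a sur covering (sur coverings require only surjectivity and local finite presentation), and the sur-sheaf property of $Z^{\P}$ is not needed—once the factored natural transformation $X_F\arr Z_F$ exists (which follows from universal injectivity of $u$ plus the image hypothesis), the constructed sur covering and lift directly witness that it is a $P$-morphism by definition.
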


\begin{proof}
Since $u$ is locally of finite type we have that $f\colon X_{F}\arr Y_{F}$
has value in $Z_{F}\subseteq Y_{F}$. We just have to show that $X_{F}\arr Z_{F}$
is a $P$-morphism. In particular we can assume that $f$ is induced
by a map of schemes. In this case we obtain a map $X\times_{Y}Z\arr X$
which is locally of finite presentation and, by hypothesis, surjective.
Thus it is a sur covering of $X$ and the map $X\times_{Y}Z\arr Z$
lifts $(X\times_{Y}Z)_{F}\arr X_{F}\arr Z_{F}$.
\end{proof}
\begin{lem}
\label{lem:open-dense} Let $X$ be a scheme of finite type over a
field $k$ endowed with a P-action of a finite group $G$ and let
$U\subset X$ be an open subset with $\dim(X\setminus U)<\dim X$.
Then there exists an open subset $V\subset U$ with $\dim(X\setminus V)<\dim X$
which is $G$ invariant, a finite universal homeomorphism $h\colon V'\arr V$
and an action of $G$ on $V'$ making $h$ equivariant. 
\end{lem}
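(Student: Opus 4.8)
The plan is to push everything down to the generic points and use that passing to the perfect closure kills inseparability. \emph{Reductions.} Since $G$ permutes the generic points of $X$ preserving the dimension of their closures (Lemma~\ref{lem:dimension and universally injective maps}) and we only need an open $V\subseteq U$ with $\dim(X\setminus V)<\dim X$, I first want to restrict to a well-behaved dense open. Here I use the following \emph{stabilization principle}: if $\Omega\subseteq X$ is a dense open with $\dim(X\setminus\Omega)<\dim X$ and every $|g|\colon|X|\arr|X|$ (the map on points induced by $g\in G$) restricts to a continuous map $\Omega\arr X$, then, $X$ being Noetherian, the decreasing chain of opens $\Omega_{0}=\Omega$, $\Omega_{n+1}=\bigcap_{g\in G}|g|^{-1}(\Omega_{n})$ stabilizes to a $G$-stable dense open with complement of dimension $<\dim X$. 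To apply it: writing $g=\phi_{g}\circ\pi_{g}^{-1}$ with $\pi_{g}\colon Z_{g}\arr X$ universally bijective of finite type (and $Z_{g}$ reduced) and $\phi_{g}\colon Z_{g}\arr X$ a scheme morphism, using Lemma~\ref{lem:P-morphism in the simple case}, Zariski's Main Theorem makes $\pi_{g}$ finite over a dense open of $X$ with small complement, where $|\pi_{g}|$ is a homeomorphism and hence $|g|$ is continuous. Intersecting over $g$, applying the principle, intersecting with $U$, applying it again, and finally passing to the disjoint union of the top-dimensional irreducible components, I may assume $X=\bigsqcup_{i}X_{i}$ is a finite disjoint union of integral separated schemes of dimension $\dim X$, all contained in $U$, with generic points $\eta_{i}$ and function fields $K_{i}$.

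\emph{The perfect generic points.} This is the heart of the argument. Write $K_{i}^{\mathrm{perf}}$ for the perfect closure of $K_{i}$ and $\iota\colon\coprod_{i}\Spec K_{i}^{\mathrm{perf}}\arr X$ for the canonical morphism. For $g\in G$, the $P$-morphism $g\circ\iota$ is in fact a scheme morphism: resolving $g=\phi_{g}\circ\pi_{g}^{-1}$, the unique point $z_{g,i}$ of $Z_{g}$ over $\eta_{i}$ has residue field finite and purely inseparable over $K_{i}$, so $\iota$ lifts to an honest morphism $\coprod_{i}\Spec K_{i}^{\mathrm{perf}}\arr Z_{g}$ (a perfect field receives any purely inseparable extension of one of its subfields, uniquely over that subfield), and composing with $\phi_{g}$ gives the claim. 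Hence $g\circ\iota$ is a point of $X$ lying over $\eta_{\sigma_{g}(i)}$ for a permutation $\sigma_{g}$ of the indices, i.e.\ a $k$-embedding $\tilde g_{i}^{*}\colon K_{\sigma_{g}(i)}\hookrightarrow K_{i}^{\mathrm{perf}}$, which extends uniquely to an isomorphism $K_{\sigma_{g}(i)}^{\mathrm{perf}}\arr K_{i}^{\mathrm{perf}}$; associativity of composition of $P$-morphisms turns $g\mapsto(\tilde g_{i}^{*})_{i}$ into a genuine action of $G$ on $P:=\coprod_{i}\Spec K_{i}^{\mathrm{perf}}$ over $k$. Now, closing $K_{i}$ up inside $K_{i}^{\mathrm{perf}}$ under this $G$-action together with the finitely many inseparable extensions $\kappa(z_{g,i})/K_{i}$, one obtains finite purely inseparable extensions $\tilde K_{i}/K_{i}$ on which $G$ still acts honestly (as a sub-action of the one on $P$) and which are large enough that the ``resolution field'' $L_{g,i}:=\kappa(z_{g,i})$ of $g$ at $\eta_{i}$ lies in $\tilde K_{i}$ for every $g$.

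\emph{Spreading out and conclusion.} Choose a finite morphism $h_{0}\colon X'\arr X$ with $X'=\bigsqcup_{i}X'_{i}$, each $X'_{i}$ integral with $k(X'_{i})=\tilde K_{i}$; over a dense open $X^{\circ}\subseteq X$ with $\dim(X\setminus X^{\circ})<\dim X$ it is a finite universal homeomorphism, and by the stabilization principle we may take $X^{\circ}$ $G$-stable and contained in $U$. Since $h_{0}|_{X^{\circ}}$ is surjective, universally injective and finitely presented, Lemma~\ref{lem:lifting P-morphisms} lifts each $g\in G$ to a unique $P$-morphism $g'$ of $X'':=X'\times_{X}X^{\circ}$ with $h_{0}\circ g'=g\circ h_{0}$, and uniqueness makes $g\mapsto g'$ a $P$-action. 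The inclusions $L_{g,i}\subseteq\tilde K_{i}$ force each $g'$ to be a scheme morphism at every generic point of $X''$, hence on a dense open with small complement; the stabilization principle once more produces a $G$-stable dense open $V'\subseteq X''$, with $\dim(X''\setminus V')<\dim X$, on which the $g'$ restrict to scheme automorphisms forming a usual $G$-action. Put $V:=h_{0}(V')$, which is open in $X$ because $h_{0}$ is a homeomorphism, and $h:=h_{0}|_{V'}\colon V'\arr V$, a finite universal homeomorphism. Transporting the honest $G$-action along the homeomorphism $h$ shows $|V|$ is $G$-stable, so the $P$-action of $G$ restricts to $V$; the identity $h\circ g'=g\circ h$ on $V'$ is the restriction of the one on $X''$; and $\dim(X\setminus V)<\dim X$ since $V$ contains every $\eta_{i}$. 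As $V\subseteq U$, this is the required data.

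The step I expect to be the main obstacle is the middle paragraph: making precise that a $P$-automorphism of $X$ induces an honest $k$-automorphism of $\coprod_{i}\Spec K_{i}^{\mathrm{perf}}$ and that $g\mapsto\tilde g^{*}$ is a homomorphism, and performing the bookkeeping that keeps the $\tilde K_{i}$ simultaneously $G$-stable and large enough to contain all the resolution fields $L_{g,i}$ — precisely what is needed for the lifted $P$-morphisms to become honest after spreading out. The reductions and the final spreading-out are routine Noetherian and limit arguments; the point to keep in mind throughout is that a universal homeomorphism carries open subsets to open subsets.
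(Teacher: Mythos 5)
Your proof is correct and follows essentially the same strategy as the paper's: shrink to top-dimensional integral components, pass to perfect closures at the generic points to turn the $P$-action into an honest action (using that a perfect field receives any purely inseparable extension of a subfield uniquely), descend to finite purely inseparable subextensions $\tilde K_i$ large enough to absorb the resolution fields $L_{g,i}$, and spread out; your use of Lemma~\ref{lem:lifting P-morphisms} to produce the action on $X''$ is just a slightly different packaging of the paper's construction of $\psi_g$ from the field maps. One small slip worth fixing: your ``stabilization principle'' is justified by appeal to $X$ being Noetherian, but a descending chain of opens in a Noetherian space need not stabilize; what actually makes it work is that the maps $|g|$ form a genuine action of the finite group $G$ on the point set $|X|$, so already $\Omega_1=\bigcap_{g\in G}|g|^{-1}(\Omega_0)$ is $G$-stable and the chain is constant from $\Omega_1$ on.
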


\begin{proof}
Notice that the condition $\dim(X\setminus U)<\dim X$ just means
that $U$ meets the irreducible components of $X$ of maximal dimension
$\dim X$. From \ref{lem:P-morphism in the simple case}, there exists
a geometrically bijective map $Z_{g}\arr U$ of finite type such that
$Z_{g}\arr U\arrdi gX$ is induced by a scheme morphism. Taking the
fiber products of the $Z_{g}$ over $U$ we can find a common map
$h\colon Z\arr U$. Call $h_{g}\colon Z\arr X$ the lifting of $U\arrdi gX$,
with $h_{\id}=h$.

We first show that $G$ permutes the generic points of the irreducible
components of $X$ of dimension $d=\dim X$. If $\xi$ is a generic
point of such a component, then $\xi\in U$, $g(\xi)=h_{g}(h^{-1}(\xi))$
and using \ref{lem:dimension and universally injective maps}, it
follows that $d=\dim\overline{\{\xi\}}=\dim\overline{\{g(\xi)\}}$.
Since $\dim X=d$ we can also conclude that $g(\xi)$ is a generic
point. 

The maps $h_{g}\colon Z\arr X$ are quasi-compact, quasi-separated
and geometrically injective. By \cite[Tag 02NW]{stacks-project} there
exists an open dense subset $W$ of $U$ such that $h_{g}^{-1}(W)\arr W$
is finite for all $g$. Set $W'=h^{-1}(W)$. In particular $h\colon W'\arr W$
is a finite universal homeomorphism. Notice that $h(h_{g}^{-1}(W)\cap W')=W\cap g^{-1}(W)$
as sets and it is an open subset of $W$. Consider $V:=\bigcap_{g\in G}g(W)$,
which is open in $W$ and set $V':=h^{-1}(V)\longrightarrow V$, which
is a finite universal homeomorphism. Notice that $V$ contains the
generic points of the irreducible components of maximal dimension.
Therefore $\dim(X\setminus V)<\dim X$. Moreover the composition $V'\subseteq W'\arrdi{h_{g}}X$,
which set-theoretically is $V'\arr V\arrdi gX$, factors through $V$
and $h_{g}\colon V'\arr V$ is surjective. Since this map is a restriction
of the finite and geometrically injective map $h_{g}^{-1}(W)\arr W$,
we can conclude that $h_{g}\colon V'\arr V$ is a finite universal
homeomorphism.

We now modify $V'$ in order to define an action on it. Notice that
if $\widetilde{V}$ is an open subset of $V$ with $\dim(X\setminus\widetilde{V})<\dim X$,
by discussion above it always contains a $G$-invariant open with
the same property and we can always replace $V$ by it. Moreover we
can always assume $X=V$. In conclusion we can shrink as much as we
want around the generic points of $X$ of maximal dimension. In particular
we can assume that $V=X$ and $V'$ are a disjoint union of affine
integral varieties of the same dimension.

Let $G(X)$ the generic points of $X$ and for $\xi\in G(X)$ let
$\eta_{\xi}$ the generic point of $V'$ mapping to $\xi$. For all
$\xi\in G(X)$ set also $K_{\xi}$ for the perfect closure of $k(\xi)$.
Recall that if $L/k(\xi)$ is a purely inseparable extension then
there exists a unique $k(\xi)$ linear map $L\arr K_{\xi}$. Since
$V'\arr X$ is a finite universal homeomorphism it follows that $k(\xi)\arr k(\eta_{\xi})$
is finite and purely inseparable. So we can assume $k(\eta_{\xi})\subseteq K_{\xi}$.
We have that $G$ permutes $G(X)$ and, since $h_{g}\colon V'\arr X$
is a finite universal homeomorphism, it also induces a finite purely
inseparable extension $k(g(\xi))\arr k(\eta_{\xi})$. In particular
there exists a unique map $\phi_{g,\xi}$ making the following diagram
commutative:   \[   \begin{tikzpicture}[xscale=2.2,yscale=-1.2]     \node (A0_1) at (1, 0) {$k(\eta_\xi)$};     \node (A0_2) at (2, 0) {$K_\xi$};     \node (A1_0) at (0, 1) {$k(g(\xi))$};     \node (A1_1) at (1, 1) {$k(\eta_{g(\xi)})$};     \node (A1_2) at (2, 1) {$K_{g(\xi)}$};     \path (A1_0) edge [->]node [auto] {$\scriptstyle{h_g}$} (A0_1);     \path (A1_0) edge [->]node [auto] {$\scriptstyle{}$} (A1_1);     \path (A0_1) edge [->]node [auto] {$\scriptstyle{}$} (A0_2);     \path (A0_2) edge [->]node [auto] {$\scriptstyle{\phi_{g,\xi}}$} (A1_2);     \path (A1_1) edge [->]node [auto] {$\scriptstyle{}$} (A1_2);   \end{tikzpicture}   \] We
claim that the two maps $\phi_{ab,\xi},\phi_{a,b(\xi)}\circ\phi_{b,\xi}\colon K_{\xi}\arr K_{ab(\xi)}$
are the same map. Let $\alpha,\beta\colon\Spec K_{ab(\xi)}\arr\Spec K_{\xi}$
be the corresponding maps. By hypothesis they coincide as $P$-morphisms
if composed with $\Spec K_{\xi}\arr X$. If $\overline{K}$ is an
algebraic closure of $K_{ab(\xi)}$ then the two maps
\[
\Spec\overline{K}\arr\Spec K_{ab(\xi)}\rightrightarrows\Spec K_{\xi}\arr\Spec k(\xi)
\]
 coincide. Using the usual properties of purely inseparable extensions
and the perfect closure we can conclude that $\alpha=\beta$. In particular
all maps $\phi_{g,\xi}$ are isomorphisms. If we set $\tilde{K}_{\xi}$
as the composite of all extensions $\phi_{g,\xi}^{-1}(k(\eta_{g(\xi)}))$
it follows that $\tilde{K}_{\xi}/k(\eta_{\xi})$ is finite and purely
inseparable and $\phi_{g,\xi}$ restricts to an isomorphism $\tilde{K}_{\xi}\arr\tilde{K}_{g(\xi)}$.
If $V'_{\xi}$ is the irreducible component of $\eta_{\xi}$ we can
find an open dense $U_{\xi}$ and a finite universal homeomorphism
$U'\arr U_{\xi}$ with $U'$ integral and fraction field $\tilde{K}_{\xi}$.
Shrinking $X$ we can assume $k(\eta_{\xi})=\tilde{K}_{\xi}$. The
map $\phi_{g^{-1},g(\xi)}$ yield a generic map $\psi_{g,\xi}\colon V'_{\xi}\arr V'_{g(\xi)}$
and shrinking again $X$ we can assume it is defined everywhere and,
more generally, that it defines an action of $G$ on $V'$.

The maps $V'\arrdi{\psi_{g}}V'\arrdi hX$ and $V'\arrdi{h_{g}}X$
coincide in the generic points and therefore they are generically
the same because $V'$ is reduced. Again shrinking $X$ we can assume
they coincide. But this exactly means that the $P$-action of $G$
on $V'$ obtained conjugating the $P$-isomorphism $V'\arr X$ is
induced by the maps $\psi_{g}$ on $V'$. By \ref{prop:remarks on P and F},
(7) we can conclude that the collection of maps $\{\psi_{g}\}_{g}$
defines a ``genuine'' action of $G$ on $V'$, which ends the proof.
\end{proof}
\begin{prop}
\label{lem:decomposing a P-action} Let $X$ be a scheme locally of
finite type over a field $k$ and with a $P$-action of a finite group
$G$. Then there exist a locally of finite type scheme $Y$ with an
action of $G$, a geometrically bijective map $Y\arr X$ of finite
type which is $G$-equivariant and a decomposition of $Y=\bigsqcup_{i}Y_{i}$
into $G$-invariant open affine subsets.
\end{prop}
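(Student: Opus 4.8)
The plan is to argue by induction on $\dim X$: using Lemma~\ref{lem:open-dense} we strip off from $X$ a dense open on which the $P$-action of $G$ is carried by a genuine action of $G$ on a finite universal homeomorphism cover, we treat the closed complement by induction, and we glue the two pieces. First I would reduce to $X$ of finite type over $k$, so that $\dim X<\infty$; granting this, we induct on $d=\dim X$, the case $X=\emptyset$ being trivial, so assume $d\geq 0$.

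By Lemma~\ref{lem:open-dense} applied with $U=X$, there is a $G$-invariant open $V\subseteq X$ with $\dim(X\setminus V)<d$, a finite universal homeomorphism $h\colon V'\arr V$, and a genuine action of $G$ on $V'$ for which $h$ is $G$-equivariant \emph{as a $P$-morphism} (equivalently, the $P$-action transported from $V$ to $V'$ along $h$ coincides with this genuine action). Inspecting the end of the proof of Lemma~\ref{lem:open-dense}, where $X$ is shrunk around the generic points of maximal dimension, we may moreover take $V'$ to be a disjoint union of affine integral $k$-schemes; as $X$ is quasi-compact this union is finite, so $V'$ is affine and carries a genuine $G$-action.

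Next put $Z=(X\setminus V)_{\red}$, a closed subscheme of $X$ which is of finite type over $k$ with $\dim Z<d$. Since $V$ is $G$-invariant, each $g\in G$ maps $|Z|=|X|\setminus V$ bijectively to itself, so Lemma~\ref{lem:lifting P-morphisms}, applied to the finitely presented and universally injective closed immersion $u\colon Z\hookrightarrow X$ and to the $P$-morphism $Z\hookrightarrow X\arrdi gX$, produces a unique $P$-morphism $\tilde g\colon Z\arr Z$ with $u\circ\tilde g=g\circ u$; uniqueness makes $g\mapsto\tilde g$ a homomorphism, so $G$ acts on $Z$ by $P$-automorphisms and $u$ is $G$-equivariant. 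By the inductive hypothesis there are a scheme $Y''$ with a genuine $G$-action, a $G$-equivariant universally bijective morphism $p\colon Y''\arr Z$ of finite type, and a decomposition $Y''=\bigsqcup_i Y''_i$ into $G$-invariant affine opens. Set $Y=V'\sqcup Y''$ with the genuine $G$-action coming from the two summands, and let $Y\arr X$ be the coproduct of $V'\arrdi hV\hookrightarrow X$ and $Y''\arrdi pZ\arrdi uX$. This map is of finite type; it is $G$-equivariant as a $P$-morphism, because the $G$-action on $Y$ preserves the two summands and each of the composites $V'\arrdi hV\hookrightarrow X$ and $Y''\arrdi pZ\arrdi uX$ is $G$-equivariant (using that $V\hookrightarrow X$ and $u$ are $G$-equivariant); and it is universally bijective, since over $x\in V$ its fibre is the one-point fibre of $h$ (the $Y''$-summand contributing nothing), over $x\in|Z|$ its fibre is the one-point fibre of $p$ (the $V'$-summand contributing nothing), $|X|=|V|\sqcup|Z|$, and these descriptions persist under arbitrary base change as $h$ and $p$ are universally bijective. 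Finally $Y=V'\sqcup\bigsqcup_i Y''_i$ is a disjoint union of $G$-invariant affine opens, which closes the induction.

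The main difficulty is the inductive step: one must check that the genuine action furnished by Lemma~\ref{lem:open-dense} on the top-dimensional stratum and the $P$-action transported to the closed complement $Z$ via Lemma~\ref{lem:lifting P-morphisms} are both compatible, as $P$-morphisms, with their inclusions into $X$, so that the glued map $Y\arr X$ is genuinely $G$-equivariant; the remaining work is bookkeeping to keep all the pieces simultaneously affine and $G$-invariant, for which the key point is that Lemma~\ref{lem:open-dense} may be arranged so that the finite universal homeomorphism cover $V'$ is a (disjoint union of) affine scheme(s) permuted by the genuine $G$-action.
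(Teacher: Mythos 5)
There is a genuine gap at the very first step: you write ``First I would reduce to $X$ of finite type over $k$, so that $\dim X<\infty$; granting this, we induct on $d=\dim X$'' and never supply the reduction. This is not a formality. When $X$ is locally of finite type but not quasi-compact --- say an infinite disjoint union of affine $k$-varieties --- a $P$-action of $G$ can permute components in an a priori unbounded way, and you cannot just restrict to a quasi-compact piece and expect it to be $G$-invariant. The paper spends roughly half of the proof on precisely this point: after replacing $X$ by $\bigsqcup_q X_q$ with $X_q$ integral of finite type (via Lemma \ref{lem:decomposing something locally of finite type} and Lemma \ref{lem:lifting P-morphisms}), it shows that each ``orbit'' $Z_q=\bigcup_{g\in G}g(X_q)$ is a $G$-invariant, locally constructible subset contained in a quasi-compact open, verifies that any two $Z_q$ that meet lie in a common finite union of the $X_{q'}$, and then decomposes $X=\bigsqcup_{J\text{ finite}}Z_J$ into $G$-invariant, locally constructible pieces of finite type. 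Only after lifting the $P$-action to monomorphisms $Y_J\arr X$ onto these $Z_J$ does the Noetherian induction become available. Without this step your induction on $\dim X$ has no legitimate starting point.

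A smaller but real point concerns your claim that ``inspecting the end of the proof of Lemma \ref{lem:open-dense}, we may moreover take $V'$ to be a disjoint union of affine integral $k$-schemes... so $V'$ is affine.'' In that proof $V'$ is indeed at one stage a finite disjoint union of affine integral varieties, but the proof subsequently performs several further shrinkings of $X$ (to make $\psi_{g,\xi}$ everywhere defined, to make the compositions coincide, etc.), and replacing $V'$ by the preimage $h^{-1}(W)$ of a smaller open $W\subset X$ need not keep $V'$ affine. The paper sidesteps this entirely: having reduced to the case where $G$ acts genuinely on a reduced, separated $X$ of finite type, it picks a dense affine open $W$, replaces it by the $G$-invariant $\bigcap_{g}g(W)$ (affine because $X$ is separated, and still dense by the dimension argument), and inducts once more. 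Your argument would need an analogous final shrinking step rather than appealing to affineness of $V'$ as produced by Lemma \ref{lem:open-dense}. The rest of your gluing argument (using Lemma \ref{lem:lifting P-morphisms} to transport the $P$-action to $Z=(X\setminus V)_{\red}$, handling $Z$ by induction, and checking universal bijectivity fiberwise) is correct and is essentially the paper's inductive step.
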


\begin{proof}
From \ref{lem:decomposing something locally of finite type} and \ref{lem:lifting P-morphisms}
we can assume $X=\bigsqcup X_{q}$ where the $X_{q}$ are integral
schemes of finite type over $k$. From \ref{lem:P-morphism in the simple case},
there exists a geometrically bijective map $\phi\colon Z_{g}\arr X$
of finite type such that $Z_{g}\arr X\arrdi gX$ is induced by a scheme
morphism $h_{g}\colon Z_{g}\arr X$. Taking the fiber products of
the $Z_{g}$ over $X$ we can find a common map $\phi\colon Z\arr X$.
Since $g(X_{q})=h_{g}(\phi^{-1}(X_{q}$)), this is a locally constructible
set of $X$. Moreover since $X_{q}$ is quasi-compact and $\phi$
is of finite type, $g(X_{q})$ is contained in a quasi-compact open
of $X$. In particular $Z_{q}=\bigcup_{g}g(X_{q})$ is a locally constructible
subset of $X$ contained in a quasi-compact open subset. Moreover
it is $G$-invariant. We use the notation in \ref{nota:splitting a covering by constructible sets}
with $I$ the index set of the $Z_{q}$. Let $q\in I$ and consider
indexes $Z_{q}\subseteq X_{q_{1}}\amalg\cdots\amalg X_{q_{l}}$. We
claim that $Z_{q}\cap Z_{q'}\neq\emptyset$ implies that $q'=q_{i}$
for some $i$. From this and \ref{nota:splitting a covering by constructible sets}
it will follow that, for $J\subseteq I$ finite, $Z_{J}$ is locally
constructible and 
\[
X=\bigsqcup_{J\subseteq I\text{ finite}}Z_{J}
\]
as sets. If $Z_{q}\cap Z_{q'}\neq\emptyset$ there exist $g,h\in G$
such that $g(X_{q})\cap h(X_{q'})\neq\emptyset$, that is $\emptyset\neq h^{-1}g(X_{q})\cap X_{q'}\subseteq Z_{q}\cap X_{q'}$,
from which the claim follows.

For all $J$ finite, since $Z_{J}$ is locally constructible, we have
a monomorphism $Y_{J}\arr X$ of finite type onto $Z_{J}$ by \ref{lem:key lemma for scheme structure on constructible sets}.
Since $Z_{J}$ is contained in a quasi-compact open of $X$ it follows
that $Y_{J}$ is quasi-compact, that is of finite type. By construction
the $Z_{J}$ are $G$-invariant and, by \ref{lem:lifting P-morphisms},
we can lift the P-action of $G$ on $X$ to a P-action of $G$ on
$Y_{J}$. 

The argument above shows that we can replace $X$ by a scheme of finite
type. We can also assume $X$ reduced and, by \ref{lem:decomposing something locally of finite type},
also separated. Consider the open $V$ and the map $h\colon V'\arr V$
obtained from \ref{lem:open-dense}. By a dimension argument and an
induction on $\dim X$ we can assume $V=X$ and that $G$ has a genuine
action on $X$ inducing the $P$-action. Consider a dense affine open
subset $W$ of $X$ and replacing it by $\bigcap_{g}g(W)$ so that
it is also $G$-invariant. Again since $\dim(X\setminus W)<\dim X$
we can assume $X=W$ and we are done.
\end{proof}
\begin{thm}
\label{prop:quotient by finite group } Let $X$ be a scheme (locally)
of finite type over a field $k$ endowed with a P-action of a finite
group $G$. Then $X$ has a strong P-quotient $X\longrightarrow Y$
with $Y$ (locally) of finite type over $k$. Moreover if $X$ is
P-isomorphic to a countable disjoint union of affine $k$-varieties
then so is the strong P-quotient $Y$.
\end{thm}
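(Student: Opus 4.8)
The plan is to reduce to a genuine (rather than merely $P$-)action of $G$ on a disjoint union of $G$-invariant affine $k$-varieties, and then, piece by piece, to take the classical invariant-theoretic quotient and invoke Corollary~\ref{cor:geometric and strong quotients}.

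First I would apply Proposition~\ref{lem:decomposing a P-action} to obtain a scheme $X'$, locally of finite type over $k$, carrying an \emph{honest} $G$-action, a $G$-equivariant universally bijective morphism $u\colon X'\to X$ of finite type, and a decomposition $X'=\bigsqcup_{i\in I}X_i'$ with each $X_i'$ affine, of finite type over $k$, and $G$-invariant. By Lemma~\ref{lem:P-morphism in the simple case} the morphism $u$ is a $P$-isomorphism; being $G$-equivariant, it induces a $G$-equivariant isomorphism $(X')^{\P}\simeq X^{\P}$ of sur sheaves, hence an isomorphism of orbit functors $(X')^{\P}/G\simeq X^{\P}/G$. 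Consequently, if $X'\to Y$ is a strong $P$-quotient, then the composite $P$-morphism $X\xrightarrow{\,u^{-1}\,}X'\to Y$ is $G$-invariant and induces the same map $X^{\P}/G\simeq(X')^{\P}/G\to Y^{\P}$, so it is a strong $P$-quotient of $X$. Hence it suffices to treat the case $X=\bigsqcup_{i\in I}X_i$ with $G$ acting genuinely and preserving each affine piece $X_i=\Spec A_i$, of finite type over $k$.

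In that case I would set $W_i=\Spec A_i^{G}$ and $Y=\bigsqcup_{i\in I}W_i$. By Noether's finiteness theorem $A_i^{G}$ is a finitely generated $k$-algebra, so $W_i$ is affine of finite type over $k$; moreover $A_i$ is integral over $A_i^{G}$ (every $a\in A_i$ kills $\prod_{g\in G}(T-ga)\in A_i^{G}[T]$) and of finite type over it, hence finite over it, so $X_i\to W_i$ is finite, in particular locally of finite presentation, and $Y$ is locally of finite type over $k$ --- and of finite type if $I$ is finite, e.g.\ if $X$ is of finite type. The induced morphism $\pi\colon X\to Y$ is $G$-invariant and locally of finite presentation, and for every algebraically closed field $K$ over $k$ the map $X(K)/G\to Y(K)$ is bijective, since this holds componentwise by the classical description of $\Spec A_i^{G}$ as the orbit space on geometric points: surjectivity of $X_i(K)\to W_i(K)$ by lifting along a finite surjection into the algebraically closed $K$, and injectivity modulo $G$ from the transitivity of $G$ on the primes of $A_i$ over a fixed prime of $A_i^{G}$, together with the structure of the resulting normal (possibly inseparable) residue field extensions. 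Corollary~\ref{cor:geometric and strong quotients} then gives at once that $\pi$ is a strong $P$-quotient, with $Y$ as required.

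It remains to handle the refinement. If $X$ is $P$-isomorphic to a countable disjoint union $\coprod_{n}V_n$ of affine $k$-varieties, then by Corollary~\ref{cor:P-isomorphisms in finite type case} there is a scheme $Z$ with universally bijective finite-type morphisms $Z\to X$ and $Z\to\coprod_{n}V_n$; each preimage $Z\times_{\coprod_{m}V_m}V_n$ is universally bijective over the variety $V_n$, hence quasi-compact by Corollary~\ref{cor:ubi is quasi-compact} and so of finite type over $k$. Thus $X$ is $P$-isomorphic to a countable disjoint union of finite-type $k$-schemes and, decomposing each of these via Lemma~\ref{lem:decomposing something locally of finite type}, to a countable disjoint union of affine integral finite-type $k$-schemes. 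Running the argument above with this model in place of $X$, and observing that each construction used --- the first reduction of Proposition~\ref{lem:decomposing a P-action}, the splitting into locally constructible pieces indexed by the finite subsets of a countable set, and the dimension induction, which adds finitely many pieces at each of finitely many steps --- keeps the index set countable, one obtains that $Y=\bigsqcup_{i}W_i$ is a countable disjoint union of affine $k$-varieties. The step I expect to be the real obstacle is this first reduction: one must be certain that Proposition~\ref{lem:decomposing a P-action} genuinely supplies \emph{$G$-invariant} affine pieces, so that the quotient can be formed separately on each, and that the $P$-isomorphism there transports not merely the scheme but the $G$-action, hence the entire quotient problem; once the action is genuine and the pieces are affine, the remaining input is classical invariant theory together with Corollary~\ref{cor:geometric and strong quotients}.
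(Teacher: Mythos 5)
Your argument matches the paper's own proof: both reduce via Proposition~\ref{lem:decomposing a P-action} to a genuine $G$-action on a disjoint union of $G$-invariant affine pieces, take $\Spec A_i^G$ on each (using Noether finiteness), and conclude by Corollary~\ref{cor:geometric and strong quotients}, with the countability of the index set handled by the same Corollary~\ref{cor:P-isomorphisms in finite type case} overlap argument. Your version merely spells out a few steps (the transport of the quotient problem along the $G$-equivariant $P$-isomorphism $u$, and the classical bijectivity of $X(K)/G\to(\Spec A^G)(K)$) that the paper leaves implicit.
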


\begin{proof}
Notice that if $U=\bigsqcup_{n\in\N}U_{n}$ is P-isomorphic to $V=\bigsqcup_{i\in I}V_{i}$
where $V_{i}$ and $U_{n}$ are schemes of finite type over $k$ with
$V_{i}\neq\emptyset$ then $I$ is at most countable. Indeed there
exist geometrically bijective maps of finite type $\phi\colon Z\arr U$
and $\psi\colon Z\arr V$ thanks to \ref{cor:P-isomorphisms in finite type case}.
Thus one can assume $Z=U=V$ and notice that the sets $\{i\in I\st U_{n}\cap V_{i}\neq\emptyset\}$
are finite and cover $I$. Thanks to the previous observation and
by \ref{lem:decomposing a P-action} we can assume $X=\Spec A$ affine
and that the $P$-action of $G$ on $X$ is actually an action. Then
$X\arr X/G=\Spec(A^{G})$ is a geometric quotient and $A^{G}$ is
of finite type over $k$. By \ref{cor:geometric and strong quotients}
the map $X\arr X/G$ is a strong $P$-quotient.
\end{proof}

\section{Motivic integration on schemes locally of finite type\label{sec:Motivic-integration}}

In this section we construct a modified Grothendieck ring using the
theory of P-schemes.
\begin{defn}
\label{def:Grothedieck rings}The \emph{modified Grothendieck ring
of varieties}, $K_{0}^{\modified}(\Var/k)$, is the free abelian group
generated by the P-isomorphism classes of $k$-varieties modulo the
relation $[X]=[U]+[V]$ if $X$ and $U\amalg V$ are P-isomorphic.
The product structure is given by $[X][Y]:=[X\times Y]$.
\end{defn}

In particular, the usual scissor relation holds: if $Y\subset X$
is a closed subvariety, then $[X]=[Y]+[X\setminus Y]$. Moreover,
if $X$ and $Y$ are P-isomorphic, then $[X]=[Y]$.
\begin{defn}
We denote by $\LL$ the class of an affine line $[\AA_{k}^{1}]$ in
$K_{0}^{\modified}(\Var/k)$. We define $\cM_{k}^{\modified}$ to
be the localization of $K_{0}^{\modified}(\Var)$ by $\LL$. For a
positive integer $l$, we define $\cM_{k}^{\modified,l}$ to be $\cM_{k}^{\modified}[\LL^{1/l}]=\cM_{k}^{\modified}[x]/(x^{l}-\LL)$.
In this ring, we have fractional powers $\LL^{r}$, $r\in\frac{1}{l}\ZZ$
of $\LL$. We then define a completion $\hat{\cM}_{k}^{\modified,l}$
of $\cM_{k}^{\modified,l}$as follows. Let $F_{m}\subset\cM_{k}^{\modified,l}$
be the subgroup generated by the elements $[X]\LL^{r}$ with $\dim X+r\le-m$.
We define
\[
\hat{\cM}_{k}^{\modified,l}:=\varprojlim_{m}\cM_{k}^{\modified,l}/F_{m},
\]
which inherits the ring structure since $F_{m}F_{n}\subset F_{m+n}$.
When $l=1$, we abbreviate $\cM_{k}^{\modified,l}$ and $\hat{\cM}_{k}^{\modified,l}$
to $\cM_{k}^{\modified}$ and $\hat{\cM}_{k}^{\modified}$ respectively.
\end{defn}

Recall that a P-morphism $X\arr Y$ of schemes induces a map $|X|\arr|Y|$
on the set of points.

\begin{defn}
\label{def:locally constructible and integrable functions} Let $X$
be a scheme locally of finite type over $k$, $l\in\Z\setminus\{0\}$
and $f\colon X\arr\frac{1}{l}\Z$ be a function, that is a map of
sets from the set of points $|X|$ of $X$ to $\frac{1}{l}\Z$. The
map $f$ is called\emph{ integrable }if there are non-empty schemes
$\{X_{i}\}_{i\in I}$ of finite type over $k$ and a P-isomorphism
$\phi\colon\coprod_{i}X_{i}\arr X$ such that $f\circ\phi$ is constant
on all $X_{i}$ and, for all $n\in\Z$, there are at most finitely
many $i\in I$ such that $\dim X_{i}+f(\phi(X_{i}))>n$.

We define the \emph{integral }$\int_{X}\LL^{f}\in\hat{\cM}_{k}^{\modified,l}\cup\{\infty\}$
of a function $f\colon X\arr\frac{1}{l}\Z$ as follows. If $f$ is
integrable,
\[
\int_{X}\LL^{f}:=\sum_{i\in I}[X_{i}]\LL^{f(\phi(X_{i}))}\in\hat{\cM}_{k}^{\modified,l}.
\]
Otherwise$\int_{X}\LL^{f}:=\infty$.
\end{defn}

Notice that, if we follow the usual convention that $\dim\emptyset=-\infty$,
in the definition of integrability and of integrals we don't have
to assume that the schemes $X_{i}$ are non empty.

The following lemma shows that the notion of integrability and the
integral itself do not depend on the choice of the $k$-schemes $X_{i}$.
\begin{lem}
\label{lem:independence integrability} Let $X$ be a scheme locally
of finite type over $k$, $l\in\Z\setminus\{0\}$ and $f\colon X\arr\frac{1}{l}\Z$
be a function. Let $\{Y_{j}\}_{j\in J}$ be non-empty schemes of finite
type over $k$ and $\phi\colon Y=\coprod_{j}Y_{j}\arr X$ be a P-isomorphism
such that $f\circ\phi$ is constant on all $Y_{j}$. If $f$ is integrable,
then for each $n\in\frac{1}{l}\ZZ$, there are at most finitely many
$j\in J$ such that $\dim Y_{j}+f(Y_{j})>n$ and 
\[
\int_{X}\LL^{f}=\sum_{j\in J}[Y_{j}]\LL^{f(\phi(Y_{j}))}\in\hat{\cM}_{k}^{\modified,l}.
\]
\end{lem}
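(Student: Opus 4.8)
The plan is to establish (1) first by reducing each implication to an honest scheme morphism, and then to deduce (2) by passing to a single scheme dominating the two given decompositions.

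\emph{Part (1).} Given the P-morphism $\phi\colon Y\arr X$, Lemma \ref{lem:P-morphism in the simple case} furnishes a universally bijective morphism $g\colon Z\arr Y$ of finite type together with a scheme morphism $\psi\colon Z\arr X$ inducing $\phi\circ g$; over the Noetherian base $\Spec k$ both $g$ and $\psi$ are quasi-compact and locally of finite presentation, and $\psi$ is surjective whenever $\phi$ is. Since $\psi$ and $\phi\circ g$ induce the same map on points, namely $|\phi|\circ|g|$ with $|g|$ bijective, preimages and images along $\phi$ become preimages and images along the genuine morphisms $g$ and $\psi$. For ``$f$ locally constructible $\Rightarrow f\circ\phi$ locally constructible'' one writes $(f\circ\phi)^{-1}(n)=g(\psi^{-1}(f^{-1}(n)))$; here $\psi^{-1}(f^{-1}(n))$ is locally constructible in $Z$ as the preimage of a locally constructible set under a morphism of schemes, and its image under the quasi-compact, locally finitely presented map $g$ is locally constructible by Chevalley's theorem. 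For the converse, with $\phi$ surjective: the direction just proved (applied to $g$) shows $f\circ\psi$ is locally constructible, and $f^{-1}(n)=\psi(\psi^{-1}(f^{-1}(n)))$ since $\psi$ is surjective; to see the latter is locally constructible, fix an affine open $U\subseteq X$, use Corollary \ref{cor:finiteness for sur coverings} (applicable since $\psi$ is a sur covering) to choose a quasi-compact open $W\subseteq Z$ with $\psi(W)=U$, cover $W$ by finitely many affine opens $W_\ell$, and apply Chevalley to each $\psi|_{W_\ell}\colon W_\ell\arr U$ to exhibit $f^{-1}(n)\cap U$ as a finite union of constructible subsets of $U$.

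\emph{Part (2).} Write $\phi$ for the P-isomorphism witnessing integrability, with $f\circ\phi$ equal to a constant $c_i$ on $X_i$ and $\int_X\LL^f=\sum_i[X_i]\LL^{c_i}$, and $\psi$ for the new P-isomorphism, with $f\circ\psi$ equal to a constant $d_j$ on $Y_j$. Applying Lemma \ref{lem:P-morphism in the simple case} to $\psi^{-1}\circ\phi\colon\coprod_iX_i\arr\coprod_jY_j$ produces a scheme $W$ with universally bijective finite-type scheme morphisms $p\colon W\arr\coprod_iX_i$ and $q\colon W\arr\coprod_jY_j$ satisfying $\phi\circ p=\psi\circ q$ as P-morphisms (the morphism $q$ is again universally bijective of finite type, since it induces a P-isomorphism, hence is universally bijective by Lemma \ref{lem:P-morphism in the simple case} and then quasi-compact by Remark \ref{rem:ubi is quasi-compact}). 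Put $W_i^X=p^{-1}(X_i)$, $W_j^Y=q^{-1}(Y_j)$ and $W_{ij}=W_i^X\cap W_j^Y$; these are clopen in $W$, with $W=\coprod_{i,j}W_{ij}$, $W_i^X=\coprod_jW_{ij}$ and $W_j^Y=\coprod_iW_{ij}$. The base changes $W_i^X\arr X_i$ and $W_j^Y\arr Y_j$ are universally bijective of finite type, hence P-isomorphisms by Lemma \ref{lem:when phiP is an isomorphism}, so $[X_i]=[W_i^X]$ and $[Y_j]=[W_j^Y]$ in $K_0^{\modified}(\Var/k)$; moreover $W_i^X$ is quasi-compact (preimage of the quasi-compact $X_i$ under the finite-type map $p$), so only finitely many $W_{ij}$ are nonempty for fixed $i$, and symmetrically for fixed $j$.

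It remains to match dimensions and constants. Let $\mu=\phi\circ p=\psi\circ q\colon W\arr X$; evaluating $f\circ\mu$ at a point of a nonempty $W_{ij}$ gives $c_i=d_j$. By Lemma \ref{lem:dimension and universally injective maps}, $\dim W_i^X=\dim X_i=\max_j\dim W_{ij}$ and $\dim W_j^Y=\dim Y_j=\max_i\dim W_{ij}$, the maxima over nonempty pieces. Hence for each $n$ the set of pairs $(i,j)$ with $W_{ij}\neq\emptyset$ and $\dim W_{ij}+c_i>n$ is finite, because such an $i$ satisfies $\dim X_i+c_i>n$ (finitely many by integrability along $\{X_i\}$) and for each such $i$ there are finitely many admissible $j$; in particular only finitely many $j$ satisfy $\dim Y_j+d_j>n$, which is the integrability along $\{Y_j\}$ asserted in the statement. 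Finally, in $\hat{\cM}_k^{\modified,l}$,
\[
\sum_i[X_i]\LL^{c_i}=\sum_i\Bigl(\sum_j[W_{ij}]\Bigr)\LL^{c_i}=\sum_{i,j}[W_{ij}]\LL^{c_i}=\sum_{i,j}[W_{ij}]\LL^{d_j}=\sum_j\Bigl(\sum_i[W_{ij}]\Bigr)\LL^{d_j}=\sum_j[Y_j]\LL^{d_j},
\]
all sums over nonempty $W_{ij}$, each regrouping legitimate because modulo any $F_m$ only the finitely many terms with dimension plus exponent exceeding $-m$ contribute. I expect the main obstacle in (1) to be that a P-morphism is only a map of point sets and need not be continuous, so that after replacing it by a genuine scheme morphism one must still control quasi-compactness before invoking Chevalley's theorem — precisely what Corollary \ref{cor:finiteness for sur coverings} supplies; and in (2) the real work is organizing the two a priori unrelated decompositions into a single clopen decomposition of the dominating scheme $W$, after which only finite scissor relations and the convergence check in the completed ring remain.
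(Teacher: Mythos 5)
Your proof is correct and follows essentially the same strategy as the paper: part (1) replaces the P-morphism by an honest scheme morphism $\psi$ via a universally bijective $g$ and applies Chevalley (with the quasi-compactness issue for the converse handled through Corollary \ref{cor:finiteness for sur coverings}, which the paper leaves implicit), and part (2) passes to a scheme dominating both decompositions and works with the refinement into clopen pieces. The only cosmetic divergence is in the finiteness step of (2): the paper builds a finite-fiber map $s\colon J\to I$ by picking a generic point of each $Y_j$, whereas you compare dimensions via $\dim Y_j=\max_i\dim W_{ij}$ directly; these are two packagings of the same dimension-preservation argument from Lemma \ref{lem:dimension and universally injective maps}.
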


\begin{proof}
Following the notation of Definition \ref{def:locally constructible and integrable functions}
we can assume $X=\coprod_{i}X_{i}$. By \ref{cor:P-isomorphisms in finite type case}
there exist a scheme $Z$ and geometrically bijective maps of finite
type $\alpha\colon Z\arr\coprod_{i}X_{i}$, $\beta\colon Z\arr\coprod_{j}Y_{j}$.
In particular $\alpha^{-1}(X_{i})$ and $\beta^{-1}(Y_{j})$ are of
finite type and those maps preserve dimension thanks to \ref{lem:dimension and universally injective maps}.
We can therefore assume $Z=X=Y$. Set
\[
I_{n}=\{i\in I\st f(X_{i})+\dim X_{i}>n\}\text{ and }J_{n}=\{j\in J\st f(Y_{j})+\dim Y_{j}>n\}.
\]
Given $j\in J$ take a generic point $\eta_{j}\in Y_{j}$ with $\dim\overline{\{\eta_{j}\}}=\dim Y_{j}$
and let $s_{j}\in I$ be such that $X_{s_{j}}$ contains the point
$\eta_{j}$. We have $f(X_{s_{j}})=f(Y_{j})$ and, by \ref{lem:dimension and universally injective maps},
$\dim Y_{j}\leq\dim X_{s_{j}}$. In particular $s\colon J\arr I$
maps $J_{n}$ into $I_{n}$ and, in order to show that $J_{n}$ is
finite, it is enough to show that $s$ has finite fibers. The result
follows from
\[
X_{i}=\bigsqcup_{j\in J}X_{i}\cap Y_{j}
\]
and the fact that $s_{j}=i$ implies that $X_{i}\cap Y_{j}\neq\emptyset$. 

For the last equality, it is enough to use the (finite) sums
\[
[X_{i}]=\sum_{j}[X_{i}\cap Y_{j}]\text{ and }[Y_{j}]=\sum_{i}[X_{i}\cap Y_{j}]
\]
in $K_{0}^{\modified}(\Var/k)$ and that, if $X_{i}\cap Y_{j}\neq\emptyset$
then $f(X_{i})=f(X_{i}\cap Y_{j})=f(Y_{j})$. 
\end{proof}
\begin{defn}
Let $\shF\colon\Sch'/k\arr\sets$ be a functor with a scheme locally
of finite type $X$ as strong P-moduli space and $f\colon\shF\arr\frac{1}{l}\Z$
be a function, which is induced by $f_{X}\colon X\to\frac{1}{l}\Z$
(see \ref{def:general locally constructible map}). The map $f$ is
called \emph{integrable }if $f_{X}$ is so. Moreover we set $\int_{\shF}\LL^{f}=\int_{X}\LL^{f_{X}}$.

If $Y$ is a scheme locally of finite type over $k$ and $C\subseteq Y$
a locally constructible subset a function $f\colon C\arr\frac{1}{l}\Z$
is just a function of sets $|C|\arr\frac{1}{l}\Z$. We define \emph{constructibility}
and \emph{integrability} for $f\colon C\arr\frac{1}{l}\Z$ as the
ones for $f\colon X\arr\frac{1}{l}\Z$, where $X$ is a scheme P-isomorphic
to $C$. Moreover we set $\int_{C}\LL^{f}=\int_{X}\LL^{f}$.
\end{defn}

Notice that, by \ref{prop:locally constructible subsets as subfunctors},
in the above definition the second definition is a particular case
of the previous one.
\begin{prop}
Let $f\colon X\arr\frac{1}{l}\Z$ be a function from a scheme locally
of finite type over $k$. Then $f$ is integrable if and only if the
following three conditions are satisfied: (1) $f$ is bounded above,
(2) for all $n\in\frac{1}{l}\Z$ the set $f^{-1}(n)$ is locally constructible
and P-isomorphic to a scheme of finite type over $k$ and (3)
\[
n-\dim(f^{-1}(-n))\arr+\infty\text{ for }\frac{1}{l}\Z\ni n\arr+\infty
\]
where we use the usual convention $\dim\emptyset=-\infty$.
\end{prop}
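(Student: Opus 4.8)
The plan is to prove the two implications separately, translating everything through a decomposition $\phi\colon\coprod_i X_i\to X$ witnessing (or produced to witness) integrability, and using that P-isomorphic schemes locally of finite type over $k$ have the same dimension (Lemma \ref{lem:dimension and universally injective maps}) together with the fact that $f^{-1}(n)$ is, up to P-isomorphism, the disjoint union of those $X_i$ on which $f$ takes the value $n$.

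For the direction ``(1)+(2)+(3) $\Rightarrow$ integrable'', let $N\in\frac{1}{l}\Z$ be an upper bound for $f$. For each $n\le N$ in $\frac{1}{l}\Z$, condition (2) gives a scheme $W_n$ of finite type over $k$ together with a universally injective morphism $W_n\to X$ of finite type whose image is the locally constructible set $f^{-1}(n)$ — such a morphism exists because $f^{-1}(n)$ is P-isomorphic to a finite-type scheme, by Lemmas \ref{lem:P-iso of locally constructible} and \ref{lem:P-morphism in the simple case} and Corollary \ref{cor:P-isomorphisms in finite type case}; for $n>N$ set $W_n=\emptyset$. Since the sets $f^{-1}(n)$ partition $|X|$, the induced map $\phi\colon\coprod_n W_n\to X$ is bijective on $K$-points for every algebraically closed $K$ (apply Lemma \ref{lem:univ-bi} fibrewise), hence universally bijective, and being locally of finite presentation over the locally Noetherian scheme $X$ it is a sur covering (Corollary \ref{cor:ubi is quasi-compact}); therefore it is a P-isomorphism (Lemma \ref{lem:when phiP is an isomorphism}). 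Now $f\circ\phi$ is constantly $n$ on $W_n$ and $\dim W_n=\dim f^{-1}(n)$ by Lemma \ref{lem:dimension and universally injective maps}. Putting $n=-p$, the inequality $\dim W_n+n>m$ reads $p-\dim f^{-1}(-p)<-m$; by (3) the left-hand side tends to $+\infty$ as $p\to+\infty$, and since also $p=-n\ge -N$, for each $m$ only finitely many $n$ satisfy it. Thus $\phi$ witnesses that $f$ is integrable.

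For the direction ``integrable $\Rightarrow$ (1)+(2)+(3)'', let $\phi\colon\coprod_{i\in I}X_i\to X$ be a P-isomorphism with $f\circ\phi\equiv f_i$ on $X_i$ and $\{i:\dim X_i+f_i>m\}$ finite for every integer $m$. If $f$ were unbounded above, then $\{i:f_i>m\}$ would be infinite for every $m$, and since every nonempty $X_i$ has $\dim X_i\ge 0$ this would force $\{i:\dim X_i+f_i>m\}$ to be infinite — so (1) holds. For (2), fixing $n$ and taking an integer $m<n$ shows $I_n:=\{i:f_i=n,\ X_i\ne\emptyset\}$ is contained in the finite set $\{i:\dim X_i+f_i>m\}$, so $Y_n:=\coprod_{i\in I_n}X_i$ is of finite type over $k$; the P-morphism $Y_n\hookrightarrow\coprod_i X_i\to X$ has image $f^{-1}(n)$, and by Lemma \ref{lem:P-morphism in the simple case} it is represented, after a universally bijective finite-type map $Z\to Y_n$, by a scheme morphism $Z\to X$, which one checks on $K$-points to be universally injective (because $\phi$ is a P-isomorphism and $Y_n$ is open and closed in its source) and which is of finite type with image $f^{-1}(n)$; hence $f^{-1}(n)$ is locally constructible by Chevalley and P-isomorphic to the finite-type $k$-scheme $Z$, and moreover $\dim f^{-1}(n)=\dim Z=\dim Y_n=\max_{i\in I_n}\dim X_i$. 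For (3), suppose on the contrary that there are $p_k\nearrow+\infty$ in $\frac{1}{l}\Z$ with $p_k-\dim f^{-1}(-p_k)\le M$; choosing $i_k\in I_{-p_k}$ realizing $\dim f^{-1}(-p_k)$ gives $\dim X_{i_k}+f_{i_k}=\dim f^{-1}(-p_k)-p_k\ge -M$, so the pairwise distinct indices $i_k$ all lie in the finite set $\{i:\dim X_i+f_i>m\}$ for any integer $m<-M$, a contradiction.

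Most of this is routine unwinding via Lemma \ref{lem:univ-bi}. The step deserving care is the verification in (2) that the scheme morphism $Z\to X$ extracted from the P-morphism $Y_n\to X$ via Lemma \ref{lem:P-morphism in the simple case} is universally injective — this is exactly where one uses that $f^{-1}(n)$ is P-isomorphic to a finite-type scheme rather than merely locally constructible — together with computing the dimension of the possibly non-quasi-compact set $f^{-1}(n)$ correctly through this P-isomorphism. The only other thing to get right is the substitution $n\leftrightarrow -n$ that matches the finiteness condition in the definition of integrability with condition (3).
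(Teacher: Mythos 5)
Your proof is correct and, at bottom, takes the same route as the paper: both pass to the decomposition of $X$ by the level sets $f^{-1}(n)$, extract schemes $W_n$ (or $Y_n$, $Z$) of finite type representing them, and compare dimensions via Lemma \ref{lem:dimension and universally injective maps}. The paper's version is shorter only because it packages the two directions at once, reformulating integrability as ``the sets $I_m=\{n: n-\dim f^{-1}(-n)<m\}$ are finite'' and observing that, in $\frac{1}{l}\Z$, finite is the same as bounded above (this is condition (3)) and bounded below (this is condition (1)); your argument proves the same inequalities but spells out each implication separately.
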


\begin{proof}
In both cases we can assume $X=\bigsqcup_{i}X_{i}$ with $f$ constant
on all $X_{i}$ and $X_{i}$ of finite type and non-empty. If $f$
is integrable then
\[
\{i\in I\st f(X_{i})=n\}\subseteq\{i\in I\st f(X_{i})+\dim X_{i}>n-1\}
\]
 is finite, that is $f^{-1}(n)$ is P-isomorphic to a scheme of finite
type. We can therefore assume $X=\bigsqcup_{n\in\frac{1}{l}\Z}X_{n}$
with $X_{n}=f^{-1}(n)$ (and allowing $X_{n}=\emptyset$). By \ref{lem:independence integrability}
integrability means that the sets $I_{m}=\{n\in\frac{1}{l}\Z\st n-\dim X_{-n}<m\}$
are finite. The limit in the statement means that all $I_{m}$ are
bounded above. Finally if $f$ is bounded above then all $I_{m}$
are bounded below. Conversely if $I_{0}$ is bounded below then $f$
is bounded above.
\end{proof}
\begin{rem}
\label{rem:realization of integrals}If we are given a continuous
ring homomorphism $\hat{\cM}_{k}^{\modified,l}\longrightarrow R$
of complete topological rings and continue to denote the image of
$\LL$ in $R$ by $\LL$, then we can similarly define integrals in
$R\cup\{\infty\}$. Of course, these integrals coincide with the images
of the corresponding integrals defined in $\hat{\cM}_{k}^{\modified,l}\cup\{\infty\}$.
\end{rem}

\section{Some results on power series rings\label{sec:Some-results-on-power-series}}

We collect in this section various results and notations about power
series rings. 
\begin{lem}
\label{lem:comp-tensor-localization-1}Let $R$ be a ring and $S$
be an $R$-algebra. Let $M$ be an $R[[t]]$-module and $M_{t}$ its
localization by $t$, which is an $R((t))=R[[t]]_{t}$-module. Then
we have
\[
(M\otimes_{R[[t]]}S[[t]])_{t}\simeq M_{t}\otimes_{R((t))}S((t)).
\]
\end{lem}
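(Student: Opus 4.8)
The plan is to show that each side of the asserted identity is canonically isomorphic to $M\otimes_{R[[t]]}S((t))$, using only associativity of the tensor product together with the fact that localizing a module at a multiplicative subset coincides with base change along the corresponding localization of the ring.

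First I would rewrite the left-hand side. For any $R[[t]]$-module $N$ one has $N_t=N\otimes_{R[[t]]}R[[t]]_t$, and writing $f/t^n=\sum_{j\ge-n}a_{j+n}t^j$ for $f=\sum_{k\ge0}a_kt^k$ shows $R[[t]]_t=R((t))$ --- the convention already adopted in the statement --- and likewise $S[[t]]_t=S((t))$. Applying this with $N=M\otimes_{R[[t]]}S[[t]]$, which is naturally an $S[[t]]$-module, and noting that the structure map $R[[t]]\to S[[t]]$ sends $t$ to $t$, so that the image in $S[[t]]$ of the multiplicative subset $\{1,t,t^2,\dots\}\subset R[[t]]$ is the corresponding subset $\{1,t,t^2,\dots\}\subset S[[t]]$, I can compute $N_t$ as $N\otimes_{S[[t]]}S[[t]]_t=N\otimes_{S[[t]]}S((t))$. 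Associativity then gives
\[
(M\otimes_{R[[t]]}S[[t]])_t=(M\otimes_{R[[t]]}S[[t]])\otimes_{S[[t]]}S((t))=M\otimes_{R[[t]]}S((t)).
\]

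Next I would rewrite the right-hand side. By the same principle $M_t=M\otimes_{R[[t]]}R((t))$, so associativity and the trivial identity $R((t))\otimes_{R((t))}S((t))=S((t))$ give
\[
M_t\otimes_{R((t))}S((t))=\bigl(M\otimes_{R[[t]]}R((t))\bigr)\otimes_{R((t))}S((t))=M\otimes_{R[[t]]}S((t)).
\]

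Finally I would check that the two occurrences of $M\otimes_{R[[t]]}S((t))$ just obtained use the same $R[[t]]$-algebra structure on $S((t))$: whether the structure map is taken as $R[[t]]\to S[[t]]\to S((t))$ or as $R[[t]]\to R((t))\to S((t))$, it is in both cases the map $\sum a_nt^n\mapsto\sum\bar a_nt^n$ induced by $R\to S$. Composing the two isomorphisms displayed above then yields the claimed equality, and unwinding the construction confirms it is the obvious map. The only step demanding any attention --- the ``hard part'' only in the sense that it is the one place bookkeeping can go wrong --- is this last compatibility check, namely identifying $R((t))$ and $S((t))$ with the localizations $R[[t]]_t$ and $S[[t]]_t$ and verifying that all the ring homomorphisms in play commute as required; everything else is formal manipulation of tensor products and localization.
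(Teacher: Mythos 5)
Your proof is correct and follows essentially the same route as the paper: both arguments reduce the identity to tensor-product associativity and the observation that localization at $t$ is base change along $R[[t]]\to R[[t]]_t=R((t))$ (resp.\ $S[[t]]\to S[[t]]_t=S((t))$), meeting at the common middle term $M\otimes_{R[[t]]}S((t))$. The paper's version is just the one-line compression of your computation, $(M\otimes_{R[[t]]}S[[t]])\otimes_{S[[t]]}S[[t]]_t = M_t\otimes_{R[[t]]_t}S[[t]]_t$, which sidesteps your explicit compatibility check by routing both sides through a single manifestly symmetric expression.
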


\begin{proof}
This follows from
\[
(M\otimes_{R[[t]]}S[[t]])\otimes_{S[[t]]}S[[t]]_{t}\simeq M_{t}\otimes_{R[[t]]_{t}}S[[t]]_{t}.
\]
\end{proof}
\begin{defn}
\label{def:complete-tensor}Let $R$ be a ring and $S$ be an $R$-algebra.
For an $R[[t]]$-module $M$, we define the \emph{complete tensor
product }as 
\[
M\hotimes_{R}S=M\otimes_{R[[t]]}S[[t]].
\]
\end{defn}

\begin{rem}
\label{lem:comp-tensor-localization} If $N$ is an $R((t))$-module
then by \ref{lem:comp-tensor-localization-1} we have
\[
N\hotimes_{R}S\simeq N\otimes_{R((t))}S((t)).
\]
In particular if $M$ is an $R[[t]]$-module and $S$ an $R$-algebra
then we have identifications
\[
(M\hotimes_{R}S)_{t}\simeq M_{t}\otimes_{R((t))}S((t))\simeq M_{t}\hotimes_{R}S.
\]
\end{rem}

\begin{lem}
\label{lem:complete-tensor} Let $R$ be a ring, $S$ be a Noetherian
$R$-algebra and $M$ be a finitely generated $R[[t]]$-module. Then
$M\otimes_{R}S\longrightarrow M\hotimes_{R}S$ is the completion with
respect to the ideal $(t)\subseteq R[[t]]$, that is we have a natural
isomorphism
\[
\varprojlim_{n\in\NN}\left((M/t^{n}M)\otimes_{R}S\right)\cong M\otimes_{R[[t]]}S[[t]].
\]
\end{lem}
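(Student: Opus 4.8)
The plan is to identify the right-hand side $M\otimes_{R[[t]]}S[[t]]$ with the $(t)$-adic completion of $M\otimes_{R}S$ directly. First I would record the elementary observation that, by right-exactness of $-\otimes_{R}S$, for every $R[[t]]$-module $P$ one has $t^{k}(P\otimes_{R}S)=\Image(t^{k}P\otimes_{R}S\to P\otimes_{R}S)$, hence a natural isomorphism $(P\otimes_{R}S)/t^{k}(P\otimes_{R}S)\cong(P/t^{k}P)\otimes_{R}S$. Taking $P=M$ shows that the $(t)$-adic completion of $M\otimes_{R}S$ is precisely $\varprojlim_{k}\bigl((M/t^{k}M)\otimes_{R}S\bigr)$, so it suffices to produce a natural isomorphism $M\otimes_{R[[t]]}S[[t]]\cong\varprojlim_{k}\bigl((M/t^{k}M)\otimes_{R}S\bigr)$ compatible with the canonical maps out of $M\otimes_{R}S$.

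Next I would invoke the Noetherian hypothesis. Since $S$ is Noetherian, so is $S[[t]]$, and $S[[t]]$ is $(t)$-adically complete and equal to its own $(t)$-adic completion. As $M$ is finitely generated over $R[[t]]$, the module $M\otimes_{R[[t]]}S[[t]]$ is finitely generated over $S[[t]]$, and a finitely generated module over a Noetherian ring that is complete with respect to an ideal is again complete with respect to that ideal (completion being exact on finitely generated modules over a Noetherian ring). Hence $M\otimes_{R[[t]]}S[[t]]$ is $(t)$-adically complete, so it equals $\varprojlim_{k}\,(M\otimes_{R[[t]]}S[[t]])/t^{k}(M\otimes_{R[[t]]}S[[t]])$. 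To compute the quotients I would use that $M/t^{k}M$ is a module over $R[[t]]/(t^{k})$, together with right-exactness of $-\otimes_{R[[t]]}S[[t]]$ and the isomorphism $S[[t]]/t^{k}S[[t]]\cong(R[[t]]/(t^{k}))\otimes_{R}S$, to get
\[
\bigl(M\otimes_{R[[t]]}S[[t]]\bigr)/t^{k}\cong(M/t^{k}M)\otimes_{R[[t]]}\bigl(S[[t]]/t^{k}S[[t]]\bigr)\cong(M/t^{k}M)\otimes_{R}S.
\]
Combining this with the first paragraph produces the desired isomorphism.

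Finally I would check compatibility: that the composite $M\otimes_{R}S\to M\otimes_{R[[t]]}S[[t]]\cong\varprojlim_{k}(M/t^{k}M)\otimes_{R}S$ is the canonical completion map. This follows by verifying that at each finite level the map $M\otimes_{R}S\to\bigl(M\otimes_{R[[t]]}S[[t]]\bigr)/t^{k}$ induced by $S\to S[[t]]$ coincides, under the identifications above, with the natural surjection $M\otimes_{R}S\to(M/t^{k}M)\otimes_{R}S$, a routine chase of the base-change isomorphisms $N\otimes_{R}S\cong N\otimes_{R[[t]]}(R[[t]]\otimes_{R}S)$. The step I expect to be the main obstacle is the completeness of $M\otimes_{R[[t]]}S[[t]]$: this is exactly where Noetherianity of $S$ is essential---without it $S[[t]]$ need not be Noetherian and a finitely generated $S[[t]]$-module need not be $(t)$-adically complete---and one must be careful that $R[[t]]$ is in general not Noetherian, so $M$ need not admit a finite presentation over $R[[t]]$ and the usual argument via finite presentations is unavailable; passing to $S[[t]]$ as above sidesteps this difficulty.
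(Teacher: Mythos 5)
Your argument is correct and follows essentially the same route as the paper: both invoke Noetherianity of $S[[t]]$ and $(t)$-adic completeness of $S[[t]]$ to conclude that the finitely generated module $M\otimes_{R[[t]]}S[[t]]$ is $(t)$-adically complete, and then identify the quotients modulo $t^k$ with $(M/t^kM)\otimes_R S$ via the same chain of base-change isomorphisms. Your additional paragraphs (the observation about $(P\otimes_R S)/t^k$ and the compatibility check) are routine details that the paper leaves implicit; your closing remark correctly locates the only place where Noetherianity of $S$ enters.
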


\begin{proof}
The ring $S[[t]]$ is Noetherian and $t$-adically complete. Since
$M\otimes_{R[[t]]}S[[t]]$ is a finitely generated $S[[t]]$-module,
it is $t$-adically complete and the projective limit of
\[
N_{n}:=\left(M\otimes_{R[[t]]}S[[t]]\right)\otimes_{S[[t]]}\left(S[[t]]/(t^{n})\right)\quad(n\in\NN).
\]
Since 
\[
S[[t]]/(t^{n})\simeq S[[t]]\otimes_{R[[t]]}(R[[t]]/(t^{n}))\simeq R[[t]]/(t^{n})\otimes_{R}S,
\]
we have
\begin{align*}
N_{n} & \cong\left(M\otimes_{R[[t]]}S[[t]]\right)\otimes_{R[[t]]}\left(R[[t]]/(t^{n})\right)\\
 & \cong(M/t^{n}M)\otimes_{R[[t]]/(t^{n})}(S[[t]]/(t^{n}))\\
 & \cong(M/t^{n}M)\otimes_{R}S.
\end{align*}
The lemma follows. 
\end{proof}
\begin{rem}
\label{rem:complete tens is tens for finite algebras} By \cite[Lemma 2.4]{Tonini:2017qr}
if $S$ is a finite and finitely presented $R$-algebra then 
\[
\omega_{S/R}\colon R[[t]]\otimes_{R}S\longrightarrow S[[t]]
\]
is an isomorphism. In particular $M\hotimes_{R}S\simeq M\otimes_{R}S$
for all $R[[t]]$-modules $M$.
\end{rem}

\begin{lem}
\label{lem:uniformizable structure} Let $R$ be a ring, $k>0$ and
$g\in R[[s]]^{*}$. Then there exists a unique map $R[[t]]\arr R[[s]]$
of $R$-algebras mapping $t$ to $s^{k}g$ and $1,s,\dots,s^{k-1}$
is an $R[[t]]$-basis of $R[[s]]$. In particular if $R\arr R'$ is
a map of rings then $R[[s]]\hotimes_{R}R'(=R[[s]]\otimes_{R[[t]]}R'[[t]])\simeq R'[[s]]$
and $R((s))\hotimes_{R}R'\simeq R'((s))$.
\end{lem}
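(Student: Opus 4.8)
The plan is to construct the map $\phi\colon R[[t]]\arr R[[s]]$, then prove the freeness statement, and finally derive the two base-change assertions formally.

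\emph{Construction and uniqueness of $\phi$.} Since $g\in R[[s]]^{*}$, one has $(s^{k}g)^{n}R[[s]]=s^{kn}R[[s]]$ for every $n\in\N$, so the $R$-algebra map $R[t]\arr R[[s]]$, $t\mapsto s^{k}g$, is continuous for the $t$-adic topology on the source and the $s$-adic topology on the target. As $R[[t]]$ is the $t$-adic completion of $R[t]$ and $R[[s]]$ is $s$-adically complete and separated, this extends uniquely to a continuous $R$-algebra map $\phi$ with $\phi(t)=s^{k}g$. To see that $\phi$ is the \emph{only} $R$-algebra map with this property (continuity not assumed), write an arbitrary $f=\sum_{n}a_{n}t^{n}$ as $f=\sum_{n<N}a_{n}t^{n}+t^{N}f_{N}$; any $R$-algebra map $\psi$ sending $t$ to $s^{k}g$ satisfies $\psi(t^{N}f_{N})=(s^{k}g)^{N}\psi(f_{N})\in s^{kN}R[[s]]$, whence $\psi(f)\equiv\sum_{n<N}a_{n}(s^{k}g)^{n}\pmod{s^{kN}R[[s]]}$, and letting $N\to\infty$ determines $\psi(f)$.

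\emph{The basis statement.} Regard $R[[s]]$ as an $R[[t]]$-module through $\phi$ and consider the $R[[t]]$-linear map $\Psi\colon R[[t]]^{\oplus k}\arr R[[s]]$, $(c_{0},\dots,c_{k-1})\mapsto\sum_{i=0}^{k-1}\phi(c_{i})s^{i}$; proving that $1,s,\dots,s^{k-1}$ is a basis amounts to showing $\Psi$ is bijective. The two facts I would exploit are that $\phi(t)R[[s]]=s^{k}R[[s]]$ (because $g$ is a unit) and that $\phi(t)=s^{k}g$ is a non-zero-divisor in $R[[s]]$ (a leading-coefficient argument, valid for any ring $R$). For surjectivity I would use successive approximation: given $h\in R[[s]]$, peel off the terms of degree $<k$; the remainder lies in $s^{k}R[[s]]=\phi(t)R[[s]]$, so divide by $\phi(t)$ and repeat, obtaining after $M$ steps an expression $h=\sum_{m<M}\phi(t)^{m}h_{m}+r_{M}$ with $\deg_{s}h_{m}<k$ and $r_{M}\in\phi(t)^{M}R[[s]]=s^{kM}R[[s]]$. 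Collecting, for each $r$, the coefficient of $s^{r}$ in the various $h_{m}$ into a series $c_{r}\in R[[t]]$, the continuity of $\phi$ from the first step yields $h=\sum_{r}\phi(c_{r})s^{r}=\Psi(c_{0},\dots,c_{k-1})$. For injectivity, suppose $\Psi(c_{0},\dots,c_{k-1})=0$ with not all $c_{r}=0$; set $m_{0}=\min_{r}\ord_{t}(c_{r})<\infty$, factor out the non-zero-divisor $\phi(t)^{m_{0}}$, reduce modulo $\phi(t)=s^{k}g$, and compare the coefficients of $s^{0},\dots,s^{k-1}$ to force the $t^{m_{0}}$-coefficient of every $c_{r}$ to vanish, a contradiction.

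\emph{Base change.} Let $R\arr R'$ be a ring map and $\bar{g}\in R'[[s]]^{*}$ the image of $g$. Freeness is stable under base change, so $R[[s]]\otimes_{R[[t]]}R'[[t]]$ is $R'[[t]]$-free on $1\otimes1,\dots,s^{k-1}\otimes1$, while by the first two steps $R'[[s]]$ is $R'[[t]]$-free on $1,\dots,s^{k-1}$ via $t\mapsto s^{k}\bar{g}$. The $R[[t]]$-linear map $R[[s]]\arr R'[[s]]$ (reduction of coefficients) is compatible with $R[[t]]\arr R'[[t]]$ — the relevant square commutes by the uniqueness clause, both composites sending $t\mapsto s^{k}\bar{g}$ — so it induces an $R'[[t]]$-linear map $R[[s]]\otimes_{R[[t]]}R'[[t]]\arr R'[[s]]$ carrying $s^{i}\otimes1$ to $s^{i}$; since it carries a basis to a basis, it is an isomorphism, which is exactly $R[[s]]\hotimes_{R}R'\simeq R'[[s]]$ in the sense of Definition \ref{def:complete-tensor}. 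For the last assertion, note that inverting $t=s^{k}g$ in $R[[s]]$ (equivalently, since $g$ is a unit, inverting $s$) gives $R((s))$; localizing the previous isomorphism accordingly and invoking Remark \ref{lem:comp-tensor-localization} (or Lemma \ref{lem:comp-tensor-localization-1}) yields $R((s))\hotimes_{R}R'=(R[[s]]\hotimes_{R}R')_{t}\simeq R'[[s]]_{t}=R'((s))$.

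\emph{Main obstacle.} The one place demanding care is the surjectivity in the basis statement: one must keep the successive-approximation process under control, check that the collected coefficients genuinely assemble into elements of $R[[t]]$, and verify that $\phi$ applied to such a series reproduces its $s$-adic sum — which is precisely the continuity secured in the first step. The remaining arguments are formal once one notices that, $g$ being a unit, $\phi(t)$ and $s^{k}$ generate the same ideals of $R[[s]]$, so that $R[[s]]$ sits over $R[[t]]$ just as it sits over $R[[s^{k}]]$.
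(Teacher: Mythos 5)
Your proof is correct, and it differs from the paper's in the central step. For the basis assertion, the paper reduces modulo $t^{n}$: since $g$ is a unit, $R[[s]]/t^{n}R[[s]]\simeq R[s]/(s^{nk})$, so the map $\phi_{n}\colon(R[t]/(t^{n}))^{k}\arr R[s]/(s^{nk})$ is a map of free $R$-modules of the same finite rank, hence is an isomorphism as soon as it is surjective; surjectivity is then pushed down to $n=1$ by Nakayama for nilpotent ideals, where it is obvious. You instead argue directly on $R[[s]]$: successive approximation (peel off degree $<k$, divide by the non-zero-divisor $\phi(t)$, iterate) for surjectivity, and a minimal-$t$-order argument together with reduction modulo $\phi(t)R[[s]]=s^{k}R[[s]]$ for injectivity. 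The paper's route is shorter and more structural (it delegates the work to the standard facts that a surjection of finite free modules of equal rank is an isomorphism and that $t$-adic completeness lets one pass from the finite levels to the limit), while yours is more elementary and keeps everything explicit, at the cost of the bookkeeping you yourself flag — checking that the collected coefficients genuinely lie in $R[[t]]$ and that $\phi$ of the assembled series reproduces the $s$-adic sum. Your treatment of the base-change step is more detailed than the paper's one-line dismissal, in particular making explicit the compatibility square $R[[t]]\arr R'[[t]]$, $R[[s]]\arr R'[[s]]$ (which follows from the same uniqueness argument applied to $R$-algebra maps $R[[t]]\arr R'[[s]]$, a point you invoke a little loosely but which does hold). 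Both proofs are sound.
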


\begin{proof}
There are compatible maps $R[t]/(t^{n})\arr R[s]/(s^{n})$ mapping
$t$ to $s^{k}g$ and passing to the limit we get a map $R[[t]]\arr R[[s]]$.
Uniqueness is easy to prove. Consider the map
\[
\phi\colon R[[t]]^{k}\arr R[[s]]
\]
mapping the canonical basis to $1,s,\dots,s^{k-1}$. Notice that $R[[s]]/t^{n}R[[s]]\simeq R[s]/(s^{nk})$
because $g$ is invertible. Thus tensoring the above map by $R[[t]]/(t^{n})$
we obtain a map
\[
\phi_{n}\colon(R[t]/(t^{n}))^{k}\arr R[s]/(s^{nk}).
\]
In order to show that $\phi$ is an isomorphism it is enough to show
that all $\phi_{n}$ are isomorphisms. Since $\phi_{n}$ is a map
between free $R$-modules of the same rank, it is enough to show that
$\phi_{n}$ is surjective. By Nakayama's lemma we can assume $n=1$,
where the result is clear.

Using that $\phi$ is an isomorphism it is easy to conclude that the
map $R[[s]]\hotimes_{R}R'\arr R'[[s]]$ is an isomorphism. Since $t=s^{k}g$
we also have $R[[s]]_{t}=R((s))$, so that also the last isomorphism
holds.
\end{proof}
\begin{lem}
\label{lem:extension of iso on uniformizable up to nilpotents} Let
$R$ be a ring, $k>0$, $\zeta_{1},\zeta_{2}\in R[[s]]^{*}$ and consider
$R[[s_{i}]]$ as an $R[[t]]$ module via $R[[t]]\arr R[[s]]$, $t\longmapsto s^{k}\zeta_{i}$
for $i=1,2$. If $\sigma\colon R((s_{1}))\arr R((s_{2}))$ is an isomorphism
of $R((t))$-algebras then, up to modding out $R$ by finitely many
nilpotents, we have that $\sigma(R[[s_{1}]])=R[[s_{2}]]$, more precisely
there exists $u\in R[[s_{2}]]^{*}$ such that $\sigma(s_{1})=us_{2}$.
Moreover $\sigma_{|R[[s_{1}]]}:R[[s_{1}]]\arr R[[s_{2}]]$ is the
unique $R$-linear map sending $s_{1}$ to $us_{2}$. 
\end{lem}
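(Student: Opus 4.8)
We keep the notation of the statement and set $A=R[[t]]$, $B_i=R[[s_i]]$, $K_i=R((s_i))$, noting that $\sigma\colon K_1\arr K_2$ is in particular an $A$-algebra isomorphism, since it fixes $R((t))\supseteq A$ pointwise. My starting point would be the monic equation satisfied by a uniformizer. By Lemma~\ref{lem:uniformizable structure}, $B_i$ is free over $A$ on $1,s_i,\dots,s_i^{k-1}$, so writing $\zeta_i(s_i)^{-1}=\sum_{j=0}^{k-1}d^{(i)}_j s_i^{j}$ with $d^{(i)}_j\in A$, the identity $s_i^{k}=t\,\zeta_i(s_i)^{-1}$ exhibits $s_i$ as a root of $P_i(Y)=Y^{k}-t\sum_{j}d^{(i)}_j Y^{j}\in A[Y]$, a monic polynomial all of whose non-leading coefficients lie in $tA$. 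Reducing modulo $(s_i)$ shows $d^{(i)}_0$ has constant term $\zeta_i(0)^{-1}\in R^{*}$, so $d^{(i)}_0\in A^{*}$; and $\zeta_i(0)\in R^{*}$ since $\zeta_i\in R[[s]]^{*}$.

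First I would put $x=\sigma(s_1)\in K_2$. Since $\sigma$ fixes $A$ and $P_1\in A[Y]$, we get $P_1(x)=0$, that is $x^{k}=t\,h(x)$ with $h(Y)=\sum_{j=0}^{k-1}d^{(1)}_j Y^{j}$, $h(0)=\zeta_1(0)^{-1}$. As $t=s_2^{k}\zeta_2(s_2)$ is $s_2^{k}$ times a unit of $B_2$, we have $K_2=(B_2)_{s_2}$, so $x=\sum_{n\ge M}a_n s_2^{n}$ with $a_M\ne 0$ and only finitely many negative indices. The key computation, replacing the valuation argument available over a field, is: \emph{if $M\le 0$ then $a_M$ is nilpotent}. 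Indeed the coefficient of $s_2^{kM}$ in $x^{k}$ is $a_M^{k}$ (the only way to write $kM$ as a sum of $k$ indices each $\ge M$), while $t\,h(x)$ has $s_2$-adic order $\ge k+(k-1)M$, which is $>kM$ because $M<k$; hence $t\,h(x)$ vanishes in degree $kM$, forcing $a_M^{k}=0$. Since quotienting by a nil ideal reflects nilpotency, I would mod $R$ out by $a_M$ and repeat; each step raises $M$ by at least $1$, so after modding out by finitely many nilpotents we reach $M\ge 1$, i.e. $x\in s_2 B_2$ (from now on $R$ denotes this quotient).

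With $x\in s_2 B_2$ the argument closes: every $d^{(1)}_j x^{j}$ with $j\ge 1$ lies in $s_2 B_2$, so $h(x)=\zeta_1(0)^{-1}+O(s_2)$ has order exactly $0$, whence $t\,h(x)=\zeta_2(0)\zeta_1(0)^{-1}s_2^{k}+O(s_2^{k+1})$ has order exactly $k$ with unit leading coefficient. Comparing with $x^{k}=a_M^{k}s_2^{kM}+O(s_2^{kM+1})$ forces $M=1$ and $a_1^{k}=\zeta_2(0)\zeta_1(0)^{-1}\in R^{*}$, so $a_1\in R^{*}$ and $x=us_2$ with $u:=a_1+a_2 s_2+\cdots\in B_2^{*}$; this is the claim $\sigma(s_1)=us_2$. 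For the remaining assertions: from $B_1=\bigoplus_j A s_1^{j}$ we get $\sigma(B_1)=\sum_j A(us_2)^{j}\subseteq B_2$, so $\sigma$ restricts to an $A$-linear map $\psi\colon B_1\arr B_2$ of free rank-$k$ $A$-modules; reducing mod $t$ (using $tB_i=s_i^{k}B_i$) turns $\psi$ into the $R$-algebra map $R[s_1]/(s_1^{k})\arr R[s_2]/(s_2^{k})$, $s_1\mapsto \bar u s_2$, which is surjective by triangularity in the powers of $s_2$ (here $a_1$ being a unit is used) and hence an isomorphism of free rank-$k$ $R$-modules; so $\det\psi\in A$ has unit constant term, $\psi$ is an isomorphism, and $\sigma(B_1)=B_2$. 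Finally $\psi=\sigma_{|B_1}$ is an $A$-linear ring map out of the free $A$-module $B_1$ on $1,s_1,\dots,s_1^{k-1}$, hence is the unique $A$-algebra ($=$ continuous $R$-algebra) homomorphism sending $s_1$ to $us_2$, namely $f(s_1)\mapsto f(us_2)$.

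I expect the only real obstacle to be the presence of zero divisors in $R$. Over a field one would simply note that $\sigma$ identifies the integral closures $B_1,B_2$ of $A$, hence their maximal ideals, so $\sigma(s_1)$ and $s_2$ differ by a unit; over a general $R$ the $s$-adic order is no longer multiplicative and this fails, which is exactly why one must first prove the offending low-degree coefficients of $\sigma(s_1)$ are nilpotent — finitely many of them — and pass to the quotient. The other steps (the monic relation, the degree count, the $\bmod\,t$ determinant argument) are routine once this point is handled.
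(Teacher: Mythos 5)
Your argument is correct but takes a genuinely different route for both halves of the statement. For the nilpotency of the low-degree coefficients of $\sigma(s_1)=\sum_m\sigma_m s_2^m$, the paper passes to residue fields: over $k(\fp)$ the ring $k(\fp)[[s_2]]$ is the integral closure of $k(\fp)[[t]]$ in $k(\fp)((s_2))$, which forces $\sigma(s_1)\bmod\fp\in s_2\,k(\fp)[[s_2]]^*$; hence each $\sigma_m$ with $m\le 0$ lies in every prime (so is nilpotent) and $\sigma_1$ lies in none (so is a unit), and one mods out by all of them at once. You instead work directly over $R$: from the Eisenstein-shaped relation $x^k=t\,h(x)$ you extract $a_M^k=0$ by comparing coefficients at $s_2^{kM}$, then quotient by $a_M$ and iterate, the process terminating because the lowest nonzero index strictly increases and is bounded below. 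This is a purely formal computation that sidesteps integral closure and the base-change identity $R((s))\hotimes_R k(\fp)\simeq k(\fp)((s))$, at the price of an induction; as a small bonus it gives the explicit exponent $a_M^k=0$ rather than abstract nilpotence. For the final equality $\sigma(B_1)=B_2$ the paper reruns the whole argument on $\sigma^{-1}$ (which in principle could require further mod-outs), whereas your Nakayama/determinant argument on the reduction $\bar\psi\colon R[s_1]/(s_1^k)\to R[s_2]/(s_2^k)$ obtains it directly from $\sigma(s_1)=us_2$; both endgames are fine, and yours is slightly tidier.
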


\begin{proof}
From \ref{lem:uniformizable structure} we see that $R[[s_{i}]]$
is free of rank $k$ over $R[[t]]$, in particular $R[[t]]\subseteq R[[s_{i}]]$
is an integral extension. Notice moreover that $R[[s_{i}]]_{t}=R((s_{i}))$.
Set $\sigma(s_{1})=\sum_{m\in\Z}\sigma{}_{m}s_{2}^{m}\in R((s_{2}))$.
If $R$ is a field then $\sigma(s_{1})\in s_{2}R[[s_{2}]]^{*}$: $R[[s_{i}]]$
is a DVR with maximal ideal $(s_{i})$ and it is the integral closure
of $R[[t]]$ inside $R((s_{i}))$, so that
\[
\sigma_{|R[[s_{1}]]}\colon R[[s_{1}]]\arrdi{\simeq}R[[s_{2}]]\text{ and }(\sigma(s_{1}))=(s_{2})
\]
This means that all the $\sigma_{m}$ for $m\leq0$ lie in all the
prime ideals, that is they are nilpotent, and no prime ideal contains
$\sigma_{1}$, that is $\sigma_{1}$ is invertible. Modding out finitely
many nonzero $\sigma_{m}$ with $m\le0$ (there are at most finitely
many of them), we can assume that $\sigma(s_{1})=us_{2}$ as in the
statement. Since $R[[s_{1}]]$ is generated by $s_{1}$ as an $R[[t]]$
algebra it also follows that $\sigma(R[[s_{1}]])\subseteq R[[s_{2}]]$.
Doing the same for $\sigma^{-1}$ one also gets the equality. The
last statement follows from \ref{lem:uniformizable structure}.
\end{proof}
\begin{lem}
\label{lem:invariants are uniformazable} Let $R$ be a ring, $k>0$
and $\zeta\in R[[s]]^{*}$. Consider $R[[s]]$ as an $R[[t]]$ module
via $R[[t]]\arr R[[s]]$, $t\longmapsto s^{k}\zeta$ and assume that
$R((s))$ has a structure of $G$-torsor over $R((t))$, where $G$
is a finite group. Then $k=|G|$ and, up to modding out $R$ by finitely
many nilpotents, we have that:

\begin{enumerate}
\item for all $g\in G$ we have that $g(R[[s]])=R[[s]]$, more precisely
there exists $u_{g}\in R[[s]]^{*}$ such that $g(s)=u_{g}s$ and $g_{|R[[s]]}:R[[s]]\arr R[[s]]$
is the unique $R$-linear map sending $s$ to $u_{g}s$;
\item if $H<G$ is a subgroup and $s'=\prod_{h\in H}h(s)$ then $s'=s^{|H|}v$
with $v\in R[[s]]^{*}$, the map $R[[y]]\arr R[[s]]$, $y\longmapsto s'$
is an isomorphism onto $R[[s]]^{H}$, ($R[[s]]^{H})_{t}=(R[[s]]_{t})^{H}$
and $t=s'^{|G|/|H|}w$ with $w\in R[[s']]^{*}$. In particular $R[[s]]$
is a free $R[[s]]^{H}$-module of rank $|H|$ and $R[[s]]^{H}$ is
a free $R[[t]]$-module of rank $|G|/|H|$.
\end{enumerate}
\end{lem}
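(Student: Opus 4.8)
The plan is as follows. First, $k=|G|$: by Lemma~\ref{lem:uniformizable structure} the elements $1,s,\dots,s^{k-1}$ form an $R[[t]]$-basis of $R[[s]]$, hence an $R((t))$-basis of $R((s))=R[[s]]_{t}$ after inverting $t$; since a $G$-torsor for a finite (constant) group is finite locally free of rank $|G|$, comparing ranks gives $k=|G|$ (the case $R=0$ carrying no content). For part~(1): each $g\in G$ is an automorphism of $R((s))$ as an $R((t))$-algebra, so Lemma~\ref{lem:extension of iso on uniformizable up to nilpotents} applies with $s_{1}=s_{2}=s$ and $\zeta_{1}=\zeta_{2}=\zeta$; modding $R$ out by the finitely many nilpotents it produces (one finite batch per $g$, so finitely many altogether), we get simultaneously $g(s)=u_{g}s$ with $u_{g}\in R[[s]]^{*}$, with $g|_{R[[s]]}$ the unique $R$-linear endomorphism sending $s$ to $u_{g}s$, and in particular $g(R[[s]])=R[[s]]$. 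The entire ``up to finitely many nilpotents'' is spent here; from now on no further reduction is made.

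Turning to part~(2), put $v:=\prod_{h\in H}u_{h}\in R[[s]]^{*}$, so $s'=\prod_{h\in H}h(s)=vs^{|H|}$, and $s'$ is $H$-invariant since $h_{0}(s')=\prod_{h\in H}(h_{0}h)(s)=s'$ for $h_{0}\in H$. Lemma~\ref{lem:uniformizable structure} applied to the relation $s'=vs^{|H|}$ shows that the $R$-algebra map $R[[y]]\to R[[s]]$, $y\mapsto s'$, is injective with image the subring $R[[s']]$, identifies $R[[y]]\cong R[[s']]$, and makes $1,s,\dots,s^{|H|-1}$ an $R[[s']]$-basis of $R[[s]]$; in particular $R[[s']]\subseteq R[[s]]^{H}$ and $R[[s]]$ is free of rank $|H|$ over $R[[s']]$. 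Also $t=s^{|G|}\zeta=(s')^{|G|/|H|}w$ in $R[[s]]$, where $w:=\zeta v^{-|G|/|H|}\in R[[s]]^{*}$. All remaining assertions of part~(2) will follow once we prove $R[[s]]^{H}=R[[s']]$: then $t\in R[[s]]^{H}=R[[s']]$, so $w=t/(s')^{|G|/|H|}$ lies in $R((s'))\cap R[[s]]=R[[s']]$ (see the last paragraph) and is a unit there by projecting $ww^{-1}=1$ onto the first basis coordinate; Lemma~\ref{lem:uniformizable structure} then gives that $R[[s']]$ is free of rank $|G|/|H|$ over $R[[t]]$, and $(R[[s]]^{H})_{t}=R[[s']][1/t]=R((s'))$.

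The heart of the argument---and, I expect, the main obstacle---is the equality $R[[s]]^{H}=R[[s']]$, the difficulty being that $R$ is an arbitrary (possibly non-Noetherian, non-local) ring, so one cannot argue with valuations. I would first prove $R((s))^{H}=R((s'))$. Since $G$ is a finite abstract group, $R((s))/R((t))$ is finite étale, so by the Galois theory of finite étale algebras (or the structure theory of $G$-covers in \cite{Tonini:2017qr}) $R((s))^{H}$ is finite étale over $R((t))$, $R((s))$ is finite locally free of rank $|H|$ over $R((s))^{H}$, and the trace $\mathrm{Tr}\colon R((s))\to R((s))^{H}$ is surjective; choosing $c$ with $\mathrm{Tr}(c)=1$, the map $x\mapsto\mathrm{Tr}(cx)$ is an $R((s))^{H}$-linear---hence $R((s'))$-linear, as $R((s'))\subseteq R((s))^{H}$---retraction of $R((s))^{H}\hookrightarrow R((s))$. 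Inverting $s'$ in the basis above, $R((s))$ is free of rank $|H|$ over $R((s'))$; so $R((s))^{H}$, being a direct summand of this finite free $R((s'))$-module, is finite projective over $R((s'))$, and since $R((s))$ is finite locally free of rank $|H|$ both over $R((s'))$ and over $R((s))^{H}$, multiplicativity of ranks forces $R((s))^{H}$ to have rank $1$ over $R((s'))$. Finally, the coefficient-of-$1$ projection $\pi\colon R((s))\to R((s'))$ is $R((s'))$-linear and restricts to the identity on $R((s'))$, so $R((s))^{H}=R((s'))\oplus\ker(\pi|_{R((s))^{H}})$ with the two summands of ranks $1$ and $0$ over $R((s'))$; a finitely generated module of rank $0$ everywhere vanishes, whence $R((s))^{H}=R((s'))$.

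To finish, I descend this to $R[[s]]$ and collect everything. As $h(R[[s]])=R[[s]]$ for $h\in H$, we have $R[[s]]^{H}=R[[s]]\cap R((s))^{H}=R[[s]]\cap R((s'))$. An elementary coefficient comparison---using that $s'=vs^{|H|}$ with $v(0)\in R^{*}$, the same computation that in the case $H=G$ gives $R[[s]]\cap R((t))=R[[t]]$---shows $R[[s]]\cap R((s'))=R[[s']]$: if $x\in R[[s]]$ with $(s')^{n}x=p\in R[[s']]$, then $p\in R[[s']]\cap s^{n|H|}R[[s]]=(s')^{n}R[[s']]$, and dividing by the non-zero-divisor $(s')^{n}$ gives $x\in R[[s']]$. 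Hence $R[[s]]^{H}=R[[s']]$; combined with the second paragraph, this yields $t=(s')^{|G|/|H|}w$ with $w\in R[[s']]^{*}$, the two freeness statements, the identification $R[[y]]\cong R[[s']]=R[[s]]^{H}$, and $(R[[s]]^{H})_{t}=R[[s']][1/t]=R((s'))=R((s))^{H}=(R[[s]]_{t})^{H}$.
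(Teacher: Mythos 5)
Your proof is correct and arrives at all the claims, but it takes a genuinely different (and somewhat longer) route than the paper for the key equality $R[[s]]^{H}=R[[s']]$. The paper observes that since $R((s))/R((s))^{H}$ is an $H$-torsor, $R((s))$ is projective of rank $|H|$ over $R((s))^{H}$ and generated by $1,s,\dots,s^{|H|-1}$ (they already generate over $R((y))\subseteq R((s))^{H}$), hence these form an $R((s))^{H}$-basis; it then takes $x\in R[[s]]^{H}$, writes $x=\sum x_{i}s^{i}$ with $x_{i}\in R[[y]]$, and by uniqueness of coordinates over the larger ring $R((s))^{H}$ concludes directly $x_{0}=x$ and $x_{1}=\cdots=x_{|H|-1}=0$, so $x\in R[[y]]$, all in one pass. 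You instead first prove $R((s))^{H}=R((s'))$ by a two-step argument — the trace-retraction makes $R((s))^{H}$ a direct summand of $R((s))$, hence projective over $R((s'))$, and rank multiplicativity (with the first-coordinate projection to eliminate the rank-$0$ complement) forces it to equal $R((s'))$ — and then separately descend to $R[[s]]$ via the coefficient computation $R[[s]]\cap R((s'))=R[[s']]$. Both hinge on the same underlying rank-$|H|$ fact, but your version requires the surjectivity of the trace for a finite étale extension and an extra intersection lemma, where the paper's avoids both by using uniqueness of coordinates to pass from $R((s))$ to $R[[s]]$ without ever isolating $R((s))^{H}=R((s'))$ as a separate claim. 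The derivation of $t=(s')^{|G|/|H|}w$ with $w\in R[[s']]^{*}$ also differs slightly in bookkeeping (the paper writes $t=(s')^{b}q$ and compares leading coefficients to get $b=|G|/|H|$ and $q$ a unit, while you compute $w$ explicitly from $\zeta$ and $v$ and verify membership and unit-ness in $R[[s']]$ via the intersection lemma), but that is cosmetic. Everything checks.
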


\begin{proof}
From \ref{lem:uniformizable structure} we see that $R[[s]]$ is free
of rank $k$ over $R[[t]]$. Since $R[[s]]_{t}/R((t))$ is a $G$-torsor
we can conclude that $k=|G|$. Point $(1)$ follows from \ref{lem:extension of iso on uniformizable up to nilpotents}.
Let us consider point $(2)$. We have $s'=s^{|H|}v$ where $v=\prod_{h}u_{h}\in R[[s]]^{*}$
for $u_{h}\in R[[s]]^{*}$ as in (1). By \ref{lem:uniformizable structure}
the map $\phi\colon R[[y]]\arr R[[s]]$, $\phi(y)=s'$, is well defined,
injective and $1,s,\dots,s^{|H|-1}$ is an $R[[y]]$-basis. Since
$s'\in R[[s]]^{H}$ it is easy to see that $\phi$ maps into $R[[s]]^{H}$.
Since $R[[s]]_{s'}=R((s))$ we have 
\[
R((y))\subseteq R((s))^{H}=(R[[s]]^{H})_{s'}\subseteq R((s)).
\]
Since $R((s))/R((s))^{H}$ is an $H$-torsor the $R((s))^{H}$-module
$R((s))$ is projective of rank $|H|$ and generated by $1,s,\dots,s^{|H|-1}$,
which is therefore a $R((s))^{H}$-basis: the induced map
\[
(R((s))^{H})^{|H|}\to R((s))
\]
is a surjective map of projective $R((s))^{H}$-modules of the same
rank, thus an isomorphism. Let $x\in R[[s]]^{H}\subseteq R[[s]]$
and write
\[
x=x_{0}+x_{1}s+\cdots+x_{|H|-1}s^{|H|-1}\text{ with }x_{i}\in R[[y]].
\]
Since we also have $x_{i}\in R((s))^{H}$ and the writing is unique
we conclude that $x_{1}=\cdots=x_{|H|-1}=0$ and $x=x_{0}$ in $R((s))^{H}$.
The injectivity of $R[[y]]\arr R((s))^{H}$ implies that $x\in R[[y]]$.
This shows that $R[[s']]=R[[s]]^{H}$ and 
\[
(R[[s]]^{H})_{t}=R((s'))=R((s))^{H}=(R[[s]]_{t})^{H}.
\]
Finally since $t\in R[[s]]^{H}$ we have $t=s'^{b}q$, where $q\in R[[s']]$,
$q(0)\neq0$. Thus $t=s^{b|H|}v^{b}q=s^{|G|}\zeta$. Looking at the
first non vanishing coefficient we conclude that $b|H|=|G|$ and that
$q$ is invertible.
\end{proof}
\begin{lem}
\label{lem:Zariski refined by uin} If $\{U_{i}\}_{i}$ is a Zariski
covering of $\Spec R((t))$ for some ring $R$ then there exists a
ubi covering $\{\Spec R_{j}\arr\Spec R\}_{j}$ such that $\Spec R_{j}((t))\arr\Spec R((t))$
factors through some of the $U_{i}$.
\end{lem}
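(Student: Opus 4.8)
The plan is to reduce to a covering by finitely many basic opens and then to describe on $\Spec R$ the locus on which each of them catches the whole fibre of $\Spec R((t))\to\Spec R$. Since $\Spec R((t))$ is affine, hence quasi-compact, I first pass to a finite subcover of $\{U_i\}_i$ and refine it to a covering by basic opens $D(g_1),\dots,D(g_n)$ with each $D(g_\ell)\subseteq U_{\iota(\ell)}$ for some index $\iota(\ell)$. As every element of $R((t))=R[[t]]_t$ has the form $t^{-N}h$ with $h\in R[[t]]$ and $t$ is a unit, I may assume $g_\ell\in R[[t]]$; write $g_\ell=\sum_{m\ge0}a_{\ell,m}t^m$ with $a_{\ell,m}\in R$. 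The key point is that for a field $\kappa$ the ring $\kappa((t))$ is a field, so $\Spec\kappa((t))$ is a single point. For $1\le\ell\le n$ and $m\ge0$ set
\[
W_{\ell,m}:=V(a_{\ell,0},\dots,a_{\ell,m-1})\cap D(a_{\ell,m})\subseteq\Spec R,
\]
a constructible subset of the quasi-compact scheme $\Spec R$ (a closed set intersected with a quasi-compact open). Then $\Spec R=\bigcup_{\ell,m}W_{\ell,m}$: for $x\in\Spec R$ with residue field $\kappa$ the composite $\Spec\kappa((t))\to\Spec R((t))$ is a single point, hence lies in some $D(g_\ell)$, so the image of $g_\ell$ in $\kappa((t))$ is a unit, in particular $\bar a_{\ell,m}\ne0$ in $\kappa$ for some $m$; taking the least such $m$ gives $x\in W_{\ell,m}$.

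Next I disjointify. Enumerate the countably many sets $W_{\ell,m}$ as $W_1,W_2,\dots$ and put $W_k':=W_k\setminus(W_1\cup\dots\cup W_{k-1})$, a constructible subset of $\Spec R$ contained in $W_k$; these are pairwise disjoint and cover $\Spec R$. By the second assertion of Lemma \ref{lem:key lemma for scheme structure on constructible sets}, applied to the affine (hence quasi-separated, with the locally finite affine covering $\{\Spec R\}$) scheme $\Spec R$ and the constructible subset $W_k'$, there are affine schemes $\Spec R_{k,j}$ and finitely presented immersions $\Spec R_{k,j}\to\Spec R$ whose total map $\coprod_j\Spec R_{k,j}\to\Spec R$ is a finitely presented monomorphism with image $W_k'$. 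The map $Z:=\coprod_{k,j}\Spec R_{k,j}\to\Spec R$ is then locally of finite presentation and surjective, and it is universally injective: the images $W_k'$ are pairwise disjoint, while for each fixed $k$ the map $\coprod_j\Spec R_{k,j}\to\Spec R$ is a monomorphism, hence universally injective. Therefore $\{\Spec R_{k,j}\to\Spec R\}_{k,j}$ is a ubi covering by affine schemes.

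It remains to verify the factorization. Fix $(k,j)$, set $A:=R_{k,j}$, and choose $\ell,m$ with $W_k'\subseteq W_{\ell,m}$. Since $\Spec A\to\Spec R$ is an immersion with set-theoretic image inside $W_{\ell,m}=V(a_{\ell,0},\dots,a_{\ell,m-1})\cap D(a_{\ell,m})$, the images in $A$ of $a_{\ell,0},\dots,a_{\ell,m-1}$ are nilpotent and the image of $a_{\ell,m}$ is a unit. Hence in $A[[t]]$ the image of $g_\ell$ is $\bar g_\ell=N+t^mu$, where $N=\sum_{0\le i<m}\bar a_{\ell,i}t^i$ is a polynomial with nilpotent coefficients, hence nilpotent, and $u=\bar a_{\ell,m}+\bar a_{\ell,m+1}t+\cdots\in A[[t]]^*$ has unit constant term. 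In $A((t))$ the element $t^mu$ is a unit, so $\bar g_\ell=(t^mu)\bigl(1+(t^mu)^{-1}N\bigr)$ is a product of a unit and $1+(\text{nilpotent})$, hence a unit. Consequently the morphism $\Spec A((t))\to\Spec R((t))$ induced by the ring map $R((t))\to A((t))$ factors through $D(g_\ell)\subseteq U_{\iota(\ell)}$, which completes the argument.

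I expect the main difficulty to lie in the ``linearization'' step of the first paragraph, that is, in producing the constructible decomposition of $\Spec R$ adapted to the covering; this is precisely where one uses that $\Spec\kappa((t))$ is a single point, which upgrades the statement ``the $D(g_\ell)$ cover $\Spec R((t))$'' to ``some $g_\ell$ becomes a unit over every point of $\Spec R$'', and where one must take the least $m$ with $\bar a_{\ell,m}\ne0$ so that the $W_{\ell,m}$ genuinely cover $\Spec R$ rather than merely an open dense part. Granting this, Lemma \ref{lem:key lemma for scheme structure on constructible sets} and the elementary nilpotent computation above are routine.
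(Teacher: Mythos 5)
Your proof is correct and reaches the same conclusion, but the central step is organized differently from the paper's argument. The paper starts from the ideal identity $(s_1,\dots,s_n)=R((t))$ and extracts a concrete relation $a_1s_1+\cdots+a_ns_n=t^r$ in $R[[t]]$; writing $s_i=\sum_j s_{i,j}t^j$, this forces $(s_{i,j}\mid j\le r)=R$, so one only ever needs the finitely many coefficients with $j\le r$. The paper then partitions $\Spec R$ via the sets $V_J^\circ$ of Notation \ref{nota:splitting a covering by constructible sets} indexed by subsets $J$ of a \emph{finite} set, and on each piece finds some $s_{i_0}=t^q\omega$ with $\omega$ a unit. Your argument instead replaces the algebraic identity with the geometric observation that $\kappa((t))$ is a field for every residue field $\kappa$, so the fibre of $\Spec R((t))\to\Spec R$ over a point is a single point lying in some $D(g_\ell)$; taking the first nonvanishing coefficient of $g_\ell$ at that point produces the (a priori countably infinite) family $W_{\ell,m}$, which you then disjointify and feed into Lemma \ref{lem:key lemma for scheme structure on constructible sets}. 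The nilpotent-plus-unit computation at the end and the appeal to Lemma \ref{lem:key lemma for scheme structure on constructible sets} are essentially the same in both. The trade-off: the paper's relation bounds the index $j$ by $r$ outright, so its ubi covering is finite (a quasi-compact monomorphism), which is tidier and occasionally useful; your route avoids having to produce the relation $\sum a_is_i=t^r$ and is arguably easier to discover, at the cost of producing a countable covering, which the definition of ubi covering permits and which suffices for every use of this lemma in the paper (trivializing line bundles on $R((t))$ sur locally on $R$).
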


\begin{proof}
We can assume $U_{i}=\Spec R((t))_{s_{i}}$ for $s_{1},\dots,s_{n}\in R[[t]]$
such that $(s_{1},\dots,s_{n})=R((t))$. This means there exist $a_{1},\dots,a_{n}\in R[[t]]$
and $r\in\N$ such that
\[
a_{1}s_{1}+\cdots+a_{n}s_{n}=t^{r}.
\]
If we write $s_{i}=\sum_{j}s_{i,j}t^{j}$ we can conclude that $(s_{i,j}\st j\leq r)=R$.
For the finite set $S$ of nonzero $s_{i,j}$ with $j\le r$, we let
$V_{s}=\Spec R_{s}$, then define $V_{J}^{\circ}=\Spec R_{J}$ for
each subset $J\subset S$. We can give scheme structures to $V_{J}^{\circ}$'s
such that the map $\coprod_{J}V_{J}^{\circ}\longrightarrow\Spec R$
is a ubi covering and for each $i$, $j$ and $J$, the element $s_{i,j}$
is either 0 or invertible in $R_{J}$. For each $J$, there exists
an index $i_{0}$ such that $s_{i_{0}}=t^{q}\omega$ with $\omega\in R_{J}[[t]]^{*}$.
In particular $s_{i_{0}}\in R_{J}((t))^{*}$. This implies that the
map $\Spec R_{J}((t))\longrightarrow\Spec R((t))$ factors through
$U_{i_{0}}$. 
\end{proof}

\section{Uniformization\label{sec:Uniformization}}

If $K$ is an algebraically closed field, then any finite étale $K((t))$-algebra
$A$ is $K$-isomorphic to a product of the power series field, $K((u))^{n}$,
for some $n\in\NN$, and its integer ring $\fO_{A}$  is isomorphic
to $K[[u]]^{n}$. This is no longer true if we replace $K$ with a
general ring. The goal of this section is to show that this however
becomes true after taking a sur covering of $\Spec K$.
\begin{defn}
\label{def:uniformizable}Let $R$ be a ring and $A$ be a finite
étale $R((t))$-algebra. We say that $A$ is \emph{uniformizable }(over
$R$) if there exist a finite decomposition $R\simeq\prod_{i=1}^{l}R_{i}$,
$n_{i}\in\N$ and isomorphisms $A\otimes_{R}R_{i}(=A\hotimes_{R}R_{i})\simeq R_{i}((s))^{n_{i}}$
such that each composition $R_{i}((t))\arr A\otimes_{R}R_{i}\arr R_{i}((s))$
maps $t$ to a series of the form $s^{k}g$ for some $k>0$ and $g\in R_{i}[[s]]^{*}$.
\end{defn}

\begin{rem}
\label{rem:uniformizable extends to a flat cover} If $A/R((t))$
is an uniformizable finite étale $R((t))$-algebra and we use notation
from \ref{def:uniformizable} then $\fO=\prod_{i}R_{i}[[s]]^{n_{i}}$
is a finite and flat $R[[t]]$ algebra with an isomorphism $\fO_{t}\simeq A$.
It is not clear if a general finite étale $R((t))$-algebra always
admits a finite and flat extension on $R[[t]]$, not even fpqc locally.
\end{rem}

\begin{thm}[Uniformization]
\label{cor:stratified-uniformization} Let $R$ be a ring and $A$
be a finite and étale $R((t))$-algebra. Then there exists a surjective
and finitely presented map $\Spec S\arr\Spec R$ such that $A\hotimes_{R}S$
is uniformizable. In other words $A$ is uniformizable sur locally
in $R$.
\end{thm}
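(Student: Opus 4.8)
The plan is to reduce to the generic point of $\Spec R$, to uniformize there after a finite separable extension of the residue field (which provides a sur covering), and then to spread the uniformization out; the single genuinely delicate point will be the last spreading-out, because $A\hotimes_{R}S$ is only finite over $S((t))$ and never of finite type over $S$.

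First I would reduce the base. Since being finite étale is a finitely presented condition, a standard limit argument writes $A=A_{0}\otimes_{R_{0}((t))}R((t))$ for a finitely generated $\ZZ$-subalgebra $R_{0}\subseteq R$ and a finite étale $A_{0}/R_{0}((t))$; as uniformizability and the property of admitting a surjective finitely presented cover are stable under base change along $R_{0}\to R$, it suffices to treat $A_{0}$, so one may assume $R$ is Noetherian and excellent (even of finite type over $\ZZ$). This is the manoeuvre that lets the statement avoid a Noetherian hypothesis. Next, by Noetherian induction on $\Spec R$ (cf.\ \ref{rem:Noetherian induction}) it is enough to produce, over a dense open $W\subseteq\Spec R$, a surjective finitely presented $T\to W$ with $A\hotimes_{R}T$ uniformizable: the closed complement of $W$ carries a finitely presented closed immersion into $\Spec R$ and is dealt with by the inductive hypothesis, and a disjoint union of sur coverings is a sur covering. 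Because finite étale algebras over $R((t))$ and over $R_{\mathrm{red}}((t))$ correspond and uniformizability is insensitive to nilpotents (this being the content of Lemma \ref{lem:extension of iso on uniformizable up to nilpotents}), one may take $R$ reduced; passing to an irreducible component and then to its normalization (a finite, hence sur, covering, $R$ being Nagata) reduces to the case that $R$ is a normal Noetherian domain with fraction field $K$.

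Over the generic point the structure theory of complete discrete valuation rings does the work. The algebra $A_{K}:=A\hotimes_{R}K=A\otimes_{R((t))}K((t))$ is finite étale over $K((t))$, hence a product $\prod_{j}L_{j}$ of finite separable extensions of $K((t))$; the integral closure of $K[[t]]$ in $L_{j}$ is a complete DVR, isomorphic by Cohen's theorem to $\kappa_{j}[[s_{j}]]$ with $t\mapsto s_{j}^{e_{j}}u_{j}$, $u_{j}\in\kappa_{j}[[s_{j}]]^{*}$, and $\kappa_{j}$ is separable over $K$ because $A_{K}$ stays reduced after every field extension of $K$. Choosing a finite separable $K'/K$ splitting all the $\kappa_{j}$ yields $A\hotimes_{R}K'\cong\prod_{i}K'((s_{i}))^{n_{i}}$ with $t\mapsto s_{i}^{k_{i}}g_{i}$, $g_{i}\in K'[[s_{i}]]^{*}$, so $A$ is uniformizable over $K'$; since $K'/K$ is finite, spreading out gives $0\neq h\in R$ and a finite étale surjective $R_{h}$-algebra $R'$, a normal Noetherian domain with fraction field $K'$, with $\Spec R'\to\Spec R$ finitely presented and image containing the dense open $\Spec R_{h}$. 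Replacing $R$ by $R'$, I may assume $A\hotimes_{R}K\cong\prod_{i}K((s_{i}))^{n_{i}}$, $t\mapsto s_{i}^{k_{i}}g_{i}$, $g_{i}\in K[[s_{i}]]^{*}$, so the integral closure of $K[[t]]$ in $A\hotimes_{R}K$ is $\fO_{K}:=\prod_{i}K[[s_{i}]]^{n_{i}}$, finite and free over $K[[t]]$. The last step is to spread $\fO_{K}$ out: let $\fO$ be the integral closure of $R[[t]]$ in $A$; since $R[[t]]$ is an excellent normal domain, $\fO$ is a finite torsion-free $R[[t]]$-module with $\fO[1/t]=A$ and $\fO\hotimes_{R}K=\fO_{K}$. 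The non-flat locus of $\fO$ over $R[[t]]$ is closed in $\Spec R[[t]]$ and misses the fibre over the generic point of $\Spec R$ (there $R[[t]]_{(t)}$ is a DVR, over which $\fO$ is torsion-free, hence free); expressing $1$ as a combination of elements of the defining ideal of that locus after inverting $R\setminus\{0\}$ and clearing denominators produces $0\neq h'\in R$ with the locus inside $V(h')$, so after replacing $R$ by $R_{h'}$ one has $\fO$ finite flat over $R[[t]]$. Then $\fO/t\fO$ is finite flat over $R$ with geometric generic fibre $\prod_{i}(K[s_{i}]/(s_{i}^{k_{i}}))^{n_{i}}$, and shrinking $\Spec R$ once more — now Chevalley and generic flatness genuinely apply, $\fO/t\fO$ being of finite type over $R$ — one gets $\fO/t\fO\cong\prod_{i}(R[s_{i}]/(s_{i}^{k_{i}}))^{n_{i}}$. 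Lifting the classes of the $s_{i}$ to $\varsigma_{i}\in\fO$ and running Nakayama's lemma over $R[[t]]$ (using $t\in\mathrm{Jac}(R[[t]])$, that $\fO$ is free over $R[[t]]$ on $1,\dots,\varsigma_{i}^{k_{i}-1}$ on each factor, and Lemma \ref{lem:uniformizable structure} to recognize the power-series structure) gives $\fO\cong\prod_{i}R[[\varsigma_{i}]]^{n_{i}}$ with $t\mapsto\varsigma_{i}^{k_{i}}$ times a unit; inverting $t$ gives $A=\fO[1/t]\cong\prod_{i}R((\varsigma_{i}))^{n_{i}}$ of the required shape, so $A$ is uniformizable over this final dense open and the induction closes.

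The hard part is this spreading-out of the uniformization. One cannot spread the abstract uniformization isomorphism out directly by constructibility or generic-flatness arguments, since $A\hotimes_{R}S$ is only finite over $S((t))$ and not of finite type over $S$; the device is to descend to the finite $R[[t]]$-algebra $\fO$ and argue through its $t$-adic layers, where finiteness over $R$ is available. This in turn forces the excellence (Nagata property) of $R[[t]]$, which is exactly why one must first descend to a finite-type base — the same reduction that removes the Noetherianity assumption. All other ingredients (Cohen's theorem, splitting residue fields, generic flatness, the Nakayama recognition of $R[[\varsigma]]$) are routine once this scaffolding is in place.
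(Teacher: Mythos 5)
The first reduction is a genuine gap, and it is exactly the technical point that the paper's own proof is designed to handle. You write that ``since being finite étale is a finitely presented condition, a standard limit argument writes $A=A_{0}\otimes_{R_{0}((t))}R((t))$ for a finitely generated $\ZZ$-subalgebra $R_{0}\subseteq R$.'' This fails because $R((t))$ is \emph{not} the filtered colimit of $R_{0}((t))$ over finitely generated subalgebras $R_{0}\subseteq R$: a power series $\sum a_{n}t^{n}$ with $a_{n}\in R$ need not have all its coefficients inside any single finitely generated $R_{0}$ (take $R=k[x_{1},x_{2},\dots]$ and $\sum x_{n}t^{n}$). So the standard ``spreading out by finite presentation'' machinery has nothing to grab onto. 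The paper gets around this precisely by first invoking a deep algebraization theorem: with $S_{0}$ the henselization of $R[t]$ at $(t)$, the results of Elkik and Gabber/Fujiwara--Kato show that finite étale covers of $R((t))=S_{0}^{\wedge}[t^{-1}]$ descend to $S_{0}[t^{-1}]$; since $S_{0}$ \emph{is} a filtered colimit of finitely presented étale $R[t]$-algebras, one can then legitimately descend to a finitely generated $R'\subseteq R$. That is the manoeuvre that removes the Noetherian hypothesis (the acknowledgement to Gabber in the paper refers to precisely this), and your proposal does not replace it with anything that works.

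Given that reduction, your spreading-out mechanism is a genuinely different route from the paper's and is interesting on its own. Where the paper constructs the finite flat $R[[t]]$-model $\fO$ geometrically --- as a partial compactification of an affine curve over the algebraic closure of $\mathrm{frac}(R')$, defined over a finite extension and then spread to a finite-type ring --- you instead take $\fO$ to be the integral closure of $R[[t]]$ in $A$ and then argue through $\fO/t\fO$ with generic flatness and a Nakayama/Eisenstein recognition of $R[[\varsigma]]$. Both arrive at the same finite flat model, but your approach leans on the excellence of $R[[t]]$ (Rotthaus) to make $\fO$ finite, and on the identification $\fO\hotimes_{R}K=\fO_{K}$, i.e.\ that integral closure commutes with the base change $R[[t]]\otimes_{R}K\to K[[t]]$, which is a non-obvious excellence point you state without argument; the paper's construction stays inside finitely presented algebraic geometry over $R'$ and never needs to discuss excellence of power-series rings. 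So even after fixing the first step, you should either supply those excellence facts or adopt the paper's curve construction for the spreading out.
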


\begin{proof}
Let $S_{0}$ be the henselization of $R[t]$ with respect to the ideal
$(t)$. From \cite[Th. 7 and pages 588-589]{MR0345966} or \cite[Th. 5.4.53]{MR2004652},
there exists a finite étale cover $S_{0}[t^{-1}]\longrightarrow A_{0}$
such that $A_{0}\otimes_{S_{0}[t^{-1}]}R((t))\cong A$. In turn there
exist an étale neighborhood $R[t]\longrightarrow S_{1}$ of $(t)$,
that is with $R\simeq S_{1}/tS_{1}$, and a finite étale cover $S_{1}[t^{-1}]\longrightarrow A_{1}$
such that $A_{1}\otimes_{S_{1}[t^{-1}]}S_{0}[t^{-1}]\cong A_{0}$
and $A_{1}\otimes_{S_{1}[t^{-1}]}R((t))\cong A$. Since $S_{1}$ and
$A_{1}$ are finitely generated over $R$, there exist a finitely
generated subalgebra $R'\subset R$, an étale neighborhood $R'[t]\longrightarrow S_{2}$
and a finite étale cover $S_{2}[t^{-1}]\longrightarrow A_{2}$ which
induce $R[t]\longrightarrow S_{1}$ and $S_{1}[t^{-1}]\longrightarrow A_{1}$
by the scalar extension $R/R'$. Then $A\cong A_{2}\otimes_{S_{2}[t^{-1}]}R((t))$.
If we put $A'=A_{2}\otimes_{S_{2}[t^{-1}]}R'((t))$, then $A\cong A'\otimes_{R'((t))}R((t))$.
Therefore it suffices to show that $R'((t))\longrightarrow A'$ is
sur locally uniformizable.

We claim that there exist a sur covering $\coprod_{i}\Spec R_{i}\longrightarrow\Spec R'$
and a commutative diagram for each $i$,
\[
\xymatrix{A_{2}\otimes_{R'}R_{i}\ar[dr]\ar[rr] &  & \Spec Q_{i}\ar[dl]\\
 & \Spec S_{2}\otimes_{R'}R_{i}
}
\]
such that 
\begin{enumerate}
\item $R_{i}$ is a domain,
\item we have $(Q_{i})_{t}\simeq A_{2}\otimes_{R'}R_{i}$,
\item the lower left arrow is the one induced from $S_{2}\longrightarrow S_{2}[t^{-1}]\longrightarrow A_{2}$,
\item the lower right arrow is a finite morphism,
\item each connected component of $(\Spec Q_{i}/tQ_{i})_{\red}$ maps isomorphically
onto $\Spec R_{i}=\Spec S_{2}\otimes_{R'}R_{i}/t(S_{2}\otimes_{R'}R_{i})$,
\item up to shrink $S_{2}$ to a smaller neighborhood of $(t)$, $\sqrt{tQ_{i}}$
is a principal ideal generated by some $q_{i}\in Q_{i}$.
\end{enumerate}
Let's see how to conclude from this. By \cite[Corollary 7.5]{MR1322960},
the ring $Q_{i}\otimes R_{i}[[t]]$ is a product of rings $P_{1}\times\cdots\times P_{l}$
such that the reduction of $P_{j}/tP_{j}$ is $R_{i}$. The map $R[[x]]\arr P_{i}$,
$x\arr q_{i}$ is well defined and surjective because $P_{i}$ is
$t$-adically and therefore $q_{i}$-adically complete. Since $\dim R[[x]]=\dim P_{i}$
and $R[[x]]$ is a domain the map $R[[x]]\arr P_{i}$ is an isomorphism.
In conclusion $Q_{i}\otimes R_{i}[[t]]\simeq R_{i}[[q_{i}]]^{n_{i}}$
for some $n_{i}\in\N$.

This implies that $A_{2}\otimes_{S_{2}[t^{-1}]}R_{i}((t))\cong A'\otimes_{R'((t))}R_{i}((t))\cong R_{i}((q_{i}))^{n_{i}}$.
The image of $t$ in each factor $R_{i}[[q_{i}]]$ is of the form
$q_{i}^{k}g$ for some $k>0$ and $g\in R_{i}[[q_{i}]]\setminus q_{i}R_{i}[[q_{i}]]$.
Inverting the constant term of $g$ for each factor, we get an open
dense subscheme $\Spec R_{i}'\subset\Spec R_{i}$ such that $R_{i}'((t))\longrightarrow A'\otimes_{R'((t))}R_{i}'((t))$
is uniformizable. By Noetherian induction, we conclude that $R_{i}((t))\longrightarrow A'\otimes_{R'((t))}R_{i}((t))$
is sur locally uniformizable. Therefore $R'((t))\longrightarrow A'$
is also sur locally uniformizable and the theorem follows.

It remains to prove the claim. Note that $R'$ is finitely generated
over $\Z$, in particular, a Noetherian ring of finite dimension.
By \ref{lem:decomposing something locally of finite type}, we may
assume that $R'$ is a domain and it is enough to show that there
exists one dominant finite-type morphism $\Spec R_{i}\longrightarrow\Spec R'$
satisfying the above conditions. Let $K$ be an algebraic closure
of the fraction field $K'$ of $R'$. The map $\Spec A_{2}\otimes_{R'}K\longrightarrow\Spec S_{2}[t^{-1}]\otimes_{R'}K$
is an étale finite cover of affine algebraic curves over $K$. Taking
a partial compactification of $\Spec A_{2}\otimes_{R'}K$, we can
extend this cover to a finite (not necessarily étale) cover $\Spec Q_{K}\longrightarrow\Spec S_{2}\otimes_{R'}K$
with $\Spec Q_{K}$ smooth. Let $p_{1},\dots,p_{m}\colon\Spec K\longrightarrow\Spec Q_{K}$
be the points lying over the point $\Spec K=V(t)\hookrightarrow\Spec S_{2}\otimes_{R'}K$.
We take a sufficiently large intermediate field $L$ between $K$
and $K'$ which is finite over $K'$ and such that $Q_{K}$ and morphisms
$\Spec Q_{K}\longrightarrow\Spec S_{2}\otimes_{R'}K$ and $p_{i}$
are all defined over $L$ (see \cite[Tag 01ZM]{stacks-project}, \cite[Tag 01ZN]{stacks-project}).
Denote by $Q_{L}\to\Spec S_{2}\otimes_{R'}L$ the obtained map. As
$Q_{L}\otimes_{L}K\simeq Q_{K}$ we can conclude that $Q_{L}$ is
smooth over $L$. Replacing $R'$ with its integral closure in $L$,
we can assume $K'=L$. Since $\Spec Q_{K'}$ is normal, we can extend
$\Spec Q_{K'}\to\Spec(S_{2}\otimes_{R'}K')$ to $\Spec S_{2}$ by
taking the normalization $\Spec Q\to\Spec S_{2}$. As $\Spec Q_{t}\to\Spec S_{2}[t^{-1}]$
and $\Spec A_{2}\to\Spec S_{2}[t^{-1}]$ are isomorphic over $K$,
shrinking $\Spec R'$ we can assume that $Q_{t}\simeq A_{2}$ (see
\cite[Tag 081E]{stacks-project}). So far we have proved that $\Spec Q$
satisfies points $(1)$ to $(4)$.

For $(5)$, consider $\Spec(Q/tQ)_{\text{\ensuremath{\red}}}\to\Spec(S_{2}/tS_{2})=\Spec R'$.
We have
\[
(Q/tQ)_{\red}\otimes_{R'}K'\simeq(Q_{K'}/tQ_{K'})_{\red}\simeq K'^{m}
\]
because $(Q_{K'}/tQ_{K'})$ is a finite $K'$ algebra with only rational
points. Thus, shrinking $R'$, we can assume $(Q/tQ)_{\red}\simeq R'^{m}$,
that is point $(5)$.

We now focus on $(6)$. Since $\Spec Q\to\Spec R'$ is generically
smooth, by \cite[Tag 0C0C]{stacks-project} we can assume it is smooth.
Since $R'$ is excellent, we can also assume that $R'$ is regular,
so that $Q$ is regular as well. Let $J=\sqrt{tQ}$. If $x=tS_{2}\in\Spec S_{2}$
is the unique point over $0=(t)\in\Spec R'[t]$, we have that that
$Q_{x}$ is a finite extension of $(S_{2})_{x}\simeq R[t]_{(t)}$.
In particular $Q_{x}\otimes_{R'}K'$ is a semilocal regular ring of
dimension $1$, thus a finite product of semilocal Dedekind domains.
By \cite[Chapter 13, Corollary 1.4]{Auslander2014} it is therefore
a PID, so that $(J_{x})\otimes_{R'}K'\simeq Q_{x}\otimes_{R'}K'$.
By \cite[Tag 01ZM]{stacks-project} we can assume it extends to an
isomorphism $J_{x}\simeq Q_{x}$. Shrinking $S_{2}$ étale locally
around $x$, we can finally assume $J\simeq Q$, which yields condition
$(6)$.
\end{proof}

\section{The P-moduli space of formal torsors\label{sec:The-P-moduli-space-of-Delta}}

Let $k$ be a base field and $G$ be a finite group. We prove the
existence of P-moduli space of torsors over $k((t))$ for a fixed
finite group $G$ or the one of finite étale covers of $k((t))$ of
fixed degree.
\begin{notation}
\label{nota:semidirect characteristic} We set $p=\car k$, allowing
it also to be $0$. In this section, with abuse of notation, we often
consider groups of the form $H\rtimes C$ where $H$ is a $p$-group
and $C$ is a tame cyclic group. If $p>0$ this means that $C$ is
a cyclic group whose order is coprime with $p$. If $p=0$ instead
this means that $H=0$, while $C$ is any cyclic group.
\end{notation}

\begin{defn}
The functor $\Delta_{n}\colon(\Aff/k)^{\op}\arr\sets$ maps a ring
$R$ to the set of isomorphism classes of finite étale covers of $R((t))$
of constant degree $n$. For a morphism $f\colon\Spec S\longrightarrow\Spec R$
of affine $k$-schemes the \emph{pull-back }map $f^{*}:\Delta_{n}(R)\longrightarrow\Delta_{n}(S)$
sends an étale $R((t))$-algebra $A$ to $A\hotimes_{R}S=A\otimes_{R((t))}S((t))$. 

We define a functor $\Delta_{G}\colon(\Aff/k)^{\op}\arr\sets$ mapping
a ring $R$ to the set of isomorphism classes of $G$-tosors over
$R((t))$. The pullback is defined similarly to the one of $\Delta_{n}$.
\end{defn}

Since finite étale algebras correspond to $S_{n}$-torsors by \cite[Prop. 1.6]{Tonini:aa},
we have an isomorphism $\Delta_{n}\simeq\Delta_{S_{n}}$.
\begin{rem}
Notation above slightly differs to the notation used in \cite{Tonini:2017qr},
where $\Delta_{G}$ denotes the analogous fiber category.
\end{rem}

\begin{lem}
\label{lem:semidirec-case}Suppose that $G$ is the semidirect product
$H\rtimes C$ of a $p$-group $H$ and a tame cyclic group $C$. Then
the functor $\Delta_{G}$ has a strong P-moduli space which is the
disjoint union of countably many affine schemes of finite type over
$k$.
\end{lem}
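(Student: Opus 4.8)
The plan is to reduce the statement to Lemma~\ref{lem:strong moduli of ind coarse} by feeding it the structural description of the moduli stack established in the authors' previous paper \cite{Tonini:2017qr}. First I would recall that, when $G=H\rtimes C$ with $H$ a $p$-group and $C$ tame cyclic, \cite{Tonini:2017qr} realizes the fiber category $\bDelta_{G}^{*}$ of such $G$-torsors over $k((t))$ (with the fpqc-local freeness condition on invariants of one-dimensional representations built in) as the limit of a direct system $\{\stZ_{n}\}_{n\in\NN}$ of Deligne-Mumford stacks of finite type over $k$ whose transition maps are finite and universally injective --- precisely the input required by Lemma~\ref{lem:strong moduli of ind coarse} (compare the reference to \cite[Section 3 and Appendix A]{Tonini:2017qr} made there). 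Note that $\bDelta_{G}^{*}$ is genuinely Deligne-Mumford: since $G$ is a constant finite group over the field $k$, the automorphism sheaf of any $G$-torsor is a finite étale group scheme, so the inertia is finite and unramified. I would also record the tautology that the functor $\Delta_{G}^{*}$ of the present paper is exactly the functor of isomorphism classes of $\bDelta_{G}^{*}$.

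With this in hand, the proof is immediate: apply Lemma~\ref{lem:strong moduli of ind coarse} to the direct system $\{\stZ_{n}\}$. It produces affine $k$-varieties $\{Y_{i}\}_{i\in\NN}$ together with a universally bijective morphism $\coprod_{i}Y_{i}\arr\overline{\bDelta_{G}^{*}}$ onto the ind-coarse moduli space which is an epimorphism in the sur topology, so that $\bigsqcup_{i\in\NN}Y_{i}$ is a strong P-moduli space of $\overline{\bDelta_{G}^{*}}$; and, by the last assertion of that lemma, the functor of isomorphism classes of the limit stack --- namely $\Delta_{G}^{*}$ --- together with all of its Zariski, étale and fppf sheafifications, has this very same strong P-moduli space. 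This is exactly the desired disjoint union of countably many affine schemes of finite type over $k$.

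A routine bookkeeping point I would dispatch is that $\Delta_{G}^{*}$ is a functor on $(\Aff/k)^{\op}$ while Lemma~\ref{lem:strong moduli of ind coarse} is phrased over $\Sch/k$; this is harmless, since $(-)^{\P}$ is compatible with restriction from $\Sch/k$ to $\Aff/k$ by Proposition~\ref{prop:remarks on P and F}, and the notion of strong P-moduli space does not see this change of site. The only genuinely non-formal ingredient is the structural description of $\bDelta_{G}^{*}$ imported from \cite{Tonini:2017qr} --- that it is a limit of finite-type Deligne-Mumford stacks along finite universally injective maps, and that this stack is the one whose isomorphism classes are computed by the functor $\Delta_{G}^{*}$ defined here. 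I expect this to be the main (in fact the only) obstacle; once it is granted, the present statement drops out of Lemma~\ref{lem:strong moduli of ind coarse} with no further work.
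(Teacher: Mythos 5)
Your proof is correct and follows essentially the same route as the paper's, which just cites the main result of \cite{Tonini:2017qr} (giving $\bDelta_{G}^{*}$ as a filtered limit of finite-type Deligne--Mumford stacks along finite universally injective maps) and Lemma \ref{lem:strong moduli of ind coarse}. The extra details you supply (the Deligne--Mumford check, the $\Aff/k$ versus $\Sch/k$ bookkeeping via Proposition \ref{prop:remarks on P and F}) are all accurate and merely spell out what the paper leaves implicit.
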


\begin{proof}
This follows from \cite[Theorem A]{Tonini:2017qr} and Lemma \ref{lem:strong moduli of ind coarse}.
\end{proof}
\begin{defn}
For $\cF\colon(\Aff/k)^{\op}\arr\sets$ and $A\in\cF(V)$, a \emph{geometric
fiber }of $A$ is the image of $A$ under the map $\cF(V)\longrightarrow\cF(K)$
associated with some geometric point $\Spec K\longrightarrow V$.
\end{defn}

\begin{defn}
We denote by $\Delta_{n}^{\circ}$ (resp. $\Delta_{G}^{\circ}$) the
subfunctor of $\Delta_{n}$ (resp. $\Delta_{G}$) consisting of étale
$R((t))$-algebras $A$ whose geometric fibers are connected: for
every algebraically closed $R$-field $K$, the induced algebra $A\hotimes_{R}K$
is a field. 
\end{defn}

\begin{lem}
\label{lem:geom connected is constructible} The property ``being
connected'' is locally constructible for both $\Delta_{G}$ and $\Delta_{n}$.
\end{lem}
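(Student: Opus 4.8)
The approach is to use the uniformization theorem to pass to the uniformizable case, where the connected locus is visibly a union of clopen pieces. First one records that ``being connected'' is a geometric property for both $\Delta_{G}$ and $\Delta_{n}$ in the sense of \ref{def:geometric and constructible property}: over an algebraically closed field $K$ a finite étale $K((t))$-algebra $A$ of degree $n$ decomposes as $A\cong\prod_{j=1}^{c}K((s_{j}))$ with $t\mapsto s_{j}^{m_{j}}$ and $\sum_{j}m_{j}=n$, where $c$ is the number of connected components; for an extension $K\arr K'$ in $\ACF/k$, Lemma~\ref{lem:uniformizable structure} gives $A\hotimes_{K}K'\cong\prod_{j}K'((s_{j}))$, so $c$ is unchanged and ``connected'' (i.e.\ $c=1$) descends and ascends along field extensions. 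Since a $G$-torsor over $R((t))$ is in particular a finite étale $R((t))$-algebra of degree $|G|$ with the same connected locus, it suffices to prove: for every ring $R$ and every finite étale $R((t))$-algebra $A$ of constant degree $n$, the set $C_{A}$ of points of $\Spec R$ over which $A$ has connected geometric fibre is locally constructible. (One should \emph{not} reduce via $\Delta_{n}\simeq\Delta_{S_{n}}$, which does not respect connectedness.)

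By Theorem~\ref{cor:stratified-uniformization} there is a surjective, finitely presented morphism $g\colon\Spec S\arr\Spec R$ such that $A\hotimes_{R}S$ is uniformizable. By associativity of the complete tensor product, $(A\hotimes_{R}S)\hotimes_{S}K=A\hotimes_{R}K$ for any geometric point $\Spec K\arr\Spec S\arrdi{g}\Spec R$, and since ``connected'' is a geometric property this gives $C_{A\hotimes_{R}S}=g^{-1}(C_{A})$. As $g$ is surjective, $C_{A}=g\bigl(C_{A\hotimes_{R}S}\bigr)$; and as $g$ is quasi-compact and locally of finite presentation, Chevalley's theorem reduces us to proving that $C_{A\hotimes_{R}S}$ is constructible in $\Spec S$.

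Finally, write $S\simeq\prod_{i=1}^{l}S_{i}$ and $A\hotimes_{R}S_{i}\simeq S_{i}((s))^{n_{i}}$ with $t\mapsto s^{k_{i}}g_{i}$ as in \ref{def:uniformizable}, and use the clopen decomposition $\Spec S=\bigsqcup_{i}\Spec S_{i}$. For a geometric point $\Spec K\arr\Spec S_{i}$, Lemma~\ref{lem:uniformizable structure} gives $\bigl(S_{i}((s))^{n_{i}}\bigr)\hotimes_{S_{i}}K\simeq K((s))^{n_{i}}$, whose spectrum has exactly $n_{i}$ connected components; hence the geometric fibre over any point of $\Spec S_{i}$ is connected if and only if $n_{i}=1$. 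Therefore $C_{A\hotimes_{R}S}=\bigsqcup_{i:\,n_{i}=1}\Spec S_{i}$ is a finite union of quasi-compact open-and-closed subsets of $\Spec S$, hence constructible, and the proof is complete. The only substantial ingredient here is the uniformization theorem, which is already proved; the rest is bookkeeping with complete tensor products and Chevalley's theorem, and the sole point requiring care is the initial reduction to finite étale algebras (avoiding $\Delta_{n}\simeq\Delta_{S_{n}}$).
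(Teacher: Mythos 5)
Your proof is correct and follows the same route as the paper: reduce via Theorem~\ref{cor:stratified-uniformization} and Chevalley's theorem to the uniformizable case, where the connected locus is visibly a union of clopen subsets $\Spec S_i$ with $n_i=1$. The paper compresses this into two sentences, and you are supplying the details it treats as obvious, including the (good) warning that $\Delta_n\simeq\Delta_{S_n}$ does not respect connectedness; your only slip is cosmetic, namely writing $t\mapsto s_j^{m_j}$ rather than $t\mapsto s_j^{m_j}g_j$ with $g_j$ a unit, which is what uniformization actually gives in the wildly ramified case and is what Lemma~\ref{lem:uniformizable structure} handles.
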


\begin{proof}
Let $\shF=\Delta_{G}$ or $\shF=\Delta_{n}$ and let $\shQ\subseteq|\shF|$
denote the property in the statement. Given $\psi\colon V\to\shF$
we have to show that $|\psi|^{-1}(\shQ)=\shQ_{V}\subseteq V$ is locally
constructible. By \ref{lem:locally constructible for F and FP} the
problem is sur local in $V$. By Theorem \ref{cor:stratified-uniformization}
we can therefore assume that $V=\Spec R$ and that $\psi$ corresponds
to a torsor/étale map $R((t))\to A=R((s))^{m}$ with $t=s^{k}g$,
$g\in R[[s]]^{*}$ and $k,m>0$. The subset $\shQ_{V}$ is locally
constructible in $V$ because, if $m=1$ then $\shQ_{V}=V$, while
if $m>1$ then $\shQ_{V}=\emptyset$.
\end{proof}
\begin{lem}
\label{lem:torsors and disjoint union} Let $f\colon X\arr S$ be
a $G$-torsor over a scheme $S$ and let $X=\bigsqcup_{i}U_{i}$ be
a finite decomposition into open subsets. Then:
\begin{itemize}
\item there exist finite decompositions into open subsets $S=\bigsqcup_{j}S_{j}$
and $f^{-1}(S_{j})=\bigsqcup_{k}V_{jk}$ with the following properties:
for all $j,k$ there exists $i$ such that $V_{jk}\subseteq U_{i}$;
the group $G$ permutes the $V_{jk}$ and, for all $j$, $G$ acts
transitively on $\{V_{jk}\}_{k}$;
\item if $G$ permutes transitively the $U_{i}$ and $G_{i}$ is the stabilizer
of $U_{i}$ in $G$ then $U_{i}$ is an $G_{i}$-torsor over $S$.
\end{itemize}
\end{lem}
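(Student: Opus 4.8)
The plan is to obtain the finer partition $\{V_{jk}\}_k$ of $f^{-1}(S_j)$ as the common refinement of the open partitions $X=\bigsqcup_i gU_i$, $g\in G$, and to read off the decomposition $S=\bigsqcup_j S_j$ from the ``combinatorial type'' of the resulting finite $G$-set of pieces. Throughout I use that a torsor under the finite (constant) group $G$ is finite \'{e}tale, so that $f$, each $U_i$ (which is open \emph{and} closed in $X$, its complement being the union of the remaining finitely many $U_{i'}$) and each translate $gU_i$ are finite \'{e}tale over $S$; in particular the image of any such map is open and closed in $S$. Writing $I$ for the finite index set of the $U_i$, I would first introduce, for each function $\phi\colon G\to I$,
\[
W_\phi:=\bigcap_{g\in G}gU_{\phi(g)},
\]
a finite intersection of clopen subsets, hence clopen and finite \'{e}tale over $S$, and contained in $U_{\phi(1)}$. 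A short computation gives $hW_\phi=W_{h\cdot\phi}$ for the action $(h\cdot\phi)(g):=\phi(h^{-1}g)$ of $G$ on $I^{G}$, so $G$ permutes the clopen decomposition $X=\bigsqcup_{\phi\in I^{G}}W_\phi$ (some pieces possibly empty) compatibly with that action.

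Next I would decompose $S$. Since each $W_\phi\to S$ is finite \'{e}tale, $f(W_\phi)$ is open and closed, so $s\mapsto\{\phi\mid s\in f(W_\phi)\}$ is a locally constant map from $S$ to the finite power set of $I^{G}$; its fibres give a finite decomposition $S=\bigsqcup_j S_j$ into open subsets such that, over each $S_j$, a given $W_\phi\cap f^{-1}(S_j)$ is either empty or surjective onto $S_j$. I then set $\{V_{jk}\}_k$ to be the nonempty members of $\{W_\phi\cap f^{-1}(S_j)\}_\phi$. These are open, $f^{-1}(S_j)=\bigsqcup_k V_{jk}$, each $V_{jk}$ lies in some $U_i$ (namely $U_{\phi(1)}$), and since $f$ is $G$-invariant every $g\in G$ preserves $f^{-1}(S_j)$ and carries $W_\phi\cap f^{-1}(S_j)$ to $W_{g\cdot\phi}\cap f^{-1}(S_j)$, so $G$ permutes $\{V_{jk}\}_k$.

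The step I expect to be the main obstacle is transitivity of the $G$-action on $\{V_{jk}\}_k$ for fixed $j$, where one must argue with geometric points rather than scheme-theoretic ones. Given two nonempty pieces $W_\phi\cap f^{-1}(S_j)$ and $W_{\phi'}\cap f^{-1}(S_j)$, I would pick a geometric point $\Spec K\to S_j$; both pieces being finite \'{e}tale and surjective over $S_j$, there are $K$-points $x$ of $W_\phi$ and $x'$ of $W_{\phi'}$ above it, and since $X\times_S\Spec K$ is a trivial $G$-torsor we get $x'=gx$ for a unique $g\in G$. The memberships $h^{-1}x\in U_{\phi(h)}(K)$ and $h^{-1}x'\in U_{\phi'(h)}(K)$ (all $h\in G$), combined with the fact that the $U_i$ are pairwise disjoint (so a $K$-point lies in at most one), force $\phi'=g\cdot\phi$, whence $V_{jk'}=gV_{jk}$. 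This finishes the first assertion.

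For the second assertion, assume $G$ permutes $\{U_i\}_i$ transitively, with $G_i$ the stabilizer of $U_i$ in $G$. Then $X=\bigsqcup_{\bar g\in G/G_i}gU_i$ with the translates $gU_i$ pairwise equal or disjoint (each being one of the $U_{i'}$), $U_i\to S$ is finite \'{e}tale (being clopen in $X$), $G_i$ preserves $U_i$ by definition, and $G_i$ acts simply transitively on the geometric fibres of $U_i\to S$: this is elementary, since $G$ acts simply transitively on the geometric fibres of $X\to S$ and these decompose, compatibly with the $G$-action, into the sheets indexed by $G/G_i$, of which $U_i$ is the one fixed by $G_i$. A finite \'{e}tale cover equipped with a simply transitive action on its geometric fibres is a torsor, so $U_i\to S$ is a $G_i$-torsor.
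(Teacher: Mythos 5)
Your proof is correct and takes essentially the same approach as the paper's: refine the clopen decomposition $X=\bigsqcup_i U_i$ by all its $G$-translates, observe that $G$ permutes the resulting clopen pieces, use that the images in $S$ are clopen to decompose $S$, and check $G$-transitivity over each $S_j$ fibrewise; the second item is likewise reduced to geometric fibres in both proofs. The only differences are cosmetic — you index the refined pieces by functions $\phi\colon G\to I$ and argue transitivity via geometric points, whereas the paper indexes by subsets $J\subseteq\{g(U_i)\}$ and argues at the level of topological points of $X\times_S X\cong G\times X$.
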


\begin{proof}
Let $T=\{g(U_{i})\}_{i,g\in G}$ and, for $J\subseteq T$, set 
\[
T_{J}=(\bigcap_{V\in J}V)\cap(\bigcap_{V\notin J}(X-V))
\]
so that $X$ is the disjoint union of the $T_{J}$ and all $U_{i}$
are a disjoint union of some of the $T_{J}$. Notice that $G$ permutes
the $T_{J}$: given $g\in G$ and $J\subseteq T$ one has that $g(T_{J})=T_{g(J)}$
where $g(J)=\{g(V)\st V\in J\}$. This also implies that $f(T_{J})\cap f(T_{J'})\neq\emptyset$
only if there exists $g\in G$ such that $g(T_{J})=T_{J'}$, in which
case $f(T_{J})=f(T_{J'})$: if $s=f(x)=f(y)$ with $x\in T_{J}$ and
$y\in T_{J'}$ then there exists $g\in G$ such that $g(x)=y\in g(T_{J})\cap T_{J'}$,
so that $g(T_{J})=T_{J'}$. Up to removing repetitions the sets $f(T_{J})$
yield the desired decomposition of $S$.

We now consider the last statement. If $S'\to S$ is any map with
$S'\neq\emptyset$ denote by $\psi\colon X'\to X$ its base change
along $f\colon X\to S$. The collection $\{\psi^{-1}(U_{i})\}_{i}$
defines a partition of $X'$ over which $G$ acts transitively. In
particular $\psi^{-1}(U_{i})\neq\emptyset$ for all $i$. We claim
that $G_{i}$ is also the stabilizer of $\psi^{-1}(U_{i})$ in $G$.
Indeed if $g(\psi^{-1}(U_{i}))=\psi^{-1}(g(U_{i}))=\psi^{-1}(U_{i})$
then $g(U_{i})\cap U_{i}\neq\emptyset$ and therefore $g(U_{i})=U_{i}$.
In particular we can assume $X=G\times S$ and, by transitivity, that
$\{1\}\times S\in U_{i}$, so that $G_{i}\times S\subseteq U_{i}$.
In order to prove that this last inclusion is an equality, it suffices
to show the corresponding equality over each point of $S$ and thus
we can further assume $S=\Spec K$, for some field $K$. In this case
\[
g\in U_{i}\then g=g\cdot1\in g(U_{i})\cap U_{i}\neq\emptyset\then g(U_{i})=U_{i}\then g\in G_{i}
\]
\end{proof}
\begin{thm}
\label{thm:strong P moduli of DeltaG} Let $k$ be a field, $G$ a
finite group and let $\mathcal{Q}$ be a locally constructible property
for $\Delta_{G}$. Then $\Delta_{G}^{\cQ}$ (e.g. $\Delta_{G}$ or
$\Delta_{G}^{\circ}$) has a strong P-moduli space which is a countable
disjoint union of affine $k$-varieties.
\end{thm}

\begin{proof}
By \ref{prop:submoduli-const} and \ref{lem:decomposing something locally of finite type}
it is enough to consider the case of $\Delta_{G}$.

In the semidirect case $G=H\rtimes C$, for a $p$-group $H$ and
a tame cyclic group $C$, the claim follows from \ref{lem:semidirec-case}.
In this case we denote the P-moduli space of $\Delta_{G}^{\circ}$
by $\overline{\Delta_{G}^{\circ}}$. %

Let now $G$ be an arbitrary finite group. Let $\Lambda$ be the set
of representatives of $G$-conjugacy classes of subgroups $H\subset G$
which are isomorphic to the semi-direct product $B\rtimes C$ of a
$p$-group $B$ and a tame cyclic group $C$. Let also $\Aut_{G}(H)$
denote the subgroup of automorphisms of $H$ induced by conjugation
of an element of $G$. There exists a natural action of $\Aut_{G}(H)$
on $\Delta_{H}^{\circ}$, inducing a natural P-action on $\overline{\Delta_{H}^{\circ}}$.
From \ref{prop:quotient by finite group }, there exists the strong
P-quotient $\overline{\Delta_{H}^{\circ}}/\Aut_{G}(H)$, which is
a strong P-moduli space of the quotient functor $\Delta_{H}^{\circ}/\Aut_{G}(H)$
and it is P-isomorphic to a disjoint union of $k$-varieties.

Consider the map $\ind_{H}^{G}\colon\Delta_{H}^{\circ}\longrightarrow\Delta_{G}$.
This is $\Aut_{G}(H)$-invariant and induces maps $\Delta_{H}^{\circ}/\Aut_{G}(H)\longrightarrow\Delta_{G}$
and
\[
\bigsqcup_{H\in\Lambda}\Delta_{H}^{\circ}/\Aut_{G}(H)\longrightarrow\Delta_{G}.
\]
We claim that this is geometrically injective and an epimorphism in
the sur topology. Since the source of this map has a strong P-moduli
space as in the theorem, from \ref{lem:when phiP is an isomorphism},
the claim implies the theorem. It remains to show the claim. 

\emph{Epimorphism. }Follows from \ref{cor:stratified-uniformization}
and \ref{lem:torsors and disjoint union} and the fact that Galois
extensions of $K((t))$ with $K$ algebraically closed has Galois
group a semidirect product of a $p$-group and a cyclic tame group.

\emph{Geometrically injective. }If $K$ is an algebraically closed
field then $\Delta_{H}^{\circ}(K)/\Aut_{G}(H)$ is the set of isomorphism
classes of Galois extensions $L/K((t))$ modded out by the equivalence
relation induced by the action of $\Aut_{G}(H)$. Given such an object
the corresponding $G$-torsor is $\ind_{H}^{G}L$. Let $L\in\Delta_{H}^{\circ}(K)$
and $L'\in\Delta_{H'}^{\circ}(K)$ for $H,H'\in\Lambda$ be such that
$\ind_{H}^{G}L\simeq\ind_{H'}^{G}L'$ as $G$-torsors. It follows
that $L'$ is one of the component of $\ind_{H}^{G}L$ and $H'$ is
its stabilizer. Thus $H'=gHg^{-1}$ for $g\in G$ and $L'=L$ with
the $H'$ action induced by $H$. But since $\Lambda$ is a set of
representative we obtain $H'=H$ and therefore $L,L'\in\Delta_{H}^{\circ}(K)$
are in the same orbit for the action of $\Aut_{G}(H)$.
\end{proof}
\begin{cor}
\label{cor:P-module-etale-gp-scheme}Theorem \ref{thm:strong P moduli of DeltaG}
holds also when $G$ is a finite étale group scheme over $k$. 
\end{cor}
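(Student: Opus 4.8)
The plan is to reduce the corollary to Theorem~\ref{thm:strong P moduli of DeltaG}, which treats constant groups, by Galois descent along a finite separable extension of $k$ that trivialises $G$. First I would choose a finite Galois extension $k'/k$, with group $\Gamma=\Gal(k'/k)$, such that $G_{k'}:=G\times_{k}k'$ is the constant group scheme attached to a finite group $G_{0}$. For a $k'$-algebra $R$, a $G$-torsor over $R((t))$ is the same thing as a $G_{0}$-torsor over $R((t))$, so the restriction of $\Delta_{G}^{\cQ}$ to $\Aff/k'$ is the functor $\Delta_{G_{0}}^{\cQ'}$ over $k'$, where $\cQ'$ is the locally constructible property of $\Delta_{G_{0}}$ obtained by pulling $\cQ$ back (the defining conditions of a locally constructible property are unaffected by the base change, the condition on geometric fibres not seeing the base field). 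The group $\Gamma$ acts on $\Delta_{G_{0}}^{\cQ'}$, $\sigma$-semilinearly over $\Spec k'$, through its actions on $k'$ and on $G_{k'}$; because $(R\otimes_{k}k')((t))=R((t))\otimes_{k((t))}k'((t))$ is a finite étale $\Gamma$-Galois cover of $R((t))$, this semilinear action is precisely the descent datum recovering $\Delta_{G}^{\cQ}$ from $\Delta_{G_{0}}^{\cQ'}$. Note also that $\Spec k'\to\Spec k$ is itself a sur covering.

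By Theorem~\ref{thm:strong P moduli of DeltaG} applied over $k'$, the functor $\Delta_{G_{0}}^{\cQ'}$ has a strong P-moduli space $X'$, a countable disjoint union of affine $k'$-varieties of finite type. By uniqueness of the strong P-moduli space, the $\sigma$-semilinear $\Gamma$-action transports to $X'$; equivalently, since $X'$ is of finite type over $k$ (as $k'/k$ is finite), $\Gamma$ acts on $X'$ by P-automorphisms over $k$, compatibly with its action on $\Spec k'$. The main obstacle is that this $\Gamma$-action is only by P-isomorphisms, hence not literally a descent datum; I would overcome it by invoking Proposition~\ref{lem:decomposing a P-action}, which yields a $k$-scheme $Y'$ carrying a genuine $\Gamma$-action, a $\Gamma$-equivariant universally bijective morphism $Y'\to X'$ of finite type, and a decomposition of $Y'$ into $\Gamma$-invariant affine opens. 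Being universally bijective and of finite type, $Y'\to X'$ is a P-isomorphism, so $Y'$ is still a strong P-moduli space of $\Delta_{G_{0}}^{\cQ'}$ and still a countable disjoint union of affine $k'$-varieties of finite type; moreover the genuine $\Gamma$-action on $Y'$ is semilinear over $k'$ because $Y'\to X'$ is $\Gamma$-equivariant over $k$. It is therefore a bona fide descent datum along $\Spec k'\to\Spec k$, and this datum is effective because $Y'$ is a disjoint union of affine schemes permuted by $\Gamma$ in finite orbits; descending orbit by orbit produces a $k$-scheme $Y$, a countable disjoint union of affine $k$-varieties of finite type, with $Y\times_{k}k'\cong Y'$ $\Gamma$-equivariantly.

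It then remains to verify that $Y$ is a strong P-moduli space of $\Delta_{G}^{\cQ}$. Using that $Y^{\P}$ is a sheaf for the sur topology (Lemma~\ref{lem:sheaves}) and that $\Spec k'\to\Spec k$ is a sur covering, the $\Gamma$-equivariant strong P-moduli morphism $\Delta_{G_{0}}^{\cQ'}\to(Y\times_{k}k')^{\P}$ over $k'$ descends to a morphism $\pi\colon\Delta_{G}^{\cQ}\to Y^{\P}$ over $k$. This $\pi$ is geometrically bijective: an algebraically closed $k$-field $K$ may be assumed to contain $k'$, and then $\Delta_{G}^{\cQ}(K)=\Delta_{G_{0}}^{\cQ'}(K)\cong(Y')^{\P}(K)=Y(K)$. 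And $\pi$ is an epimorphism in the sur topology: for a $k$-scheme $V$ with a map $V\to Y^{\P}$, the base change $V\times_{k}k'\to V$ is a sur covering, and as $\Delta_{G_{0}}^{\cQ'}\to(Y')^{\P}$ is a sur epimorphism over $k'$ (Proposition~\ref{prop:admissible-moduli}) there is a sur covering $W\to V\times_{k}k'$ through which $W\to(Y\times_{k}k')^{\P}$ lifts to $\Delta_{G_{0}}^{\cQ'}=\Delta_{G}^{\cQ}\times_{k}k'$, hence to $\Delta_{G}^{\cQ}$; since $W\to V$ is again a sur covering, $\pi$ is a sur epimorphism. By Lemma~\ref{lem:when phiP is an isomorphism}, $\pi^{\P}\colon(\Delta_{G}^{\cQ})^{\P}\to Y^{\P}$ is an isomorphism, so $Y$ is a strong P-moduli space of $\Delta_{G}^{\cQ}$.

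For the last assertion of the theorem, suppose $G_{0}$ (equivalently $G_{\bar k}$) is a semidirect product of a $p$-group and a tame cyclic group; then Theorem~\ref{thm:strong P moduli of DeltaG} over $k'$ also identifies, $\Gamma$-equivariantly, the strong P-moduli spaces of $\Delta_{G_{0}}^{*,\cQ'}$ and $\Delta_{G_{0}}^{\cQ'}$, and running the same descent shows that $\Delta_{G}^{*,\cQ}$ and $\Delta_{G}^{\cQ}$ have the same strong P-moduli space. I expect the one genuinely delicate point to be the rigidification step: replacing the only-up-to-P-isomorphism Galois action on the $k'$-model $X'$ by a genuine semilinear action on a P-isomorphic model, so that ordinary faithfully flat descent becomes applicable; once this is in hand the remainder is bookkeeping with the sur-sheaf property of $(-)^{\P}$ and Lemma~\ref{lem:when phiP is an isomorphism}.
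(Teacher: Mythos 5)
Your proof is correct and arrives at the same answer as the paper's, but the implementation is noticeably different. The paper, after passing to a Galois extension $k'/k$ that trivialises $G$ and producing the strong P-moduli space $X'$ over $k'$ with its P-action of $\Gamma=\Gal(k'/k)$, simply takes the strong P-quotient $X'/\Gamma$ by invoking Theorem~\ref{prop:quotient by finite group } as a black box. You instead open that box: you rigidify the P-action via Proposition~\ref{lem:decomposing a P-action} to get a P-isomorphic $k'$-scheme $Y'$ with a genuine $\Gamma$-action, decomposed into $\Gamma$-invariant affine opens, and then perform honest faithfully flat (Galois) descent along $\Spec k'\to\Spec k$ to get a $k$-scheme $Y$, after which you directly verify the strong P-moduli property by the sur-sheaf criterion. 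Since the proof of Theorem~\ref{prop:quotient by finite group } itself factors through Proposition~\ref{lem:decomposing a P-action} and then takes invariants on each affine piece, the two arguments are technically close — for a Galois action on an affine $k'$-scheme the invariant ring is exactly the descended $k$-algebra — but your route makes the descent explicit where the paper's is condensed into one sentence. The one point you flag as ``delicate,'' namely that the genuine $\Gamma$-action on $Y'$ is $k'$-semilinear even though the equivariance of $Y'\to X'$ is only P-morphic, deserves a line of justification: two $k$-scheme morphisms $Y'\to\Spec k'$ that induce the same natural transformation on geometric points must in fact be equal, because $k'/k$ is finite separable — writing $k'=k[x]/(p)$ with $p$ separable, if $\alpha,\beta\in\cO(Y')$ are the two images of $x$ and $\alpha-\beta$ is nilpotent, then $0=p(\alpha)-p(\beta)=(\alpha-\beta)u$ with $u$ a unit, so $\alpha=\beta$. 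With that observation supplied, your argument is complete and self-contained; the trade-off against the paper's version is explicitness (you redo part of the P-quotient construction) versus brevity (the paper relies on a result you avoid re-using).
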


\begin{proof}
Let $k'/k$ be a finite Galois extension with Galois group $H$ such
that $G\otimes_{k}k'$ is a constant group. After the base change
to $k'$, the functor $\Delta_{G}^{\cQ}$ has a strong P-moduli space.
This space has a P-action of $H$. It suffices to take the strong
P-quotient, which exists from \ref{prop:quotient by finite group }.
\end{proof}

\section{Local constructibility of weighting functions\label{sec:Local-constructibility}}

In the wild McKay correspondence, there appear motivic integrals of
the form $\int_{\Delta_{G}}\LL^{f}$ for some weighting functions
$f\colon\Delta_{G}\longrightarrow\frac{1}{|G|}\ZZ$. In this section,
we show that these functions $f$ are locally constructible, which
proves that these integrals indeed make sense. 

We first recall the definitions of these functions. We fix a free
$k[[t]]$-module $M=k[[t]]^{\oplus r}$ of rank $r$ endowed with
a $k[[t]]$-linear $G$-action. For a $k$-algebra $B$, we let $M_{B}:=M\hat{\otimes}_{k}B$. 
\begin{defn}
For a field extension $K/k$ and a $G$-torsor $A/K((t))$, we define
a number $v_{M}(A)\in\frac{1}{|G|}\ZZ$ by 
\[
v_{M}(A):=\frac{1}{|G|}l_{K[[t]]}\left(\frac{\Hom_{k[[t]]}(M,\cO_{A})}{\cO_{A}\cdot\Hom_{k[[t]]}^{G}(M,\cO_{A})}\right),
\]
where $\Hom_{k[[t]]}^{G}(M,\fO_{A})$ is the set of $G$-equivariant
$k[[t]]$-linear maps, $\odi A$ is the integral closure of $K[[t]]$
inside $A$ and $l_{K[[t]]}$ denotes the length of a $K[[t]]$-module.
The natural map 
\[
M_{K}\longrightarrow E_{K}:=\Hom_{K[[t]]}(\Hom_{K[[t]]}^{G}(M_{K},\cO_{A}),\cO_{A}),\,f\longmapsto(\psi\longmapsto\psi(f))
\]
induces a map 
\[
\eta_{A}\colon\Spec S_{\cO_{A}}^{\bullet}E_{K}\longrightarrow\Spec S_{K[[t]]}^{\bullet}M_{K},
\]
where $S_{R}^{\bullet}M$ denotes the symmetric algebra of an $R$-module
$M$. Let 
\[
o\in(\Spec S_{K[[t]]}^{\bullet}M_{K})(K)
\]
be the $K$-point at the origin. We define 
\[
w_{M}(A):=\dim\eta_{A}^{-1}(o)-v_{M}(A)\in\frac{1}{|G|}\ZZ.
\]
Using the above functions we define maps 
\[
v_{M},w_{M}\colon\Delta_{G}\arr\frac{1}{|G|}\ZZ.
\]
We will just write $v=v_{M}$ and $w=w_{M}$ when this creates no
confusion.
\end{defn}

\begin{rem}
For slight differences of definitions of $v$ and $w$ appearing in
the literature, see \cite[Rem. 8.2]{MR3730512} and errata to the
paper \cite{MR3665638}, available online.\footnote{https://msp.org/ant/2017/11-4/p02.xhtml}
\end{rem}

We are going to prove that the maps $v,w\colon\Delta_{G}\arr\frac{1}{|G|}\ZZ$
are well defined and locally constructible. If we set $N=\Hom_{k[[t]]}(M,k[[t]])$
and think of it as a $k[[t]]$-module with an action of $G$ we have
isomorphisms 
\[
N\otimes_{k[[t]]}\odi A\simeq\Hom_{k[[t]]}(M,\cO_{A})\simeq N_{K}\otimes_{K[[t]]}\odi A\simeq\Hom_{K[[t]]}(M_{K},\cO_{A}).
\]

\begin{lem}
\label{lem:weight function module property} The $K[[t]]$-module
$\Hom_{k[[t]]}^{G}(M,\cO_{A})$ is free of rank $r$ and the map 
\[
\Hom_{k[[t]]}^{G}(M,\cO_{A})\otimes_{K[[t]]}\odi A\arr\cO_{A}\cdot\Hom_{k[[t]]}^{G}(M,\cO_{A})\subseteq\Hom_{k[[t]]}(M,\odi A)
\]
is an isomorphism. In particular the number $v(A)$ is well defined,
that is finite. 
\end{lem}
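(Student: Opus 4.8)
The plan is to compare every module occurring in the statement with its localization at $t$, that is, with its restriction to $\Spec K((t))$, where the torsor structure of $A/K((t))$ makes Galois descent available; the fact that $K[[t]]$ is a discrete valuation ring will then let me transfer the generic information back to the integral level.

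I would first record the basic structures. Write $P:=\Hom_{k[[t]]}(M,\odi A)$; the identities noted just before the lemma give $P=N_{K}\otimes_{K[[t]]}\odi A=\Hom_{K[[t]]}(M_{K},\odi A)$. Since $A$ is finite étale over $K((t))$, $\odi A$ is finite over $K[[t]]$ (finiteness of the integral closure), and being $t$-torsion-free it is $K[[t]]$-free, of rank $\dim_{K((t))}A=|G|$ because $\odi A[t^{-1}]=A$. Hence $P$ is $\odi A$-free of rank $r$, carrying the diagonal $G$-action coming from $N_{K}$ and $\odi A$, and $P[t^{-1}]=\Hom_{K((t))}(V,A)$, where $V:=M_{K}[t^{-1}]$ is the free rank-$r$ $K((t))$-module with its induced (scalar-trivial) $G$-action. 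On the generic fibre, $\Hom_{K((t))}(V,A)=V^{\vee}\otimes_{K((t))}A$ is an $A$-module with a $G$-action semilinear over the $G$-action on $A$, and with invariants $\Hom^{G}_{K((t))}(V,A)$. Since $\Spec A\arr\Spec K((t))$ is a $G$-torsor, Galois descent applies: the natural map
\[
\Hom^{G}_{K((t))}(V,A)\otimes_{K((t))}A\arr\Hom_{K((t))}(V,A)
\]
is an isomorphism, and $\Hom^{G}_{K((t))}(V,A)$ is a $K((t))$-vector space of dimension $r$, the $A$-rank of $V^{\vee}\otimes_{K((t))}A$.

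Next I would pass back to the integral level. As $M$ is finite free and $(-)^{G}$ commutes with the flat localization $K[[t]]\arr K((t))$, one gets $P^{G}[t^{-1}]=\Hom^{G}_{K((t))}(V,A)$; thus $P^{G}$ is a finitely generated torsion-free module over the discrete valuation ring $K[[t]]$ of generic rank $r$, hence free of rank $r$, which is the first assertion. For the second, $P^{G}\otimes_{K[[t]]}\odi A$ is $\odi A$-free of rank $r$, so $t$-torsion-free, and therefore injects into its localization $\Hom^{G}_{K((t))}(V,A)\otimes_{K((t))}A$; composing with the generic isomorphism above identifies it with a submodule of $\Hom_{K((t))}(V,A)=P[t^{-1}]$ through which the map $P^{G}\otimes_{K[[t]]}\odi A\arr P$ factors. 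Since $P\arr P[t^{-1}]$ is injective, so is $P^{G}\otimes_{K[[t]]}\odi A\arr P$, i.e.\ it is an isomorphism onto its image $\odi A\cdot P^{G}$. Finally $P$ and $\odi A\cdot P^{G}$ are both $\odi A$-free of rank $r$ and coincide after inverting $t$, so $P/(\odi A\cdot P^{G})$ is a finitely generated $\odi A$-module killed by a power of $t$, hence of finite length over $\odi A$ and a fortiori over $K[[t]]$; this shows that $v(A)$ is finite.

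The one step that goes beyond routine commutative algebra, and which I expect to be the crux, is the faithfully flat descent for the $G$-torsor $A/K((t))$. I do not anticipate real difficulty there: after the faithfully flat base change $K((t))\arr A$ the torsor trivializes, $A\otimes_{K((t))}A\simeq\prod_{g\in G}A$, and both assertions reduce to the evident computation with the regular representation; this is classical and could also be quoted directly from Galois descent theory.
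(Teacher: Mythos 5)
Your proof is correct and follows essentially the same route as the paper: both pass to the generic fibre by inverting $t$, exploit that the torsor becomes trivial after faithfully flat base change (fppf/Galois descent with the regular representation), and then use that $K[[t]]$ is a DVR to transport the generic rank and the injectivity statements back to the integral level. The paper's version is terser but the key ideas — freeness of $\Hom^{G}_{k[[t]]}(M,\cO_A)$ as a torsion-free module over a DVR, and the descent isomorphism after localizing by $t$ — are exactly the ones you identify.
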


\begin{proof}
We can assume $K=k$. The module $\Hom_{k[[t]]}^{G}(M,\cO_{A})$ is
contained in a free $k[[t]]$-module and therefore it is free. In
order to compute its rank and prove that the map in the statement
is injective we can check what happens after localizing by $t$. If
we set $R=k((t))$ and $N_{t}=Q$ we have that the map $(Q\otimes_{R}A)^{G}\otimes_{R}A\arr Q\otimes_{R}A$,
fppf locally on $R$ after trivializing $A$, become
\[
(Q\otimes_{R}R[G])^{G}\otimes_{R}R[G]\arr Q\otimes_{R}R[G].
\]
In particular $(Q\otimes_{R}R[G])^{G}\simeq Q$, the corresponding
map $Q\arr Q\otimes_{R}R[G]$ is the coaction and the above map is
an isomorphism.
\end{proof}
\begin{rem}
The function $v$ is equal to the $t$-order of the ideal $\mathfrak{r}^{*}(L|\mathfrak{o},M)\subset\cO_{A}$,
Fröhlich's module resolvent \cite[Sec. 3]{MR0414520}. This follows
from determinantal descriptions of both values (see \cite[Def. 6.5]{MR3791224},
\cite[Def. 3.3]{MR3431631} and \cite[Sec. 3]{MR0414520}).
\end{rem}

\begin{lem}
Let $K'/K/k$ be field extensions, $A/K((t))$ a $G$-torsor and $A_{K'}=A\hat{\otimes}_{K}K'$
the associated $G$-torsor over $K'((t))$. Then $v(A)=v(A_{K'})$
and $w(A)=w(A_{K'})$. In particular the maps $v,w\colon\Delta_{G}\arr\frac{1}{|G|}\ZZ$
are well defined.
\end{lem}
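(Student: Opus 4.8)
The plan is to trace through the definitions of $v$ and $w$ and show that every ingredient commutes with the base change $-\otimes_{K[[t]]}K'[[t]]$, while the two numerical outputs (a $K[[t]]$-length and a fibre dimension) are unchanged by it.

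First I would isolate the crucial input: the $G$-equivariant identity
$\cO_{A_{K'}}=\cO_{A}\otimes_{K[[t]]}K'[[t]]$,
i.e. that forming the integral closure commutes with this base change. This is \emph{not} automatic for an arbitrary finite flat $K[[t]]$-algebra (when the residue field extensions are inseparable the base change can fail to be normal), so I would reduce to the case where $A$ is uniformizable over $K$: by Theorem \ref{cor:stratified-uniformization} there is a surjective finitely presented $\Spec S\to\Spec K$ with $A\hotimes_{K}S$ uniformizable, and, choosing a point of $\Spec S$, a finitely generated field extension $L/K$ over which $A\hotimes_{K}L$ is uniformizable; since $v$ and $w$ take values in the discrete set $\tfrac1{|G|}\ZZ$ it is enough (working with a common extension, enlarging fields as needed) to treat uniformizable torsors. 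For $A$ uniformizable, Lemma \ref{lem:uniformizable structure} gives $\cO_{A}\simeq\prod_i K[[s_i]]$ with $K[[s_i]]\otimes_{K[[t]]}K'[[t]]\simeq K'[[s_i]]$, while $A_{K'}\simeq\prod_i K'((s_i))$ is again uniformizable with integral closure $\prod_i K'[[s_i]]$; this yields the identity.

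Granting this, consider $v$. Since $M$ is free over $k[[t]]$, the functors $\Hom_{k[[t]]}(M,-)$ and $\Hom^G_{k[[t]]}(M,-)=\Hom_{k[[t]][G]}(M,-)$ — the latter being the kernel of an explicit map of $k[[t]]$-modules — commute with the flat base change $-\otimes_{K[[t]]}K'[[t]]$, so by the identity of Step 1 they send $\cO_A$ to $\cO_{A_{K'}}$. Using Lemma \ref{lem:weight function module property}, which identifies $\cO_{A}\cdot\Hom^G_{k[[t]]}(M,\cO_{A})$ with $\Hom^G_{k[[t]]}(M,\cO_{A})\otimes_{K[[t]]}\cO_{A}$ inside $\Hom_{k[[t]]}(M,\cO_{A})$ (and the analogous statement over $K'$), the short exact sequence defining the length quotient $Q$ in $v(A)=\tfrac1{|G|}l_{K[[t]]}(Q)$ base changes to the corresponding sequence for $A_{K'}$. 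Hence $Q\otimes_{K[[t]]}K'[[t]]$ is the analogous quotient for $A_{K'}$, and $l_{K[[t]]}(Q)=l_{K'[[t]]}\!\left(Q\otimes_{K[[t]]}K'[[t]]\right)$ for every finite length $K[[t]]$-module $Q$ (both equal the sum of the exponents of an elementary-divisor decomposition). Therefore $v(A)=v(A_{K'})$.

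For $w$, the same base change identifies $\Hom^G_{K[[t]]}(M_K,\cO_{A})$, hence $E_K=\Hom_{K[[t]]}(\Hom^G_{K[[t]]}(M_K,\cO_A),\cO_A)\cong\cO_A^{\oplus r}$, with the corresponding data for $A_{K'}$, so that $\eta_{A_{K'}}$ and the origin $o_{K'}$ are obtained from $\eta_A$ and $o$ by $\otimes_{K[[t]]}K'[[t]]$; in particular $\eta_{A_{K'}}^{-1}(o_{K'})=\eta_A^{-1}(o)\otimes_K K'$. Since the dimension of a scheme of finite type over a field is unchanged by field extension (as in Lemma \ref{lem:dimension and universally injective maps}), $\dim\eta_{A_{K'}}^{-1}(o_{K'})=\dim\eta_A^{-1}(o)$, and combined with $v(A)=v(A_{K'})$ this gives $w(A)=w(A_{K'})$. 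The well-definedness of $v,w\colon\Delta_G\to\tfrac1{|G|}\ZZ$ then follows, since a geometric point of $\Delta_G$ is represented by a torsor over an algebraically closed field and two such representatives of the same point become isomorphic over a further extension. The main obstacle throughout is Step 1 (integral closure versus base change): it forces the detour through Theorem \ref{cor:stratified-uniformization} and the explicit structure of uniformizable torsors in Lemma \ref{lem:uniformizable structure}, rather than a naive flat-base-change argument.
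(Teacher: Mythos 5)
Your instinct that the equality $\cO_{A_{K'}}=\cO_A\hotimes_K K'$ needs justification is exactly right, and it exposes a real weak spot: the paper's own one-line proof (flat base change commutes with $G$-invariants, plus the length identity) silently uses this identity without comment. But your proposed repair is circular. To replace $A/K((t))$ by a uniformizable $A_L/L((t))$ you must already know $v(A)=v(A_L)$, and $L/K$ is precisely a field extension under which $A$ need \emph{not} be uniformizable over $K$, so this is an instance of the very lemma you are proving; the observation that $v,w$ take values in the discrete set $\frac{1}{|G|}\ZZ$ does nothing to break the circle.

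In fact the lemma as literally stated fails when $K$ is imperfect. Take $p=3$, $k=K=\FF_3(u)$, $K'=K(u^{1/3})$, $G=\ZZ/3$, $A=K((t))[x]/(x^3-x-u/t^3)$, and $M=k[[t]][G]$ the regular representation. Then $\cO_A=K[[t]][y]$ with $y=tx$ (so $e=1$, $f=3$), and the discriminant of $Y^3-t^2Y-u$ gives $v(A)=\frac{1}{2}\ord_t\disc(\cO_A/K[[t]])=3$; over $K'$ one shifts $x$ by $u^{1/3}/t$ to see that $A_{K'}$ is a totally ramified Artin--Schreier extension with break $1$, so $\ord_t\disc(\cO_{A_{K'}}/K'[[t]])=4$ and $v(A_{K'})=2$. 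The culprit is exactly what you feared: $\cO_A\hotimes_K K'$ is no longer normal. The statement the paper actually uses is the case $K$ perfect (equivalently, algebraically closed, since $v$ is only needed on geometric points of the P-moduli space), and there the lemma is true: the residue extensions of $\cO_A/K[[t]]$ are then separable, $\cO_A\hotimes_K K'$ stays regular hence normal, $\cO_{A_{K'}}=\cO_A\hotimes_K K'$, and the paper's short argument goes through verbatim. The correct move is therefore to add the hypothesis that $K$ is perfect rather than to attempt a reduction through field extensions.
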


\begin{proof}
Recall that the operations of taking invariants and flat base change
commute. From \ref{lem:weight function module property} and the fact
that 
\[
l_{K'[[t]]}(Q\hotimes_{K}K')=l_{K[[t]]}(Q)\text{ for }Q\in\Mod(K[[t]])
\]
one gets $v(A)=v(A_{K'})$. Similarly one obtains that $\eta_{A_{K'}}^{-1}(o)\simeq\eta_{A}^{-1}(o)\times_{K}K'$
and therefore the equality for the dimensions.
\end{proof}
\begin{lem}[{cf. \cite[Lem. 3.4]{MR3431631}}]
\label{lem:induction and local constructibility} Let $H$ be a subgroup
of $G$. Then we have the equalities of functions
\[
v_{\Res_{H}M}=v_{M}\circ\ind_{H}^{G},w_{\Res_{H}M}=w_{M}\circ\ind_{H}^{G}\colon\Delta_{H}\arr\Delta_{G}\arr\frac{1}{|G|}\ZZ.
\]
\end{lem}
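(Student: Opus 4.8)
The plan is to reduce both identities to Frobenius reciprocity together with a bookkeeping of twists by coset representatives. Throughout, recall that $\Res_H M$ is $M$ regarded as a $k[[t]]$-module with the $H$-action obtained by restriction, so $(\Res_H M)_K=M_K$. Fix a field extension $K/k$, an $H$-torsor $A/K((t))$, and set $B:=\ind_H^G A$. I would first realize $B$ as the concrete induced torsor
\[
B=\{\,f\colon G\to A \mid f(xh)=h^{-1}(f(x))\ \text{for all }x\in G,\ h\in H\,\}
\]
with pointwise ring structure and $G$-action $(g\cdot f)(x)=f(g^{-1}x)$. Choosing left coset representatives $g_1=e,g_2,\dots,g_m$ of $G/H$ (so $m=[G:H]$), evaluation at the $g_i$ gives a ring isomorphism $B\cong A^{m}$, and since forming the integral closure commutes with finite products, $\cO_B\cong\cO_A^{m}$ as rings; concretely $\cO_B=\{f\colon G\to\cO_A\mid f(xh)=h^{-1}(f(x))\}$ and the $i$-th projection is $f\mapsto f(g_i)$. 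Under these identifications the reciprocity map
\[
\Hom_{k[[t]]}^G(M,\cO_B)\longrightarrow\Hom_{k[[t]]}^H(\Res_H M,\cO_A),\qquad \Phi\longmapsto\bigl(m\mapsto\Phi(m)(e)\bigr)
\]
is a $K[[t]]$-linear isomorphism, with inverse $\phi\mapsto\Phi$ where $\Phi(m)(x)=\phi(x^{-1}m)$; in particular $\Hom^G_{k[[t]]}(M,\cO_B)$ is free of rank $r$ by the argument of Lemma \ref{lem:weight function module property}.

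For the statement about $v$, the key observation is that $\cO_B\cdot\Hom_{k[[t]]}^G(M,\cO_B)$ is an $\cO_B$-submodule of $\Hom_{k[[t]]}(M,\cO_B)=\bigoplus_{i=1}^m\Hom_{k[[t]]}(M,\cO_A)$, hence splits along the idempotents of $\cO_B\cong\cO_A^{m}$ as a direct sum $\bigoplus_i S_i$. Using the formulas above I would compute $S_i=\cO_A\cdot\{\phi\circ\tau_i\mid\phi\in\Hom_{k[[t]]}^H(\Res_H M,\cO_A)\}$, where $\tau_i\colon M\to M$, $m\mapsto g_i^{-1}m$, is a $k[[t]]$-linear automorphism. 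Since $\Phi\mapsto\Phi\circ\tau_i$ is an $\cO_A$-linear automorphism of $\Hom_{k[[t]]}(M,\cO_A)$ carrying $\cO_A\cdot\Hom^H_{k[[t]]}(\Res_H M,\cO_A)$ onto $S_i$, the quotient $\Hom_{k[[t]]}(M,\cO_A)/S_i$ has the same $K[[t]]$-length as $\Hom_{k[[t]]}(M,\cO_A)/\bigl(\cO_A\cdot\Hom^H_{k[[t]]}(\Res_H M,\cO_A)\bigr)$, namely $|H|\,v_{\Res_H M}(A)$. Summing over the $m$ cosets and dividing by $|G|=m|H|$ gives $v_M(\ind_H^G A)=v_{\Res_H M}(A)$.

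For $w$ it then suffices, given the $v$-identity, to show $\dim\eta_{B}^{-1}(o)=\dim\eta_{A}^{-1}(o)$. Choosing a $K[[t]]$-basis $\phi_1,\dots,\phi_r$ of $\Hom^H_{K[[t]]}(M_K,\cO_A)$ and the matching basis $\psi_1,\dots,\psi_r$ of $\Hom^G_{K[[t]]}(M_K,\cO_B)$ with $\psi_l(m)(e)=\phi_l(m)$, I would write $\eta_A^{-1}(o)$ and $\eta_B^{-1}(o)$ explicitly as spectra of quotients of polynomial rings. With $e_1,\dots,e_r$ the standard basis of $M_K$, the fiber $\eta_A^{-1}(o)$ is cut out inside $(\cO_A/t\cO_A)[y_1,\dots,y_r]$ by the $r$ linear forms $\sum_l\overline{\phi_l(e_j)}\,y_l$; and, using $\psi_l(e_j)(g_i)=\phi_l(g_i^{-1}e_j)$ together with $\cO_B/t\cO_B\cong(\cO_A/t\cO_A)^m$, the fiber $\eta_B^{-1}(o)$ is the disjoint union over $i=1,\dots,m$ of the analogous spectra in which $\phi_l(e_j)$ is replaced by $\phi_l(g_i^{-1}e_j)$. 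For each $i$ the tuple $(g_i^{-1}e_j)_j$ is again a $K[[t]]$-basis of $M_K$, related to $(e_j)_j$ by a matrix in $\GL_r(k[[t]])$, so the corresponding families of linear forms differ by an invertible $(\cO_A/t\cO_A)$-linear substitution and generate the same ideal; hence each summand of $\eta_B^{-1}(o)$ equals $\eta_A^{-1}(o)$. Thus $\eta_B^{-1}(o)$ is a disjoint union of $m$ copies of $\eta_A^{-1}(o)$, the dimensions coincide, and
\[
w_M(\ind_H^G A)=\dim\eta_B^{-1}(o)-v_M(\ind_H^G A)=\dim\eta_A^{-1}(o)-v_{\Res_H M}(A)=w_{\Res_H M}(A).
\]

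The main obstacle I anticipate is organizational rather than conceptual: one has to check that the reciprocity isomorphism is compatible not merely as abstract $K[[t]]$-modules but with the $\cO_B$-module structure on $\Hom_{k[[t]]}(M,\cO_B)$ and with the scheme structure on $\Spec S^{\bullet}_{\cO_B}E_K$, i.e.\ that the sole effect of the coset twists $\tau_i$ is a $K[[t]]$-linear change of basis of $M$, which alters neither $K[[t]]$-lengths nor fiber dimensions. The remaining ingredients --- Frobenius reciprocity, commutation of integral closure with finite products, and freeness of the invariant $\Hom$-modules via Lemma \ref{lem:weight function module property} --- are standard or already established in the excerpt.
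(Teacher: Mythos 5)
Your proof takes essentially the same route as the paper: decompose the integral closure of the induced torsor along the $G/H$-cosets, use Frobenius reciprocity for the invariant $\Hom$, and observe that each coset component is obtained from the $H$-side by an invertible $k[[t]]$-linear change of basis on $M$ coming from a coset representative, so $K[[t]]$-lengths and fiber dimensions are preserved. You are somewhat more explicit than the paper about the coset twists $\tau_i$, which is welcome --- the paper asserts a literal equality of $\fO$-submodules across the product decomposition, whereas what actually holds is the isomorphism up to these $\GL_r(k[[t]])$-reparametrizations that you spell out.
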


\begin{proof}
Assume a $G$-torsor $A/K((t))$ is induced by an $H$-torsor $B/K((t))$.
We have isomorphisms
\[
A\simeq\ind_{H}^{G}B\simeq B^{|G/H|}\comma\odi A\simeq\ind_{H}^{G}\odi B\simeq\odi B^{|G/H|}
\]
and $\Hom_{k[[t]]}^{G}(M,\odi A)\simeq\Hom_{k[[t]]}^{H}(M,\odi B)$.
Moreover
\[
\odi A\Hom_{k[[t]]}^{G}(M,\odi A)\simeq(\odi B\Hom_{k[[t]]}^{H}(M,\odi B))^{|G/H|}
\]
inside $(\Hom_{k[[t]]}(M,\odi B))^{|G/H|}\simeq\Hom_{k[[t]]}(M,\odi A)$.
This proves that $v(A)=v(B)$. Finally one can check that
\[
\eta_{A}\colon\AA_{\cO_{A}}^{r}=\A_{\odi B}^{r}\amalg\cdots\amalg\A_{\odi B}^{r}\arr\A_{K[[t]]}^{r}
\]
 and that all maps $\A_{\odi B}^{r}\arr\A_{K[[t]]}^{r}$ are isomorphic
to $\eta_{B}$. It follows that $\dim\eta_{A}^{-1}(o)=\dim\eta_{B}^{-1}(o)$.
\end{proof}
To show properties of $v$ and $w$, we give slightly different descriptions
of these functions. For simplicity assume that $A$ is uniformizable
and connected, that is a Galois extension of $K((t))$ with group
$G$ and $\odi A=K[[s]]$. Let 
\[
a_{j}={}^{t}(a_{1j},\dots,a_{rj})\in N\otimes_{k[[t]]}\cO_{A}\simeq\cO_{A}^{r}\quad(j=1,\dots,r)
\]
be a $K[[t]]$-basis of $(N\otimes_{k[[t]]}\odi A)^{G}$ , which,
by \ref{lem:weight function module property}, is also an $\odi A$-basis
of $\odi A(N\otimes_{k[[t]]}\odi A)^{G}$. Since $l_{K[[t]]}=l_{K[[s]]}$
and using standard properties of DVR's we get
\begin{equation}
v(A)=\frac{\ord_{A}\det(a_{ij})}{|G|},\label{eq:v(A)}
\end{equation}
where $\ord_{A}$ denotes the normalized additive valuation on $A$,
that is, the order in $s$.

Let $e_{1},\dots,e_{r}$ be the standard basis of $M$ and $b_{1},\dots,b_{r}\in E_{K}$
the dual basis of $a_{1},\dots,a_{r}$. The map $M_{K}\longrightarrow E_{K}$
sends $e_{i}$ to $\sum_{j}a_{ij}b_{j}$. If $\Omega$ is the residue
field of $\odi A$ then
\[
(\eta_{A}^{-1}(o))_{\red}=\Spec(\frac{\Omega[X,\dots,X_{r}]}{(\sum_{j}\overline{a_{ij}}X_{j})})
\]
where $\overline{a_{ij}}$ is the image of $a_{ij}\in\odi A$ in $\Omega$.
It follows that 
\begin{equation}
\dim\eta_{A}^{-1}(o)=r-\rank(\overline{a_{ij}}).\label{eq:w(A)}
\end{equation}

\begin{lem}
\label{lem:consistent basis}Let $A/B((t))$ be a $G$-torsor such
that $A=B((s))$ is uniformizable and $\odi A=B[[s]]$ is $G$ invariant.
Assume moreover that $B$ is a Noetherian ring. Then there exist a
sur covering $\Spec B'\longrightarrow\Spec B$ such that $(N\otimes_{k[[t]]}(\odi A\hotimes_{B}B'))^{G}$
is a free $B'[[t]]$-module of rank $r$ and, for any ring map $B'\longrightarrow C$,
the base change map
\[
[N\otimes_{k[[t]]}(\odi A\hotimes_{B}B')]^{G}\hotimes_{B'}C\arr[N\otimes_{k[[t]]}(\odi A\hotimes_{B}C)]^{G}
\]
is an isomorphism.
\end{lem}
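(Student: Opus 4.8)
The plan is to work with the $k[[t]]$-module-with-$G$-action $N:=\Hom_{k[[t]]}(M,k[[t]])$ and the $B[[t]]$-module $P:=N\otimes_{k[[t]]}\cO_A$. By Lemmas \ref{lem:uniformizable structure} and \ref{lem:invariants are uniformazable}, $\cO_A=B[[s]]$ is free of rank $|G|$ over $B[[t]]$, so $P$ is free of rank $r|G|$ over $B[[t]]$ and carries a $B[[t]]$-linear diagonal $G$-action (on $N$ via the given action, on $\cO_A$ via the torsor action, which fixes $B[[t]]=\cO_A^{G}$). I would set $Q:=P^{G}=\Ker\bigl(\phi\colon P\to\bigoplus_{g\in G}P,\ x\mapsto(gx-x)_{g\in G}\bigr)$ and $I:=\Image\phi$, so $0\to Q\to P\to I\to 0$ with $Q,I$ finite over the Noetherian ring $B[[t]]$. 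Since $t=s^{|G|}\cdot(\text{unit})$ is a nonzerodivisor on $B[[s]]$, all of $P$, $\bigoplus_g P$, $Q$, $I$ are $t$-torsion-free and $t$-adically complete and separated. Everything runs identically after a base change $B\to C$, using $\cO_A\hotimes_B C=C[[s]]$ (Lemma \ref{lem:uniformizable structure}) with the induced $G$-action; I write $P_C:=N\otimes_{k[[t]]}C[[s]]$, $\phi_C$, $I_C$, etc.

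First I would record the behaviour after inverting $t$. Localizing (exactly) gives $Q_t=(P_t)^{G}$ with $P_t=N\otimes_{k[[t]]}B((s))$. As $B((s))/B((t))$ is a $G$-torsor, faithfully flat descent identifies $(P_t)^{G}$ with the $B((t))$-module pulling back to $P_t$; in particular $(P_t)^{G}$ is finite locally free of rank $r$ over $B((t))$, and, because descent is compatible with the base change $C((t))\to C((s))$ of the torsor, the natural map $(P_t)^{G}\otimes_{B((t))}C((t))\to(P_{C,t})^{G}$ becomes an isomorphism after the faithfully flat map $C((t))\to C((s))$, hence is an isomorphism. So $(-)^{G}$ commutes with base change after inverting $t$.

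Next, since $I$ is $t$-torsion-free we have $\Tor_1^{B[[t]]}(I,B)=0$, so tensoring $0\to Q\to P\to I\to 0$ with $B=B[[t]]/(t)$ yields a short exact sequence of finite $B$-modules $0\to Q/tQ\to P/tP\to I/tI\to 0$. Stratifying the Noetherian scheme $\Spec B$ into finitely many locally closed subschemes on which these coherent sheaves are free, and refining by a finite affine cover, produces a sur covering $\coprod_j\Spec B_j\to\Spec B$ by Noetherian affines (each $\Spec B_j\to\Spec B$ is a composite of an open immersion and a finitely presented closed immersion, hence finitely presented since $B$ is Noetherian, and the $\Spec B_j$ are jointly surjective) over which $Q/tQ$ and $I/tI$ become free; these reductions are compatible with all of the above, so I may assume $Q/tQ$ and $I/tI$ are free over $B$. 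Then a $t$-adic argument upgrades this: lifting a basis of $Q/tQ$ to a map $\psi\colon B[[t]]^{\oplus\rho}\to Q$, one sees $\psi$ is surjective (successive approximation, both sides $t$-adically complete), and $\Ker\psi/t\Ker\psi=0$ because $\Tor_1^{B[[t]]}(Q,B)=0$ ($Q$ $t$-torsion-free), which forces $\Ker\psi=0$ by $t$-adic separatedness; hence $Q\cong B[[t]]^{\oplus\rho}$, and localizing at $t$ and comparing with the previous paragraph gives $\rho=r$. The identical argument (with $I/tI$) shows $I$ is free over $B[[t]]$.

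Finally I would check compatibility with an arbitrary ring map $B\to C$. Since $I$ is free, hence flat, over $B[[t]]$, tensoring $0\to Q\to P\to I\to 0$ with $C[[t]]$ over $B[[t]]$ keeps it exact: $0\to Q\hotimes_B C\to P_C\to I\hotimes_B C\to 0$. In particular $Q\hotimes_B C$ embeds in $P_C$, and since each element of $Q$ maps to a $G$-invariant element of $P_C$, the natural map $Q\hotimes_B C\to(P_C)^{G}$ is injective. For surjectivity: $P_C/(Q\hotimes_B C)\cong I\hotimes_B C$ is flat over $C[[t]]$, hence $t$-torsion-free; and $(Q\hotimes_B C)_t=(P_t)^{G}\otimes_{B((t))}C((t))=(P_{C,t})^{G}=\bigl((P_C)^{G}\bigr)_t$ by the second paragraph. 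As $Q\hotimes_B C$ is $t$-torsion-free it embeds into its localization, so any $x\in(P_C)^{G}$ satisfies $t^{n}x\in Q\hotimes_B C$ for $n\gg0$; since $P_C/(Q\hotimes_B C)$ is $t$-torsion-free, this forces $x\in Q\hotimes_B C$. Hence $Q\hotimes_B C=(P_C)^{G}$, which is the assertion. The one genuinely delicate point is this last step: base change along a non-flat $C$ does not in general commute with $t$-adic completion or with taking $G$-invariants, and the way around it is to decouple the problem into its behaviour after inverting $t$ (controlled by honest faithfully flat descent along the $G$-torsor $B((s))/B((t))$) and its behaviour modulo $t$ (trivialized by the preliminary sur covering), glued by the $t$-torsion-freeness of the quotient $I$.
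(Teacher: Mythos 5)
Your argument takes a genuinely different route from the paper's. The paper localizes $B[[t]]$ at the prime $(t)$ to obtain a discrete valuation ring, puts the coboundary map $\phi$ (whose kernel is the module of invariants) into Smith normal form there, and then clears denominators to realize this diagonal shape, with entries either $0$ or units of $B[[t]]$, over a dense affine open of $\Spec B$; such a diagonal form makes the kernel a free direct summand of $P$ whose formation manifestly commutes with any $B\to C$, so no descent or torsion-freeness argument is needed. You instead analyze the sequence $0\to Q\to P\to I\to 0$ abstractly: lift freeness of $Q/tQ$ and $I/tI$ to freeness of $Q$ and $I$ via a Nakayama-plus-$t$-adic-separatedness argument, and prove base-change compatibility by combining fppf descent along the torsor $B((s))/B((t))$ (which controls the picture after inverting $t$) with $t$-torsion-freeness of $I\hotimes_B C$. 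That final gluing step is correct and is a clean alternative to the paper's explicit diagonalization.

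There is, however, a gap in the stratification step. You stratify $\Spec B$ so that $(Q/tQ)\otimes_B B_j$ and $(I/tI)\otimes_B B_j$ are free, and then assert the reduction is ``compatible with all of the above.'' But the locally closed immersions $\Spec B_j\to\Spec B$ have non-flat closed pieces, so $B[[t]]\to B_j[[t]]$ is not flat: the sequence $0\to Q\to P\to I\to 0$ need not stay exact after $\otimes_{B[[t]]}B_j[[t]]$, and the kernel and image of $\phi\otimes_{B[[t]]}B_j[[t]]$ need not coincide with $Q\otimes_{B[[t]]}B_j[[t]]$ and $I\otimes_{B[[t]]}B_j[[t]]$. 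Thus freeness of $(Q/tQ)\otimes_B B_j$ does not give you freeness of the reduction mod $t$ of $\Ker(\phi\otimes_{B[[t]]}B_j[[t]])$, which is what your Nakayama step actually requires. The repair is the one the paper uses: Noetherian induction, reducing to $B$ a domain and exhibiting a dense affine open $\Spec B_f$ on which the lemma holds. Since $B[[t]]\to B_f[[t]]$ is flat, $Q$ and $I$ localize to the correct kernel and image over $B_f[[t]]$, and generic freeness of the finite $B$-modules $Q/tQ$ and $I/tI$ supplies the $f$; from there your Nakayama and descent arguments go through unchanged.
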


\begin{proof}
By \ref{rem:Noetherian induction} we may suppose that $B$ is a domain
and show that there exists an affine open dense subscheme $\Spec B'\hookrightarrow\Spec B$
satisfying the requests of the lemma. Given a $B$-algebra $C$ we
set $A_{C}=A\hotimes_{B}C$, $\odi{A_{C}}=\odi A\hotimes_{B}C\simeq C[[s]]$
and
\[
\phi_{A_{C}}\colon N\otimes_{k[[t]]}\odi{A_{C}}\arr\bigoplus_{g\in G}N\otimes_{k[[t]]}\odi{A_{C}},\alpha\longmapsto(g\alpha-\alpha)_{g}
\]
so that $(N\otimes_{k[[t]]}\odi{A_{C}})^{G}=\Ker\phi_{A_{C}}$. Let
$S$ be the localization of $B[[t]]$ at the prime ideal $(t)$, which
is a discrete valuation ring. Let us consider the map
\[
\phi_{A}\otimes_{B[[t]]}S\colon N\otimes_{k[[t]]}\fO_{A}\otimes_{B[[t]]}S\longrightarrow\bigoplus_{g\in G}N\otimes_{k[[t]]}\fO_{A}\otimes_{B[[t]]}S.
\]
From \cite[VII. 21]{MR643362}, there exist $S$-bases $\alpha_{1},\dots,\alpha_{e}$
and $\beta_{1},\dots,\beta_{f}$ of the source and the target, and
elements $c_{1},\dots,c_{e}\in S$ such that $\phi\otimes_{B[[t]]}S$
sends $\alpha_{i}$ to $c_{i}\beta_{i}$. Moreover, we may suppose
that for some $d\in\{1,\dots,e\}$, we have $c_{i}=0$, $i\le d$
and $c_{i}\ne0$, $i>d$. 

Identifications $N\cong k[[t]]^{\oplus r}$ and $\fO_{A}\cong B[[t]]^{|G|}$
induce an identification 
\[
N\otimes_{k[[t]]}\fO_{A}\otimes_{B[[t]]}S\cong S^{r|G|}.
\]
Through this identification, $\alpha_{i}$ and $\beta_{i}$ are expressed
as tuples $(\alpha_{i,j})_{j}$ and $(\beta_{i,j})_{j}$ of elements
of $S$. Note that an element of $S$ is a fraction $u/v$ with $u,v\in B[[t]]$
such that $v$ has nonzero constant term, denoted by $v_{0}$, and
$v$ is invertible in the ring $B_{v_{0}}[[t]]$. In particular there
exists $v\in B[[t]]-(t)$ such that the $S$-bases $\alpha_{i}$ and
$\beta_{j}$ are also bases over $B[[t]]_{v}$. Replacing $B$ by
$B_{v_{0}}$ we can therefore assume that this holds globally. In
particular $c_{i}\in B[[t]]$ and we may further suppose that the
leading coefficients of $c_{i}$, $i>d$ are units, that is they are
invertible. Then, for any ring map $B\longrightarrow C$, the map
$\phi_{A_{C}}=\phi_{A}\otimes_{B[[t]]}C[[t]]$ is similarly given
by $\alpha_{i}\longmapsto c_{i}\beta_{i}$, where $c_{i}\in C[[t]]$
are zero for $i\le d$ and units for $i>d$. This ends the proof.
\end{proof}
\begin{thm}
\label{thm:constructible v}The functions $v,w\colon\Delta_{G}\longrightarrow\frac{1}{|G|}\ZZ$
are locally constructible (see \ref{def:general locally constructible map}).
Similarly for the restrictions of v and $w$ to $\Delta_{G}^{\cQ}$
for a locally constructible property $\cQ$. 
\end{thm}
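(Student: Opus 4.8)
The plan is to reduce, via induction on subgroups and the uniformization theorem, to the situation of the explicit formulas (\ref{eq:v(A)}) and (\ref{eq:w(A)}), and then to check that those formulas vary constructibly in families by means of Lemma \ref{lem:consistent basis}.

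First, the reductions. The case of a general $\cQ$ follows from the case $\cQ=(\Delta_G)_F$: by \ref{prop:submoduli-const} (and \ref{prop:locally constructible subsets as subfunctors}) the strong P-moduli space of $\Delta_G^{\cQ}$ is P-isomorphic to a locally constructible subset of $\overline{\Delta_G}$, say with image that of a finitely presented monomorphism $Z\arr\overline{\Delta_G}$, and the restriction of $v$ (resp.\ $w$) to $Z$ is its pullback along $Z\arr\overline{\Delta_G}$, which is locally constructible by \ref{lem:independence integrability}(1). Next, from the proof of Theorem \ref{thm:strong P moduli of DeltaG} the map $\bigsqcup_{H\in\Lambda}\Delta_H^{\circ}/\Aut_G(H)\arr\Delta_G$ is geometrically bijective and a sur epimorphism, hence induces a P-isomorphism of strong P-moduli spaces by \ref{lem:when phiP is an isomorphism}; under it the function $v=v_M$ (resp.\ $w=w_M$) corresponds to the collection of the $v_{\Res_H M}$ (resp.\ $w_{\Res_H M}$) on the $\Delta_H^{\circ}/\Aut_G(H)$ by Lemma \ref{lem:induction and local constructibility}; and $\Delta_H^{\circ}\arr\Delta_H^{\circ}/\Aut_G(H)$ is a surjective P-morphism, so by \ref{lem:independence integrability}(1) it suffices to prove that $v$ and $w$ are locally constructible on $\Delta_G^{\circ}$ when $G$ is a semidirect product of a $p$-group and a tame cyclic group. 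We assume this from now on.

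Fix one of the affine $k$-varieties of finite type making up $\overline{\Delta_G^{\circ}}$. Because it is a strong P-moduli space there is a sur covering of it, from a scheme $\Spec R_0$ of finite type over $k$, carrying an honest $G$-torsor $A_0/R_0((t))$ with connected geometric fibers; by Theorem \ref{cor:stratified-uniformization} we may refine $\Spec R_0$ by a further sur covering so that $A_0$ becomes uniformizable, and after splitting $\Spec R_0$ according to the decomposition of Definition \ref{def:uniformizable}, connectedness of the geometric fibers forces every $n_i=1$. We thus obtain an affine scheme $\Spec R'$ of finite type (hence Noetherian) over $k$, a surjective P-morphism $\Spec R'\arr\overline{\Delta_G^{\circ}}$, and a uniformizable $G$-torsor $A'=R'((s))$ with $t=s^{|G|}g$, $g\in R'[[s]]^{*}$, and $\cO_{A'}=R'[[s]]$, which is $G$-invariant after modding out finitely many nilpotents by \ref{lem:invariants are uniformazable}(1) (this does not change $|\Spec R'|$). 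Applying Lemma \ref{lem:consistent basis} and composing sur coverings once more, we may also assume $(N\otimes_{k[[t]]}\cO_{A'})^{G}$ is a free $R'[[t]]$-module of rank $r$ with a basis $a_1,\dots,a_r$, $a_j={}^{t}(a_{1j},\dots,a_{rj})\in R'[[s]]^{r}$, whose formation commutes with arbitrary base change $R'\arr C$. By \ref{lem:independence integrability}(1) it now suffices to show $v$ and $w$ are locally constructible on $\Spec R'$.

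Let $x\colon\Spec K\arr\Spec R'$ be a geometric point. Then $A'_x$ is a connected Galois extension of $K((t))$ with $\cO_{A'_x}=K[[s]]$, and by base-change compatibility $(a_j)_x$ is the basis occurring in (\ref{eq:v(A)}) and (\ref{eq:w(A)}). Write $\det(a_{ij})=\sum_{m\ge 0}\delta_m s^{m}\in R'[[s]]$ with $\delta_m\in R'$, and put $c_{ij}=a_{ij}(0)\in R'$. By Lemma \ref{lem:weight function module property} the element $\det(a_{ij})_x\in K[[s]]$ is nonzero of finite $s$-order for every $x$, so (\ref{eq:v(A)}) gives $v(A'_x)=|G|^{-1}\min\{m\mid\delta_m(x)\ne 0\}$; hence for each $n$,
\[
v^{-1}(n)\cap|\Spec R'|=V(\delta_0,\dots,\delta_{n|G|-1})\cap D(\delta_{n|G|})
\]
(with the convention $V(\emptyset)=\Spec R'$), which is locally closed, so $v$ is locally constructible on $\Spec R'$, and by \ref{lem:independence integrability}(1) on $\overline{\Delta_G^{\circ}}$. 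For $w$, formula (\ref{eq:w(A)}) gives $\dim\eta_{A'_x}^{-1}(o)=r-\rank_K((c_{ij})_x)$; the function $x\mapsto\rank_K((c_{ij})_x)$ takes only the values $0,\dots,r$, with each level set the difference of two open loci (non-vanishing of $\rho\times\rho$ minors), hence constructible, so $x\mapsto\dim\eta_{A'_x}^{-1}(o)$ is locally constructible and finitely-valued; therefore $w=\dim\eta^{-1}(o)-v$ satisfies $w^{-1}(n)=\bigcup_{m=0}^{r}\bigl(\{x\mid\dim\eta_{A'_x}^{-1}(o)=m\}\cap v^{-1}(m-n)\bigr)$, a finite union of constructible sets, so $w$ is locally constructible on $\Spec R'$ and hence on $\overline{\Delta_G^{\circ}}$. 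The main obstacle is precisely the passage from the pointwise identities (\ref{eq:v(A)}), (\ref{eq:w(A)}) to this family statement: one needs a single basis of $(N\otimes_{k[[t]]}\cO_{A'})^{G}$ over the base whose reduction at each geometric point is the basis used in the formulas, which is exactly what Lemma \ref{lem:consistent basis} provides; everything else is bookkeeping with sur coverings and \ref{lem:independence integrability}.
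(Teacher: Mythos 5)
Your proposal is correct and takes essentially the same approach as the paper: reduce via sur coverings and uniformization (Theorem \ref{cor:stratified-uniformization}) to a uniformizable torsor $A=B((s))$ with $\fO_A=B[[s]]$ $G$-invariant, then apply Lemma \ref{lem:consistent basis} to obtain a basis of $(N\otimes_{k[[t]]}\fO_A)^G$ compatible with base change, and read off local constructibility from the explicit formulas (\ref{eq:v(A)}) and (\ref{eq:w(A)}) via vanishing/rank conditions on coefficients. The only organizational difference is that you route the reduction to the connected, semidirect-product case through the $\bigsqcup_{H\in\Lambda}\Delta_H^{\circ}/\Aut_G(H)$ decomposition from the proof of Theorem \ref{thm:strong P moduli of DeltaG}, whereas the paper invokes \ref{lem:torsors and disjoint union} and \ref{lem:induction and local constructibility} inline after uniformizing; both routes rest on the same two lemmas and reach the same computation.
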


\begin{proof}
The second assertion is a direct consequence of the first by \ref{thm:strong P moduli of DeltaG}
and \ref{prop:locally constructible sur locally}. We are going to
prove the first assertion, that is prove that, if $Z\to\Delta_{G}$
is a map from a scheme, then the restrictions $v_{|Z}$ and $w_{|Z}$
are locally constructible. By \ref{prop:locally constructible sur locally}
and \ref{cor:stratified-uniformization} we can assume that $Z=\Spec B$
and that the associated $G$-torsor $A/B((t))$ is uniformizable.
We can further assume that $B$ is a domain and, by \ref{lem:invariants are uniformazable},
\ref{lem:torsors and disjoint union} and \ref{lem:induction and local constructibility}
we may suppose that $A=B((s))$ is also a domain and $\fO_{A}=B[[s]]$
is $G$-invariant. In particular we can now apply \ref{lem:consistent basis}
and assume the conclusion of this lemma. Let $a_{1},\dots,a_{r}$
be a $B[[t]]$-basis of $(N\otimes_{k[[t]]}\odi A)^{G}$ and write
\[
a_{j}=(a_{ij})_{i}\in(N\otimes_{k[[t]]}\odi A)\simeq\odi A^{r}.
\]
We want to use equation (\ref{eq:v(A)}). Write 
\[
d=\det(a_{ij})\in\fO_{A}=B[[s]]
\]
as $\sum_{i\ge0}d_{i}s^{i}$, $d_{i}\in B$. Then the locus where
this determinant has $s$-order $\ge l$ is the closed subset
\[
\{v\geq l/|G|\}=\{p\in\Spec B\st\ord_{A_{k(p)}}(d)\geq l\}=\bigcap_{i<l}V(d_{i})\subset\Spec B.
\]
As for the function $w$, let $\overline{a_{ij}}$ be the image of
$a_{ij}$ in $B$ and consider the matrix $(\overline{a_{ij}})\in B^{r^{2}}$.
From equation (\ref{eq:w(A)}), we need to show that the map 
\[
\Spec B\ni x\longmapsto\rank(\overline{a_{ij}})_{x}
\]
is locally constructible. The locus where this rank is less than $s$
is the zero locus of the $s\times s$ minors of $(\overline{a_{ij}})$
and is a closed subset. This completes the proof.
\end{proof}
\begin{cor}
Let $l$ be a positive integer such that $v(\Delta_{G})\subset\frac{1}{l}\ZZ$
(e.g. $l=|G|$). Integrals $\int_{\Delta_{G}}\LL^{d-v}$ and $\int_{\Delta_{G}}\LL^{w}$
are well-defined as elements of $\hat{\cM}_{k}^{\modified,l}\cup\{\infty\}$.
\end{cor}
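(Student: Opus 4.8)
The plan is to deduce the corollary formally from the two main results of this section together with the definitions of Section~\ref{sec:Motivic-integration}. Recall that for a functor $\cF\colon(\Aff/k)^{\op}\arr\sets$ admitting a scheme $X$ locally of finite type over $k$ as strong P-moduli space, a function $\cF\arr\frac1l\ZZ$ is by definition a function $X\arr\frac1l\ZZ$, and when such a function is locally constructible its integral $\int_{\cF}\LL^{f}:=\int_X\LL^{f}$ is defined as an element of $\hat{\cM}_k^{\modified,l}\cup\{\infty\}$ by Definition~\ref{def:locally constructible and integrable functions} (a genuine element of $\hat{\cM}_k^{\modified,l}$ exactly when $f$ is integrable, and $\infty$ otherwise). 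So everything reduces to producing the P-moduli space and checking that the two exponents are locally constructible $\frac1l\ZZ$-valued functions.

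For the first point I would simply invoke Theorem~\ref{thm:strong P moduli of DeltaG}: $\Delta_G$ has a strong P-moduli space $X$ which is a countable disjoint union of affine $k$-varieties, in particular a scheme locally of finite type over $k$, so the integral against $\Delta_G$ is literally the integral against $X$. For the second point I would use Theorem~\ref{thm:constructible v}, which gives that $v$ and $w$ are locally constructible on (the P-moduli space of) $\Delta_G$. The function $d-v$ is then again locally constructible, since adding the fixed integer $d$ merely relabels the fibers $v^{-1}(n)$, each of which stays locally constructible; and since $v(\Delta_G)\subset\frac1l\ZZ$ by hypothesis and $d\in\ZZ\subset\frac1l\ZZ$, the function $d-v$ takes values in $\frac1l\ZZ$. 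For $w$, note that on every geometric point $w(A)=\dim\eta_A^{-1}(o)-v(A)$ with $\dim\eta_A^{-1}(o)\in\ZZ$, so $w(\Delta_G)\subset\ZZ-v(\Delta_G)\subset\frac1l\ZZ$ as well.

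Combining the two points, the corollary is immediate: $\int_{\Delta_G}\LL^{d-v}$ and $\int_{\Delta_G}\LL^{w}$ are the integrals of the locally constructible $\frac1l\ZZ$-valued functions $d-v$ and $w$ on the scheme $X$ locally of finite type over $k$, hence are well-defined in $\hat{\cM}_k^{\modified,l}\cup\{\infty\}$. I do not expect a genuine obstacle here; the only care needed is routine bookkeeping — matching the domain category $(\Aff/k)$ used for $\Delta_G$ with the conventions of Section~\ref{sec:Motivic-integration} (where $\Sch'/k$ may be taken to be $\Aff/k$), and recording that the chosen $l$ works simultaneously for $d-v$ and for $w$. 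All the substance of the statement is contained in Theorems~\ref{thm:strong P moduli of DeltaG} and~\ref{thm:constructible v}.
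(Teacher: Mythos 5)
Your argument is correct and is essentially the one the paper leaves implicit (the corollary appears with no attached proof, being an immediate consequence of the two preceding results). You use Theorem~\ref{thm:strong P moduli of DeltaG} to obtain the strong P-moduli space as a scheme locally of finite type, Theorem~\ref{thm:constructible v} for local constructibility of $v$ and $w$, and the observations that shifting by the integer $d$ preserves local constructibility and that $w = \dim\eta_A^{-1}(o) - v$ has integer first term, so both exponents land in $\frac{1}{l}\ZZ$; Definition~\ref{def:locally constructible and integrable functions} then yields the integrals in $\hat{\cM}_{k}^{\modified,l}\cup\{\infty\}$.
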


\bibliographystyle{plain}
\bibliography{ramif-gps}

\begin{thebibliography}{10}

\bibitem{Auslander2014}
Maurice Auslander and David~A. Buchsbaum.
\newblock {\em {Groups, Rings, Modules}}.
\newblock Dover Publications, 2014.

\bibitem{MR643362}
Nicolas Bourbaki.
\newblock {\em \'{E}l\'{e}ments de math\'{e}matique}.
\newblock Masson, Paris, 1981.
\newblock Alg{\`e}bre. Chapitres 4 {\`a} 7. [Algebra. Chapters 4--7].

\bibitem{MR1322960}
David Eisenbud.
\newblock {\em Commutative algebra: with a view toward algebraic geometry},
  volume 150 of {\em Graduate Texts in Mathematics}.
\newblock Springer-Verlag, New York, 1995.

\bibitem{MR0345966}
Ren\'{e}e Elkik.
\newblock Solutions d'\'{e}quations {\`a} coefficients dans un anneau
  hens\'{e}lien.
\newblock {\em Ann. Sci. \'{E}cole Norm. Sup. (4)}, 6:553--603 (1974), 1973.

\bibitem{MR0414520}
A.~Fr{{\"o}}hlich.
\newblock Module conductors and module resolvents.
\newblock {\em proc. London Math. Soc. (3)}, 32(2):279--321, 1976.

\bibitem{MR2004652}
Ofer Gabber and Lorenzo Ramero.
\newblock {\em Almost ring theory}, volume 1800 of {\em Lecture Notes in
  Mathematics}.
\newblock Springer-Verlag, Berlin, 2003.

\bibitem{MR579791}
David Harbater.
\newblock Moduli of {$p$}-covers of curves.
\newblock {\em Comm. Algebra}, 8(12):1095--1122, 1980.

\bibitem{kelly-thesis}
Shane Kelly.
\newblock {\em Triangulated categories of motives in positive characteristic}.
\newblock PhD thesis, Universit{\'e} Paris 13, Australian National University,
  2012.

\bibitem{stacks-project}
The {Stacks Project Authors}.
\newblock Stacks project.
\newblock http://stacks.math.columbia.edu, 2015.

\bibitem{Tonini:2017qr}
Fabio Tonini and Takehiko Yasuda.
\newblock Moduli of formal torsors.
\newblock {\em Journal of Algebraic Geometry}, 29:753--801, 2020.

\bibitem{Tonini2016}
Fabio Tonini and Lei Zhang.
\newblock Algebraic and {N}ori fundamental gerbes.
\newblock {\em J. Inst. Math. Jussieu}, 18(4):855--897, 2019.

\bibitem{Tonini:aa}
Fabio Tonini and Lei Zhang.
\newblock {Essentially Finite Vector Bundles on Normal Pseudo-proper Algebraic
  Stacks}.
\newblock {\em Documenta Mathematica}, 25:159--169, feb 2020.

\bibitem{MR2223406}
Angelo Vistoli.
\newblock Grothendieck topologies, fibered categories and descent theory.
\newblock In {\em Fundamental algebraic geometry}, volume 123 of {\em Math.
  Surveys Monogr.}, pages 1--104. Amer. Math. Soc., Providence, RI, 2005.

\bibitem{MR3665638}
Melanie Wood and Takehiko Yasuda.
\newblock Mass formulas for local {G}alois representations and quotient
  singularities {II}: {D}ualities and resolution of singularities.
\newblock {\em Algebra Number Theory}, 11(4):817--840, 2017.

\bibitem{MR3431631}
Melanie~Matchett Wood and Takehiko Yasuda.
\newblock Mass formulas for local {G}alois representations and quotient
  singularities. {I}: a comparison of counting functions.
\newblock {\em Int. Math. Res. Not. IMRN}, (23):12590--12619, 2015.

\bibitem{MR3791224}
Takehiko Yasuda.
\newblock Toward motivic integration over wild {D}eligne-{M}umford stacks.
\newblock In {\em Higher dimensional algebraic geometry---in honour of
  {P}rofessor {Y}ujiro {K}awamata's sixtieth birthday}, volume~74 of {\em Adv.
  Stud. Pure Math.}, pages 407--437. Math. Soc. Japan, Tokyo, 2017.

\bibitem{MR3730512}
Takehiko Yasuda.
\newblock The wild {M}c{K}ay correspondence and {$p$}-adic measures.
\newblock {\em J. Eur. Math. Soc. (JEMS)}, 19(12):3709--3734, 2017.

\bibitem{yasuda2019motivic}
Takehiko Yasuda.
\newblock Motivic integration over wild deligne-mumford stacks.
\newblock arXiv:1709.01705, 2019.

\end{thebibliography}

\end{document}